\documentclass[11pt,a4paper]{article}
\usepackage[T1]{fontenc}
\usepackage{iftex}
\ifPDFTeX\usepackage[utf8]{inputenc}\fi
\usepackage{lmodern,microtype,mathtools,amssymb,amsthm,mathrsfs}
\usepackage[a4paper,margin=28mm,headheight=14pt]{geometry}
\usepackage{enumitem,array,needspace}
\usepackage[hidelinks]{hyperref}
\numberwithin{equation}{section}
\newtheorem{theorem}{Theorem}[section]
\newtheorem{proposition}[theorem]{Proposition}
\newtheorem{lemma}[theorem]{Lemma}
\newtheorem{corollary}[theorem]{Corollary}
\theoremstyle{definition}

\newtheorem{question}[theorem]{Question}
\theoremstyle{remark}
\newtheorem{remark}[theorem]{Remark}
\newcommand{\Pp}{\mathbb P}
\newcommand{\E}{\mathbb E}
\newcommand{\R}{\mathbb R}
\newcommand{\N}{\mathbb N}
\newcommand{\Z}{\mathbb Z}
\newcommand{\one}{\mathbf 1}
\newcommand{\calG}{\mathcal G}
\newcommand{\D}{\mathcal D}
\newcommand{\dimH}{\operatorname{dim}_{\mathrm H}}
\newcommand{\dimB}{\operatorname{dim}_{\mathrm B}}
\newcommand{\dimUB}{\overline{\operatorname{dim}}_{\mathrm B}}
\newcommand{\law}{\overset{\mathrm{law}}=}
\newcommand{\cH}{\mathscr H}
\DeclareMathOperator{\diam}{diam}
\DeclareMathOperator{\Var}{Var}
\DeclareMathOperator{\Leb}{Leb}
\setlist[enumerate]{label=\textup{(\roman*)},leftmargin=*,itemsep=3pt,topsep=4pt}
\allowdisplaybreaks[2]
\hypersetup{pdftitle={Graph distance and percolation dimension of three-dimensional Brownian motion},pdfauthor={Daisuke Shiraishi}}
\title{Graph distance and percolation dimension\\of three-dimensional Brownian motion}
\author{Daisuke Shiraishi\thanks{Graduate School of Informatics,
  Kyoto University, Kyoto 606-8501, Japan.}}
\date{}
\begin{document}
\maketitle
\begin{abstract}
We prove that the minimum Hausdorff dimension of a continuous curve
joining $W_0$ to $W_1$ within the range $W([0,1])$ of a
three-dimensional Brownian motion is almost surely a deterministic
constant $\gamma_3\in(1,5/3]$.
Moreover, we construct continuum many simple curves with pairwise
distinct images that attain this minimum.
Each curve has box dimension $\gamma_3$ and admits a
parametrization that is $\beta$-H\"older for every
$0<\beta<1/\gamma_3$.
To identify $\gamma_3$, we partition Brownian time into $n$ equal
intervals and form a graph by joining pairs with intersecting ranges.
We show that its endpoint graph distance and diameter have a common
deterministic almost sure growth exponent $\theta$ and that
$\gamma_3=2\theta$.
We also establish $n^{p\theta+o(1)}$ growth for their $p$-th moments,
for every $p>0$.
\end{abstract}
\noindent\textit{Keywords:} Brownian motion; graph distance; percolation dimension;
Hausdorff dimension; random metric.\par
\smallskip
\noindent\textit{2020 Mathematics Subject Classification:} 60J65; 60D05, 28A80.
\tableofcontents
\newpage

\section{Introduction}\label{sec:introduction}

We study the smallest Hausdorff dimension of a continuous curve
joining two prescribed points within the range of a three-dimensional
Brownian motion, a version of the percolation dimension problem
introduced by Burdzy~\cite{Burdzy}.

A related problem for planar Brownian motion is also listed among the open
problems in M\"orters and Peres~\cite[Selected open problems, (2)]{MortersPeres}.
In connection with this question,
Pemantle~\cite[Theorem 1.2]{Pemantle} proved that the planar Brownian
range contains no nondegenerate line segment. On the other hand,
Zhan~\cite[Theorem 3.1]{Zhan} constructed, inside the trace of planar
Brownian motion stopped on leaving the unit disc, a curve with the law
of radial Schramm--Loewner evolution (SLE) with parameter~2 (see Schramm~\cite{SchrammSLE} for the introduction of SLE).
By Beffara~\cite[Introduction, main theorem]{Beffara}, this curve has
Hausdorff dimension $5/4$, which is also the growth exponent of planar
loop-erased random walk, introduced by Lawler~\cite{Lawler1980}
(for its precise definition, see \eqref{eq:chronological-loop-erasure}
in Section~\ref{subsec:dimension-proof}).
The latter identity was proved by
Kenyon~\cite[Theorem 3]{Kenyon} and, by a different method, by
Masson~\cite[Theorem 1.1]{Masson}; see also
Lawler~\cite[Theorem 1.1]{LawlerEdge}.
However, these results do not determine the minimum curve dimension
in a planar Brownian range or the exact exponent for the planar
analogue of the graph distance introduced below.

In three dimensions, the infimum of the Hausdorff dimensions of curves
joining the initial and terminal points within a Brownian range is known
to lie strictly between one and two.
Lawler~\cite[Corollary 1.2]{LawlerCut} used cut points to give the
strict lower bound, whereas
Fargason~\cite[Theorem 1.1 and Corollary 1.2]{Fargason} constructed a
connecting curve whose dimension is bounded above by a deterministic
constant strictly smaller than two. For general dimension bounds
based on crossing estimates, we refer to Aizenman and
Burchard~\cite[Theorem 1.3 and Appendix A.4]{AizenmanBurchard};
related work involving Brownian loop soup is discussed in
Gao, Li and Li~\cite[Section 1.2]{GaoLiLi}.
Our approach relates the minimum curve dimension to graph distance:
we divide Brownian time into equal intervals and join pairs whose
ranges intersect. We identify the minimum dimension with twice the
growth exponent of this graph distance and construct continuum many
simple curves attaining the minimum.

Graph distance also arises on the trace of a random walk, whose
vertices and edges are the visited sites and traversed edges.
Burdzy and Lawler~\cite{BurdzyLawler1990} studied graph distance on this trace, while
Shiraishi~\cite[Section 4]{Shiraishi} developed an ergodic approach
for the trace of a non-intersecting two-sided random walk in dimensions
two and three. Related results in dimensions four and higher appear
in~\cite{AdhikariOkada,AdhikariOkadaShiraishi,Croydon,CroydonShiraishi,ShiraishiWatanabe}.

To state our results, let $W=(W_t)_{t\ge0}$ be standard Brownian motion
in $\R^3$ started at the origin $\mathbf0=(0,0,0)$, and set
$K=W([0,1])$. We write $\Pp$ and $\E$ for probability and expectation
under the law of $W$. For an interval $J\subset\R$, a continuous map
$w:J\to\R^3$, and a set $I\subset J$, write
$w(I)=\{w(t):t\in I\}$. A \emph{curve} is a continuous map from an
interval into a metric space, and it is \emph{simple} if it is
injective. A \emph{graph path} is a finite sequence of vertices such that consecutive vertices are adjacent; vertices may be repeated.

The space $C(A,E)$ of continuous maps from a compact metric space $A$
to a metric space $E$ is equipped with the topology of uniform convergence.
We write $\dimH$, $\dimB$ and $\dimUB$ for Hausdorff, box and upper
box dimension, respectively, in the Euclidean metric. For $x,y\in K$, define
\begin{equation}\label{eq:percolation-definition}
 p_K(x,y)=\inf\left\{\dimH\eta([0,1]):
 \begin{array}{l}
 \eta\in C([0,1],K),\\[-1mm]
 \eta(0)=x,\quad\eta(1)=y
 \end{array}\right\},\qquad
 \mathfrak p_3(W)=p_K(W_0,W_1).
\end{equation}
The curves in this infimum need not follow the time order of $W$.

Partition $[0,1]$ into the closed time cells
$I_i^{(n)}=[i/n,(i+1)/n]$, $0\le i \le n-1$, where $n\ge1$ is an integer.
The undirected simple graph $G_n=G_n(W)$
has vertex set $V(G_n)=\{0,\ldots,n-1\}$ and edges given by
\begin{equation}\label{eq:graph-definition}
 i\sim j\quad\Longleftrightarrow\quad
 i\ne j,\quad W(I_i^{(n)})\cap W(I_j^{(n)})\ne\varnothing.
\end{equation}
Consecutive time cells share an endpoint, so their vertices are
adjacent and $G_n$ is connected. We use graph distance with unit edge
lengths: for a finite connected graph $G$, let $d_G(x,y)$ be the minimum
number of edges in a graph path from $x$ to $y$, and set
$\diam G=\max_{x,y\in V(G)}d_G(x,y)$.
We consider the endpoint distance $L_n$ and the diameter $V_n$:
\begin{equation}\label{eq:finite-observables}
 L_n=d_{G_n}(0,n-1),\qquad V_n=\diam G_n.
\end{equation}
Both quantities are positive for $n\ge2$.
In contrast, Brownian motion is almost surely simple in dimensions
$d\ge4$~\cite[Theorem 9.22]{MortersPeres}, and the same graph
construction then gives endpoint distance and diameter $n-1$.

\subsection{Main result}\label{subsec:main-dimension}

The following main theorem of this paper identifies the minimum curve
dimension with twice the common growth exponent of $L_n$ and $V_n$,
and shows that continuum many simple curves with distinct images
attain this minimum.
\begin{theorem}\label{thm:main-growth}
There is a deterministic exponent $\theta\in(1/2,5/6]$ such that,
for every $p>0$,
\begin{equation}\label{eq:main-moments}
 \E[L_n^p]=n^{p\theta+o(1)},\qquad
 \E[V_n^p]=n^{p\theta+o(1)}.
\end{equation}
Set $\gamma_3=2\theta$. On an event of probability one,
\begin{equation}\label{eq:main-pathwise}
 \lim_{n\to\infty}\frac{\log L_n}{\log n}
 =\lim_{n\to\infty}\frac{\log V_n}{\log n}
 =\theta,\qquad \mathfrak p_3(W)=\gamma_3.
\end{equation}
On this event, there are continuum many simple curves $\eta:[0,1]\to K$
with pairwise distinct images, each $\beta$-H\"older for every
$0<\beta<1/\gamma_3$ and satisfying
\begin{equation}\label{eq:main-minimizers}
 \eta(0)=W_0,\qquad \eta(1)=W_1,\qquad
 \dimH\eta([0,1])=\dimB\eta([0,1])=\gamma_3.
\end{equation}
\end{theorem}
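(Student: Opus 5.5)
The argument has three parts. First, build $\theta$ from a subadditive structure. Second, convert between graph distance and curve dimension in both directions. Third, obtain pathwise limits from concentration and the moment bounds.

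\textbf{Step 1 (existence of $\theta$ and moments).} By Brownian scaling, $G_{mn}$ restricted to the $m$ consecutive blocks of $n$ cells each has the law of $m$ copies of $G_n$, glued at shared endpoints, plus extra edges coming from intersections between blocks. Extra edges can only shorten distances, so concatenating geodesics gives $L_{mn}\le \sum_{k<m} L_n^{(k)}$. Here the $L_n^{(k)}$ are i.i.d.\ copies by independence of increments.

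For a reverse inequality, take any path in $G_{mn}$ from $0$ to $mn-1$. It crosses every block boundary, so it visits every block. Coarse-graining gives $L_{mn}\ge$ (distance in $G_m$) up to the number of cells used in each block. I would aim for a quasi-multiplicativity $\E[L_{mn}]\asymp \E[L_m]\E[L_n]$ up to $n^{o(1)}$ factors, proved with separation lemmas in the style of Lawler's intersection-exponent arguments. A Fekete-type lemma then gives $\E[L_n]=n^{\theta+o(1)}$.

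The bound $\theta\le5/6$ comes from Fargason-type constructions: I would use the cut-point-free but sparse chain of Fargason's curve, whose dimension is at most $5/3$. The strict inequality $\theta>1/2$ would come from Lawler's cut points: $n^{3/4-\xi+o(1)}$ cut times with $\xi<1/4$ must all be crossed, giving $L_n\ge n^{1/2+\epsilon}$. Here I use the fact that the cut-point exponent of Brownian motion in $\R^3$ gives cut-point dimension strictly above $1/2$ after the factor-$2$ time-space conversion.

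Higher moments $\E[L_n^p]=n^{p\theta+o(1)}$ follow from concentration. The upper tail comes from the subadditive decomposition, which lets me write $L_n$ as a sum over blocks and bound moments by iterating. The lower tail needs $\Pp(L_n<n^{\theta-\epsilon})$ decaying faster than any power, which I would prove by a multiscale argument: many independent blocks must each be short. For $V_n$, note $L_n\le V_n$, and $V_n\le \max$ over pairs is controlled by a union bound over $n^2$ pairs, each of which is comparable to $L$ on a subinterval; the polynomial union bound is absorbed by the superpolynomial tails.

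\textbf{Step 2 (dimension identification).} For the upper bound $\mathfrak p_3\le 2\theta$: a geodesic in $G_n$ gives a chain of $\approx n^\theta$ cells, each of diameter $\approx n^{-1/2}$. I would refine dyadically, taking $n=2^k$ and choosing nested geodesics with controlled branching. This yields a limiting curve covered at scale $2^{-k/2}$ by $2^{k\theta+o(k)}$ balls, so its upper box dimension is at most $2\theta$ and it is Hölder with exponent $<1/\gamma_3$. Making the choices at each dyadic level in two distinct ways along a Cantor tree gives continuum many distinct simple curves; simplicity is obtained by loop-erasure.

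For the lower bound: if a curve $\eta$ in $K$ joins $W_0$ to $W_1$, then the cells it meets form a connected set of $G_n$ containing $0$ and $n-1$, so there are at least $L_n$ of them. The difficulty is that meeting a cell does not mean using it substantially. I would use a Frostman/mass-distribution argument: if $\dimH\eta<2\theta-\epsilon$, cover $\eta$ by balls of radius $r_j$ with $\sum r_j^{2\theta-\epsilon}$ small. Each ball of radius $r$ hits the range in a bounded (up to $r^{-o(1)}$) number of time pieces of duration $r^2$. This converts the cover into a graph path in a multiscale graph, contradicting the lower-tail estimate from Step 1 via Borel–Cantelli over dyadic scales and locations.

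Finally, Hausdorff dimension at least $\gamma_3$ together with upper box dimension at most $\gamma_3$ gives equality of $\dimH$ and $\dimB$ for the constructed curves.

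\textbf{Step 3 (a.s.\ limits).} Borel–Cantelli along $n=2^k$, using the superpolynomial tails, gives the pathwise limits. Monotonicity, comparing $G_n$ with $G_{2^k}$ for $2^k\le n<2^{k+1}$ up to constant factors, interpolates to all $n$. Determinism of $\mathfrak p_3$ follows from the 0-1 law, and it is also identified directly with $2\theta$.

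\textbf{Main obstacle.} I expect the lower bound for $\mathfrak p_3$ and the lower-tail concentration to be the hardest parts, since arbitrary curves can revisit and cut across cells at all scales. My first attempt would be a multiscale covering lemma: the number of $r^2$-time cells met by a Euclidean ball of radius $r$ in $K$ is at most $r^{-o(1)}$ uniformly, almost surely. This follows from Brownian return probabilities in $\R^3$, where the number of returns to a ball is geometric.
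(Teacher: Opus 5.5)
Your outline has the right architecture: nested refinements for the upper bound and Kaufman doubling for the lower bound. Your cut-time count for $\theta>1/2$ is a valid alternative to the paper's use of Lawler's cut points at the curve level. But several load-bearing steps fail.

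In Step~1, concatenating geodesics gives only $\E L_{mn}\le m(\E L_n+1)$, which is compatible with every $\theta\le1$ and says nothing about an exponent. The claim ``a path crosses every block boundary, so it visits every block'' is false: edges of $G_{mn}$ join cells of non-adjacent blocks whenever their ranges meet. So neither a reverse product bound nor the ``many blocks must each be short'' lower tail is available.

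Quasi-multiplicativity is also not needed. Fekete's lemma only uses the one-sided bound $a_{rs}\le Ca_ra_s$ up to logarithms, obtained by lifting a coarse geodesic to the fine mesh. The lifted path enters and leaves each coarse cell at arbitrary subcells, so you must control internal \emph{diameters} of all blocks at once. That is why the paper uses $Y_{a,b}=1+\max_{a\le u<v\le b}\diam\calG[u,v]$. It is monotone and subadditive, so a stopping-time iteration with the strong Markov property gives $\Pp(Y_N\ge t\mu_N)\le Ce^{-ct}$. This argument fails for $L$, which is not monotone in the terminal time. Then $Y_{rs}\le3M_{r,s}Y(C_{r,s})$ gives $\mu_{rs}\le C(1+\log s)\mu_r\mu_s$, since the maximum over $s$ blocks costs only a $\log s$. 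The grid bound $\mu_N\le C\log(q+2)\mu_L+2q^2f_N$ transfers the exponent to $\E L_N$, and Paley--Zygmund gives the moment lower bounds without any lower-tail estimate.

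The same tail drives the upper-bound curve, but only with refinement factors $r_j\to\infty$ (the paper takes $r_j=\lceil e^{\sqrt j}\rceil$). Refining by a factor $2$ gains nothing. Finally, Fargason's curve is only known to have dimension strictly below $2$, not $5/3$. The paper gets $\theta\le5/6$ from $\gamma_3\le\ell_3\le5/3$, using the Sapozhnikov--Shiraishi coupling of the loop-erased random walk limit inside $W([0,\tau])$ and Lawler's bound. That step also needs the connected-set lower bound for the endpoint $W_\tau\ne W_1$.

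The central missing idea is in the dimension lower bound. A cover with small $\sum r_i^{2\theta-\epsilon}$ uses unboundedly many scales, and single-scale estimates on $L_n$ do not control such multiscale chains. Refining to the finest mesh $2^{-K}$ loses a factor $2^{cK\epsilon}$. Moreover, union bounds over all dyadic intervals at a fixed scale ratio diverge, so atypically short internal distances exist at every fine scale. What is needed is the multiscale statement that, for $\alpha<\theta$, $\sum_l|a_l-b_l|^\alpha$ is bounded below over all chains from $0$ to $1$ linked by equal Brownian positions. Equivalently, one needs a compatible $\alpha$-H\"older $u$ with $u(0)=0$ and $u(1)=1$.

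The paper builds $u$ as a subsequential limit of $\kappa_j^{-1}d_{G_{n_j}}(0,\cdot)$. H\"older bounds at all scales need only the upper tails. The lower bound is needed at one scale only, namely $u(1)\ge c_0$, which Paley--Zygmund gives along the record subsequence. Compatibility holds because $|u_j(s)-u_j(t)|\le3/\kappa_j$ when $W_s=W_t$. Positive probability is upgraded to probability one by making $u$ constant near time $1$, extending it by transience, and applying scaling and Blumenthal's law. Then $\bar u(C)\supset[0,1]$ and Kaufman give $\dimH C\ge2\alpha$. Summing increments of $u$ along a geodesic gives $1\le H(L_N+1)N^{-\alpha}$, which is the almost sure lower growth.

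Your nonuniqueness step also has a gap. Two choices per level followed by loop-erasure does not guarantee distinct images; you need a point that provably lies in one image and not in another. The paper replaces subarcs near $W_0$ by detours around double points $z_n$ inside $W([0,q_n])\setminus\{W_0,z_n\}$. The existence of these detours rests on the absence of pioneer double cut points (Gao--Li--Liu--Qian). Their $p$-variations are summable, so every binary choice of detours keeps dimension $\gamma_3$.
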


The following remark records the known bounds on $\gamma_3$ and their
relation to three-dimensional loop-erased random walk.
\begin{remark}\label{rem:dimension-bounds}
Let $\ell_3$ denote the growth exponent of three-dimensional
loop-erased random walk: the expected number of edges in the
loop-erasure of a simple random walk on $\Z^3$ from the origin,
stopped on leaving the ball of radius $R$, is
$R^{\ell_3+o(1)}$~\cite{Shiraishi}.
Then
\begin{equation}\label{eq:main-lerw}
 1<\gamma_3\le\ell_3\le\frac53.
\end{equation}
The strict lower bound follows from Lawler's theorem on cut
points~\cite[Corollary~1.2]{LawlerCut}. The comparison with $\ell_3$
uses the coupling of Sapozhnikov and
Shiraishi~\cite[Theorems~1.1 and~1.2]{SapozhnikovShiraishi} and the
dimension identity for the loop-erased random walk
limit~\cite{ShiraishiDimension}; see
Section~\ref{subsec:dimension-proof}. Finally, the upper bound
$\ell_3\le5/3$ is due to Lawler~\cite{LawlerLERW}.
\end{remark}

\subsection{Further results}\label{subsec:further-results}

Another main contribution of this paper is Theorem~\ref{thm:main-metric}.
Because its statement requires additional notation, we present it
separately from Theorem~\ref{thm:main-growth}.
It establishes joint subsequential limits of normalized internal graph
distances, gives a modulus of continuity for the limiting pseudometrics,
and gives a gluing formula that reconstructs each limit
from the internal distances and the Brownian self-intersection
relation for every finite dyadic time partition. 
It also proves that these limits are measurable functions
of the Brownian path and that, along the corresponding subsequences,
the normalized distances converge on the original Wiener space in
H\"older norms of every exponent below $\theta$.
For each fixed deterministic $s\in[0,1]$, the limiting pseudometric
almost surely separates $s$ from all other time parameters.
Proposition~\ref{prop:coarse-information} further shows that each
limit is determined by the Brownian self-intersection relation.

We also extend the dimension identity and nonuniqueness in
Theorem~\ref{thm:main-growth} to curves from $W_s$ to every
$y\in K\setminus\{W_s\}$, simultaneously for all such $y$ and almost
surely for each fixed deterministic $s\in[0,1]$; see
Theorem~\ref{thm:main-pinned}. The corresponding dimension lower bound
for all pairs of distinct points on one event remains open.

Finally, we consider spatial discretizations obtained by successive
exits from balls of radius $r$. Theorem~\ref{thm:main-exit} gives
almost sure growth $r^{-\gamma_3+o(1)}$ for the endpoint distance and
diameter of the resulting intersection graph, together with the
corresponding moment exponents. Corollary~\ref{cor:quantitative-lower-bounds} gives
quantitative lower bounds for equal time partitions.

\subsection{Proof of the main theorem}\label{subsec:proof-ideas}

We outline the proof of Theorem~\ref{thm:main-growth}.
We first introduce an auxiliary quantity defined as one plus
the maximum diameter of the subgraphs induced by consecutive
unit time cells. This quantity is defined in \eqref{eq:envelope},
and Brownian scaling relates the underlying graphs to $G_n$
through \eqref{eq:unit-scaling}.
The comparison of two time meshes in
Lemma~\ref{lem:lifting}, together with the tail bound in
Proposition~\ref{prop:upper-tail}, gives an approximate product inequality for
the expectation of this auxiliary quantity. Proposition~\ref{prop:product}
then proves the existence of its growth exponent using
Fekete's lemma. The grid comparison in Lemma~\ref{lem:grid} identifies
this exponent with those of the endpoint distance and graph diameter
in expectation. Combining these estimates with the Paley--Zygmund
inequality gives the growth of all their positive moments; see
Proposition~\ref{prop:finite-exponent}.

To prove $\mathfrak p_3(W)\le2\theta$,
Proposition~\ref{prop:curve-upper} constructs a curve from $W_0$ to
$W_1$ inside $K$ using nested partitions of Brownian time.
The graph diameter estimates control the number of time cells retained
at each scale, while Brownian H\"older regularity bounds their spatial
diameters. The resulting curve has upper box dimension at most
$2\theta$ and is $\beta$-H\"older for every
$0<\beta<1/(2\theta)$. It therefore has finite $p$-variation
for every $p>2\theta$. Lemma~\ref{lem:holder-arc} then gives
a simple curve with the same endpoints and a single parametrization
satisfying all these H\"older bounds.

For the reverse inequality, we construct $\alpha$-H\"older functions
on Brownian time, for $0<\alpha<\theta$, that take equal values
whenever the Brownian positions agree. The starting point is the
graph distance from the initial time cell, divided by a deterministic
scale comparable to the expected endpoint distance.
Proposition~\ref{prop:scalar-seed} uses the moment bounds and
a positive lower bound for the normalized endpoint mean to obtain
a nonconstant subsequential limit with positive probability.
With positive probability, such a function can be made
constant near time one. The transience argument in
Lemma~\ref{lem:escape} then allows it to be extended to all
later times while preserving compatibility.
Brownian scaling and Blumenthal's zero--one law yield,
almost surely, a compatible function $u$ with $u(0)=0$
and $u(1)=1$; see Proposition~\ref{prop:plateau}.
Compatibility lets $u$ descend to a continuous function $\bar u$
on $K$, with $\bar u(W_t)=u(t)$. For any connected set
$C\subset K$ joining $W_0$ to $W_1$, the image $\bar u(C)$ contains
$[0,1]$. The H\"older bound therefore forces the set of Brownian times
whose positions lie in $C$ to have Hausdorff dimension at least
$\alpha$. Kaufman's dimension doubling theorem
gives $\dimH C\ge2\alpha$, as in Lemma~\ref{lem:dimension-test}.
Letting $\alpha\uparrow\theta$ identifies the minimum.

These functions also give the almost sure lower growth bound for
$L_n$ by summing their increments along a graph path.
On the other hand, the tail estimate and the Borel--Cantelli lemma
give the matching upper bound for $V_n$.
Section~\ref{subsec:dimension-proof} combines these arguments and
establishes the comparison with the loop-erased random walk exponent.

To prove nonuniqueness, Lemma~\ref{lem:double-point-avoidance} uses
the absence of pioneer double cut points, proved by
Gao, Li, Liu and Qian~\cite[Theorem~1.2]{GaoLiLiuQian}, to construct
alternative connections in the Brownian range.
In Proposition~\ref{prop:nonunique-curves}, we use these connections
to replace countably many pairwise disjoint subarcs of a minimizing
curve. The local variation estimate in
Lemma~\ref{lem:local-variation}, together with the H\"older
regularity of the original curve, ensures that the changes have
summable $p$-variation for every $p>\gamma_3$.
For each subarc, we may either retain the original subarc or
use its replacement. A selected double point belongs to the
resulting curve's image precisely when the original subarc is retained.
Thus different binary sequences produce continuum many simple curves
with distinct images. Lemma~\ref{lem:holder-arc} then gives each
curve a parametrization that is $\beta$-H\"older for every
$0<\beta<1/\gamma_3$.

\subsection{Organization}\label{subsec:organization}

Section~\ref{sec:growth} begins with the necessary notation and then
proves the graph distance exponent and moment estimates.
Section~\ref{sec:curves} completes the proof
of Theorem~\ref{thm:main-growth}. Sections~\ref{sec:limits}
and~\ref{sec:gluing} construct limiting distances and prove their
reconstruction and measurability, respectively.
Section~\ref{sec:separation} establishes separation from deterministic
times and the corresponding curve dimension results.
Section~\ref{sec:exits} treats ball exit partitions and quantitative
lower bounds. Section~\ref{sec:questions} concludes with several
open problems.

\section{The graph distance exponent}\label{sec:growth}

In this section, we prove that the endpoint distance and graph diameter
have the same expectation and moment exponents, as asserted in
Theorem~\ref{thm:main-growth}. We compare these quantities using the
interval maximum defined in \eqref{eq:envelope}, then choose the
subsequence and normalization needed to construct limits in
Section~\ref{sec:limits}.

\subsection{Notation and interval maxima}\label{subsec:interval-maxima}

We begin with notation, then introduce the interval maxima and prove
their subadditivity and exponential tail bound.
We use $\N=\{1,2,\ldots\}$, $|I|$ for interval length, $|\cdot|$ for
the Euclidean norm, and $B(x,r)$ for the open Euclidean ball. The
filtration $(\mathcal F_t)_{t\ge0}$ is the completed, right-continuous
natural filtration of $W$. We write $\Pp_x,\E_x$ when Brownian motion
starts at $x$, $\Leb$ for Lebesgue measure, and $\one_A$ for the
indicator of $A$. The symbols $\law$ and $\xrightarrow{\mathrm d}$ denote
equality and convergence in distribution, respectively. For paths on
an unbounded time interval we use uniform convergence on compact sets.
All measurability statements refer to the Borel sigma-fields of the
stated spaces, with completion when specified. Subsequences are
deterministic unless stated otherwise.

Unless specified otherwise, the constants $c,C$ are positive, finite
and deterministic, may change from line to line, and have any additional
dependence indicated by subscripts. For positive sequences,
$a_n\asymp b_n$ means that their
ratio is bounded above and below by positive constants, and
$a_n=n^{\alpha+o(1)}$ means $\log a_n/\log n\to\alpha$. For a random
variable $Z$ with values in a normed space $E$ and $p\ge1$, put
$\|Z\|_{L^p(\Pp;E)}=(\E\|Z\|_E^p)^{1/p}$; for real random variables
we simply write $\|Z\|_p$.

For a function $f$ from a compact subset $A$ of a Euclidean
space into a Euclidean space, write
\[
 \|f\|_\infty=\sup_{x\in A}|f(x)|,\qquad
 [f]_{\alpha;A}
 =\sup_{\substack{x,y\in A\\x\ne y}}
       \frac{|f(x)-f(y)|}{|x-y|^\alpha}
       \quad(0<\alpha<1).
\]
When $A=[0,1]$, we abbreviate $[f]_{\alpha;[0,1]}$ to $[f]_\alpha$.

The cell assigned to $t\in[0,1]$ at mesh $1/n$, $n\in\N$, has index
\[
  \iota_n(t)=\min\{\lfloor nt\rfloor,n-1\}.
\]
At a shared endpoint we choose the cell on the right, while $t=1$
belongs to the last cell.

On unit time intervals, define $\calG[a,b]$ for integers $0\le a<b$
to have vertices $a,\ldots,b-1$, with distinct $i,j$ adjacent when
$W([i,i+1])\cap W([j,j+1])\ne\varnothing$. Brownian scaling and translation invariance give
\begin{equation}\label{eq:unit-scaling}
 \calG[0,N]\law G_N,
 \qquad \calG[a,a+N]\law\calG[0,N].
\end{equation}

For comparisons between time scales, we need diameter bounds for every
consecutive group of time cells.
Set
\begin{align}
 \lambda[a,b]&=d_{\calG[a,b]}(a,b-1),\notag\\
 Y_{a,b}&=1+\max_{a\le u<v\le b}\diam\calG[u,v],
 \qquad Y_{a,a}=0,\label{eq:envelope}\\
 Y_N&=Y_{0,N},\qquad \mu_N=\E Y_N,\qquad
 f_N=\E(1+L_N),\qquad g_N=\E(1+V_N).\notag
\end{align}
Note that one plus graph distance counts the cells in a shortest graph path. Thus $1\le f_N\le g_N\le\mu_N\le N$, with $f_1=g_1=\mu_1=1$.

The first lemma gives the subadditivity and endpoint representation of
the interval maximum.
\begin{lemma}\label{lem:envelope}
For integers $0\le a\le c\le b$ and $N\ge1$,
\begin{align}
 Y_{a,b}&\le Y_{a,c}+Y_{c,b},\label{eq:subadd}\\
 Y_N&\le Y_{N+1}\le Y_N+1,\qquad 1\le Y_N\le N,\label{eq:increments}\\
 Y_N&=1+\max_{0\le u<v\le N}\lambda[u,v].\label{eq:endidentity}
\end{align}
\end{lemma}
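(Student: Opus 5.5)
The plan is to prove all three assertions from two elementary properties of the graphs $\calG[u,v]$. First, for $a\le u<v\le b$, $\calG[u,v]$ is the subgraph of $\calG[a,b]$ induced on $\{u,\ldots,v-1\}$, and adjacency depends only on the pair of cells. Hence a graph path in $\calG[u',v']$ with $u\le u'<v'\le v$ is also a path in $\calG[u,v]$, and therefore
\[
 d_{\calG[u,v]}(x,y)\le d_{\calG[u',v']}(x,y)\qquad\text{for } x,y\in\{u',\ldots,v'-1\}.
\]
Second, consecutive unit cells share an endpoint. So $i\sim i+1$ in every $\calG[u,v]$ containing both vertices, which makes each $\calG[u,v]$ connected with $\diam\calG[u,v]\le v-u-1$.

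For \eqref{eq:subadd}, the case $c=a$ or $c=b$ is trivial because $Y_{a,a}=0$, so I will assume $a<c<b$. Fix $a\le u<v\le b$ and vertices $x\le y$ of $\calG[u,v]$.
\begin{enumerate}
\item If $v\le c$, then $\diam\calG[u,v]\le Y_{a,c}-1$ by definition.
\item If $u\ge c$, then likewise $\diam\calG[u,v]\le Y_{c,b}-1$.
\item Otherwise $u<c<v$. If $x,y$ lie on the same side of $c$, the bound is as in the first two cases, using the induced-subgraph monotonicity. If $x<c\le y$, I concatenate a shortest path from $x$ to $c-1$ in $\calG[u,c]$, the edge $c-1\sim c$, and a shortest path from $c$ to $y$ in $\calG[c,v]$. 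This gives
\[
 d_{\calG[u,v]}(x,y)\le(Y_{a,c}-1)+1+(Y_{c,b}-1).
\]
\end{enumerate}
In every case $1+\diam\calG[u,v]\le Y_{a,c}+Y_{c,b}$. Taking the maximum over $u<v$ gives \eqref{eq:subadd}.

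For \eqref{eq:increments}, monotonicity $Y_N\le Y_{N+1}$ holds because the maximum defining $Y_{N+1}$ is over a larger index set. The bound $Y_{N+1}\le Y_N+1$ follows from \eqref{eq:subadd} with $(a,c,b)=(0,N,N+1)$, since $Y_{N,N+1}=1+\diam\calG[N,N+1]=1$ (the graph is a single vertex). The bound $1\le Y_N$ is immediate from the definition, and $Y_N\le N$ follows from $\diam\calG[u,v]\le v-u-1\le N-1$.

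For \eqref{eq:endidentity}, the inequality "$\ge$" holds because $\lambda[u,v]=d_{\calG[u,v]}(u,v-1)\le\diam\calG[u,v]$. For "$\le$", choose $u<v$ and $x\le y$ in $\{u,\ldots,v-1\}$ attaining the maximum in $Y_N$. Since $u\le x<y+1\le v$, the induced-subgraph monotonicity gives
\[
 d_{\calG[u,v]}(x,y)\le d_{\calG[x,y+1]}(x,y)=\lambda[x,y+1],
\]
and the pair $(x,y+1)$ is admissible in the maximum (when $x=y$ both sides are $0$). No step here is a real obstacle. The only point needing care is the direction of the distance comparison under restriction: distances can only increase when we pass to the smaller graph, which is exactly what both \eqref{eq:subadd} and \eqref{eq:endidentity} require.
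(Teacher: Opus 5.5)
Your proof is correct and follows the paper's argument essentially step by step. You use the same case analysis with the crossing edge $\{c-1,c\}$ for \eqref{eq:subadd}, the same use of $Y_{N,N+1}=1$ and $\diam\calG[u,v]\le v-u-1$ for \eqref{eq:increments}, and the same induced-subgraph comparison $d_{\calG[u,v]}(x,y)\le\lambda[x,y+1]$ for \eqref{eq:endidentity}.
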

\begin{proof}
We first prove \eqref{eq:subadd}. Fix integers
$a\le u<v\le b$ and vertices $u\le x\le y<v$.
If $v\le c$ or $u\ge c$, the required bound follows
directly from the definition of $Y_{a,c}$ or $Y_{c,b}$.
Suppose now that $u<c<v$. If $x,y<c$, they can be
joined within $\calG[u,c]$ using at most $Y_{a,c}-1$
edges. Similarly, if $c\le x,y$, they can be joined
within $\calG[c,v]$ using at most $Y_{c,b}-1$ edges.
If $x<c\le y$, join $x$ to $c-1$ within $\calG[u,c]$,
cross the edge $\{c-1,c\}$, and join $c$ to $y$
within $\calG[c,v]$. This path has at most
\[
 (Y_{a,c}-1)+1+(Y_{c,b}-1)
 =Y_{a,c}+Y_{c,b}-1
\]
edges. Thus, in all cases,
\[
 1+d_{\calG[u,v]}(x,y)\le Y_{a,c}+Y_{c,b}.
\]
Taking the maximum over the restrictions and their
vertices proves \eqref{eq:subadd}.

By \eqref{eq:envelope}, we have $Y_N\le Y_{N+1}$.
Applying \eqref{eq:subadd} and using $Y_{N,N+1}=1$,
we obtain $Y_{N+1}\le Y_N+1$.
Since consecutive vertices are adjacent,
$\diam\calG[u,v]\le v-u-1\le N-1$
for $0\le u<v\le N$.
Hence $1\le Y_N\le N$, proving
\eqref{eq:increments}.

Finally, for $0\le u\le x<y<v\le N$, the graph
$\calG[x,y+1]$ is the subgraph of $\calG[u,v]$
induced by the vertices $x,\ldots,y$. Hence
\[
d_{\calG[u,v]}(x,y)
\le d_{\calG[x,y+1]}(x,y)
=\lambda[x,y+1].
\]
Taking maxima gives
\[
Y_N\le 1+\max_{0\le u<v\le N}\lambda[u,v].
\]
The reverse inequality follows from
$\lambda[u,v]\le\diam\calG[u,v]$.
This proves \eqref{eq:endidentity}.
\end{proof}

\vspace{1.5mm}

Subadditivity and the strong Markov property yield the following exponential tail estimate.
\begin{proposition}\label{prop:upper-tail}
There are constants $c,C>0$ such that
\begin{equation}\label{eq:upper-tail}
 \Pp(Y_N\ge t\mu_N)\le Ce^{-ct},\qquad N\ge1,\quad t\ge0.
\end{equation}
For every integer $p\ge1$, we have $\E[Y_N^p]\le p!(C\mu_N)^p$. Furthermore, for
integers $r,s\ge1$, if $Z_1,\ldots,Z_s$ each have the law of $Y_r$,
with arbitrary dependence, then
\begin{equation}\label{eq:max-tail}
 \left\|\max_{1\le i\le s}Z_i\right\|_2
 \le C\log(s+2)\mu_r.
\end{equation}
\end{proposition}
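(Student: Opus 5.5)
We use the classical Hoffmann-Jørgensen / Kesten-type iteration for subadditive maxima, built from the subadditivity \eqref{eq:subadd} and the strong Markov property. The key step is a recursive bound of the form
\[
 \Pp(Y_N\ge 2k\,m)\le \Pp(Y_N\ge k\,m)^2 + (\text{small}),\qquad m\asymp\mu_N .
\]
Iterating it in $k$ gives geometric decay.

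\textbf{Step 1 (decomposition).} Fix the level $m=4\mu_N$, so that Markov's inequality gives $\Pp(Y_N\ge m)\le 1/4$. Let $T_1$ be the first integer $u\le N$ at which $Y_{0,u}\ge km$; by \eqref{eq:increments}, $Y_{0,T_1}\le km+1$. We would like to say: on $\{Y_N\ge 2km+2\}$, subadditivity forces $Y_{T_1,N}\ge km$.

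There is a difficulty. The process $u\mapsto Y_{0,u}$ is not adapted to the Brownian filtration at integer times in a way that makes $Y_{T_1,N}$ independent of $\mathcal F_{T_1}$. The reason is that $\calG[T_1,N]$ only involves $W$ after $T_1$, and that part is fine. But $T_1$ is determined by $W|_{[0,T_1]}$, and the future increments $W_{T_1+\cdot}-W_{T_1}$ are independent of $\mathcal F_{T_1}$ by the strong Markov property. Since $\calG[T_1,N]$ depends only on these increments, it is translation invariant.

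Conditionally on $T_1=u$, the graph $\calG[u,N]$ has the law of $\calG[0,N-u]$ by \eqref{eq:unit-scaling}. Hence
\[
\Pp\bigl(Y_{u,N}\ge km\mid\mathcal F_u\bigr)=\Pp(Y_{N-u}\ge km)\le \Pp(Y_N\ge km),
\]
using the monotonicity in \eqref{eq:increments}. Combining this with \eqref{eq:subadd} gives
\[
\Pp(Y_N\ge 2km+2)\le \Pp(Y_N\ge km)^2 .
\]

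\textbf{Step 2 (iteration).} Starting from $\Pp(Y_N\ge m)\le1/4$ and doubling $k$, we get
\[
\Pp\bigl(Y_N\ge 2^j(m+2)\bigr)\le 4^{-2^{j-1}}\cdots ,
\]
that is, doubly exponential decay along dyadic levels. This is stronger than needed. Interpolating between consecutive dyadic levels gives $\Pp(Y_N\ge t\mu_N)\le Ce^{-ct}$, where the $+2$ is absorbed because $\mu_N\ge1$.

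The main obstacle is precisely the stopping-time/independence issue above. Step 1 handles it, because only the post-$T_1$ increments enter $Y_{T_1,N}$. We must also check that $T_1$ is an $(\mathcal F_u)$-stopping time on the integers; this holds because $Y_{0,u}$ is $\mathcal F_u$-measurable.

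\textbf{Step 3 (consequences).} The moment bound follows by integrating the tail:
\[
\E[Y_N^p]=\int_0^\infty p s^{p-1}\,\Pp(Y_N\ge s)\,ds\le C p!\,(\mu_N/c)^p ,
\]
after adjusting constants. For \eqref{eq:max-tail}, a union bound gives
\[
\Pp\bigl(\max_i Z_i\ge t\mu_r\bigr)\le \min\bigl(1,\,sCe^{-ct}\bigr).
\]
Integrating $2t$ against this tail, splitting at $t_0=c^{-1}\log(Cs+2)$, yields
\[
\E\Bigl[\bigl(\max_i Z_i\bigr)^2\Bigr]\le C\log^2(s+2)\,\mu_r^2 .
\]
This is the claimed $L^2$ bound, and it requires no assumption on the dependence between the $Z_i$.
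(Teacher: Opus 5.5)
Your proof is correct. It uses exactly the paper's ingredients: subadditivity \eqref{eq:subadd}, the unit increment bound in \eqref{eq:increments}, the Markov property at integer times with translation invariance \eqref{eq:unit-scaling}, monotonicity in $N$, and Markov's inequality as the seed. Where you differ is in how the iteration is organized.

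The paper fixes one threshold $2\mu_N$ and restarts the interval maximum along a chain of stopping times $T_0<T_1<\cdots$. Each new block reaches the threshold before time $N$ with conditional probability at most $1/2$. Subadditivity over the $j$ completed blocks then gives $\Pp(Y_N\ge3j\mu_N)\le2^{-j}$ directly for every integer $j$.

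You instead use a single stopping time to prove the Hoffmann-J{\o}rgensen-type inequality $\Pp(Y_N\ge2x+2)\le\Pp(Y_N\ge x)^2$ for every $x>0$, and iterate it along $x_{j+1}=2x_j+2$.

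The two schemes have the same strength. Yours gives $\Pp(Y_N\ge x_j)\le\Pp(Y_N\ge x_0)^{2^j}$ with $x_j\le2^j(x_0+2)$. The paper's chain, run with threshold $x_0$, gives $\Pp(Y_N\ge j(x_0+1))\le\Pp(Y_N\ge x_0)^j$. Both amount to $Ce^{-ct}$. So your remark that doubly exponential decay along dyadic levels is ``stronger than needed'' is inaccurate: at $t\asymp2^j$ it is exactly exponential in $t$, which is precisely what the statement requires. The paper's route avoids interpolating between dyadic levels, while yours isolates a clean two-level inequality.

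Two points should be stated explicitly. First, $\Pp(T_1\le N)=\Pp(Y_N\ge x)$ uses that $u\mapsto Y_{0,u}$ is nondecreasing. Second, the independence step needs only $\{T_1=u\}\in\mathcal F_u$ together with summing over the finitely many values $u\le N$. The ``difficulty'' paragraph in Step 1, which contradicts itself before resolving the issue, can therefore be deleted. Step 3 coincides with the paper's argument.
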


\begin{proof}
Fix $N$ and define stopping times by $T_0=0$ and
\[
 T_{j+1}=\inf\{m\in\Z:m>T_j,\ Y_{T_j,m}\ge2\mu_N\}.
\]
The infimum of the empty set is $+\infty$; if $T_j=+\infty$,
set $T_{j+1}=+\infty$. By monotonicity and Markov's inequality,
$\Pp(T_1\le N)\le\Pp(Y_N\ge2\mu_N)\le1/2$.
On $\{T_j\le N\}$, the strong Markov property at $T_j$ and
monotonicity in the remaining time $N-T_j\le N$ give
\[
 \Pp(T_{j+1}\le N\mid\mathcal F_{T_j})
 \le \Pp(Y_N\ge2\mu_N)\le\tfrac12.
\]
Iterating this conditional estimate gives
$\Pp(T_j\le N)\le2^{-j}$.

By \eqref{eq:subadd} and $Y_{m,m+1}=1$, we have
$Y_{a,m+1}\le Y_{a,m}+1$ for integers $a\le m$.
Hence each completed interval has maximum less than
$2\mu_N+1$. On $\{T_j>N\}$, there are at most $j-1$
such intervals before $N$, followed by one interval whose
maximum is less than $2\mu_N$. Subadditivity therefore gives
\[
 Y_N<j(2\mu_N+1)\le3j\mu_N.
\]
It follows that $\Pp(Y_N\ge3j\mu_N)\le2^{-j}$,
which proves \eqref{eq:upper-tail} after adjusting the
constants. The moment bound follows from \eqref{eq:upper-tail}.
A union bound gives
\[
 \Pp\left(\max_{1\le i\le s}Z_i>t\mu_r\right)
 \le\min\{1,Cs e^{-ct}\}.
\]
Splitting the tail integral at $t=C\log(s+2)$
yields \eqref{eq:max-tail}.
\end{proof}

\subsection{Comparison of two time meshes}\label{subsec:two-partitions}

For positive integers $r,s$, consider the unit time partition
of $[0,rs]$ and the coarser partition into $s$ intervals
$[jr,(j+1)r]$, $0\le j<s$. Let $C_{r,s}$ be the simple graph
with vertices $0,\ldots,s-1$, joining distinct $i$ and $j$
when
\[
 W([ir,(i+1)r])\cap W([jr,(j+1)r])
 \ne\varnothing.
\]
Define $Y(C_{r,s})$ as one plus the maximum diameter
of the induced subgraphs of $C_{r,s}$ on nonempty
consecutive vertex sets. Set
\[
 M_{r,s}=\max_{0\le j<s}Y_{jr,(j+1)r}.
\]
The following lemma bounds $Y_{rs}$ by constructing paths
in the finer graph from paths in the coarser graph.

\begin{lemma}\label{lem:lifting}
For every realization of the Brownian path,
\begin{equation}\label{eq:lifting}
 Y_{rs}\le3M_{r,s}Y(C_{r,s}).
\end{equation}
\end{lemma}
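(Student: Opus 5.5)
By \eqref{eq:endidentity}, it suffices to fix integers $0\le u\le x<y<v\le rs$ and bound $1+d_{\calG[u,v]}(x,y)$ by $3M_{r,s}Y(C_{r,s})$. In fact it is enough to bound $1+d_{\calG[x,y+1]}(x,y)$, since $\calG[x,y+1]$ is the subgraph of $\calG[u,v]$ induced by $x,\ldots,y$. The plan is to take a shortest path in the coarse graph and lift it to the fine graph. Set $p=\lfloor x/r\rfloor$ and $q=\lfloor y/r\rfloor$. The coarse blocks $B_j=[jr,(j+1)r]$ for $p\le j\le q$ cover $[x,y+1]$. Let $j_0=p,j_1,\ldots,j_m=q$ be a shortest path in the subgraph of $C_{r,s}$ induced by $p,\ldots,q$. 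Then $m+1\le Y(C_{r,s})$, and the path uses only blocks with indices in $[p,q]$.

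Next I lift each coarse edge. For each $k<m$, the edge $j_k\sim j_{k+1}$ gives times $t\in B_{j_k}$ and $t'\in B_{j_{k+1}}$ with $W_t=W_{t'}$. Each of these times lies in a unit cell, say $a_k\subset B_{j_k}$ and $b_{k+1}\subset B_{j_{k+1}}$, and these two cells are adjacent in the fine graph, or equal. Put $b_0=x$ and $a_m=y$. Inside block $j_k$ we then join $b_k$ to $a_k$. If we use only cells of the block, a path exists with at most $Y_{j_kr,(j_k+1)r}-1\le M_{r,s}-1$ edges. Concatenating gives a path with at most $m+(m+1)(M_{r,s}-1)$ edges, so one plus its length is at most $(m+1)M_{r,s}\le M_{r,s}Y(C_{r,s})$.

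The main obstacle is that the endpoint blocks $B_p$ and $B_q$ may stick out of $[x,y+1]$, and intermediate blocks are fine anyway. The lifted path must stay inside $\calG[x,y+1]$. If a crossing time $t$ in $B_p$ lies before $x$, I will not use it directly. Instead I handle the end blocks separately. Within $B_p$, I travel inside the truncated interval $[x,(p+1)r]$ to the cell $(p+1)r-1$. This step costs at most $M_{r,s}-1$ edges, because a maximum over consecutive subintervals of a block dominates that of its truncations, which is exactly why $Y$ is defined as a maximum over all subintervals. I then step to cell $(p+1)r$, and symmetrically handle $B_q$. The middle part uses the coarse path restricted to the blocks $p+1,\ldots,q-1$, which lie fully inside $[x,y+1]$. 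So I apply the argument above to the coarse graph on $p+1,\ldots,q-1$, with the boundary cells $(p+1)r$ and $qr-1$ as endpoints; its induced diameter is again at most $Y(C_{r,s})-1$.

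Summing, the total cell count is at most $M_{r,s}+M_{r,s}+M_{r,s}Y(C_{r,s})\le3M_{r,s}Y(C_{r,s})$, using $Y(C_{r,s})\ge1$. The degenerate cases $p=q$ or $q=p+1$ need separate handling: there the whole path lies in one block or two adjacent blocks, and the bound $2M_{r,s}$ follows directly by the subadditivity in \eqref{eq:subadd}. This case analysis accounts for the factor $3$.
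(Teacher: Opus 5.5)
Your proof is correct and follows essentially the same route as the paper. Both reduce via \eqref{eq:endidentity} to endpoint distances on consecutive restrictions, and both lift a shortest coarse path through the complete middle blocks at a cost of at most $M_{r,s}$ cells per block. Both then attach the two truncated end blocks at cost at most $M_{r,s}$ each, giving $M_{r,s}Y(C_{r,s})+2M_{r,s}\le 3M_{r,s}Y(C_{r,s})$, and treat the case with no complete middle block by the bound $2M_{r,s}$.
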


\begin{proof}
We first consider $\calG[kr,\ell r]$, where
$0\le k<\ell\le s$. Fix two vertices $x,y$ of this
graph and choose a shortest path between their
containing intervals in the subgraph of $C_{r,s}$
induced by the vertices $k,\ldots,\ell-1$.
If this path has $q$ edges, it visits $q+1$
intervals, with $q+1\le Y(C_{r,s})$.
For each edge, choose intersecting unit time cells
in the two incident intervals. These choices
specify the cells to be joined inside each visited
interval, using $x$ and $y$ at the ends.
Each internal connection needs at most
$M_{r,s}-1$ edges, and there are $q$ edges
between intervals. Thus the resulting path in
$\calG[kr,\ell r]$ has length at most
\[
 q+(q+1)(M_{r,s}-1)
 =M_{r,s}(q+1)-1
 \le M_{r,s}Y(C_{r,s})-1.
\]

Now consider a restriction $[u,v]\subset[0,rs]$
with integer endpoints. By \eqref{eq:endidentity},
it suffices to bound $1+\lambda[u,v]$.
The restriction consists of complete intervals
of length $r$ and at most two shorter end portions.
Each end portion is a consecutive restriction
within one interval of length $r$. It can be
traversed using at most $M_{r,s}-1$ edges,
and one additional edge connects it to the
adjacent complete interval.
When there is a complete middle part, the preceding
bound therefore gives
\[
 1+\lambda[u,v]
 \le M_{r,s}Y(C_{r,s})+2M_{r,s}
 \le 3M_{r,s}Y(C_{r,s}).
\]
If no complete interval is present, the restriction
lies in at most two consecutive intervals of length
$r$, and $1+\lambda[u,v]\le 2M_{r,s}$.
Taking the maximum in \eqref{eq:endidentity}
proves \eqref{eq:lifting}.
\end{proof}

Combining \eqref{eq:lifting} with Brownian scaling and the moment
bounds in Proposition~\ref{prop:upper-tail} gives the following
approximate product inequality and, in turn, the existence of the growth exponent.
\begin{proposition}\label{prop:product}
For integers $r,s\ge1$, we have
\begin{equation}\label{eq:product}
 \mu_{rs}\le C\min\{1+\log r,1+\log s\}\mu_r\mu_s. 
\end{equation}
Furthermore, there is a deterministic $\theta\in[0,1]$ such that
\begin{equation}\label{eq:envelope-exponent}
 \mu_N=N^{\theta+o(1)}.
\end{equation}
\end{proposition}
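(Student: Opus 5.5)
We take expectations in Lemma~\ref{lem:lifting}, $Y_{rs}\le 3M_{r,s}Y(C_{r,s})$, and bound the two factors through Proposition~\ref{prop:upper-tail}; the growth exponent then follows from a Fekete-type argument.

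\textbf{Step 1: the product inequality.} Cauchy--Schwarz gives
\[
\mu_{rs}\le 3\|M_{r,s}\|_2\,\|Y(C_{r,s})\|_2 .
\]
By \eqref{eq:unit-scaling}, each $Y_{jr,(j+1)r}$ has the law of $Y_r$. Hence \eqref{eq:max-tail} yields $\|M_{r,s}\|_2\le C\log(s+2)\mu_r$.

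For the coarse graph, Brownian scaling by the factor $r$ shows that $C_{r,s}$ has the law of $\calG[0,s]$. Therefore $Y(C_{r,s})\law Y_s$, and the moment bound with $p=2$ gives $\|Y_s\|_2\le C\mu_s$. Together these give
\[
\mu_{rs}\le C(1+\log s)\mu_r\mu_s .
\]

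To obtain the minimum with $1+\log r$, we exchange the roles of the two factors. One could instead use the roles reversed, bounding $M$ via the fine graph at scale $s$. The simplest route is to note that the factor in \eqref{eq:max-tail} only needs to be $\log(\text{number of blocks}+2)$, and then apply the argument with $(r,s)$ replaced by $(s,r)$. Since $rs=sr$, and Lemma~\ref{lem:lifting} holds for any factorization with $s$ blocks of length $r$, this yields
\[
\mu_{rs}\le C(1+\log r)\mu_s\mu_r .
\]
Taking the better of the two bounds gives \eqref{eq:product}.

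\textbf{Step 2: existence of the exponent.} Set
\[
a_N=\log\bigl(C'(1+\log N)\mu_N\bigr),
\]
with $C'\ge C$. Then
\[
a_{rs}\le \log C'+\log(1+\log rs)+\log C+\log(1+\log r)+\log\mu_r+\log\mu_s ,
\]
and $\log(1+\log rs)\le\log(1+\log r)+\log(1+\log s)$. With a suitable choice of $C'$, this gives $a_{rs}\le a_r+a_s$. The sequence is therefore submultiplicative along products, and the Fekete lemma along the multiplicative semigroup gives
\[
\lim_{k\to\infty}\frac{a_{m^k}}{\log m^k}=\inf_k\frac{a_{m^k}}{\log m^k}
\]
for each fixed $m$.

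\textbf{Step 3: passing to general $N$ (the main obstacle).} Multiplicative subadditivity alone does not give a limit over all $N$; we need monotonicity. We have $Y_N\le Y_{N+1}$, so $\mu_N$ is nondecreasing. Define
\[
\theta=\liminf_{N\to\infty}\frac{\log\mu_N}{\log N},
\]
and fix $\varepsilon>0$. Choose $m$ with $\mu_m\le m^{\theta+\varepsilon}$ and $m$ large enough that $C(1+\log m)\le m^{\varepsilon}$. Iterating \eqref{eq:product} with the factor $1+\log m$ (using the first-factor form with $r=m$) gives
\[
\mu_{m^k N_0}\le \bigl(C(1+\log m)\mu_m\bigr)^k\mu_{N_0}\le m^{k(\theta+2\varepsilon)}\mu_{N_0}.
\]

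For a general $N$, take $k$ with $m^k\le N<m^{k+1}$. Monotonicity gives $\mu_N\le\mu_{m^{k+1}}$. Hence
\[
\limsup_{N\to\infty}\frac{\log\mu_N}{\log N}\le\theta+2\varepsilon ,
\]
so the limit exists. Finally, $1\le\mu_N\le N$ by \eqref{eq:increments}, which forces $\theta\in[0,1]$. This proves \eqref{eq:envelope-exponent}.

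The delicate point is keeping the logarithmic loss at $\log m$ per iteration rather than $\log m^k$. This is why the minimum in \eqref{eq:product} matters: we always place the fixed small block size $m$ in the logarithm.
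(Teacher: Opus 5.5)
Your Step~1 is the paper's proof of \eqref{eq:product}. Step~2, however, is wrong as written. You combine $\log(1+\log rs)\le\log(1+\log r)+\log(1+\log s)$ with the factor $1+\log r$ from \eqref{eq:product}. The estimates you display then only give
\[
 a_{rs}-(a_r+a_s)\le\log C-\log C'+\log(1+\log r),
\]
and the last term is unbounded in $r$, so no constant $C'$ yields $a_{rs}\le a_r+a_s$. The fix has two parts:
\begin{itemize}
\item[--] use the minimum in \eqref{eq:product} together with $1+\log(rs)\le2\max\{1+\log r,1+\log s\}$;
\item[--] these give $(1+\log rs)\min\{1+\log r,1+\log s\}\le2(1+\log r)(1+\log s)$, so $C'=2C$ produces exact submultiplicativity.
\end{itemize}
This is precisely the paper's normalization. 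Step~3 never uses $a_N$, however, so you can delete Step~2 and the proof stands.

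Step~3 is correct and takes a genuinely different, more hands-on route to the exponent. The paper works with the exactly submultiplicative sequence $\widehat\mu_N=2C(1+\log N)\mu_N$. It applies Fekete's lemma to $\log\widehat\mu_{2^k}$, discards the $N^{o(1)}$ factor $1+\log N$, and extends from powers of two to all $N$ by monotonicity. You instead run the Fekete argument directly:
\begin{itemize}
\item[--] you pick a base $m$ with $\mu_m\le m^{\theta+\varepsilon}$ (possible for arbitrarily large $m$, since $\theta$ is a $\liminf$) and $C(1+\log m)\le m^{\varepsilon}$;
\item[--] you iterate $\mu_{mN'}\le C(1+\log m)\mu_m\mu_{N'}$, which comes from lifting with $m$ coarse blocks, so the logarithmic loss stays at $\log m$ per step;
\item[--] you interpolate between powers of $m$ by monotonicity, which bounds the $\limsup$ by $\theta+2\varepsilon$.
\end{itemize}
Your version avoids the normalization and needs only the half of the minimum that puts the fixed factor in the logarithm. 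The paper's exact submultiplicativity needs both halves, but in exchange its argument reduces to citing a standard lemma.
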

\begin{proof}
By Brownian scaling and translation invariance,
$Y(C_{r,s})$ has the law of $Y_s$, and each variable
in the maximum defining $M_{r,s}$ has the law of $Y_r$. Applying
Cauchy--Schwarz to \eqref{eq:lifting}, followed by the estimates in
Proposition~\ref{prop:upper-tail}, gives
\[
 \mu_{rs}\le3\|M_{r,s}\|_2\|Y(C_{r,s})\|_2
 \le C(1+\log s)\mu_r\mu_s.
\]
Repeating the comparison with $r$ intervals of length $s$ gives the
same bound with $1+\log r$. Taking the smaller of the two bounds proves
\eqref{eq:product}.

Increase the constant $C$ in \eqref{eq:product} to be at least one, and
put $\widehat\mu_N=2C(1+\log N)\mu_N$. Since
\[
 1+\log(rs)\le2\max\{1+\log r,1+\log s\},
\]
\eqref{eq:product} gives
$\widehat\mu_{rs}\le\widehat\mu_r\widehat\mu_s$. Thus
$\log\widehat\mu_{2^k}$ is subadditive in $k$. Fekete's lemma gives the
limit of $\log\widehat\mu_{2^k}/(k\log2)$; see \cite[proof of Theorem 6.4.1,
equations (6.4.1)--(6.4.2)]{Durrett}. The same limit holds for $\log\mu_{2^k}/(k\log2)$, since
$\log(1+\log N)=o(\log N)$. Monotonicity of $\mu_N$ then extends
the limit from powers of two to all integers. The bounds $1\le\mu_N\le N$ complete the proof.
\end{proof}

\subsection{Endpoint distance and graph diameter}\label{subsec:endpoint-diameter}

We next show that $f_N$ and $g_N$ have the same growth
exponent $\theta$ as $\mu_N$ in
\eqref{eq:envelope-exponent}. To this end, the following
lemma provides the mesh comparison and grid decomposition
needed to obtain a lower bound on $f_N$.
\begin{lemma}\label{lem:grid}
For integers $1\le M\le N$,
\begin{equation}\label{eq:refinement}
 f_M\le2f_N.
\end{equation}
Furthermore, for integers $2\le q\le N$, let $L=\lceil N/q\rceil$. Then
\begin{equation}\label{eq:grid}
 \mu_N\le C\log(q+2)\mu_L+2q^2f_N.
\end{equation}
\end{lemma}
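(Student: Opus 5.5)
For \eqref{eq:refinement} I would give a pathwise comparison of $G_M$ and $G_N$, both built from the same path $W$ on $[0,1]$. Since $M\le N$, each fine cell $I_i^{(N)}$ has length $1/N\le 1/M$. It is therefore contained in the union of at most two consecutive coarse cells $I_j^{(M)}$ and $I_{j+1}^{(M)}$.

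Take a shortest path $0=i_0,i_1,\dots,i_k=N-1$ in $G_N$, so $k=L_N$. Replace each $i_m$ by the one or two coarse cells covering $I_{i_m}^{(N)}$.
\begin{itemize}
\item If $W(I_{i_m}^{(N)})\cap W(I_{i_{m+1}}^{(N)})\neq\varnothing$, then some coarse cell covering $i_m$ and some coarse cell covering $i_{m+1}$ also have intersecting ranges, so they are equal or adjacent in $G_M$.
\item Two coarse cells covering the same fine cell are consecutive, hence adjacent.
\end{itemize}
So the resulting sequence is a graph path in $G_M$ with at most $2(k+1)$ vertices. It starts in coarse cell $0$, because fine cell $0$ lies in $[0,1/M]$. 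It ends in coarse cell $M-1$, because fine cell $N-1$ contains time $1$. This gives $1+L_M\le 2(1+L_N)$ pathwise, and taking expectations yields \eqref{eq:refinement}.

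For \eqref{eq:grid} I work with $Y_N$ on $[0,N]$ through \eqref{eq:endidentity}. Cut $[0,N]$ into the $q'\le q$ blocks $B_j=[jL,\min\{(j+1)L,N\}]$; all are complete of length $L$ except possibly the last. Fix integers $0\le u<v\le N$.
\begin{itemize}
\item If $[u,v]$ meets at most two consecutive blocks, then by \eqref{eq:subadd} we get $1+\lambda[u,v]\le Y_{jL,(j+2)L\wedge N}\le 2\max_j Y_{B_j}$.
\item Otherwise, let $aL$ be the first block boundary $\ge u$ and $b'=\min\{bL,N\}$ the last boundary $\le v$, with $a<b$. Build a path from $u$ to $v-1$ in three pieces: join $u$ to $aL-1$ inside $\calG[u,aL]$, cross one edge, follow a shortest path from $aL$ to $b'-1$ in $\calG[aL,b']$, cross one edge, and join $b'$ to $v-1$ inside $\calG[b',v]$. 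Each end piece is a consecutive restriction inside one block, so it costs at most $\max_jY_{B_j}-1$ edges. This gives
\[
1+\lambda[u,v]\le 2\max_j Y_{B_j}+1+\lambda[aL,b'].
\]
\end{itemize}
Maximizing over $u,v$ and bounding the maximum over the at most $q^2$ pairs $a<b$ by the sum, I get
\[
Y_N\le 2\max_j Y_{B_j}+\sum_{a<b}\bigl(1+\lambda[aL,b']\bigr).
\]

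Now take expectations. Each $Y_{B_j}$ is dominated by a variable with the law of $Y_L$, by monotonicity and \eqref{eq:unit-scaling}. So \eqref{eq:max-tail} (via $L^1\le L^2$) bounds the first term by $C\log(q+2)\mu_L$. By \eqref{eq:unit-scaling}, $\lambda[aL,b']$ has the law of $L_m$ with $m=b'-aL\le N$. Hence $\E(1+\lambda[aL,b'])=f_m\le 2f_N$ by \eqref{eq:refinement}, and the sum contributes at most $2q^2f_N$.

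The only delicate point is the bookkeeping in the second part. Truncating the last block at $N$ is what keeps every subinterval length $m\le N$, which is needed for the refinement inequality to apply. I also need to check that the end pieces are genuinely restrictions of a single block, so that they are controlled by the block maxima $Y_{B_j}$.
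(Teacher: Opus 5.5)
Your proof is correct and follows the paper's argument essentially step for step. The refinement bound $1+L_M\le2(1+L_N)$ comes from projecting a shortest $G_N$-path using at most two coarse cells per fine cell. The grid bound splits each restriction $[u,v]$ into two end pieces, controlled by block maxima through \eqref{eq:max-tail}, and a middle piece among at most $q^2$ boundary pairs, each of mean at most $2f_N$ by \eqref{eq:refinement}.

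One point in the first part needs tightening. Within each fine cell, list only the coarse cell containing the entry intersection time, followed by the one containing the exit time; this is what the paper does via the common refinement. Listing both covering coarse cells in an arbitrary order need not produce a graph path.
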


\begin{proof}
To prove \eqref{eq:refinement}, construct $G_M$ and $G_N$ from the
same Brownian path on $[0,1]$. Their common refinement has all
division points from both partitions. Since $M\le N$, each interval
of length $1/N$ splits into at most two refinement intervals.
Choose an intersection point for each edge of a shortest path
in $G_N$. In each visited cell, connect the two resulting time
representatives through the common refinement; use times $0$ and $1$
at the ends of the path. At most two refinement intervals are used
in each cell. Projecting these intervals to their containing cells
in $G_M$, and deleting consecutive repetitions, gives a graph path from vertex \(0\) to vertex \(M-1\) with at most \(2(1+L_N)\) vertices.
Thus $1+L_M\le2(1+L_N)$, and taking expectations proves
\eqref{eq:refinement}.

For the grid decomposition, choose integers
$0=t_0<\cdots<t_s=N$ with $s\le q$ and $t_{i+1}-t_i\le L$.
Fix a restriction $[u,v]$ in \eqref{eq:endidentity}. If it contains
two or more grid points, let $t_i$ and $t_j$ be the first and last.
Join $u$ to $t_i$, use a shortest path from $t_i$ to $t_j-1$ in
$\calG[t_i,t_j]$, and then join $t_j-1$ to $v-1$.
Each nonempty end portion lies within one grid interval
$[t_k,t_{k+1}]$. Its internal connection uses at most
$Y_{t_k,t_{k+1}}-1$ edges, and at most one additional edge
joins it to the middle part. If there are fewer than two grid points, the whole
restriction lies in at most two grid intervals, so
its endpoint distance is at most
$2\max_{k<s}Y_{t_k,t_{k+1}}-1$.
In either case, maximizing over $[u,v]$ gives
\[
 Y_N\le2\max_{j<s}Y_{t_j,t_{j+1}}
       +\max_{i<j}\bigl(1+\lambda[t_i,t_j]\bigr).
\]
To take expectations, extend each grid interval to length $L$
and dominate its maximum by a variable with the law of $Y_L$.
Equation~\eqref{eq:max-tail} bounds the expectation of the first
maximum by $C\log(q+2)\mu_L$. By \eqref{eq:unit-scaling} and
\eqref{eq:refinement}, each variable in the second maximum has
expectation $f_{t_j-t_i}\le2f_N$. Since there are at most $q^2$ of them,
bounding their maximum by their sum proves \eqref{eq:grid}.
\end{proof}

The following proposition identifies the expectation and moment
exponents by combining the comparison in Lemma~\ref{lem:grid} with the
upper tail estimate in Proposition~\ref{prop:upper-tail}.
\begin{proposition}\label{prop:finite-exponent}
The exponent $\theta$ in \eqref{eq:envelope-exponent}
belongs to $[1/2,1]$, and
\begin{equation}\label{eq:finite-exponent}
 f_N=N^{\theta+o(1)},\qquad g_N=N^{\theta+o(1)}.
\end{equation}
In fact, for every $p>0$,
\[
 \E[L_N^p]=N^{p\theta+o(1)},\qquad
 \E[V_N^p]=N^{p\theta+o(1)}.
\]
\end{proposition}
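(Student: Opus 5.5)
We first show that $f_N$ has the same exponent as $\mu_N$. Since $f_N\le g_N\le\mu_N$, it suffices to bound $f_N$ from below. Lemma~\ref{lem:grid} gives
\[
 2q^2f_N\ge\mu_N-C\log(q+2)\mu_{\lceil N/q\rceil}.
\]
Fix $\varepsilon>0$ and take $q=q_\varepsilon$ a large constant. By \eqref{eq:envelope-exponent}, $\mu_{\lceil N/q\rceil}\le (N/q)^{\theta+\varepsilon}$ and $\mu_N\ge N^{\theta-\varepsilon}$ for large $N$, but these do not compare directly because the error terms are $N^{o(1)}$. It is cleaner to use \eqref{eq:product} with $r=q$, $s=\lceil N/q\rceil$. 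This gives $\mu_{qs}\le C(1+\log q)\mu_q\mu_s$, which is the wrong direction.

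So I would argue by contradiction along scales instead. Suppose $f_N\le N^{\theta-2\varepsilon}$ for infinitely many $N$. At such $N$, choose $q=N^{\delta}$ with small $\delta$. The grid bound then reads
\[
 \mu_N\le C\log N\,\mu_{N^{1-\delta}}+2N^{2\delta+\theta-2\varepsilon}.
\]
The second term is at most $N^{\theta-\varepsilon}$ if $\delta<\varepsilon/2$. The first term is at most $N^{(1-\delta)(\theta+\eta)+o(1)}$ for any $\eta>0$, which is $\le N^{\theta-\delta\theta/2}$ when $\theta>0$ and $\eta$ is small. Both terms contradict $\mu_N\ge N^{\theta-\eta}$. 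Hence $f_N\ge N^{\theta-o(1)}$, and \eqref{eq:finite-exponent} follows.

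Next comes the lower bound $\theta\ge1/2$. This argument also gives $\theta>0$, which the previous step needs; if $\theta=0$ there is nothing to compare, but we still need it excluded. I would use $\mu_{rs}\le C(1+\log s)\mu_r\mu_s$ together with a direct lower bound $\mu_N\ge cN^{1/2}$. For this, consider $V_N$ in $G_N$ under scaling. Each cell has spatial diameter about $N^{-1/2}$, while $|W_1|$ is of order one with positive probability. Any graph path from $0$ to $N-1$ must cover the displacement $|W_1-W_0|$ by a chain of cells whose consecutive images intersect. Therefore $|W_1|\le\sum_{\text{cells on path}}\diam W(I_i)\le(1+L_N)\max_i\diam W(I_i^{(N)})$. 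The maximum is $O(\sqrt{\log N/N})$ with high probability, so $1+L_N\ge c\sqrt{N/\log N}$ with positive probability. This gives $f_N\ge N^{1/2-o(1)}$ and hence $\theta\ge1/2$.

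For moments, the upper bound follows from $L_N,V_N\le Y_N$ and Proposition~\ref{prop:upper-tail}. For integer $p$ this gives $\E[L_N^p]\le\E[Y_N^p]\le p!(C\mu_N)^p=N^{p\theta+o(1)}$, and Jensen/Hölder covers non-integer $p$. For the lower bound with $p\ge1$, Jensen gives $\E[L_N^p]\ge(\E L_N)^p$, and $\E L_N\ge f_N-1=N^{\theta+o(1)}$. The case $0<p<1$ is the main obstacle. Here I would use Paley--Zygmund: since $\E[L_N^2]\le C\mu_N^2$ and $\E L_N\ge\mu_N N^{-o(1)}$, we get
\[
 \Pp\bigl(L_N\ge\tfrac12\E L_N\bigr)\ge\frac{(\E L_N)^2}{4\E L_N^2}\ge N^{-o(1)}.
\]
Therefore $\E[L_N^p]\ge N^{-o(1)}(\tfrac12\E L_N)^p=N^{p\theta+o(1)}$. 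The same bounds hold for $V_N$, since $L_N\le V_N\le Y_N$.
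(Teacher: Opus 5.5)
Your proof is correct and follows the paper's argument. The displacement bound via maximal cell diameters gives $\theta\ge1/2$, which you rightly note must come before the grid step since that step needs $\theta>0$. The grid bound \eqref{eq:grid} with $q$ a small power of $N$ gives $f_N\ge N^{\theta-o(1)}$, and the tail bounds plus Paley--Zygmund give the moment asymptotics. The abandoned attempts with constant $q$ and with \eqref{eq:product} are unnecessary and can simply be deleted.
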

\begin{proof}
We first show that $\theta\ge1/2$. For unit time cells, the reflection
principle and the Gaussian tail bound~\cite[Theorem 2.21 and
Remark 2.22]{MortersPeres} give
\[
 \Pp\left(\max_{0\le i<N}\diam W([i,i+1])>
                  A\sqrt{\log(N+2)}\right)\longrightarrow0
\]
for a sufficiently large constant $A$. A graph path from vertex $0$ to
vertex $N-1$ in $\calG[0,N]$, with $q$ edges, connects $W_0$ to $W_N$
through $q+1$ cell ranges. On the event that every cell diameter is at most
$A\sqrt{\log(N+2)}$, it follows that
$|W_N-W_0|\le(q+1)A\sqrt{\log(N+2)}$.
Since $\Pp(|W_N-W_0|\ge\sqrt N)$ is a positive constant independent of
$N$, the intersection of these two events has probability bounded away
from zero for large $N$. Hence $f_N\ge c\sqrt{N/\log(N+2)}$.
The inequality $f_N\le\mu_N$ proves $\theta\ge1/2$.

Fix $\varepsilon\in(0,1)$ and take $q=\lceil N^\varepsilon\rceil$ in
\eqref{eq:grid}. The term $C\log(q+2)\mu_L$ is
$N^{(1-\varepsilon)\theta+o(1)}=o(\mu_N)$. Thus
\[
 f_N\ge c\mu_N/q^2=N^{\theta-2\varepsilon+o(1)}.
\]
Let $\varepsilon\downarrow0$, and use $f_N\le g_N\le\mu_N$ to obtain
\eqref{eq:finite-exponent}.

Concerning the moment estimates, integrating
\eqref{eq:upper-tail} gives
$\E[Y_N^p]\le C_p\mu_N^p=N^{p\theta+o(1)}$
for every $p>0$, yielding the required upper bounds. For the matching lower bound, the Paley--Zygmund
inequality~\cite[Exercise 3.5]{MortersPeres} applied to $1+L_N$ gives
\[
 \Pp(1+L_N\ge f_N/2)
 \ge\frac{f_N^2}{4\E[(1+L_N)^2]}
 \ge c(f_N/\mu_N)^2=N^{-o(1)}.
\]
Restricting the expectation to this event gives, for every $p>0$,
\[
 \E[(1+L_N)^p]\ge (f_N/2)^p\Pp(1+L_N\ge f_N/2)
                \ge N^{p\theta-o(1)}.
\]
Finally, $L_N\le1+L_N\le2L_N$ for $N\ge2$,
and the same bounds hold for $V_N$.
Together with the upper bounds and $L_N\le V_N$,
this proves the claimed moment asymptotics.
\end{proof}

\subsection{Choice of normalization}\label{subsec:normalization}

To construct subsequential limits of normalized graph
distances in Section~\ref{sec:limits}, we need a
deterministic subsequence and suitable normalization
constants. The following lemma provides both, together
with uniform upper bounds on the normalized expected
interval maxima at smaller scales and a positive lower
bound on the expected normalized endpoint distance.

\begin{lemma}\label{lem:records}
There is a deterministic increasing sequence of powers of two
$n_j\to\infty$ with the following properties. Set
\begin{equation}\label{eq:record-normalization}
 \kappa_j=\mu_{n_j}.
\end{equation}
There are deterministic exponents
$\theta/2\le\rho_j<\theta$ with $\rho_j\uparrow\theta$
such that
\begin{equation}\label{eq:record-variable}
 \mu_l\le2\kappa_j(l/n_j)^{\rho_j},
       \qquad 1\le l\le n_j,\quad j\ge1.
\end{equation}
For every $0<\rho<\theta$ there is a deterministic $C_\rho<\infty$
such that, for every $j$,
\begin{equation}\label{eq:record-local}
 \mu_l\le C_\rho\kappa_j(l/n_j)^\rho,
       \qquad 1\le l\le n_j.
\end{equation}
Moreover, there is a deterministic $c_*>0$ such that, for every $j$,
\begin{equation}\label{eq:record-positive}
 \E L_{n_j}\ge c_*\kappa_j.
\end{equation}
In particular $\kappa_j=n_j^{\theta+o(1)}\to\infty$, and $\kappa_j$ is
comparable to each of $\E L_{n_j}$ and $\E V_{n_j}$ on this sequence.
\end{lemma}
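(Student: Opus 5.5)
The plan is to take $n_j$ to be ``record'' scales among powers of two for a slowly increasing sequence of exponents, and to derive everything from the record property together with Lemma~\ref{lem:grid}. Since $\theta\ge1/2>0$ by Proposition~\ref{prop:finite-exponent}, fix the deterministic exponents
\[
 \rho_j=\theta-\frac{\theta}{2(j+1)},
\]
which satisfy $\theta/2\le\rho_j<\theta$ and $\rho_j\uparrow\theta$.

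\emph{Existence of records.} For $\rho<\theta$, \eqref{eq:envelope-exponent} gives $\mu_{2^k}/2^{k\rho}\to\infty$ as $k\to\infty$. Call $k$ a $\rho$-record if
\[
 \mu_{2^k}2^{-k\rho}\ge\mu_{2^i}2^{-i\rho}\qquad\text{for all }0\le i\le k.
\]
The sequence $\mu_{2^k}2^{-k\rho}$ is unbounded, so each new running maximum occurs at a record, and there are infinitely many $\rho$-records. Choose $n_j=2^{k_j}$ inductively, where $k_j$ is a $\rho_j$-record with $k_j>k_{j-1}$, and set $\kappa_j=\mu_{n_j}$.

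\emph{Proof of \eqref{eq:record-variable}.} Let $1\le l\le n_j$ and pick $i\le k_j$ with $2^{i-1}<l\le2^i$. Monotonicity of $\mu$ (from \eqref{eq:increments}) and the record property give
\[
 \mu_l\le\mu_{2^i}\le\kappa_j(2^i/n_j)^{\rho_j}\le 2^{\rho_j}\kappa_j(l/n_j)^{\rho_j}\le2\kappa_j(l/n_j)^{\rho_j}.
\]

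\emph{Proof of \eqref{eq:record-local}.} Fix $\rho<\theta$. For all $j$ with $\rho_j\ge\rho$, we have $(l/n_j)^{\rho_j}\le(l/n_j)^\rho$ because $l/n_j\le1$, so the constant $2$ works. Only finitely many $j$ have $\rho_j<\rho$. For each of these, the quantity $\max_{l\le n_j}\mu_l n_j^\rho/(\kappa_j l^\rho)$ is a finite deterministic number. Take $C_\rho$ to be the maximum of $2$ and these finitely many numbers.

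\emph{Proof of \eqref{eq:record-positive}.} Apply \eqref{eq:grid} with $N=n_j$ and a fixed power of two $q\ge2$, so that $L=n_j/q$ for large $j$. By \eqref{eq:record-variable} and $\rho_j\ge\theta/2$,
\[
 \mu_L\le2\kappa_j q^{-\rho_j}\le2\kappa_j q^{-\theta/2}.
\]
Fix $q$ so large that $2C\log(q+2)q^{-\theta/2}\le1/2$. Then \eqref{eq:grid} gives
\[
 \kappa_j\le\tfrac12\kappa_j+2q^2f_{n_j},
 \qquad\text{hence}\qquad f_{n_j}\ge\frac{\kappa_j}{4q^2}.
\]
Since $1+L_N\le2L_N$ for $N\ge2$, we get $\E L_{n_j}\ge f_{n_j}/2\ge\kappa_j/(8q^2)$ for all large $j$. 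The finitely many small $j$ are absorbed into $c_*$ because $\E L_{n_j}>0$ for $n_j\ge2$.

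\emph{Remaining assertions.} We have $\kappa_j=n_j^{\theta+o(1)}$ by \eqref{eq:envelope-exponent}. Comparability follows from
\[
 c_*\kappa_j\le\E L_{n_j}\le\E V_{n_j}\le g_{n_j}\le\mu_{n_j}=\kappa_j.
\]

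\emph{Main obstacle.} The delicate step is \eqref{eq:record-positive}. The grid inequality alone only gives $f_N\ge N^{-o(1)}\mu_N$. To obtain a bounded ratio, the record property must control $\mu_{N/q}$ by a fixed fraction of $\mu_N$ uniformly in $j$. This is exactly why the records are taken with exponents $\rho_j$ bounded below by $\theta/2$, rather than with $\rho_j$ allowed to degenerate.
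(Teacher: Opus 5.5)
Your proposal is correct and follows essentially the same route as the paper. You take record scales among powers of two for exponents $\rho_j\uparrow\theta$ with $\rho_j\ge\theta/2$, use dyadic rounding for \eqref{eq:record-variable}, and fix a power of two $q$ in \eqref{eq:grid} to force $f_{n_j}\ge\kappa_j/(4q^2)$. Your use of $1+L_N\le 2L_N$ instead of the paper's discarding of finitely many terms is an equally valid finish.

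One detail should be made explicit: the first record index must satisfy $k_1\ge1$. The paper guarantees this by starting from $n_0=1$ and requiring $n_j>n_{j-1}$. Otherwise the trivial record $k=0$ could give $n_1=1$ and $\E L_1=0$, which would break \eqref{eq:record-positive} at $j=1$.
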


\begin{proof}
Choose $\rho_j\uparrow\theta$ with $\theta/2\le\rho_j<\theta$.
By \eqref{eq:envelope-exponent}, the sequence
$\mu_{2^k}/2^{k\rho_j}$ is unbounded for each $j$.
Starting from $n_0=1$, choose $n_j=2^{k_j}>n_{j-1}$
sufficiently large that
\[
 \frac{\mu_{2^{k_j}}}{2^{k_j\rho_j}}
 =\max_{0\le k\le k_j}\frac{\mu_{2^k}}{2^{k\rho_j}}.
\]
Given $l\le n_j$, let $2^k$ be the least power of two
not smaller than $l$. The choice of $n_j$ gives
$\mu_l\le\mu_{2^k}\le\kappa_j(2^k/n_j)^{\rho_j}$.
Since $2^k<2l$ and $\rho_j\le1$, this gives
\eqref{eq:record-variable}.

Choose a fixed power of two $q$ such that
$2C\log(q+2)q^{-\theta/2}\le1/2$, where $C$ is the
constant in \eqref{eq:grid}. Since the record indices
can be chosen arbitrarily large, we may assume
$n_j\ge q$ for every $j$. Then $q$ divides $n_j$,
and \eqref{eq:record-variable} bounds the first term
of \eqref{eq:grid} by
$2C\log(q+2)\kappa_jq^{-\rho_j}\le\kappa_j/2$.
The other term must therefore be at least $\kappa_j/2$,
giving $f_{n_j}\ge\kappa_j/(4q^2)$.
Since $\kappa_j\to\infty$, after discarding finitely
many terms and relabeling the sequences, we have
\[
 \E L_{n_j}=f_{n_j}-1\ge\kappa_j/(8q^2)
\]
for every $j$. This proves \eqref{eq:record-positive}.

Finally, for fixed $\rho<\theta$, eventually
$\rho_j\ge\rho$, so \eqref{eq:record-variable}
implies \eqref{eq:record-local}. Increasing $C_\rho$
if necessary covers the finitely many remaining indices.
The exponent and the stated comparisons follow from
\eqref{eq:envelope-exponent}, \eqref{eq:record-positive},
and $f_{n_j}\le g_{n_j}\le\mu_{n_j}$.
\end{proof}

\section{Curves of minimum dimension}\label{sec:curves}

In this section we prove that $\mathfrak p_3(W)=2\theta$ and construct
continuum many simple curves attaining this minimum, with the
regularity asserted in Theorem~\ref{thm:main-growth}. We first build
curves whose images have upper box dimension at most $2\theta$
(Proposition~\ref{prop:curve-upper}), then obtain the matching lower
bound using compatible H\"older functions
(Lemma~\ref{lem:dimension-test} and Proposition~\ref{prop:plateau}).
Section~\ref{subsec:dimension-proof} combines these results to
establish the dimension identity and proves the almost sure growth
exponents of $L_n$ and $V_n$ using the compatible functions and the
estimates from Section~\ref{sec:growth}.
Finally, local replacements of the minimizing curves establish
nonuniqueness (Proposition~\ref{prop:nonunique-curves}).

\subsection{Uniform dimension doubling}\label{subsec:dimension-doubling}

We begin by recalling the dimension doubling theorem that transfers
dimension estimates from Brownian time to the range. Since the time
sets in our constructions depend on $W$, we use the simultaneous form
of Kaufman's theorem~\cite{Kaufman}: on an event of probability one,
\begin{equation}\label{eq:kaufman}
 \dimH W(A)=2\dimH A\qquad
            \text{for every }A\subset[0,1].
\end{equation}
Here we use the formulation in
\cite[Theorem 9.28]{MortersPeres}. Brownian scaling gives the same assertion
on any fixed finite time interval.

\subsection{Construction of connecting curves}\label{subsec:curve-construction}

We first give a lemma that removes loops from a continuous curve while
controlling its $p$-variation. For a continuous curve
$f:[a,b]\to\R^3$ and $p\ge1$, write
\begin{equation}\label{eq:p-variation}
 \|f\|_{p\text{-var};[a,b]}
 =\left(\sup_{a=t_0<\cdots<t_m=b}
       \sum_{i=1}^m|f(t_i)-f(t_{i-1})|^p\right)^{1/p}.
\end{equation}
This seminorm is invariant under nondecreasing surjective
reparametrizations and satisfies the triangle inequality for sums
and concatenations of curves. We omit the interval when it is $[0,1]$.

\begin{lemma}\label{lem:holder-arc}
Let $q\ge1$, and let $f:[0,1]\to\R^3$ be continuous with
$f(0)\ne f(1)$. Suppose that $\|f\|_{p\text{-var}}<\infty$
for every $p>q$. For any sequence $p_n\downarrow q$ with $p_n>q$,
there exists a simple curve $\eta:[0,1]\to f([0,1])$ satisfying
$\eta(0)=f(0)$ and $\eta(1)=f(1)$, and
\[
 \|\eta\|_{p\text{-var}}\le\|f\|_{p\text{-var}}
 \qquad(p\ge1),
\]
\[
 [\eta]_{1/p_n}^{p_n}
 \le 2^n\bigl(1+\|f\|_{p_n\text{-var}}^{p_n}\bigr)
 \qquad(n\ge1).
\]
In particular, $\eta$ is $\beta$-H\"older for every
$0<\beta<1/q$. The conclusion also applies if $f$ is
$\beta$-H\"older for every $0<\beta<1/q$.
\end{lemma}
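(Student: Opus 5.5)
We construct $\eta$ in two stages: first extract a simple arc from $f$ by loop removal with the parametrization inherited from $f$, then reparametrize that arc by a variation-type control function to get the H\"older bounds.

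\textbf{Loop removal.} We build a nondecreasing time change. Put $\sigma(s)=\sup\{t\in[0,1]:f(t)=f(s)\}$. Starting from $s_0=0$, we would cut out the maximal loop at the current point, jump to $\sigma(s)$, and continue. Making this precise requires a transfinite or limiting argument. A cleaner alternative is the standard construction of an arc inside a Peano continuum joining $f(0)$ to $f(1)$. For that arc we would show the existence of a closed set $T\subset[0,1]$ containing $0$ and $1$ on which $f$ is injective and such that $f(T)$ is that arc, with each complementary interval $(a,b)$ of $T$ satisfying $f(a)=f(b)$. Concretely, take $T$ to be the set of $t$ such that $f(t)\ne f(t')$ for all $t'>t$ in the ordering along the arc. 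Following Lemma-type loop erasures, we set $T=\{t: f(t')\ne f(t)\ \forall t'>t\}$ after first replacing $f$ by the ``last-exit'' loop erasure, iterated.

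Next define $\tilde f(t)=f(\tau(t))$, where $\tau(t)=\max(T\cap[0,t])$. This gives a curve that is constant on the complementary intervals. It is continuous, since $f(a)=f(b)$ at the ends of each gap, and it traces the arc injectively modulo constancy intervals. For any partition $t_0<\dots<t_m$, the points $\tau(t_i)$ form a partition of $[0,1]$ with the same increments. Hence $\|\tilde f\|_{p\text{-var}}\le\|f\|_{p\text{-var}}$ for all $p\ge1$.

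\textbf{Reparametrization.} Set $\omega(s,t)=\sum_n 2^{-n}\|\tilde f\|^{p_n}_{p_n\text{-var};[s,t]}/(1+\|f\|_{p_n\text{-var}}^{p_n})$. Each summand is superadditive, so $\omega$ is a control with $\omega(0,1)\le1$. It is continuous because each $p_n$-variation is continuous in its endpoints for continuous $f$ of finite variation, and the tail of the series is uniformly small. Let $\phi(t)=\omega(0,t)$. Because $\tilde f$ is injective off its constancy intervals, after collapsing those intervals $\phi$ is strictly increasing. We then define $\eta(\phi(t)/\phi(1))=\tilde f(t)$, rescaled to $[0,1]$. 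This is a well-defined simple curve with the same image and endpoints, and its $p$-variation does not exceed $\|\tilde f\|_{p\text{-var}}$ by reparametrization invariance. For $u<v$ we get $|\eta(v)-\eta(u)|^{p_n}\le\|\tilde f\|^{p_n}_{p_n\text{-var}}$ over the corresponding interval. This is at most $2^n(1+\|f\|^{p_n}_{p_n\text{-var}})\,\omega\le 2^n(1+\|f\|^{p_n}_{p_n\text{-var}})(v-u)$, using $\phi(1)\le1$, or after adjusting the normalization. That is the claimed H\"older estimate. Since $p_n\downarrow q$, $\eta$ is $\beta$-H\"older for every $\beta<1/q$. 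For the final sentence, note that a $\beta$-H\"older $f$ with $\beta>1/p$ has $\|f\|_{p\text{-var}}\le[f]_\beta$, so the hypothesis holds.

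\textbf{Main obstacle.} The delicate part is the rigorous loop erasure: producing $T$ with $f|_T$ injective and $f(a)=f(b)$ on the gaps, in a way that yields a monotone time change. I would first try the following. Define the gaps greedily by taking maximal intervals $(a,b)$ with $f(a)=f(b)$, ordered by length. Among all such intervals, select a maximal family of disjoint ones through Zorn's lemma or by a countable greedy choice by decreasing length. Then check that the remaining set $T$ has no repeated values, since any repeat would produce a further removable interval. A secondary issue is strict monotonicity of $\phi$, which needs $\tilde f$ nonconstant on every interval meeting $T$ in more than one point; injectivity on $T$ supplies this.
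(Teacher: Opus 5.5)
Your reparametrization step is the same as the paper's. The paper also sums normalized $p_n$-variation controls of the loop-free curve, and your superadditivity estimate $\omega(s,t)\le\phi(t)-\phi(s)=\phi(1)(v-u)\le v-u$ gives the stated H\"older bound. For continuity of $t\mapsto\|\tilde f\|_{p_n\text{-var};[0,t]}$ you can argue as the paper does, using finite $r$-variation for $q<r<p_n$.

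The gap is the loop-removal step, which carries all the topological content of the lemma. The paper does not prove it; it invokes Brazas's loop-cancellation construction. That construction gives $g=f\circ\sigma$ with $\sigma$ nondecreasing and the property that $g(s)=g(t)$ for $s<t$ forces $g$ to be constant on $[s,t]$. Your sketch does not produce this, for two reasons.

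First, the target is misstated. $f|_T$ can never be injective, since both endpoints of each removed interval lie in $T$ and have equal images. What you need is that $s<t$ in $T$ with $f(s)=f(t)$ forces $(s,t)$ to be a removed interval.

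Second, the routes you list fail:
\begin{itemize}
\item An arbitrary arc in the Peano continuum $f([0,1])$ may run along a piece of $f$ against its time direction, so no monotone $T$ need exist for it.
\item The last-exit set is neither closed nor loop-free. With a single coincidence $f(t_1)=f(t_2)$ it equals $[0,1]\setminus\{t_1\}$.
\item A maximal family of pairwise disjoint loop intervals does not suffice. If $f(0.1)=f(0.5)$ and $f(0.2)=f(0.3)$ are the only coincidences, then $\{(0.2,0.3)\}$ is such a maximal family. Yet $\tilde f(0.1)=\tilde f(0.5)$ while $\tilde f$ is nonconstant on $[0.1,0.5]$, so your $\eta$ is not simple. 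Your justification ``any repeat would produce a further removable interval'' breaks exactly here: that interval contains an already chosen one and cannot be added while keeping disjointness.
\item Restricting to inclusion-maximal loop intervals fails too. A small loop survives if it is nested in a maximal loop that was discarded for overlapping a longer chosen one.
\end{itemize}

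The greedy version can be repaired, but only with an exact choice. At step $k$, among nondegenerate $[a,b]$ with $f(a)=f(b)$ and $(a,b)$ disjoint from the earlier chosen intervals (a compact family), take one of \emph{maximal} length. Now suppose $s<t$ lie in $T$ with $f(s)=f(t)$ and $(s,t)$ is not chosen. Since $s,t\in T$, the interval $(s,t)$ cannot partially overlap a chosen interval. If $(s,t)$ contains a chosen interval, then at the first step where one inside it was chosen, $(s,t)$ was available and strictly longer, a contradiction. If $(s,t)$ is disjoint from all chosen intervals, it was available at every step. Then the process never stops, and infinitely many disjoint chosen intervals all have length at least $t-s$, which is impossible.

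Alternatively, use Zorn's lemma with the coarsening order, where $\mathcal A$ precedes $\mathcal B$ when every interval of $\mathcal A$ lies inside one of $\mathcal B$. You must then check that each component of the union along a chain still has equal endpoint values. With either repair, or simply by citing Brazas, your argument coincides with the paper's.
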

\begin{proof}
By the loop-cancellation construction of Brazas~\cite{Brazas},
there is a continuous curve $g:[0,1]\to f([0,1])$ satisfying
$g(0)=f(0)$ and $g(1)=f(1)$ with the following property:
if $g(s)=g(t)$ for $s<t$, then $g$ is constant on $[s,t]$.
The curve $g$ is obtained by choosing pairwise disjoint closed
intervals $[a,b]$ with $f(a)=f(b)$ and replacing $f$ on each of
them by the constant value $f(a)$. More precisely, define
$\sigma:[0,1]\to[0,1]$ by $\sigma(r)=a$ when $r\in(a,b)$ for
one of these intervals, and $\sigma(r)=r$ otherwise. Then
$\sigma$ is nondecreasing and $g(r)=f(\sigma(r))$.
For every partition $0=t_0<\cdots<t_m=1$, the sequence
$\sigma(t_0),\ldots,\sigma(t_m)$ is nondecreasing. Removing
repeated times if necessary gives
\[
 \sum_{i=1}^m|g(t_i)-g(t_{i-1})|^p
 =\sum_{i=1}^m|f(\sigma(t_i))-f(\sigma(t_{i-1}))|^p
 \le\|f\|_{p\text{-var}}^p.
\]
Consequently,
\[
 \|g\|_{p\text{-var}}\le\|f\|_{p\text{-var}}
 \qquad(p\ge1).
\]

Fix a sequence $p_n\downarrow q$ with $p_n>q$, and put
\[
 v_n(t)=\|g\|_{p_n\text{-var};[0,t]}^{p_n},\qquad v_n(0)=0.
\]
For $q<r<p_n$, the oscillation of $g$ on $[s,t]$ gives
\[
 \|g\|_{p_n\text{-var};[s,t]}^{p_n}
 \le \left(\sup_{u,v\in[s,t]}|g(u)-g(v)|\right)^{p_n-r}
       \|g\|_{r\text{-var}}^r.
\]
Since $g$ is continuous, the right-hand side tends to zero uniformly as
$t-s\downarrow0$. The inequality
\[
 0\le \|g\|_{p_n\text{-var};[0,t]}
       -\|g\|_{p_n\text{-var};[0,s]}
 \le \|g\|_{p_n\text{-var};[s,t]}
 \qquad(s<t)
\]
shows that $v_n$ is continuous. Combining partitions also gives
\[
 v_n(t)-v_n(s)\ge
 \|g\|_{p_n\text{-var};[s,t]}^{p_n}
 \ge |g(t)-g(s)|^{p_n}\qquad(s<t).
\]
In particular, $v_n$ is constant wherever $g$ is constant. The
uniformly convergent series
\[
 h(t)=\sum_{n\ge1}2^{-n}\frac{v_n(t)}{1+v_n(1)}
\]
is continuous and nondecreasing, with $h(0)=0$ and $0<h(1)\le1$.
If $g(s)=g(t)$ for $s<t$, the property of $g$ established above
implies that all $v_n$ are constant on $[s,t]$, and hence
$h(s)=h(t)$. Conversely, $h(s)=h(t)$ implies
$v_n(s)=v_n(t)$ for every $n$, so the preceding increment bound
gives $g(s)=g(t)$.

Set $\phi(t)=h(t)/h(1)$, which is continuous, nondecreasing,
and maps $[0,1]$ onto $[0,1]$. For $u\in[0,1]$, choose a time
$t$ with $\phi(t)=u$ and define $\eta(u)=g(t)$.
This defines a function because $\phi(s)=\phi(t)$ implies
$g(s)=g(t)$. For any closed set $F\subset\R^3$,
\[
 \eta^{-1}(F)=\phi(g^{-1}(F))
\]
is compact, so $\eta$ is continuous. If $\eta(u)=\eta(v)$,
choose $s,t$ with $\phi(s)=u$ and $\phi(t)=v$.
Then $g(s)=g(t)$, which implies $\phi(s)=\phi(t)$ and hence
$u=v$. Thus $\eta$ is injective. Furthermore,
$\eta(0)=f(0)$, $\eta(1)=f(1)$, and
$\eta([0,1])=g([0,1])\subset f([0,1])$.

For $u<v$, choose $s\le t$ with $\phi(s)=u$ and $\phi(t)=v$.
Then
\[
\begin{aligned}
 |\eta(v)-\eta(u)|^{p_n}
 &\le v_n(t)-v_n(s)\\
 &\le 2^n(1+v_n(1))\bigl(h(t)-h(s)\bigr)\\
 &\le 2^n\bigl(1+\|f\|_{p_n\text{-var}}^{p_n}\bigr)|v-u|.
\end{aligned}
\]
This proves the stated H\"older estimate for every $n$.
Since $p_n\downarrow q$, for any $0<\beta<1/q$ we can choose
$n$ with $\beta<1/p_n$, and then
$[\eta]_\beta\le[\eta]_{1/p_n}<\infty$.

To compare $p$-variation, take any partition
$0=u_0<\cdots<u_m=1$. We can choose
$0=t_0\le\cdots\le t_m=1$ with $\phi(t_i)=u_i$, and hence
\[
 \sum_{i=1}^m|\eta(u_i)-\eta(u_{i-1})|^p
 =\sum_{i=1}^m|g(t_i)-g(t_{i-1})|^p
 \le\|g\|_{p\text{-var}}^p.
\]
Conversely, applying $\phi$ to a partition for $g$ and
removing repeated values gives the reverse inequality.
Therefore $\|\eta\|_{p\text{-var}}=\|g\|_{p\text{-var}}$,
which proves the required bound.
Finally, if $f$ is $\beta$-H\"older with $\beta p>1$, then
$\|f\|_{p\text{-var}}\le[f]_\beta$.
Thus the H\"older hypothesis in the last assertion implies
finite $p$-variation for every $p>q$.
\end{proof}

We construct connecting curves by successively refining Brownian time
cells and using shortest paths in the intersection graphs of their
subcells. The next
proposition gives the resulting dimension and H\"older bounds uniformly
over the endpoints.
\begin{proposition}\label{prop:curve-upper}
On an event of probability one, for every $x,y\in K$, there exist
a compact set $A\subset[0,1]$ and a continuous curve
$\eta:[0,1]\to K$ such that
\begin{equation}\label{eq:curve-upper}
 \begin{gathered}
 \eta(0)=x,\qquad\eta(1)=y,\qquad \eta([0,1])\subset W(A),\\
 \dimUB A\le\theta,\qquad
 \dimUB\eta([0,1])\le2\theta.
 \end{gathered}
\end{equation}
The curve $\eta$ is $\beta$-H\"older for every
$0<\beta<(2\theta)^{-1}$.
For each such $\beta$, there is a nonnegative random variable
$H_\beta$ with $\E[H_\beta^p]<\infty$ for every $p>0$
such that $[\eta]_\beta\le H_\beta$ uniformly over $x,y\in K$.
When $x\ne y$, $\eta$ can be chosen simple with these properties.
\end{proposition}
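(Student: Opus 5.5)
We build the curve by a multiscale refinement of dyadic time cells. Fix the dyadic scales $m_k=2^{k}$, or a sparser geometric sequence $m_k=b^k$ for a large fixed power of two $b$. At each scale we keep a finite chain of closed time cells whose ranges form a connected chain from $x$ to $y$, and we refine each kept cell using shortest paths in the intersection graph of its subcells.

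\textbf{Scale $0$.} Choose times $s,t$ with $W_s=x$ and $W_t=y$, and take a shortest path in $G_b$ from $\iota_b(s)$ to $\iota_b(t)$. Its length is at most $V_b$.

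\textbf{Refinement step.} Suppose the chain at scale $k$ is a sequence of cells $J_0,\dots,J_m$ with $W(J_i)\cap W(J_{i+1})\ne\varnothing$. Fix intersection times $a_i\in J_i$ and $a_i'\in J_{i+1}$ with $W(a_i)=W(a_i')$. Inside $J_i$, we must connect the entry time $a_{i-1}'$ (or $s$) to the exit time $a_i$ (or $t$). For this we take a shortest path in the intersection graph of the $b$ subcells of $J_i$ between the subcells containing these two times. By scaling, this graph is a copy of $\calG[u,v]$ for a subinterval, so its length is at most $Y$ of that cell at ratio $b$.

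\textbf{Counting cells.} The number $N_k$ of cells at scale $k$ satisfies $N_{k+1}\le\sum_{J}Y(J)$. The key quantitative input is that, uniformly over all dyadic cells at level $k$, the interval maxima are controlled. Here we use Proposition~\ref{prop:upper-tail} together with a union bound over the $b^k$ cells and Borel--Cantelli. This gives $\max_{|J|=b^{-k}}Y(J;b)\le C\log(b^{k})\mu_b$ for all large $k$, almost surely.

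This per-step bound loses a factor $\mu_b$ at each scale, giving $N_k\le(C k\mu_b)^k$, which is $b^{k(\theta+\epsilon)}$ only after letting $b\to\infty$. The $\log$ factor is $k$-dependent, but $\log k$ is absorbed since $(Ck)^k$ is superexponential. We therefore prefer to bound directly the number of level-$k$ descendants of one cell by comparing with $Y$ at mesh $b^{k}$. Concretely, the refinement procedure embeds into a path in the unit-mesh graph of that cell, which is the mechanism of Lemma~\ref{lem:lifting}, except that it is run in reverse and then bounded by $3^k\prod$ factors.

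I expect this to be the main obstacle: obtaining $N_k\le b^{k(\theta+\epsilon)}$ without accumulating $k$-dependent constants. The first thing I would try is to use the sparse dyadic sequence, with $b=b(\epsilon)$ large, so that per-step losses $C\log(b^k)$ are compensated. This gives $\mu_b\le b^{\theta+\epsilon/2}$ and $C\log(kb)\le b^{\epsilon/2}$ only for $k\lesssim b^{\epsilon/2}$. Beyond that range, I would take $b_k$ growing slowly with $k$, say $b_k=2^{\lceil\sqrt k\rceil}$, so that $\log(\text{number of cells})/\log b_k$ stays bounded.

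\textbf{Dimension and H\"older bounds.} The set $A$ is $\bigcap_k\bigcup_{J\text{ kept at }k}J$, a compact set. It meets each kept cell, since the nested chains are connected and their ranges contain a connected compact set joining $x$ to $y$. The covering numbers then give $\dimUB A\le\theta+\epsilon$. Each cell of length $\delta$ has range diameter at most $C\delta^{1/2}\sqrt{\log(1/\delta)}$ by L\'evy's modulus. Hence $W(A)$ is covered by $N_k$ balls of radius $b^{-k/2+o(k)}$, so $\dimUB\le2\theta+2\epsilon$. We remove the dependence on $\epsilon$ by choosing a single construction with $b_k$ growing as above.

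\textbf{Parametrization.} We allot parameter time equally, $1/N_k$ to each cell at level $k$, consistently nested. The curve $\eta$ is the uniform limit of the piecewise assignments, and it is continuous because the range diameters shrink. For $|u-v|\approx 1/N_k$, the points lie in at most two adjacent cells at level $k$, which intersect. This gives $|\eta(u)-\eta(v)|\le2\,\mathrm{diam}\le N_k^{-1/(2\theta)+o(1)}$, i.e.\ the $\beta$-H\"older property for all $\beta<1/(2\theta)$. The random constant $H_\beta$ comes from the modulus of continuity of $W$ and the random threshold beyond which the Borel--Cantelli bounds hold. Both have all moments by Gaussian tails together with Proposition~\ref{prop:upper-tail}. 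Uniformity over $x,y$ holds because the bounds concern all dyadic cells simultaneously.

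\textbf{Simple curve.} Finally, when $x\ne y$ we apply Lemma~\ref{lem:holder-arc} with $q=2\theta$ to get a simple curve. Its image is contained in the original image, so the box-dimension and $W(A)$ inclusions persist.
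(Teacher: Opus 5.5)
Once you drop the fixed ratio $b$ and take $b_k=2^{\lceil\sqrt k\rceil}$, your construction is essentially the paper's proof, which uses $r_j=\lceil e^{\sqrt j}\rceil$. The paper refines each kept cell along a shortest path in the intersection graph of its subcells. It controls all cells of a level at once by Proposition~\ref{prop:upper-tail} and Borel--Cantelli, subdivides parameter intervals equally, and uses the Brownian H\"older modulus for spatial diameters. It finishes with Lemma~\ref{lem:holder-arc}.

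The fixed-$b$ route is indeed a dead end: the product of the factors $C\log(b^l)\mu_b$ contains $k!$, which is not absorbed. But the reason you give for choosing $b_k$ is not correct. Writing $P_k=\prod_{l\le k}b_l$, the ratio $\log P_k/\log b_k\asymp k$ is unbounded. The choice works for two other reasons:
\begin{enumerate}
\item the union-bound loss, of order $\log P_k\asymp k^{3/2}$, is $b_k^{o(1)}$, so each step costs $b_k^{\theta+o(1)}$ and the total cost is $P_k^{\theta+o(1)}$;
\item $\log P_{k+1}/\log P_k\to1$, which you need for scales between consecutive levels in both the box-dimension and the H\"older estimates.
\end{enumerate}

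Two steps do not go through as written.

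\textbf{Parametrization.} You cannot give each level-$k$ cell parameter length exactly $1/N_k$ and also keep the assignments nested, because different cells have different numbers of children. Nestedness is what makes the uniform limit exist. With nested equal subdivisions, a level-$k$ parameter interval only has length bounded below by the reciprocal of the product of the per-step caps, times $P_J^{-1}$ for the exceptional levels up to the Borel--Cantelli threshold $J$. This is the paper's $N_J^{-1}b_j$, whose $N_J$ is your $P_J$. The H\"older estimate must compare $|u-v|$ with this lower bound, not with the average length $1/N_k$. The average does not prevent $u,v$ from straddling many short intervals.

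\textbf{Moments of $H_\beta$.} A union bound at level $C\log(P_k)\,\mu_{b_k}$ with fixed $C$ does not give all moments. The threshold enters $H_\beta$ through $P_J^\beta=\exp\{O(J^{3/2})\}$, not through $J$ itself. That union bound makes $\Pp(J\ge k)$ only a fixed negative power of $P_k$, so you get finitely many moments. You need a tail level $t_k$ with $t_k/\log P_k\to\infty$ and still $t_k=b_k^{o(1)}$. The paper takes $t_j=r_j^{\varepsilon_j/2}\ge(j+1)^3$, which gives $\Pp(J\ge j)\le Ce^{-cj^3}$ and hence $\E[N_J^p]<\infty$ for every $p$.
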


\begin{proof}
To make the estimates hold at sufficiently fine levels, set
$r_j=\lceil e^{\sqrt j}\rceil$, $N_0=1$ and
$N_j=\prod_{l=1}^j r_l$. Since $\mu_r=r^{\theta+o(1)}$, choose
deterministic $\varepsilon_j\downarrow0$ such that
\[
 \mu_{r_j}\le r_j^{\theta+\varepsilon_j/2},\qquad
 \varepsilon_j\sqrt j\ge6\log(j+1).
\]
Then $\log N_j\sim(2/3)j^{3/2}$ and
$r_j^{\varepsilon_j/2}\ge(j+1)^3$, so
\begin{equation}\label{eq:upper-scales}
 CN_{j-1}\exp\{-c r_j^{\varepsilon_j/2}\}
       \le C e^{-cj^3}.
\end{equation}
At level $j$, divide each interval of the preceding partition into $r_j$
equal subintervals. Within any fixed interval, their intersection graph
has the law of $G_{r_j}$. Equation~\eqref{eq:upper-tail} and a union bound
over the $N_{j-1}$ intervals show that the probability that any of these
graphs has diameter plus one greater than $r_j^{\theta+\varepsilon_j}$ is
at most $Ce^{-cj^3}$. These probabilities are summable, so the
Borel--Cantelli lemma gives all these bounds for sufficiently large
$j$, on a single event of probability one that does not depend on $x,y$.
Let $J$ be the last level at which a bound fails, with $J=0$ if none
fails. Then $\Pp(J\ge j)\le Ce^{-cj^3}$ and
\begin{equation}\label{eq:curve-failure-moments}
 \E[N_J^p]<\infty\qquad(p>0).
\end{equation}

Fix a Brownian path with $J<\infty$ that is H\"older continuous
with every exponent below $1/2$, and choose $s,t\in[0,1]$ with
$W_s=x$ and $W_t=y$. We define
$f^{(0)}(v)=W_{(1-v)s+vt}$ for $0\le v\le1$,
where $v$ is the parameter of the connecting curve.
At each subsequent step, the parameter interval is divided into
finitely many subintervals, each assigned to a Brownian time cell
so that the corresponding curve segment lies in its range.
Initially the whole parameter interval is assigned to $[0,1]$.
Distinct parameter subintervals may be assigned to the same time
cell, and these assignments are counted separately.

At step $j$, consider a parameter subinterval assigned to a time
cell $I$ of length $N_{j-1}^{-1}$, and denote the values of
$f^{(j-1)}$ at its endpoints by $z,z'$.
Representatives of $z,z'$ are chosen in $I$, together with the
smaller cells containing them. A shortest path in the intersection
graph of the $r_j$ subcells of $I$ determines the subcells to be
used. For each edge, we choose a common point of the two incident
Brownian ranges. Within each visited subcell, the curve follows
$W$ between the chosen points, reversing Brownian time if necessary.
Concatenating these pieces gives a curve from $z$ to $z'$ contained
in $W(I)$. The original parameter subinterval is divided into
equal parts, each assigned to a visited subcell, and the
corresponding curve segments are parametrized on these parts.
Repeating this construction on every parameter subinterval
defines $f^{(j)}$, whose values at all previously chosen division
points remain unchanged.

We first bound the number of parameter subintervals, and hence the
number of time cells used. Let $M_j$ count the parameter subintervals
after step $j$.
At every sufficiently large $j$, each old segment is replaced by at
most $r_j^{\theta+\varepsilon_j}$ new segments, since a shortest graph
path visits one more vertex than its number of edges. Hence
\[
 M_j\le N_J\prod_{l=1}^j r_l^{\theta+\varepsilon_l},
 \qquad
 \limsup_{j\to\infty}\frac{\log M_j}{\log N_j}\le\theta.
\]
The exceptional levels contribute at most $N_J$. The limit follows
from $\varepsilon_l\to0$ and
$\sum_{l\le j}\log r_l=\log N_j$.

The construction also gives nested time sets: if $A_j$ is the union
of the selected closed time cells at level $j$, then
$A_{j+1}\subset A_j$. For a parameter in a level-$j$ curve segment,
every later curve value stays in the Brownian range of its assigned
level-$j$ time cell.
At an endpoint of such a segment, the value is fixed at every later
level.
Consequently, for $l,l'\ge j$,
\[
 \|f^{(l)}-f^{(l')}\|_\infty
 \le\sup_{\substack{s,t\in[0,1]\\|s-t|\le N_j^{-1}}}
 |W_s-W_t|\longrightarrow0.
\]
The limit $f$ is continuous, with endpoints $x,y$. Set
$A=\bigcap_jA_j$. Compactness and nestedness give
\[
 f([0,1])\subset\bigcap_jW(A_j)=W(A).
\]
To pass from the cell count to dimension, let $\mathcal N(A,r)$ be the
least number of sets of diameter at most $r$ needed to cover $A$.
At level $j$, this set is covered by at most
$M_j$ intervals of length $N_j^{-1}$.
If $N_{j+1}^{-1}\le r<N_j^{-1}$, then
$\mathcal N(A,r)\le M_{j+1}$. Since
$\log N_{j+1}/\log N_j\to1$, the bound on $M_j$ gives
\[
 \dimUB A=\limsup_{r\downarrow0}
 \frac{\log\mathcal N(A,r)}{\log(1/r)}
 \le\limsup_{j\to\infty}\frac{\log M_{j+1}}{\log N_j}
 \le\theta.
\]

We use the equal divisions of the curve parameter to control its
H\"older regularity. Write
\[
 b_j=\prod_{l=1}^j r_l^{-(\theta+\varepsilon_l)},\qquad b_0=1.
\]
Each level-$j$ parameter interval has length at least
$N_J^{-1}b_j$. Indeed, at each level $l$ where the diameter
bound holds, an interval is divided into at most
$r_l^{\theta+\varepsilon_l}$ subintervals, while the product
of the subdivision factors at the exceptional levels is
at most $N_J$.
Also $b_j=N_j^{-\theta-o(1)}$.

Let $[W]_\xi$ be the H\"older seminorm on $[0,1]$, where $0<\xi<1/2$.
For parameter times $u,v$ with $|u-v|\le N_J^{-1}b_j$, the interval
between them is contained in the union of at most two adjacent
level-$j$ parameter intervals. Their images lie in the ranges of
the assigned Brownian time cells, so
\[
 |f(u)-f(v)|\le2[W]_\xi N_j^{-\xi}.
\]
Fix $0<\beta<1/(2\theta)$ and choose $\beta\theta<\xi<1/2$.
Since $b_j=N_j^{-\theta-o(1)}$ and
$\log N_{j+1}/\log N_j\to1$, we have
\[
 C_\beta:=
 \max\left\{1,\,
 2\sup_{j\ge0}N_j^{-\xi}b_{j+1}^{-\beta}\right\}
 <\infty.
\]
Choosing $j$ with
$N_J^{-1}b_{j+1}<|u-v|\le N_J^{-1}b_j$
therefore yields
\begin{equation}\label{eq:curve-holder}
 |f(u)-f(v)|\le K_\beta|u-v|^\beta,\qquad
 K_\beta=C_\beta[W]_\xi N_J^\beta.
\end{equation}
For $|u-v|>N_J^{-1}$, the same bound follows from
$\operatorname{diam}K\le[W]_\xi$.
Gaussian increment moments and dyadic chaining give every positive
moment of $[W]_\xi$; together with
\eqref{eq:curve-failure-moments} and the Cauchy--Schwarz inequality,
this gives $\E[K_\beta^p]<\infty$ for every $p>0$.
The bound $K_\beta$ is independent of $x,y$, and the construction
of $f$ does not depend on $\beta$.
These H\"older bounds also give
$\dimUB f([0,1])\le2\theta$.

It remains to obtain a simple curve when $x\ne y$.
The H\"older bounds above imply that
$\|f\|_{p\text{-var}}<\infty$ for every $p>2\theta$.
Fix a deterministic sequence $p_n\downarrow2\theta$ with $p_n>2\theta$.
Lemma~\ref{lem:holder-arc} gives a simple curve $\eta$ with
$\eta(0)=x$, $\eta(1)=y$ and
$\eta([0,1])\subset f([0,1])\subset W(A)$.
Its upper box dimension is therefore at most $2\theta$.
For each $n$, choose a deterministic exponent
$1/p_n<\beta_n<1/(2\theta)$. Since
$\|f\|_{p_n\text{-var}}\le[f]_{\beta_n}\le K_{\beta_n}$,
the estimate in Lemma~\ref{lem:holder-arc} gives
\[
 [\eta]_{1/p_n}
 \le\bigl[2^n(1+K_{\beta_n}^{p_n})\bigr]^{1/p_n}.
\]
For every $0<\beta<1/(2\theta)$, there is $n$ with
$\beta<1/p_n$. As the parameter interval is $[0,1]$,
the same bound applies to $[\eta]_\beta$. It is uniform over
$x,y$ and has moments of every positive order.
When $x=y$, take the constant curve.
\end{proof}

The random variable $\mathfrak p_3(W)$ is measurable with respect
to the completion of the sigma-field generated by $W$.
Indeed, for continuous paths $w,\eta$, the conditions
$\eta(0)=w(0)$, $\eta(1)=w(1)$ and
$\eta([0,1])\subset w([0,1])$ define a closed subset of the
product path space. The map
$\eta\mapsto\dimH\eta([0,1])$ is Borel measurable, as can be
seen from finite covers by balls with rational centres and radii.
For each real $a$, the set of paths $w$ admitting such a curve
$\eta$ with $\dimH\eta([0,1])<a$ is therefore the projection
of a Borel set. This projection is analytic and hence measurable
with respect to the completion of Wiener measure
\cite[Exercise 14.3 and Theorem 21.10]{Kechris}.

\subsection{Compatible H\"older functions}\label{subsec:holder-functions}

To prove $\mathfrak p_3(W)\ge2\theta$, we use $\alpha$-H\"older
functions on Brownian time, with $0<\alpha<\theta$, that take equal
values whenever $W$ visits the same spatial point. We first derive a
dimension lower bound from such functions and then construct one that
separates $W_0$ and $W_1$ with positive probability.
A function $u:[0,T]\to\R$ is compatible with a continuous path
$w:[0,T]\to\R^3$ if
\begin{equation}\label{eq:compatible}
 w(s)=w(t)\quad\Longrightarrow\quad u(s)=u(t).
\end{equation}
For a compatible function $u$, define $\bar u:w([0,T])\to\R$ by
$\bar u(x)=u(t)$ for any $t\in[0,T]$ with $w(t)=x$.
Equation~\eqref{eq:compatible} ensures that this does not depend
on the choice of $t$. If $u$ is continuous, then so is $\bar u$.
Indeed, for any closed set $F\subset\R$,
\[
 \bar u^{-1}(F)=w(u^{-1}(F)),
\]
and the right-hand side is compact, hence closed in $w([0,T])$.

On the event \eqref{eq:kaufman}, a compatible $\alpha$-H\"older
function whose induced function $\bar u$ is nonconstant on a connected
set $C\subset K$ forces $\dimH C\ge2\alpha$:
\begin{lemma}\label{lem:dimension-test}
Suppose \eqref{eq:kaufman} holds, $0<\alpha<1$, and
$u:[0,1]\to\R$ is compatible with $W$ and $\alpha$-H\"older.
Let $\bar u(W_t)=u(t)$. If $C\subset K$ is connected and
$\bar u$ is nonconstant on $C$, then
\begin{equation}\label{eq:dimension-test}
 \dimH C\ge2\alpha.
\end{equation}
\end{lemma}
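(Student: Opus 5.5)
Set $A=\{t\in[0,1]:W_t\in C\}=W^{-1}(C)\cap[0,1]$, so that $W(A)=C$ because $C\subset K=W([0,1])$. The plan has two steps. First show $\dimH A\ge\alpha$. Then invoke the simultaneous dimension doubling \eqref{eq:kaufman}, which applies to every subset of $[0,1]$ including the path-dependent set $A$, to get $\dimH C=\dimH W(A)=2\dimH A\ge2\alpha$. Since $A$ need not be Borel, it matters that \eqref{eq:kaufman} is stated for arbitrary $A\subset[0,1]$ on a single full-measure event.

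For the first step, use connectedness. The map $\bar u$ is continuous on $K$, as shown just before the lemma from compatibility and continuity of $u$. Hence $\bar u(C)$ is a connected subset of $\R$, and it is an interval. Because $\bar u$ is nonconstant on $C$, this interval is nondegenerate and therefore has $\dimH\bar u(C)=1$. By definition of $\bar u$, we have $\bar u(C)=\bar u(W(A))=u(A)$.

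Next use the H\"older bound. An $\alpha$-H\"older map does not increase dimension beyond a factor $1/\alpha$: if $A\subset\bigcup_k U_k$ with $\diam U_k\le\delta$, then $u(A)\subset\bigcup_k u(U_k)$ with $\diam u(U_k)\le[u]_\alpha(\diam U_k)^\alpha$. This gives $\cH^{s/\alpha}(u(A))\le[u]_\alpha^{s/\alpha}\cH^{s}(A)$, and hence $\dimH u(A)\le\dimH A/\alpha$. Combining this with $\dimH u(A)=1$ yields $\dimH A\ge\alpha$, and the Kaufman step above finishes the proof.

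There is no real obstacle here. The only points needing care are the continuity of $\bar u$ and the applicability of \eqref{eq:kaufman} to the random, possibly non-Borel set $A$, and both are already supplied by the preceding text.
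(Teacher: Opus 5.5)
Your proposal is correct and follows essentially the same route as the paper. The paper also sets $A=\{t\in[0,1]:W_t\in C\}$, observes that $u(A)=\bar u(C)$ is a nondegenerate interval, derives $\dimH A\ge\alpha$ from the H\"older dimension bound, and concludes with the simultaneous Kaufman theorem \eqref{eq:kaufman}; the only difference is that you verify $\dimH u(A)\le\dimH A/\alpha$ directly from coverings, where the paper cites M\"orters--Peres.
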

\begin{proof}
Set $A=\{t\in[0,1]:W_t\in C\}$. Since $C$ is connected and
$\bar u$ is continuous and nonconstant on $C$, the image
$u(A)=\bar u(C)$ is a nondegenerate interval.
The dimension bound for H\"older maps
\cite[Remark 4.12]{MortersPeres} gives
\[
 1=\dimH u(A)\le\frac{\dimH A}{\alpha}.
\]
As $W(A)=C$, equation~\eqref{eq:kaufman} yields
$\dimH C=2\dimH A\ge2\alpha$.
\end{proof}

We can apply this criterion once a compatible function separates the
endpoints. The following proposition provides such a function with
positive probability by taking limits of normalized distances from the
initial cell.
\begin{proposition}\label{prop:scalar-seed}
For every $0<\alpha<\theta$ there are deterministic constants
$c_0,M,p_0>0$ such that, with probability at least $p_0$,
there exists a continuous function $u:[0,1]\to\R$ compatible
with $W$ and satisfying
\begin{equation}\label{eq:scalar-seed}
 u(0)=0,\qquad u(1)\ge c_0,\qquad [u]_{\alpha;[0,1]}\le M.
\end{equation}
\end{proposition}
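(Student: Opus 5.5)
The plan is to take the normalized graph distance from the initial cell along the record sequence of Lemma~\ref{lem:records}, extract a subsequential limit in distribution, and show that this limit is compatible, $\alpha$-H\"older with good moments, and nonconstant with positive probability. Fix $0<\alpha<\theta$ and pick $\rho$ with $\alpha<\rho<\theta$. For each $j$ define $D_j(t)=d_{G_{n_j}}(0,\iota_{n_j}(t))/\kappa_j$ for $t\in[0,1]$, and let $u_j$ be its piecewise-linear interpolation between the grid points $i/n_j$. Adjacent vertices have distances from $0$ differing by at most one, so $u_j$ differs from $D_j$ by at most $1/\kappa_j\to0$.

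\textbf{Step 1: increment bound.} For $s<t$ with $t-s\approx l/n_j$, the distance from cell $\iota(s)$ to cell $\iota(t)$ is at most $Y$ of the consecutive block of $l+1$ cells between them. By \eqref{eq:unit-scaling} this block maximum has the law of $Y_{l+1}$. Proposition~\ref{prop:upper-tail} then gives all moments of order $\mu_{l+1}$, and \eqref{eq:record-local} bounds this by $C_\rho\kappa_j(l/n_j)^\rho$. Hence $\E|u_j(t)-u_j(s)|^p\le C_p|t-s|^{p\rho}$ uniformly in $j$; for $|t-s|<1/n_j$ the linear interpolation handles the bound trivially. Kolmogorov's continuity criterion, or Garsia--Rodemich--Rumsey, gives $\sup_j\E[u_j]_\alpha^p<\infty$. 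So $(u_j)$ is tight in $C([0,1])$. Together with $W$ we get joint tightness of $(W,u_j)$, and we pass to a subsequence with $(W,u_j)\to(W,u)$ in distribution. By Skorokhod's representation we may realize this convergence almost surely on some space. Afterwards we can recover a version that is a function of the original $W$, or simply state the result for the law of $(W,u)$, whose first marginal is Wiener measure. That suffices, since the event in \eqref{eq:scalar-seed} concerns the existence of $u$; measurability of that existence event can be handled by the analytic-set argument used for $\mathfrak p_3$.

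\textbf{Step 2: compatibility.} If $W_s=W_t$, then the cells containing $s$ and $t$ are adjacent or equal in $G_{n_j}$, so $|D_j(s)-D_j(t)|\le1/\kappa_j$. This is a closed condition, but the limit must be compatible at all pairs with $W_s=W_t$, including pairs that are only approximately coincident in the prelimit. I would use the stronger quantitative statement: for every $\delta>0$, whenever $|W_s-W_t|\le\delta$ fails to be small enough to force intersection, nothing is claimed. Instead, using continuity, consider the closed set $\{(w,v):v(s)=v(t)\text{ whenever }w(s)=w(t)\}$. It is not closed, so the plan is to use $\{(w,v): |v(s)-v(t)|\le\varepsilon\text{ whenever }w([s-\delta,s+\delta])\cap w([t-\delta,t+\delta])\ne\varnothing\}$. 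This set is closed. The prelimit satisfies it for $1/n_j<\delta$ with $\varepsilon=2\omega_{u_j}(2\delta)+1/\kappa_j$, where $\omega$ is the modulus of continuity, and the H\"older moment bounds control it uniformly. Passing to the limit and letting $\delta\to0$ gives compatibility almost surely. This is the main obstacle: compatibility is not a closed property, and the $\delta$-fattened version above is what I would try first.

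\textbf{Step 3: nondegeneracy and conclusion.} We have $u(0)=0$, since $D_j(0)=0$. By \eqref{eq:record-positive}, $\E u_j(1)\ge c_*$ up to $o(1)$. Uniform integrability, from the second moments bounded by $C\mu_{n_j}^2/\kappa_j^2$, gives $\E u(1)\ge c_*$ with $\E u(1)^2\le C$. Paley--Zygmund then gives $\Pp(u(1)\ge c_*/2)\ge c_*^2/(4C)$. Markov's inequality on $[u]_\alpha$ gives $\Pp([u]_\alpha>M)\le c_*^2/(8C)$ for large $M$. Taking $c_0=c_*/2$ and $p_0=c_*^2/(8C)$ yields \eqref{eq:scalar-seed}.
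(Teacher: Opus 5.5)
Your Steps~1 and~3 are sound. The short-increment case in Step~1 is not quite trivial: it needs $\kappa_j\ge c_\rho n_j^\rho$, which is \eqref{eq:record-local} with $l=1$. Step~2, however, has a genuine gap, exactly at the point you flagged. The set
\[
 S=\bigl\{(w,v):\ |v(s)-v(t)|\le\varepsilon\ \text{whenever}\ w([s-\delta,s+\delta])\cap w([t-\delta,t+\delta])\ne\varnothing\bigr\}
\]
is not closed. Nonempty intersection of two images survives uniform limits of $w$ but is destroyed by small perturbations, so an implication with this hypothesis is not preserved in the limit. For example, let $w(s_0)=w(t_0)$ with $|s_0-t_0|>2\delta$, and approximate $w$ uniformly by polygonal paths with vertices in general position; these are simple curves in $\R^3$. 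Pair them with a fixed $v$ whose oscillation over time windows of length $2\delta$ is at most $\varepsilon$, but with $|v(s_0)-v(t_0)|>\varepsilon$. This gives points of $S$ converging to a point outside $S$. In your Skorokhod coupling you only know $W^{(j)}\to W$ uniformly. That alone provides no exact double points of $W^{(j)}$ near a double point of $W$, so the portmanteau argument does not yield compatibility of the limit $u$.

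The idea you are missing is that no limit in distribution is needed. The paper stays on the original probability space with the single path $W$. Paley--Zygmund for $u_j(1)$ at each finite $j$, together with the uniform H\"older moments, gives $\Pp(E_j)\ge p_1/2$ for
\[
 E_j=\{u_j(1)\ge c_0,\ [u_j]_{\alpha;[0,1]}\le M\},
\]
hence $\Pp(\limsup_j E_j)\ge p_1/2$. On each path in $\limsup_j E_j$, Arzel\`a--Ascoli gives a path-dependent subsequence converging uniformly. Because the same $W$ is used at every index, the bound $|u_j(s)-u_j(t)|\le3/\kappa_j$, valid simultaneously for all $s,t$ with $W_s=W_t$, passes directly to the limit.

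If you want to keep your distributional route, carry the self-intersection relation $\mathcal I_{[0,1]}(W)$ as an extra coordinate with the Hausdorff metric, as in Proposition~\ref{prop:local-field}. The joint law of the path and its relation does not depend on $j$, so in the Skorokhod copy the limiting set is $\mathcal I_{[0,1]}(W)$ almost surely. It then supplies $s_j\to s$ and $t_j\to t$ with $W^{(j)}_{s_j}=W^{(j)}_{t_j}$, and uniform convergence of $u_j$ to the continuous $u$ gives $u(s)=u(t)$. Alternatively, every $W^{(j)}$ is itself Brownian and converges to $W$, so Lusin's theorem applied to the Borel map $w\mapsto\mathcal I_{[0,1]}(w)$ gives this Hausdorff convergence in probability.
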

\begin{proof}
Fix $\alpha<\rho<\theta$ and take $(n_j)$ and $(\kappa_j)$ from
Lemma~\ref{lem:records}. For each $j$, define a piecewise linear
function $u_j:[0,1]\to\R$ by
\[
 u_j(i/n_j)=\kappa_j^{-1}d_{G_{n_j}}(0,i),\quad 0\le i<n_j,
 \qquad u_j(1)=u_j((n_j-1)/n_j).
\]
Note that $u_j(0)=0$ and $u_j(1)=L_{n_j}/\kappa_j$.

For integers $0\le a<b<n_j$, the graph triangle inequality gives
\[
 |u_j(b/n_j)-u_j(a/n_j)|
 \le\kappa_j^{-1}d_{G_{n_j}}(a,b).
\]
The graph distance on the right is at most the diameter of the
subgraph induced by the consecutive vertices $a,\ldots,b$.
By Brownian scaling and translation invariance, the interval
maximum for this restriction has the law of $Y_{b-a+1}$.
Proposition~\ref{prop:upper-tail} and \eqref{eq:record-local}
therefore give, for every $p\ge1$,
\[
 \|u_j(b/n_j)-u_j(a/n_j)\|_{L^p}
 \le C_{p,\rho}\left(\frac{b-a+1}{n_j}\right)^\rho.
\]
For $s,t\in[0,1]$ with $h=|t-s|\ge n_j^{-1}$, linear
interpolation expresses $u_j(s)$ and $u_j(t)$ as convex combinations
of values $u_j(i/n_j)$ from a block of at most
$\min\{n_j,\lceil n_jh\rceil+3\}$ consecutive vertices.
The preceding estimate therefore gives
\[
 \|u_j(t)-u_j(s)\|_{L^p}\le C_{p,\rho}h^\rho.
\]
If $h<n_j^{-1}$, consecutive vertices of $G_{n_j}$ are adjacent,
so the linear interpolation has Lipschitz constant at most
$n_j/\kappa_j$. Equation~\eqref{eq:record-local} with $l=1$
gives $\kappa_j\ge c_\rho n_j^\rho$, and hence
\[
 |u_j(t)-u_j(s)|\le C_\rho n_j^{1-\rho}h
 \le C_\rho h^\rho.
\]
We have proved, for all $s,t\in[0,1]$ and $p\ge1$,
\begin{equation}\label{eq:scalar-increments}
 \E|u_j(t)-u_j(s)|^p\le C_{p,\rho}|t-s|^{p\rho},
\end{equation}
with constants independent of $j$.

At dyadic level $k$, equation~\eqref{eq:scalar-increments} gives
\[
 \left\|\max_{0\le l<2^k}
 |u_j((l+1)2^{-k})-u_j(l2^{-k})|\right\|_{L^p}
 \le C_{p,\rho}2^{-k(\rho-1/p)}.
\]
Dyadic approximation of pairs of times and telescoping of the
increments bound $[u_j]_{\alpha;[0,1]}$ by a constant times
\[
 \sum_{k\ge0}2^{k\alpha}
 \max_{0\le l<2^k}
 |u_j((l+1)2^{-k})-u_j(l2^{-k})|.
\]
Choosing $p>1$ with $\rho-1/p>\alpha$ and applying Minkowski's
inequality, we obtain
\[
 \sup_j\E\bigl[[u_j]_{\alpha;[0,1]}^p\bigr]<\infty.
\]
Moreover, \eqref{eq:record-positive} and the bound on second moments
from Proposition~\ref{prop:upper-tail} give
\[
 \E u_j(1)\ge c_*>0,\qquad \sup_j\E[u_j(1)^2]<\infty.
\]
The Paley--Zygmund inequality therefore gives deterministic
$c_0,p_1>0$ such that $\Pp(u_j(1)\ge c_0)\ge p_1$ for every $j$.
By the uniform moment bound on $[u_j]_{\alpha;[0,1]}$, we may
choose a deterministic $M<\infty$ such that
\[
 \Pp\bigl(u_j(1)\ge c_0,\ [u_j]_{\alpha;[0,1]}\le M\bigr)
 \ge p_1/2\qquad(j\ge1).
\]
For $j\ge1$, set
$E_j=\{u_j(1)\ge c_0,\ [u_j]_{\alpha;[0,1]}\le M\}$.
Then $\Pp(\limsup_j E_j)\ge\limsup_j\Pp(E_j)\ge p_1/2$.
On each path in $\limsup_j E_j$, the functions from the
corresponding subsequence vanish at $0$, have H\"older seminorm
at most $M$, and are therefore uniformly bounded and
equicontinuous. By the Arzel\`a--Ascoli theorem
\cite[Theorem 7.25]{Rudin}, a further subsequence converges
uniformly to a function $u$ satisfying \eqref{eq:scalar-seed}.
The subsequence may depend on $W$.

It remains to verify compatibility. If $W_s=W_t$, the cells with
indices $\iota_{n_j}(s)$ and $\iota_{n_j}(t)$ have intersecting
Brownian ranges and hence are either identical or adjacent in
$G_{n_j}$. Consecutive grid values of $u_j$ differ by at most $1/\kappa_j$.
Thus, by linear interpolation and the definition of $u_j(1)$,
\[
 |u_j(r)-u_j(\iota_{n_j}(r)/n_j)|\le1/\kappa_j
 \qquad(r\in[0,1]).
\]
The graph triangle inequality now gives
\[
 |u_j(s)-u_j(t)|\le3/\kappa_j\longrightarrow0.
\]
This estimate holds simultaneously for all $s,t\in[0,1]$ with
$W_s=W_t$. Since $u_j$ converges uniformly along the selected
subsequence, the limit $u$ is compatible with $W$.
Taking $p_0=p_1/2$ proves the proposition.
\end{proof}

\subsection{Pseudometrics and compatible H\"older functions}\label{subsec:holder-chains}

Before applying a zero--one law, we define a pseudometric by taking
the infimum of costs over finite chains. We use it to express the
existence of compatible H\"older functions as a measurable event in
Section~\ref{subsec:scalar-zero-one}.
For $T>0$, $0<\alpha<1$ and $w\in C([0,T],\R^3)$, put
\begin{equation}\label{eq:holder-chain}
 \delta_{\alpha,T}^w(s,t)=
 \inf\left\{\sum_{l=1}^m|a_l-b_l|^\alpha:
 \begin{array}{l}
 m\ge1,\quad a_1=s,\quad b_m=t,\quad a_l,b_l\in[0,T],\\
 w(b_l)=w(a_{l+1})\quad(1\le l<m)
 \end{array}\right\}.
\end{equation}
The times in a chain need not be in increasing order. We write
$\delta_\alpha^w=\delta_{\alpha,1}^w$.

The following lemma expresses the pseudometric in terms of
compatible H\"older functions and establishes its measurability.
\begin{lemma}\label{lem:chain-dual}
The function $\delta_{\alpha,T}^w$ is a continuous pseudometric
on $[0,T]$. It satisfies
$\delta_{\alpha,T}^w(s,t)\le|s-t|^\alpha$ and
$\delta_{\alpha,T}^w(s,t)=0$ whenever $w(s)=w(t)$, and is the
largest pseudometric with these properties. Moreover,
\begin{equation}\label{eq:chain-dual}
 \delta_{\alpha,T}^w(s,t)
 =\sup\{|u(s)-u(t)|:[u]_{\alpha;[0,T]}\le1,\ u\text{ compatible with }w\}.
\end{equation}
The map $(w,s,t)\mapsto\delta_{\alpha,T}^w(s,t)$ is Borel measurable
on $C([0,T],\R^3)\times[0,T]^2$, where the path space has the uniform norm.
\end{lemma}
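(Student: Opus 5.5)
Write $\delta=\delta_{\alpha,T}^w$. The plan is to establish, in order, the pseudometric and maximality properties, the duality formula \eqref{eq:chain-dual}, continuity, and finally measurability.

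\textbf{Pseudometric properties and maximality.} Symmetry comes from reversing a chain: replace each pair $(a_l,b_l)$ by $(b_l,a_l)$ and reverse the order of the pairs. This preserves both the cost and the linking conditions. The triangle inequality comes from concatenating a chain from $s$ to $r$ with a chain from $r$ to $t$. The junction condition holds there because the last $b$ of the first chain and the first $a$ of the second chain both equal $r$. The one-link chain $(s,t)$ gives $\delta(s,t)\le|s-t|^\alpha$. If $w(s)=w(t)$, the two-link chain $(s,s),(t,t)$ has cost $0$, so $\delta(s,t)=0$.

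For maximality, let $d$ be any pseudometric with these two properties and take an arbitrary chain. The triangle inequality gives
\[
d(s,t)\le\sum_l d(a_l,b_l)+\sum_l d(b_l,a_{l+1})\le\sum_l|a_l-b_l|^\alpha,
\]
since each $d(b_l,a_{l+1})$ vanishes. Taking the infimum over chains gives $d\le\delta$.

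\textbf{Continuity.} From the triangle inequality, $|\delta(s,t)-\delta(s',t')|\le|s-s'|^\alpha+|t-t'|^\alpha$. So $\delta$ is itself $\alpha$-H\"older in each variable.

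\textbf{Duality formula \eqref{eq:chain-dual}.} First, if $u$ is compatible with $w$ and $[u]_{\alpha;[0,T]}\le1$, then $d(s,t)=|u(s)-u(t)|$ is a pseudometric with the two properties. Maximality then gives $|u(s)-u(t)|\le\delta(s,t)$.

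Conversely, fix $s$ and set $u(t)=\delta(s,t)$. This attains the supremum, since $u(t)-u(s)=\delta(s,t)$. The reverse triangle inequality gives $|u(t)-u(t')|\le\delta(t,t')\le|t-t'|^\alpha$, so $[u]_{\alpha;[0,T]}\le1$. If $w(t)=w(t')$, then $\delta(t,t')=0$, so $u(t)=u(t')$ and $u$ is compatible.

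\textbf{Measurability.} This is the step I expect to be the main obstacle. The constraint $w(b_l)=w(a_{l+1})$ is closed in $(w,a,b)$, but the infimum over a closed, non-open constraint set need not be semicontinuous in $w$. I would therefore aim to show that $w\mapsto\delta^w(s,t)$ is lower semicontinuous, which for fixed $m$ would come from compactness of $[0,T]^{2m}$.

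\emph{Bounding the chain length.} The number of links $m$ is unbounded, so I first restrict to chains of bounded cost. Chains with cost at most $T^\alpha+1$ can be shortened as follows. Wherever some $|a_l-b_l|$ is tiny, replace $a_l$ by $b_l$; this changes cost only slightly in total, but the linking condition is not preserved. The cleaner route is the following.

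\emph{Lower semicontinuity.} Take $w_k\to w$ uniformly and chains $\gamma_k$ for $w_k$ with $\operatorname{cost}(\gamma_k)\le\liminf_k\delta^{w_k}(s,t)+\epsilon$. The goal is to produce a chain for $w$ of no larger cost, up to $\epsilon$.
\begin{itemize}
\item Links with $|a_l-b_l|\ge\eta$ number at most $\operatorname{cost}/\eta^\alpha$.
\item Short links, of length less than $\eta$, are grouped into maximal runs between long links. Within a run, the $\delta^{w_k}$-displacement between its endpoints is at most the run's cost.
\end{itemize}
So each $\gamma_k$ reduces to a bounded number of long links, alternating with runs of small total cost. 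Passing to a subsequence, the endpoints of these long links converge. The linking conditions persist in the limit where long links are adjacent. Where a run intervenes, I replace it by a direct jump between its endpoints, whose $\delta^w$-cost is controlled by the run's cost.

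That last control requires continuity of $\delta$ in $w$ for short-range distances, which is circular as stated. Instead I would fall back on the dual formula. The supremum in \eqref{eq:chain-dual} can be taken over a countable dense family of functions $u$; alternatively, the condition $\delta^w(s,t)>c$ is analytic, being the projection of a closed set of pairs $(w,u)$. This gives universal measurability, and joint Borel measurability should then follow by expressing $\delta^w$ as a monotone limit of Borel approximations. One such approximation replaces the exact linking condition by $|w(b_l)-w(a_{l+1})|\le1/k$ with rational times, which is Borel; I would then check that the limit as $k\to\infty$ recovers $\delta^w$, using compactness together with the H\"older continuity established above.
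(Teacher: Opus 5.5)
The first four parts are correct and match the paper: the pseudometric properties, maximality, continuity, and the duality formula \eqref{eq:chain-dual}. The paper telescopes directly along chains for both the duality bound and maximality, whereas you derive the duality bound from maximality; the argument is the same.

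\textbf{The gap is in the measurability step, which you never close.} You observe correctly that for a fixed number $m$ of pairs, compactness makes the minimal cost $\delta_m^w(s,t)$ lower semicontinuous in $(w,s,t)$. Consider the set of $(w,s,t,a,b)$ with $a_1=s$, $b_m=t$, $w(b_l)=w(a_{l+1})$ and cost at most $c$. It is closed, because evaluation $(w,r)\mapsto w(r)$ is jointly continuous. Its projection along the compact factor $[0,T]^{2m}$ is therefore closed.

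You then abandon this and try to prove lower semicontinuity of $\delta^w$ itself, which forces you to control unbounded chain lengths. That is unnecessary, because the lemma only asserts Borel measurability. The key point is
\[
\delta_{\alpha,T}^w(s,t)=\inf_{m\ge1}\delta_m^w(s,t),
\]
a countable infimum of lower semicontinuous, hence Borel, functions. This observation is the paper's entire measurability argument, and it is what your proposal is missing.

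None of your fallbacks substitutes for it:
\begin{enumerate}
\item A countable dense family of test functions cannot be fixed independently of $w$, since admissibility in \eqref{eq:chain-dual} depends on $w$ through compatibility.
\item The set of compatible pairs $(w,u)$ is not closed. A uniform limit of paths can acquire self-intersections that the approximants lack, as noted in Section~\ref{subsec:joint-limits}. In any case, a projection argument gives only analytic sets and universal measurability, not the Borel measurability claimed.
\item The relaxed constraint $|w(b_l)-w(a_{l+1})|\le1/k$ does give Borel approximations increasing in $k$. But identifying their limit with $\delta^w$ requires exchanging $\lim_k$ with $\inf_m$. That is exactly the unbounded-chain-length difficulty you had already called circular, and you leave it unchecked.
\end{enumerate}
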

\begin{proof}
Reversing and concatenating chains prove symmetry and the triangle
inequality; a pair $(s,s)$ gives zero distance on the diagonal.
A single pair gives $\delta_{\alpha,T}^w(s,t)\le|s-t|^\alpha$,
which implies continuity by the triangle inequality. When $w(s)=w(t)$,
the two pairs $(s,s),(t,t)$ have total cost zero. For any function in
the supremum in \eqref{eq:chain-dual}, telescoping along a chain gives
$|u(s)-u(t)|\le\delta_{\alpha,T}^w(s,t)$. Conversely,
$u(v)=\delta_{\alpha,T}^w(s,v)$ is compatible and has H\"older seminorm
at most one, so it gives equality. The same telescoping argument for
any pseudometric bounded by $|s-t|^\alpha$ and zero on equal path
positions proves maximality.
For measurability, fix $m\ge1$. For each $c\in\R$, the set of
$(w,s,t)$ admitting a chain with $m$ pairs and total cost at most $c$
is closed. Indeed, the time parameters belong to the compact set
$[0,T]^{2m}$, and both the constraints and the cost are continuous
in these parameters and in $(w,s,t)$. Thus the minimum cost among
chains with $m$ pairs is Borel measurable. Taking the infimum over
$m\ge1$ proves the last assertion.
\end{proof}

In particular, the existence event in Proposition~\ref{prop:scalar-seed}
is $\{\delta_\alpha^W(0,1)\ge c_0/M\}$, so it is measurable.
To require a function to equal one on an entire terminal interval,
for $0<t\le T$ define
\[
  b_{\alpha,T}^w(t)=\inf_{r\in[t,T]}\delta_{\alpha,T}^w(0,r).
\]
Allowing the final time of the chain to vary over the compact interval
$[t,T]$ in the preceding proof shows that $b_{\alpha,T}^w(t)$ is Borel.
Write $b=b_{\alpha,T}^w(t)$. Then $b>0$ if and only if there is a compatible
$\alpha$-H\"older function $v:[0,T]\to[0,1]$ with $v(0)=0$ and $v=1$ on
$[t,T]$. Indeed, such a function with H\"older constant $H$ gives $b\ge1/H$
by \eqref{eq:chain-dual}. Conversely, if $b>0$, take
$v(s)=\min\{1,\delta_{\alpha,T}^w(0,s)/b\}$. Denote this existence event
for $w=W$ by $A_\alpha^T(t)$.

Similarly, let $A_\alpha(t)$ be the event that there is a compatible function
$v:[0,\infty)\to[0,1]$ with a finite global $\alpha$-H\"older
constant, $v(0)=0$, and $v=1$ on $[t,\infty)$.
To verify measurability, note that the pseudometrics
$\delta_{\alpha,M}^W$ decrease as the integer $M$ increases, because
more time pairs are allowed. For fixed $s,u\ge0$, let $M\to\infty$ through integers with
$M\ge\max\{s,u\}$, and write the limit as
$\delta_{\alpha,\infty}^W(s,u)$. It is a pseudometric bounded by
$|s-u|^\alpha$, and
\[
 \inf_{u\ge t}\delta_{\alpha,\infty}^W(0,u)
 =\inf_{M\in\N,\,M\ge t}b_{\alpha,M}^W(t).
\]
If this value is $b_\infty>0$, the function
$v(s)=\min\{1,\delta_{\alpha,\infty}^W(0,s)/b_\infty\}$ has the
required properties. Conversely, any such function with H\"older constant
$H$ gives $b_{\alpha,M}^W(t)\ge1/H$ for every integer $M\ge t$.
Consequently,
\begin{equation}\label{eq:infinite-plateau}
 A_\alpha(t)=
 \left\{\inf_{M\in\N,\,M\ge t}b_{\alpha,M}^W(t)>0\right\}.
\end{equation}
Thus $A_\alpha(t)$ is measurable.

\subsection{Transience and Blumenthal's zero--one law}\label{subsec:scalar-zero-one}

Proposition~\ref{prop:scalar-seed} gives a compatible H\"older function
with positive probability. We use the following avoidance lemma and
Brownian scaling to prove almost sure existence.
\begin{lemma}\label{lem:escape}
On an event of probability one, for every $0\le T<\infty$,
the complement of $W([0,T])$ in $\R^3$ is connected.
For every such $T$ and every $x\notin W([0,T])$, an independent
Brownian motion started at $x$ has positive probability of avoiding
$W([0,T])$ for all future times.
In particular, for deterministic $0<t<1$,
\begin{equation}\label{eq:escape}
 \Pp\bigl(W([1,\infty))\cap W([0,t])=\varnothing
                       \mid\mathcal F_1\bigr)>0\quad\text{a.s.}
\end{equation}
\end{lemma}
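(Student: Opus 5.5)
The plan has three steps, one for each assertion of Lemma~\ref{lem:escape}: a polarity fact for the first, a transience-based escape estimate for the second, and the Markov property for \eqref{eq:escape}.

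\emph{Connectedness of the complement.} Fix $T$ and work on the almost sure event where $W$ is continuous. The set $K_T=W([0,T])$ is compact, so its complement has exactly one unbounded component. It remains to rule out bounded components. The key input is that in $\R^3$ a fixed Brownian path does not separate space. We use the classical fact that two-dimensional Lebesgue measure of $K_T$ vanishes, since $\dimH K_T=2$ but $\cH^2(K_T)=0$ in $d=3$~\cite[Theorem 4.33 and Corollary 4.40 type results]{MortersPeres}. Together with the fact that a compact set $F\subset\R^3$ with $\cH^{2}(F)=0$ cannot separate $\R^3$, this already gives the claim. To make it simultaneous in $T$, note that the property for $T$ follows from the property for any rational $T'\ge T$ only in the wrong direction. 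So we instead apply the non-separation fact directly to each $K_T\subset K_{T'}$: $\cH^2(K_T)\le\cH^2(K_{N})=0$ for integers $N\ge T$, which holds on a single null-complement event. We check the non-separation fact for $\cH^2(F)=0$ by projection. For $x,y\notin F$, almost every line segment, or a two-segment polygonal path through a generic intermediate point, misses $F$, because the radial projection of $F$ from $x$ onto a sphere has $\cH^2$-measure zero.

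\emph{Escape with positive probability.} Let $B$ be an independent Brownian motion from $x\notin K_T$. Since $K_T$ is compact and $x\notin K_T$, there is $\varepsilon>0$ with $B(x,2\varepsilon)\cap K_T=\varnothing$. By transience in $\R^3$, $B$ eventually leaves $B(\mathbf0,R)$ forever for $R$ large enough that $K_T\subset B(\mathbf0,R/2)$. The event we need is that $B$ travels from $x$ to $\partial B(\mathbf0,R)$ within an $\varepsilon$-tube around a polygonal path $\pi$ avoiding $K_T$, and afterwards never returns to $B(\mathbf0,R/2)$. Such a $\pi$ exists by the first step: choose it from $x$ to a point of $\partial B(\mathbf0,R)$ in the complement, using that the complement is open and connected. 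Its $\varepsilon'$-neighbourhood misses $K_T$ for some $\varepsilon'>0$ by compactness. The tube event has positive probability by the support theorem for Brownian motion. The return probability from $|z|=R$ to $B(\mathbf0,R/2)$ is $1/2<1$. The strong Markov property at the exit time then gives a positive product.

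\emph{Deriving \eqref{eq:escape}.} By the Markov property at time $1$, given $\mathcal F_1$ the process $(W_{1+s})_{s\ge0}$ is a Brownian motion from $W_1$, independent of $W([0,t])$. The conditional probability therefore equals $\phi(W_1,W([0,t]))$, where $\phi(x,F)$ is the probability that a Brownian motion from $x$ avoids $F$. We must check $W_1\notin W([0,t])$ almost surely for deterministic $t<1$. This holds because $W([0,t])$ and $W([t,1])$ intersect, but $W_1$ is a fixed time point independent of the past in the following sense: $W_1-W_t$ has a density independent of $\mathcal F_t$, and $W([0,t])$ has Lebesgue measure zero. Then the second step applied with $T=t$ gives positivity almost surely.

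The main obstacle I expect is the first step, namely getting a clean citable non-separation statement uniformly in $T$. I would try first the route via $\cH^2(K_N)=0$ and the radial projection argument. As a fallback, I would rely on path connectivity of complements of compact sets of zero $2$-capacity.
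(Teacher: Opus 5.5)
Your argument is correct and has the same skeleton as the paper's proof:
\begin{itemize}
\item non-separation via a projection of measure zero;
\item a tube-following event, combined with the $1/2$ non-return probability and the strong Markov property;
\item the Markov property at time one.
\end{itemize}
Two of the inputs differ.

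\textbf{Connectedness of the complement.} You use $\cH^2(W([0,N]))=0$ together with radial projection from $x$. The radial step is sound, and simpler than you make it. Since $\operatorname{dist}(x,K_T)>0$, the map $z\mapsto(z-x)/|z-x|$ is Lipschitz on $K_T$. Hence almost every ray from $x$ misses $K_T$, and such a ray already reaches infinity; no two-segment path is needed.

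The weak point is the input $\cH^2(W([0,N]))=0$ in $\R^3$. It does not follow from dimension results, since $\dimH K_T=2$ is compatible with $\cH^2(K_T)>0$. What you need is the Ciesielski--Taylor exact-gauge theorem, with gauge $\phi(r)=r^2\log\log(1/r)$. That theorem is true, but it is considerably deeper than necessary.

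The paper uses only orthogonal projection onto a coordinate plane. The projected path is a planar Brownian motion, whose range has zero area by L\'evy's theorem (Theorem 2.24 in M\"orters--Peres). So for $x\notin K_T$ one picks $y$ in a ball about $x$ that misses $K_T$, chosen so that its projection avoids the projected range. The segment $[x,y]$, followed by the line through $y$ perpendicular to the plane, then escapes $K_T$. Monotonicity in $T$ gives simultaneity in $T$, exactly as in your argument.

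\textbf{Showing $W_1\notin W([0,t])$.} You use that $W_1-W_t$ has a density independent of $\mathcal F_t$ and that $\Leb W([0,t])=0$. The paper instead uses time reversal and polarity of points. Both are valid one-line arguments.

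So your route works if you take Ciesielski--Taylor as a black box, whereas the paper's route needs only L\'evy's zero-area theorem.

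\textbf{Minor point in the tube step.} Take $R>|x|$, and let $\pi$ overshoot $\partial B(\mathbf 0,R)$ slightly. Otherwise the support-theorem event need not force the sphere to be hit while the path is still inside the tube.
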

\begin{proof}
The projection of $W([0,N])$ onto a fixed coordinate plane has area
zero almost surely for every integer $N$; see
\cite[Theorem 2.24]{MortersPeres}. The same therefore holds for
every $K_0=W([0,T])$ with $T<\infty$. Fix $x\notin K_0$. In a ball about $x$ disjoint from $K_0$, there
is a point $y$ whose projection does not belong to the projection of
$K_0$. The segment from $x$ to $y$ lies outside $K_0$, and the line
through $y$ perpendicular to the projection plane also avoids $K_0$.
Hence $x$ can be joined, without meeting $K_0$, to the exterior of a
ball containing $K_0$. This proves connectedness.

If $K_0\subset B(0,R)$, there is a polygonal route in the complement
from $x$ to a point outside $B(0,2R)$. Brownian motion can follow this
route within a sufficiently narrow tube with positive probability.
This follows, for example, by subdividing the route into short pieces,
using positive Gaussian transition densities and the positive probability
of small oscillations over fixed time intervals. From outside
$B(0,2R)$ the motion avoids $B(0,R)$ with probability at least $1/2$, by
\cite[Corollary 3.19]{MortersPeres}. Finally, time reversal and polarity
of points~\cite[Corollary 2.26]{MortersPeres} give
$W_1\notin W([0,t])$ almost surely. The Markov property at time one
then proves \eqref{eq:escape}.
\end{proof}

The avoidance estimate, Brownian scaling and Blumenthal's zero--one
law give the following almost sure existence result.
\begin{proposition}\label{prop:plateau}
For every $0<\alpha<\theta$ and $t>0$,
\begin{equation}\label{eq:plateau}
 \Pp(A_\alpha(t))=1.
\end{equation}
\end{proposition}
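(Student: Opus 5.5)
Throughout, fix $0<\alpha<\theta$. The argument has three steps. First, show that $A_\alpha(t)$ has positive probability for some $t$. Second, use Brownian scaling to see that this probability does not depend on $t$. Third, apply Blumenthal's zero--one law as $t\downarrow0$.

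\emph{Step 1: positive probability.} Proposition~\ref{prop:scalar-seed} gives an event $E$ with $\Pp(E)\ge p_0$. On $E$ there is a compatible $u$ on $[0,1]$ with $u(0)=0$, $u(1)\ge c_0$ and $[u]_\alpha\le M$. Choose a small deterministic $t_0<1$, for instance with $M(1-t_0)^\alpha\le c_0/4$. Then $u\ge 3c_0/4$ on $[t_0,1]$. Define
\[
 v=\min\{1,\max\{0,2u/c_0-1/2\}\}.
\]
This $v$ is compatible and $\alpha$-H\"older, vanishes at $0$, and equals $1$ on $[t_0,1]$. Hence $A^1_\alpha(t_0)\supset E$ up to null sets.

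Next combine this with \eqref{eq:escape}, applied with $t=t_0$. On $A^1_\alpha(t_0)\cap\{W([1,\infty))\cap W([0,t_0])=\varnothing\}$, extend $v$ by $1$ on $[1,\infty)$. I claim the extension is compatible. Take $s<1\le r$ with $W_s=W_r$. The avoidance event forces $s>t_0$, so $v(s)=1=v(r)$. Equal positions with both times in $[1,\infty)$ give equal values trivially. The global H\"older constant stays finite, since $v$ is constant on $[t_0,\infty)$. So this intersection is contained in $A_\alpha(t_0)$.

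The events $A^1_\alpha(t_0)$ and $E$ are $\mathcal F_1$-measurable. The conditional probability in \eqref{eq:escape} is a.s.\ positive, so conditioning on $\mathcal F_1$ gives
\[
 \Pp(A_\alpha(t_0))\ge\E\bigl[\one_{E}\,\Pp(\cdots\mid\mathcal F_1)\bigr]>0.
\]

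\emph{Step 2: scaling.} Brownian scaling $W_\cdot\mapsto\lambda^{-1/2}W_{\lambda\cdot}$ maps $A_\alpha(t)$ to $A_\alpha(\lambda t)$. Compatibility is preserved under this map. A function $v(\cdot/\lambda)$ changes its H\"older constant only by the factor $\lambda^{-\alpha}$, which stays finite. Hence $\Pp(A_\alpha(t))=q$ is independent of $t$, and $q>0$ by Step 1. Also $A_\alpha(t)\supset A_\alpha(t')$ for $t'<t$: a function that equals $1$ on $[t',\infty)$ also equals $1$ on $[t,\infty)$.

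\emph{Step 3: zero--one law.} The main obstacle is that $A_\alpha(t)$ depends on the whole path, not only on $\mathcal F_{t}$. I therefore pass to the decreasing limit $B=\bigcap_{k}A_\alpha(1/k)$. This set satisfies $\Pp(B)=q>0$, and I want to show $B\in\mathcal F_{0+}$ up to null sets. Then Blumenthal's law gives $\Pp(B)=1$, so $q=1$.

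To get germ measurability, I would try to show that for each $\varepsilon>0$ the event $A_\alpha(t)$ for $t<\varepsilon$ agrees a.s.\ with an event determined by $W|_{[0,\varepsilon]}$ together with an escape-type condition. My first attempt would be as follows. On $B$, for small $t$, one can find $t<\varepsilon$ such that $A^{\varepsilon}_\alpha(t)$ holds and the future after $\varepsilon$ avoids $W([0,t])$. The avoidance of the future depends on the path after $\varepsilon$, but in a tail way that becomes almost sure as $t\downarrow0$ relative to $\varepsilon$. By transience and scaling, $\Pp\bigl(W([\varepsilon,\infty))\cap W([0,t])\ne\varnothing\bigr)\to0$ as $t/\varepsilon\to0$.

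So up to events of probability $o(1)$, $A_\alpha(t)$ coincides with the $\mathcal F_\varepsilon$-measurable event $A^\varepsilon_\alpha(t)$. For the approximation to work in both directions I use the following two facts. Restricting a function from $[0,\infty)$ to $[0,\varepsilon]$ preserves compatibility. Conversely, extension by $1$ works exactly as in Step 1 once avoidance holds. Letting $t\downarrow0$ and then $\varepsilon\downarrow0$, $B$ differs from an $\mathcal F_{0+}$-measurable event by a null set. Blumenthal then gives $\Pp(B)\in\{0,1\}$, hence $\Pp(B)=1$, and therefore $\Pp(A_\alpha(t))=q=1$ for every $t>0$.
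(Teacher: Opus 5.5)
Your proof is correct and follows the paper's route: positive probability of $A^1_\alpha(t_0)$ from Proposition~\ref{prop:scalar-seed}, extension by one on the avoidance event of Lemma~\ref{lem:escape}, scaling invariance of $\Pp(A_\alpha(t))$, and Blumenthal's law applied to $\bigcap_k A_\alpha(2^{-k})$ after localizing it to $\mathcal F_T$. The only difference is in how you localize. You approximate $A_\alpha(t)$ by the $\mathcal F_\varepsilon$-event $A^\varepsilon_\alpha(t)$ up to $\Pp(W([0,t])\cap W([\varepsilon,\infty))\ne\varnothing)\to0$, and you use implicitly that an $L^1$-limit of $\mathcal F_\varepsilon$-events is an $\mathcal F_\varepsilon$-event modulo null sets; the paper instead uses the pathwise fact that this avoidance holds almost surely for all large $k$, which gives the intersection as $\liminf_k A^T_\alpha(2^{-k})$ directly, and in either form the avoidance needs polarity of the origin as well as transience.
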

\begin{proof}
Let $c_0,M$ be as in Proposition~\ref{prop:scalar-seed}, and fix a
deterministic $\varepsilon\in(0,1)$ with $M\varepsilon^\alpha<c_0/2$.
The event
$E=\{\delta_\alpha^W(0,1)\ge c_0/M\}$ has positive probability.
On $E$, the function $u(s)=M\delta_\alpha^W(0,s)$, $0\le s\le1$,
is compatible with $W$, satisfies $u(0)=0$, $u(1)\ge c_0$ and
$[u]_\alpha\le M$, by Lemma~\ref{lem:chain-dual}.
For $1-\varepsilon\le s\le1$,
$u(s)\ge c_0-M\varepsilon^\alpha>c_0/2$. Therefore
\[
 v(s)=\min\{1,2u(s)/c_0\},\qquad 0\le s\le1,
\]
is compatible, takes values in $[0,1]$, satisfies $v(0)=0$, and
equals one on $[1-\varepsilon,1]$. In particular,
$\Pp(A_\alpha^1(1-\varepsilon))>0$.

The event $A_\alpha^1(1-\varepsilon)$ is $\mathcal F_1$-measurable.
By Lemma~\ref{lem:escape}, the event
\[
 A_\alpha^1(1-\varepsilon)\cap
 \{W([1,\infty))\cap W([0,1-\varepsilon])=\varnothing\}
\]
has positive probability. On this event, a function $v$ satisfying
the conditions of $A_\alpha^1(1-\varepsilon)$ extends to
$[0,\infty)$ by setting
$v(s)=1$ for $s\ge1$. Compatibility is preserved because any
intersection between the future and $W([0,1])$ must occur at a point
visited during $[1-\varepsilon,1]$, where $v=1$.
The extension has a finite global H\"older constant: if
$s<1-\varepsilon$ and $r\ge1$, then
\[
 |v(r)-v(s)|=|v(1-\varepsilon)-v(s)|
 \le [v]_\alpha(1-\varepsilon-s)^\alpha
 \le[v]_\alpha(r-s)^\alpha;
\]
for $s\ge1-\varepsilon$ the difference is zero.
Consequently $p_\alpha:=\Pp(A_\alpha(1))>0$.

Brownian scaling gives $\Pp(A_\alpha(t))=p_\alpha$ for every $t>0$,
since a time rescaling preserves compatibility and multiplies a finite
global H\"older constant by a finite factor. Moreover,
$A_\alpha(t')\subset A_\alpha(t)$ when $t'<t$. Consequently
\[
 A_\alpha^*:=\bigcap_{k\ge1}A_\alpha(2^{-k})
 \quad\text{has probability }p_\alpha.
\]
The Brownian germ $\sigma$-field is
$\mathcal F_{0+}:=\bigcap_{T>0}\mathcal F_T$; it consists of events
measurable from the path on every initial time interval $[0,T]$.
To apply Blumenthal's zero--one law, we show that $A_\alpha^*$ belongs
to this $\sigma$-field. Transience and polarity
of points~\cite[Theorem 3.20 and
Corollary 2.26]{MortersPeres} give
$\inf_{u\ge T}|W_u|>0$ almost surely for each $T>0$. Continuity at time
$0$ then implies
\begin{equation}\label{eq:initial-disjoint-scalar}
 W([0,2^{-k}])\cap W([T,\infty))=\varnothing
                 \quad\text{for all sufficiently large }k.
\end{equation}
Fix $T>0$. On a realization satisfying
\eqref{eq:initial-disjoint-scalar}, the events
$A_\alpha^T(2^{-k})$ and $A_\alpha(2^{-k})$ agree for all
sufficiently large $k$.
Indeed, a function defined on the whole trajectory restricts to one on
$[0,T]$. In the other direction, a function for $A_\alpha^T(2^{-k})$
already equals one on $[2^{-k},T]$. Extending it by one after $T$
preserves compatibility, since the part before $2^{-k}$ has no
intersection with the path after $T$. It also preserves a finite
H\"older constant. Consequently, up to a null set,
\[
 A_\alpha^*=\liminf_{k\to\infty}A_\alpha^T(2^{-k})\in\mathcal F_T.
\]
Taking a countable sequence $T\downarrow0$ therefore places
$A_\alpha^*$ in $\mathcal F_{0+}$, with completion understood. Since
$p_\alpha>0$, Blumenthal's zero--one law
\cite[Theorem 2.7]{MortersPeres} gives $p_\alpha=1$.
\end{proof}

\subsection{Percolation dimension and distance growth}\label{subsec:dimension-proof}

Here we prove the dimension identity and almost sure distance exponents
in \eqref{eq:main-pathwise}, as well as \eqref{eq:main-lerw}.
We also obtain a simple curve attaining the minimum; the construction
of continuum many such curves is completed in
Section~\ref{subsec:nonuniqueness}.

For a finite nearest-neighbor path $\omega=(\omega_0,\ldots,\omega_m)$,
its \emph{chronological loop-erasure} is defined by the indices
\begin{equation}\label{eq:chronological-loop-erasure}
 \begin{aligned}
 j_0&=\max\{i:0\le i\le m,\ \omega_i=\omega_0\},\\
 j_{k+1}&=\max\{i:j_k+1\le i\le m,\ \omega_i=\omega_{j_k+1}\}
 \quad(j_k<m).
 \end{aligned}
\end{equation}
When $j_q=m$, the sequence $(\omega_{j_0},\ldots,\omega_{j_q})$
is the loop-erasure of $\omega$; it is self-avoiding and has the
same endpoints. Applying this construction to simple random walk
gives \emph{loop-erased random walk}~\cite{Lawler1980}.

\begin{proof}[Proof of \eqref{eq:main-pathwise} and \eqref{eq:main-lerw}]
The expectation and moment exponents follow from
Proposition~\ref{prop:finite-exponent}, so we begin with the dimension
lower bound.
Let $(\alpha_j)$ be a sequence of exponents increasing to $\theta$,
with $0<\alpha_j<\theta$. By Proposition~\ref{prop:plateau},
for every pair $j,m\ge1$, almost surely there is a compatible
$\alpha_j$-H\"older function on $[0,1]$ equal to zero at $0$
and to one on $[1/m,1]$. These countably many statements and
\eqref{eq:kaufman} hold on a common event of probability one.
For any $y\in K\setminus\{W_0\}$, there is a time $t\in(0,1]$
with $W_t=y$, and an integer $m$ with $1/m\le t$.
Applying Lemma~\ref{lem:dimension-test} for every $j$ and letting
$j\to\infty$ gives
\begin{equation}\label{eq:connected-dimension-lower}
 \dimH C\ge2\theta
\end{equation}
simultaneously for every $y\in K\setminus\{W_0\}$ and every
connected $C\subset K$ containing $W_0,y$. In particular, since
$W_1\ne W_0$ almost surely, $\mathfrak p_3(W)\ge2\theta$.
Proposition~\ref{prop:curve-upper} gives a simple curve whose upper
box dimension is at most $2\theta$. For every bounded set $E\subset\R^3$,
\begin{equation}\label{eq:box-comparison}
 \dimH E\le\underline{\operatorname{dim}}_{\mathrm B}E\le\dimUB E,
\end{equation}
where the middle term is the lower box dimension. Together with
\eqref{eq:connected-dimension-lower}, this gives Hausdorff and box
dimensions $2\theta$ for the curve. Thus $\mathfrak p_3(W)=2\theta$,
with the minimum attained by a simple curve.
Lawler's lower bound from cut points~\cite[Corollary 1.2]{LawlerCut} then gives
$\gamma_3:=2\theta>1$.

To prove \eqref{eq:main-lerw}, let
$\tau=\inf\{t>0:|W_t|=1\}$.
On an enlarged probability space, Sapozhnikov and Shiraishi
\cite[Theorems 1.1 and 1.2]{SapozhnikovShiraishi} couple
$W([0,\tau])$ with a simple arc contained in this range and
joining $W_0$ to $W_\tau$. The arc has the law of the loop-erased
random walk limit constructed by Kozma~\cite{Kozma}, and has
Hausdorff dimension $\ell_3$ by
\cite{ShiraishiDimension}.
The event $\{0<\tau<1\}$ has positive probability. On this event
the arc lies in $K$, so \eqref{eq:connected-dimension-lower} applies with
$y=W_\tau\ne W_0$. Thus $2\theta\le\ell_3$ on an event of
positive probability. Both exponents are deterministic, so
$\gamma_3\le\ell_3$. Lawler's upper bound~\cite{LawlerLERW} gives
$\ell_3\le5/3$ and hence $\theta\le5/6$.

It remains to prove the almost sure distance exponent. By Brownian
scaling, $1+V_N$ is stochastically dominated by $Y_N$. For each
$\varepsilon>0$, the relation $\mu_N=N^{\theta+o(1)}$ and
\eqref{eq:upper-tail} therefore give, for all sufficiently large $N$,
\[
 \Pp(V_N>N^{\theta+\varepsilon})
 \le C\exp\{-cN^{\varepsilon/2}\}.
\]
The Borel--Cantelli lemma yields
$\limsup_{N\to\infty}\log V_N/\log N\le\theta$ almost surely.
For the matching lower limit, fix $0<\alpha<\theta$ and use
Proposition~\ref{prop:plateau} to choose a compatible function $u$
on $[0,1]$ with $u(0)=0$, $u(1)=1$ and H\"older constant $H$.
Consider a shortest graph path from the first to the last cell,
with $q=L_N$ edges and hence $q+1$ visited cells.
For each transition between consecutive cells, an intersection
point gives two time representatives at which $u$ has the same value,
by compatibility. Within each visited cell, the entry and exit times
differ by at most $N^{-1}$, so the corresponding values of $u$
differ by at most $HN^{-\alpha}$. Summing these changes along
the path, including the initial and terminal time representatives,
and using $u(1)-u(0)=1$, we obtain
\[
 1\le H(q+1)N^{-\alpha}.
\]
Hence $L_N\ge(2H)^{-1}N^\alpha$ for all sufficiently large $N$.
Together with $L_N\le V_N$, a countable sequence
$\alpha\uparrow\theta$ proves \eqref{eq:main-pathwise}.
\end{proof}

We record the corresponding threshold for compatible H\"older functions
in the following corollary: the chain pseudometric separates the
endpoints below $\theta$ and vanishes above it.
\begin{corollary}\label{cor:chain-threshold}
The following two assertions hold simultaneously with probability one:
\begin{equation}\label{eq:chain-threshold}
 \delta_\alpha^W(0,1)>0\quad(0<\alpha<\theta),\qquad
 \delta_\alpha^W\equiv0\quad(\theta<\alpha<1).
\end{equation}
Consequently $\theta$ is the supremum of the exponents
$\alpha\in(0,1)$ for which a nonconstant compatible $\alpha$-H\"older
function exists.
\end{corollary}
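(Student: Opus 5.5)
The first assertion comes from Proposition~\ref{prop:plateau}, and the second from the graph distance upper bound through a chain construction. For $0<\alpha<\theta$, take a countable sequence $\alpha_j\uparrow\theta$. Proposition~\ref{prop:plateau} provides, almost surely and for every $j$ at once, a compatible $\alpha_j$-H\"older function $v$ on $[0,1]$ with $v(0)=0$ and $v(1)=1$. By \eqref{eq:chain-dual}, this gives $\delta_{\alpha_j}^W(0,1)\ge 1/[v]_{\alpha_j}>0$.

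Positivity then passes to every $\alpha\le\alpha_j$. Since $|a-b|^{\alpha}\ge|a-b|^{\alpha_j}$ for $|a-b|\le1$, every chain cost for $\alpha$ dominates the corresponding cost for $\alpha_j$. Hence $\delta_\alpha^W\ge\delta_{\alpha_j}^W$ whenever $\alpha\le\alpha_j$. This gives the first statement simultaneously for all $\alpha<\theta$.

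For $\theta<\alpha<1$, I work on the almost sure event from the proof of \eqref{eq:main-pathwise}, on which $\limsup_N\log V_N/\log N\le\theta$. Fix $s<t$ in $[0,1]$ and a mesh $N$. Take a shortest path in $G_N$ between the cells $\iota_N(s)$ and $\iota_N(t)$; it has at most $V_N+1$ cells. For each edge, choose an intersection point, which gives times $b_l,a_{l+1}$ with $W(b_l)=W(a_{l+1})$. Within each visited cell, the pair $(a_l,b_l)$ has $|a_l-b_l|\le N^{-1}$, and the chain starts at $a_1=s$ and ends at $b_m=t$. This chain is admissible in \eqref{eq:holder-chain}, so
\[
 \delta_\alpha^W(s,t)\le (V_N+1)N^{-\alpha}\le N^{\theta+\varepsilon-\alpha}
\]
for large $N$. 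Choosing $\varepsilon<\alpha-\theta$ and letting $N\to\infty$ gives $\delta_\alpha^W(s,t)=0$. Because the event does not depend on $\alpha$, $s$ or $t$, this holds for all of them simultaneously. This is the step needing the most care: the graph path must be translated into a valid chain, with the endpoint cells handled by $s$ and $t$ themselves. That check is routine.

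For the consequence, take $\alpha<\theta$. Then $u(\cdot)=\delta_\alpha^W(0,\cdot)$ is compatible, $\alpha$-H\"older and nonconstant, since $u(0)=0<u(1)$. Now take $\alpha>\theta$ and any compatible $u$ with $[u]_\alpha<\infty$. By \eqref{eq:chain-dual}, $|u(s)-u(t)|\le[u]_\alpha\,\delta_\alpha^W(s,t)=0$, so $u$ is constant. Therefore the supremum of admissible exponents equals $\theta$.
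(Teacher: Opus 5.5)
Your proposal is correct and follows essentially the same route as the paper: Proposition~\ref{prop:plateau} with Lemma~\ref{lem:chain-dual} and monotonicity in $\alpha$ along a countable sequence $\alpha_j\uparrow\theta$ for the lower threshold, and a chain read off a shortest path in $G_N$, giving $\delta_\alpha^W(s,t)\le N^{-\alpha}(V_N+1)$, combined with the almost sure growth of $V_N$ for the upper threshold. The characterization of $\theta$ then follows from \eqref{eq:chain-dual}, as in the paper.
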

\begin{proof}
The assertion for $\alpha<\theta$ follows from
Proposition~\ref{prop:plateau} and Lemma~\ref{lem:chain-dual}.
For the upper threshold, a shortest graph
path between the cells containing $s,t$ yields
\begin{equation}\label{eq:chain-graph}
 \delta_\alpha^W(s,t)
 \le n^{-\alpha}\bigl(d_{G_n}(\iota_n(s),\iota_n(t))+1\bigr).
\end{equation}
Between consecutive intersections, the two representative times belong
to the same cell, so their contribution is at most $n^{-\alpha}$.
On an event of probability one on which \eqref{eq:main-pathwise} holds,
\[
 \sup_{s,t\in[0,1]}\delta_\alpha^W(s,t)
 \le n^{-\alpha}(V_n+1)
 =n^{\theta-\alpha+o(1)}\longrightarrow0
 \qquad(\alpha>\theta).
\]
This proves the upper assertion simultaneously for all
$\alpha\in(\theta,1)$. Below $\theta$, a countable sequence of exponents
increasing to $\theta$ and monotonicity of $\delta_\alpha^W$ in $\alpha$
give simultaneity. The characterization by compatible functions follows from
\eqref{eq:chain-dual}.
\end{proof}

\subsection{Nonuniqueness of minimizing curves}\label{subsec:nonuniqueness}

We prove nonuniqueness by replacing disjoint subarcs of a minimizing
curve near $W_0$. The following lemma bounds the $p$-variation of
curves used in these replacements.

\begin{lemma}\label{lem:local-variation}
Almost surely, for every $0\le u<v\le1$ there is a continuous curve
$f_{u,v}:[0,1]\to W([u,v])$ joining $W_u$ to $W_v$ such that
\begin{equation}\label{eq:local-variation}
 \|f_{u,v}\|_{p\text{-var}}\le C_p|v-u|^{1/4}
 \qquad(p>\gamma_3).
\end{equation}
For each $p>\gamma_3$, the constant $C_p=C_p(W)$ is finite
almost surely and does not depend on $u,v$. The curve $f_{u,v}$
can be chosen independently of $p$, so all these estimates hold
for that one curve.
\end{lemma}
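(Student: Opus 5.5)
The plan is to reuse the nested construction from the proof of Proposition~\ref{prop:curve-upper} on the time interval $[u,v]$, and to read off a $p$-variation bound that scales with the length of that interval. First I reduce to dyadic intervals. For $u<v$, let $k$ satisfy $2^{-k-1}<v-u\le2^{-k}$. Then $[u,v]$ is contained in the union of at most two adjacent dyadic intervals of length $2^{-k}$, and I must connect $W_u$ to $W_v$ inside $W([u,v])$ itself. So I instead apply the construction directly to the deterministic-scale cell structure. I subdivide $[u,v]$ into the cells of a fixed nested hierarchy: the dyadic intervals at levels $N_j$ from Proposition~\ref{prop:curve-upper}, intersected with $[u,v]$. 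At each level I use shortest paths in the intersection graph of subcells of each selected cell. Partial cells at the ends of $[u,v]$ are handled as in Lemma~\ref{lem:lifting}: they cost at most one extra interval maximum each. This gives a curve $f_{u,v}$ defined independently of $p$, with image in $W([u,v])$.

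Next comes the variation estimate. Fix $p>\gamma_3$ and choose $\xi<1/2$ and $\varepsilon>0$ with $p\xi>\theta+\varepsilon$ (possible because $p/2>\theta$). Suppose the curve on $[u,v]$ uses $M_j$ cells of length $N_j^{-1}$ at level $j\ge j_0$, where $N_{j_0}^{-1}\asymp v-u$. Grouping any partition by level-$j$ pieces then gives
\[
 \|f_{u,v}\|_{p\text{-var}}^p\lesssim\sum_{j\ge j_0}M_{j+1}\bigl([W]_\xi N_j^{-\xi}\bigr)^p .
\]
This follows by a standard argument: a partition increment at the scale where it is first split is bounded by the oscillation of at most two level-$j$ pieces, and each piece is charged at most a bounded number of times per level. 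On the Borel--Cantelli event I need the bound $M_{j+1}\le C(W)\,(N_{j+1}/N_{j_0})^{\theta+\varepsilon}$, uniformly over all cells at all levels. With it the sum is dominated by $C\,N_{j_0}^{-(\theta+\varepsilon)}N_{j_0}^{\theta+\varepsilon}\sum_j N_{j}^{\theta+\varepsilon-p\xi}\cdot N_{j_0}^{-\ldots}$. Done carefully, this yields $\|f_{u,v}\|_{p\text{-var}}^p\le C\,N_{j_0}^{-(p\xi-\theta-\varepsilon)}$, that is, $\|f_{u,v}\|_{p\text{-var}}\le C(v-u)^{\xi-(\theta+\varepsilon)/p}$. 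Since $p>2\theta$, I can pick $\xi$ close to $1/2$ and $\varepsilon$ small so that this exponent is at least $1/4$ (it tends to $1/2-\theta/p>0$). If the exponent is not at least $1/4$, I would settle for a positive power, with the lemma's $1/4$ recovered as long as $1/2-\theta/p\ge1/4$. Otherwise I would also use the trivial bound when $v-u$ is large. As a fallback, I will accept that the statement's $1/4$ may really need $p$ large. However, monotonicity of $p$-variation in $p$ reduces the question to $p$ close to $\gamma_3$, which is the hard case, so I will track the exponent honestly.

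The main obstacle is uniformity: the counts $M_j$ must be controlled simultaneously for all intervals $[u,v]$ and all cells, not merely along one random curve. I will use the union bound in Proposition~\ref{prop:upper-tail} over all $N_{j-1}$ cells at level $j$ together with the stretched-exponential tails \eqref{eq:upper-scales}. These show that, beyond a random level $J$, every cell at every level has graph-diameter factor at most $r_j^{\theta+\varepsilon_j}$. At levels $\le J$ the counts are bounded by $N_J$, which is absorbed into $C_p(W)$. The end portions of $[u,v]$ need the interval-maximum version $Y$ rather than $V$, which is exactly why the construction uses $Y$, bounded via \eqref{eq:endidentity}. Finally, the constant $C_p$ is $[W]_\xi\cdot N_J^{O(1)}$, which is finite almost surely, and taking a countable sequence of $p$ gives all $p>\gamma_3$ at once.
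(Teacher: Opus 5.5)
Your route differs from the paper's, and the framework is viable, but the final exponent is wrong and you leave it unresolved.

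\textbf{The paper's route.} The paper does not rerun the nested construction. It applies Proposition~\ref{prop:curve-upper} to each dyadic cell $I$ by Brownian scaling, which gives H\"older constants with $\E[H_{I,j}^a]\le C_{j,a}|I|^{a/2}$. A union bound over the $2^m$ cells of length $2^{-m}$ (with $a>4$) and Borel--Cantelli then give $H_{I,j}\le A_j|I|^{1/4}$ for all dyadic $I$. Finally it splits $[u,v]$ into maximal dyadic intervals and concatenates the curves $f_I$.

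\textbf{What works in your route.} Running the multiscale construction directly on $[u,v]$ is fine. Controlling the two partial end chains by the interval maximum $Y_{r_j}$ works, since its tail \eqref{eq:upper-tail} gives the same Borel--Cantelli event as in Proposition~\ref{prop:curve-upper}. Your charging argument is also correct once made precise: an increment assigned to the finest level $j$ at which it is covered by two adjacent level-$j$ pieces must contain a whole level-$(j+1)$ piece. Hence at most $M_{j+1}$ increments are charged to level $j$.

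\textbf{The gap.} You arrive at $\|f_{u,v}\|_{p\text{-var}}\le C(v-u)^{\xi-(\theta+\varepsilon)/p}$. Its exponent is below $1/2-\theta/p$, which is at most $1/4$ whenever $\gamma_3<p\le4\theta$, and tends to $0$ as $p\downarrow\gamma_3$. You then allow that $1/4$ might need large $p$. Monotonicity in $p$ only transfers bounds from small $p$ to large $p$, not the other way. So, as written, the proposal does not prove the lemma near $\gamma_3$. That is exactly the range Lemma~\ref{lem:double-point-avoidance} and Proposition~\ref{prop:nonunique-curves} need, to get H\"older regularity for every $\beta<1/\gamma_3$.

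\textbf{The fix.} The loss comes from dropping the factor $N_{j_0}^{-(\theta+\varepsilon)}$ in your own count bound. With $M_{j+1}\le C(W)(N_{j+1}/N_{j_0})^{\theta+\varepsilon}$ and $N_{j+1}=r_{j+1}N_j$,
\[
 \sum_{j\ge j_0}M_{j+1}N_j^{-p\xi}
 \le C(W)N_{j_0}^{-(\theta+\varepsilon)}\sum_{j\ge j_0}r_{j+1}^{\theta+\varepsilon}N_j^{\theta+\varepsilon-p\xi}
 \le C'(W)\,r_{j_0+1}^{\theta+\varepsilon}N_{j_0}^{-p\xi}.
\]
The last step holds because $p\xi>\theta+\varepsilon$ and $r_{j+1}^{-(p\xi-\theta-\varepsilon)}\to0$, so the series is dominated by its first term, uniformly in $j_0$. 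Choose $j_0$ with $N_{j_0+1}^{-1}<v-u\le N_{j_0}^{-1}$, and use $r_{j_0+1}=N_{j_0}^{o(1)}$. This gives $\|f_{u,v}\|_{p\text{-var}}\le C(W)(v-u)^{\xi-o(1)}$, which is the exponent Brownian scaling predicts for a curve in a set of diameter about $(v-u)^{1/2}$. Now take $\xi\in(\max\{1/4,\theta/p\},1/2)$ and $\varepsilon$ small. This yields $C_p|v-u|^{1/4}$ for every $p>\gamma_3$, with $C_p$ built from $[W]_\xi$ and $N_J$, uniform in $u,v$, and for a single curve not depending on $p$.

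\textbf{Comparison once repaired.} Your argument is self-contained and gives any exponent below $1/2$ directly, at the cost of redoing the construction with partial end cells. The paper treats Proposition~\ref{prop:curve-upper} as a black box, which is why that proposition records moment bounds for $H_\beta$. Its exponent $1/4$ is just one convenient choice below $1/2$ that makes the Borel--Cantelli sum converge.
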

\begin{proof}
Choose $\beta_j\uparrow1/\gamma_3$ with $0<\beta_j<1/\gamma_3$.
For each time cell $I=[k2^{-m},(k+1)2^{-m}]\subset[0,1]$,
Brownian scaling and Proposition~\ref{prop:curve-upper} give one curve
$f_I:[0,1]\to W(I)$ joining the Brownian positions at the endpoints
of $I$, with random bounds $H_{I,j}$ such that
\[
 [f_I]_{\beta_j}\le H_{I,j},\qquad
 \E[H_{I,j}^a]\le C_{j,a}|I|^{a/2}\qquad(a>0).
\]
Since the collection of dyadic intervals and the sequence
$(\beta_j)$ are countable, the curves $f_I$ and their bounds
$H_{I,j}$ can be chosen simultaneously on an event of probability one. A union bound and Markov's inequality give, for $a>4$,
\[
 \Pp\left(\max_{|I|=2^{-m}}H_{I,j}>2^{-m/4}\right)
 \le C_{j,a}2^{m(1-a/4)}.
\]
Since these probabilities are summable in $m$, the Borel--Cantelli
lemma gives almost surely finite $A_j$ with
$H_{I,j}\le A_j|I|^{1/4}$ for every $I,j$. If $p>\gamma_3$, choose
$j$ with $\beta_jp>1$. Then
\begin{equation}\label{eq:cell-variation}
 \|f_I\|_{p\text{-var}}\le[f_I]_{\beta_j}
       \le A_j|I|^{1/4}.
\end{equation}

For arbitrary $u<v$, decompose $[u,v]$ into maximal dyadic intervals
contained in $[u,v]$. Their interiors are disjoint, at most two have
each given length, and their union contains $(u,v)$. Thus
$\sum_I|I|^{1/4}\le C|v-u|^{1/4}$.
On each $I=[a,b]$, use the curve $f_I$ with parameter
$(r-a)/(b-a)$, $r\in I$, and concatenate these curves in increasing
time order. Their endpoint values agree with $W$ at the common cell
endpoints. Setting the values at $u,v$ to $W_u,W_v$ gives a continuous
curve with image in $W([u,v])$: the dyadic intervals can accumulate
only at $u,v$, and $f_I([0,1])\subset W(I)$.

To bound its $p$-variation, retain finitely many consecutive dyadic
intervals and join the two remaining ends by straight segments.
These auxiliary curves converge uniformly to the concatenated curve,
and the $p$-variation of the two straight segments tends to zero.
The triangle inequality, \eqref{eq:cell-variation} and lower
semicontinuity of $p$-variation give
\[
 \|f_{u,v}\|_{p\text{-var}}
 \le A_j\sum_I|I|^{1/4}
 \le C A_j|v-u|^{1/4},\qquad \beta_jp>1.
\]
Rescaling its parameter interval affinely to $[0,1]$ proves
\eqref{eq:local-variation}, with a suitable random constant $C_p$.
The estimates for all dyadic intervals hold simultaneously on an
event of probability one, independently of $u,v$. Hence the
construction applies to intervals chosen after observing $W$.
The curves $f_I$ are fixed independently of $p$, so the resulting
curve $f_{u,v}$ satisfies the bound for every $p>\gamma_3$.
\end{proof}

\Needspace{7\baselineskip}
A double point of $W$ on $[0,q]$ is a point visited at two distinct
times in that interval; a triple point is visited at three distinct
times. A double point $z=W_s=W_t$, $s<t$, is a
\emph{pioneer double cut point} if
$W([0,s))\cap W((s,t))=\varnothing$; see
\cite[Section~1.1]{GaoLiLiuQian}. The following lemma uses
Gao, Li, Liu and Qian's theorem excluding such points to construct
connections that avoid $z$ and $W_0$.
\begin{lemma}\label{lem:double-point-avoidance}
Almost surely, simultaneously for every positive dyadic rational
$q\le1$ and every double point $z$ of $W$ on $[0,q]$, any two points
of $K_q\setminus\{W_0,z\}$, where $K_q=W([0,q])$, can be joined by
a continuous curve $g:[0,1]\to K_q\setminus\{W_0,z\}$ with
\begin{equation}\label{eq:avoidance-variation}
 \|g\|_{p\text{-var}}\le3C_pq^{1/4}\qquad(p>\gamma_3),
\end{equation}
where $C_p$ is the random constant in \eqref{eq:local-variation}.
\end{lemma}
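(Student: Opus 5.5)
The plan is to work on a single almost sure event and build $g$ by concatenating at most three curves of the form $f_{u,v}$ from Lemma~\ref{lem:local-variation}. On this event the following hold:
\begin{enumerate}
\item the conclusion of Lemma~\ref{lem:local-variation};
\item the Gao--Li--Liu--Qian theorem, in the form that $W$ has no pioneer double cut points, applied via Brownian scaling on each dyadic interval $[0,q]$; this is a countable family of events;
\item $W$ has no triple points that involve time $0$ in a degenerate way, and $W_t\ne W_0$ for all $t>0$.
\end{enumerate}
The last fact comes from transience and polarity of points; more precisely, $W_0$ is visited only at time $0$ almost surely. Each curve $f_{u,v}$ has $p$-variation at most $C_p|v-u|^{1/4}\le C_pq^{1/4}$, and $p$-variation is subadditive under concatenation. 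Three pieces therefore give the bound \eqref{eq:avoidance-variation}.

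Fix $q$ and a double point $z=W_s=W_t$ with $0<s<t\le q$ (we have $s>0$ because $W_0$ is not revisited). Take $x=W_a$ and $y=W_b$ in $K_q\setminus\{W_0,z\}$, with $a,b>0$. The naive connection $f_{a,b}$, say for $a<b$, lies in $W([a,b])$. It avoids $W_0$ automatically, since $0\notin[a,b]$ and $W_0$ is visited only at time $0$. It can fail only if $z\in W([a,b])$, that is, if $[a,b]$ meets the set $\{s,t\}$, or some other visiting time of $z$ if $z$ happens to be a triple point. To handle this, I use the absence of pioneer double cut points. Since $z$ is not a pioneer double cut point, there exist times $r\in[0,s)$ and $r'\in(s,t)$ with $W_r=W_{r'}$; I expect the argument also needs $r>0$, which should follow since $W_0$ is not revisited. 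This gives a detour in time: from a time just before $s$ one can jump, through the intersection $W_r=W_{r'}$, to the interior of $(s,t)$. One can then continue past $t$ using a symmetric argument, or instead return. Concretely, I split the time interval into the pieces $[a,r]$ and $[r',b]$, chosen so that neither piece contains a visiting time of $z$, and take $g=f_{a,r}\ast f_{r',b}$, with a third piece if needed. This uses at most three pieces, which matches the constant $3$.

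The main obstacle is ensuring that the pieces of time avoid every visiting time of $z$. One issue is the case analysis of where $a$ and $b$ lie relative to $s$ and $t$: both before $s$, on opposite sides, inside $(s,t)$, or after $t$. The other is the possibility that $z$ is a triple point. For triple points, I first try to invoke the pioneer-cut-point theorem with $s$ taken as the first visit and $t$ as the last visit, so that the interval $(s,t)$ absorbs the middle visits. In the case where $x$ and $y$ lie on opposite sides of $[s,t]$, the detour $r\to r'$ only escapes into $(s,t)$. I would then need a second crossing of $t$, obtained by applying the same non-pioneer property to the time-reversed configuration, or by noting that the set of times in $(s,t)$ that are connected to the time after $t$ without passing through a visit of $z$ must be nonempty, since otherwise $z$ would be a cut point of $K_q$. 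If that reasoning gets too delicate, the fallback is to argue connectivity of $K_q\setminus\{W_0,z\}$ through finitely many chained pieces. That fallback would sacrifice the constant $3$, and I would then need to recheck that the count stays bounded by three.
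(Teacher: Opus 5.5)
Your skeleton is the one the paper uses: at most three curves $f_{u,v}$ from Lemma~\ref{lem:local-variation}, a crossing of $s$ obtained from the absence of pioneer double cut points, and the fact that $W_0$ is never revisited. As written, though, the proposal has a real gap at the triple-point step.

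You never invoke the fact that three-dimensional Brownian motion has no triple points \cite[Theorem~9.22]{MortersPeres}. The workaround you suggest (take $s$ to be the first and $t$ the last visit, so that $(s,t)$ ``absorbs'' the middle visits) does not work. A middle visit $u\in(s,t)$ puts $z$ inside $W((s,t))$. Since $f_{u',v'}$ is only known to take values in $W([u',v'])$, any piece whose time interval contains $u$ may pass through $z$. You would then need a further crossing of $u$, and the bound of three pieces would be lost. Once the no-triple-points theorem is in your almost sure event, $z$ is visited exactly at $s<t$, and
\[
 K_q\setminus\{W_0,z\}=W((0,s))\cup W((s,t))\cup W((t,q]).
\]
This makes your whole case analysis unnecessary: choose any time representatives of the two points in these three ranges.

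The second missing step is the crossing past $t$, which is not optional. It should come from the Gao--Li--Liu--Qian theorem applied to the reversed paths $(W_q-W_{q-u})_{0\le u\le q}$, one for each dyadic $q$, and these should be in the almost sure event from the start. This gives $W((s,t))\cap W((t,q])\neq\varnothing$, exactly as in the paper. The $q$-dependent applications you need are to these reversals, not to rescalings of the forward path, whose pioneer condition does not involve $q$.

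Your alternative, ``otherwise $z$ would be a cut point of $K_q$'', rests on an unproved claim that such double points are not cut points of the range, so it is not an argument as it stands. The chained fallback is unnecessary.

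Combine the reversed crossing with $W((0,s))\cap W((s,t))\neq\varnothing$, which follows from the forward theorem and $W_0\notin W((s,t))$. Then any two points of $K_q\setminus\{W_0,z\}$ are joined using at most one piece in each of the three ranges. Each piece is built on a closed time interval of length at most $q$ contained in $(0,s)$, $(s,t)$ or $(t,q]$, so its range avoids $W_0$ and $z$, and the total is $3C_pq^{1/4}$.

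The paper also uses polarity for the reversed path to get $t<q$. In your version with $t\le q$, the case $t=q$ is harmless because the third range is empty, but you should mention it.
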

\begin{proof}
Three-dimensional Brownian motion has no triple points
\cite[Theorem~9.22]{MortersPeres}. A fixed point is polar
\cite[Corollary~2.26]{MortersPeres}. Indeed, for every $m\ge1$,
$W_{1/m}\ne W_0$ almost surely, and the Markov property and polarity
rule out further visits to $W_0$ after $1/m$.
Consequently, $W_0$ is almost surely not revisited after time zero.
For any fixed $q>0$, the reversed path
$(W_q-W_{q-t})_{0\le t\le q}$ has the law of a Brownian motion
started at the origin. Applying the same polarity statement to this
reversed path shows that $W_q$ is almost surely not visited before
$q$. These conclusions hold simultaneously for all positive dyadic
rationals $q\le1$ because there are countably many of them.
Fix a realization on which these statements,
Lemma~\ref{lem:local-variation}, and
\cite[Theorem~1.2]{GaoLiLiuQian} for $W$ and for each of the
reversed paths all hold. Such realizations form an event of
probability one.

Fix such a dyadic $q$ and a double point $z$ of $W$ on $[0,q]$.
As there are no triple points, $z$ is visited at exactly two
times $s<t$; the endpoint properties above give $0<s<t<q$.
By \cite[Theorem~1.2]{GaoLiLiuQian}, these two visits cannot
satisfy $W([0,s))\cap W((s,t))=\varnothing$.
Applying the same theorem to the reversed path, and using the absence
of further visits to $W_0$ and $W_q$, yields
\[
 W((0,s))\cap W((s,t))\ne\varnothing,\qquad
 W((s,t))\cap W((t,q])\ne\varnothing.
\]
Since there are no further visits to $z$,
\[
 K_q\setminus\{W_0,z\}
 =W((0,s))\cup W((s,t))\cup W((t,q]).
\]
The two nonempty intersections above allow any two points of
$K_q\setminus\{W_0,z\}$ to be joined using at most three curves,
one in each of the time ranges $(0,s)$, $(s,t)$ and $(t,q]$.
For each connection, take a constant curve if its endpoints coincide.
Otherwise, choose times representing its endpoints in the relevant
time interval and apply Lemma~\ref{lem:local-variation} between those
times, reversing the resulting curve if necessary. Each closed time
interval used is contained in $(0,s)$, $(s,t)$ or $(t,q]$, respectively;
its Brownian range therefore avoids $W_0$ and $z$, and its length is
at most $q$. Concatenation gives \eqref{eq:avoidance-variation}
for the same curve $g$ and every $p>\gamma_3$.
\end{proof}

\Needspace{7\baselineskip}
The next proposition constructs the minimizing curves with distinct
images required in Theorem~\ref{thm:main-growth}.
\begin{proposition}\label{prop:nonunique-curves}
On an event of probability one, for every $y\in K\setminus\{W_0\}$
there are continuum many simple curves from $W_0$ to $y$ in $K$ with
pairwise distinct images and finite $p$-variation for every $p>\gamma_3$.
In particular, after the reparametrization in
Lemma~\ref{lem:holder-arc}, these curves with $y=W_1$ satisfy
\eqref{eq:main-minimizers}, giving the nonuniqueness assertion of
Theorem~\ref{thm:main-growth}.
\end{proposition}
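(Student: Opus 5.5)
Fix a realization on which Lemmas~\ref{lem:local-variation} and~\ref{lem:double-point-avoidance}, Proposition~\ref{prop:curve-upper}, \eqref{eq:connected-dimension-lower} and the absence of triple points all hold, and fix $y\in K\setminus\{W_0\}$.

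\textbf{Base curve.} Proposition~\ref{prop:curve-upper} gives a simple curve $\eta_0$ from $W_0$ to $y$ in $K$. It is $\beta$-H\"older for every $\beta<1/\gamma_3$, so $\|\eta_0\|_{p\text{-var}}<\infty$ for every $p>\gamma_3$.

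\textbf{Selecting double points near $W_0$.} I will choose dyadic rationals $q_k\downarrow0$ and pairwise disjoint closed parameter intervals $[a_k,b_k]$ accumulating only at parameter $0$, with $\eta_0([a_k,b_k])\subset K_{q_k}$. This is possible because $\eta_0(u)\to W_0$ as $u\to0$, and small neighbourhoods of $W_0$ in $K$ lie in $K_q$ once $W([q,1])$ stays away from $W_0$, which holds since $W_0$ is not revisited. I also need a double point $z_k\in\eta_0((a_k,b_k))$ with $z_k\ne W_0$.

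The source of double points is the following. Each subarc of the minimizing curve has Hausdorff dimension $\gamma_3>1$ by \eqref{eq:connected-dimension-lower}, applied via Brownian scaling to a subarc started at a point of $K$. If a subarc avoided all double points, its preimage under $W$ would be a single time set $A$ on which $W$ is injective. I would argue that $A$ can be taken to be an interval, giving dimension $2$ and a contradiction. This step is the weakest link. A cleaner route is to pass the curve through a prescribed double point: since double points are dense in $K$ near $W_0$, take a double point $z_k$ close to $W_0$ at a small dyadic scale, and reroute $\eta_0$ through $z_k$ using Lemma~\ref{lem:local-variation} before taking the arc. Either way, the main obstacle is guaranteeing that $z_k$ lies on the final simple curve.

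\textbf{Replacements and their cost.} Lemma~\ref{lem:double-point-avoidance} gives a curve $g_k$ in $K_{q_k}\setminus\{W_0,z_k\}$ joining $\eta_0(a_k)$ to $\eta_0(b_k)$, with $\|g_k\|_{p\text{-var}}\le3C_pq_k^{1/4}$. I choose $q_k$ so that $\sum_kq_k^{1/4}<\infty$.

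\textbf{Building the family.} For each $\omega\in\{0,1\}^{\N}$, let $f_\omega$ be $\eta_0$ with $[a_k,b_k]$ replaced by $g_k$ exactly when $\omega_k=1$. Then $f_\omega$ is continuous at parameter $0$, because the replacements lie in $K_{q_k}$ and $\diam K_{q_k}\to0$. By the triangle inequality for concatenations together with lower semicontinuity, as in the proof of Lemma~\ref{lem:local-variation}, $\|f_\omega\|_{p\text{-var}}\le\|\eta_0\|_{p\text{-var}}+3C_p\sum_kq_k^{1/4}<\infty$ for every $p>\gamma_3$.

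Applying Lemma~\ref{lem:holder-arc} gives a simple curve $\eta_\omega$ whose image lies in the image of $f_\omega$. This curve is $\beta$-H\"older for every $\beta<1/\gamma_3$, so $\dimUB\le\gamma_3$, while \eqref{eq:connected-dimension-lower} gives the matching lower bound.

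\textbf{Distinct images.} If $\omega_k=1$, then $z_k\notin\eta_\omega$. To get this, I need $z_k$ to avoid $\eta_0$ outside $(a_k,b_k)$ (true since $\eta_0$ is simple) and to avoid the other replacements $g_l$. The latter is arranged by removing all $z_l$ simultaneously: the same lemma avoids one double point, and I would pick $q_l$ and $z_l$ inductively with $z_l\notin K_{q_{l+1}}$, so that the later replacements $g_{l'}\subset K_{q_{l'}}$ miss $z_l$.

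If $\omega_k=0$, I need $z_k$ to survive the loop removal. This is the second delicate point. I would handle it by noting that the loop cancellation of Lemma~\ref{lem:holder-arc} only deletes subarcs between repeated values. Since every point of $f_\omega$ near $z_k$ comes from the retained simple arc $\eta_0|_{[a_k,b_k]}$, and no other piece of $f_\omega$ meets $z_k$, every simple curve in $\operatorname{im}f_\omega$ from $W_0$ to $y$ must pass through $z_k$. The reason is that $z_k$ is a cut point of $\operatorname{im}f_\omega$ relative to those endpoints; this needs the other pieces to avoid the neighbouring portion, which the nested choice of $q_k$ should give.

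Hence distinct $\omega$ give distinct images, producing continuum many curves. Setting $y=W_1$ gives \eqref{eq:main-minimizers}.
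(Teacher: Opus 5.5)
There is a genuine gap in the distinctness step, and that step is the substance of the proposition. Your only nested condition is $z_l\notin K_{q_{l+1}}$. It keeps later replacements away from earlier double points, but it does not keep an earlier replacement $g_l$, $l<k$, away from $z_k$: that curve lies in $K_{q_l}\supset K_{q_k}\ni z_k$. So even the case $\omega_k=1$ is not settled.

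The case $\omega_k=0$ is worse. Lemma~\ref{lem:double-point-avoidance} only gives $g_l\subset K_{q_l}\setminus\{W_0,z_l\}$, so an untrimmed $g_l$ may meet $\eta_0$ at many points, including points on both sides of a retained $z_k=\eta_0(c_k)$:
\begin{itemize}
\item for $l<k$, because all of $\eta_0([a_k,b_k])$ lies in $K_{q_k}\subset K_{q_l}$;
\item for $l>k$, unless $K_{q_l}$ is chosen disjoint from $\eta_0([c_k,1])$.
\end{itemize}
Such a $g_l$ is a bypass. Then $z_k$ is not a cut point of $f_\omega([0,1])$ separating $W_0$ from $y$, and the loop cancellation in Lemma~\ref{lem:holder-arc} may delete it. Saying that the nested choice ``should give'' the cut-point property leaves exactly the needed disjointness unproved.

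The paper avoids loop erasure at this stage by trimming each replacement. Having joined a point before $z_n$ to a point after it by a curve $g$, it takes $v$ to be the first time $g$ meets $\eta([c_n,1])$, and $u$ the last time before $v$ that $g$ meets $\eta([0,c_n])$. These two compact sets meet only at $z_n$, which $g$ avoids, so $g((u,v))$ misses $A=\eta([0,1])$. Lemma~\ref{lem:holder-arc} applied to $g|_{[u,v]}$ then gives a simple $\lambda_n$ meeting $A$ only at its endpoints $\eta(a_n),\eta(b_n)$, with $a_n<c_n<b_n$. The paper also chooses $q_n$ so that $K_{q_n}$ misses $\eta([r_n,1])\cup\bigcup_{j<n}\lambda_j$, where $r_n=\min\{2^{-n},a_{n-1}\}$. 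With both choices, every $f_\varepsilon$ is already simple, and $z_n\in f_\varepsilon([0,1])\Leftrightarrow\varepsilon_n=0$ is immediate.

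Your cut-point route can also be repaired, but only with the analogous two-sided conditions. Require $\eta_0([0,b_k])\subset K_{q_k}$, and choose $K_{q_k}$ disjoint from $g_1\cup\cdots\cup g_{k-1}$ and from $\eta_0([a_{k-1},1])$. Then $f_\omega([0,c_k))$ and $f_\omega((c_k,1])$ are disjoint, so $z_k$ separates $W_0$ from $y$.

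Two smaller points.
\begin{itemize}
\item Your ``weakest link'' is settled by compactness. Suppose no point of a subarc $J\subset A\cap K_q$ is visited twice during $[0,q]$. Then $W$ is a continuous bijection from the compact set $\{t\in[0,q]:W_t\in J\}$ onto the arc $J$, hence a homeomorphism. So that set is a nondegenerate interval, and \eqref{eq:kaufman} gives $\dimH J=2$. This argument also produces the double point on $[0,q_k]$ that Lemma~\ref{lem:double-point-avoidance} requires.
\item The contradiction needs only $\dimH J\le\gamma_3<2$. The lower bound you invoke for subarcs starting at arbitrary points of $K$ is not available, since \eqref{eq:connected-dimension-lower} requires $W_0\in C$.
\end{itemize}
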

\begin{proof}
Fix a realization in the event of probability one on which
Lemmas~\ref{lem:local-variation} and~\ref{lem:double-point-avoidance},
Proposition~\ref{prop:curve-upper}, \eqref{eq:kaufman}, and
\eqref{eq:connected-dimension-lower} all hold.
Fix $y\in K\setminus\{W_0\}$
and choose a simple curve $\eta:[0,1]\to K$ from $W_0$ to $y$ as in
Proposition~\ref{prop:curve-upper},
in its parametrization that is $\beta$-H\"older for every
$0<\beta<1/\gamma_3$. Its image $A=\eta([0,1])$ has Hausdorff
dimension $\gamma_3$ by \eqref{eq:curve-upper} and
\eqref{eq:connected-dimension-lower}.

Every nondegenerate closed subarc $J\subset A\cap K_q$, with
$q\in(0,1]$ dyadic, contains a double point of $W$ on $[0,q]$.
Indeed, if no point of $J$ were visited twice during $[0,q]$, then
\[
 W:\{t\in[0,q]:W_t\in J\}\longrightarrow J
\]
would be a continuous bijection. Its domain is compact, so this map
would be a homeomorphism. Since $J$ is homeomorphic to a closed
interval, its preimage would be a nondegenerate closed interval in
$[0,q]$. Equation~\eqref{eq:kaufman} would then give $\dimH J=2$,
contrary to $\dimH J\le\gamma_3<2$.

We now choose double points $z_n=\eta(c_n)$ and simple curves
$\lambda_n:[a_n,b_n]\to K$ inductively. Start with $a_0=1$.
Having chosen the first $n-1$ curves, put
\[
 r_n=\min\{2^{-n},a_{n-1}\},\qquad
 E_n=\eta([r_n,1])\cup\bigcup_{j<n}\lambda_j([a_j,b_j]).
\]
Since the compact set $E_n$ does not contain $W_0$, Brownian
continuity allows us to choose a positive dyadic $q_n\le2^{-n}$ with
$K_{q_n}\cap E_n=\varnothing$. Because $W_0$ is never revisited, $W([q_n,1])$ is also compact
and does not contain $W_0$. A sufficiently short initial subarc of
$\eta$ therefore lies in a ball about $W_0$ disjoint from
$W([q_n,1])$, and so lies in $K_{q_n}$. Choosing a nondegenerate
closed subarc strictly inside it gives a double point
$z_n=\eta(c_n)$ of $W$ on $[0,q_n]$.

Choose one point before $z_n$ and one after it along this initial
subarc. Lemma~\ref{lem:double-point-avoidance} joins them, in that order,
by a curve $g$ in
$K_{q_n}\setminus\{W_0,z_n\}$ satisfying
\eqref{eq:avoidance-variation}. Let $v$ be the first parameter time
at which $g$ meets $\eta([c_n,1])$, and let $u$ be its last visit to
$\eta([0,c_n])$ before $v$. The sets $\eta([0,c_n])$ and $\eta([c_n,1])$ are compact and
intersect only at $z_n$, which $g$ avoids. Thus $u,v$ are well defined,
$u<v$, and $g((u,v))$ does not meet $A$.
Apply Lemma~\ref{lem:holder-arc} to the restricted curve $g|_{[u,v]}$.
The resulting simple curve has endpoints $\eta(a_n),\eta(b_n)$ with
\begin{equation}\label{eq:detour-intervals}
 0<a_n<c_n<b_n<r_n.
\end{equation}
Parametrize it on $[a_n,b_n]$ and call it $\lambda_n$. By construction,
\begin{equation}\label{eq:detour-properties}
 \begin{gathered}
 \lambda_n([a_n,b_n])\subset K_{q_n}\setminus\{W_0,z_n\},\\
 \lambda_n([a_n,b_n])\cap A=\{\eta(a_n),\eta(b_n)\},\\
 \|\lambda_n\|_{p\text{-var};[a_n,b_n]}
       \le3C_pq_n^{1/4}\qquad(p>\gamma_3).
 \end{gathered}
\end{equation}
It is disjoint from every earlier $\lambda_j$, since it lies in
$K_{q_n}$ and $K_{q_n}\cap E_n=\varnothing$.

To sum the changes made by these replacements, define
$h_n:[0,1]\to\R^3$ to equal $\lambda_n-\eta$ on $[a_n,b_n]$
and zero elsewhere. The matching endpoint values make $h_n$ continuous.
For $p>\gamma_3$, choose $\beta<1/\gamma_3$ with $\beta p>1$.
Extending by zero does not change the variation of a curve with zero
endpoints, so \eqref{eq:detour-intervals}--\eqref{eq:detour-properties}
give
\begin{equation}\label{eq:detour-summability}
 \|h_n\|_{p\text{-var}}
 \le3C_p2^{-n/4}+[\eta]_\beta2^{-n\beta},\qquad
 \sum_{n\ge1}\|h_n\|_{p\text{-var}}<\infty.
\end{equation}
A binary sequence $\varepsilon=(\varepsilon_n)_{n\ge1}\in\{0,1\}^{\N}$
now determines a curve by
\[
 f_\varepsilon=\eta+\sum_{n\ge1}\varepsilon_nh_n.
\]
By \eqref{eq:detour-summability}, the series converges uniformly, and
the triangle inequality and lower semicontinuity of $p$-variation give
\[
 \|f_\varepsilon\|_{p\text{-var}}
 \le\|\eta\|_{p\text{-var}}+\sum_{n\ge1}\|h_n\|_{p\text{-var}}
 <\infty\qquad(p>\gamma_3).
\]
The intervals $[a_n,b_n]$ are pairwise disjoint by
\eqref{eq:detour-intervals}. By \eqref{eq:detour-properties}, each
$\lambda_n$ meets $A$ only at its two endpoints, and the images of
different $\lambda_n$ are disjoint. Since $\eta$ and all the
$\lambda_n$ are simple, $f_\varepsilon$ is simple, joins $W_0$ to
$y$, and takes values in $K$.

The point $z_n=\eta(c_n)$ is omitted from $\lambda_n$. It does not
belong to an earlier replacement because $z_n\in K_{q_n}$ and
$K_{q_n}\cap E_n=\varnothing$. Nor does it belong to a later
replacement $\lambda_m$, $m>n$, since $r_m<c_n$ and thus
$z_n\in\eta([r_m,1])\subset E_m$, while
$\lambda_m([a_m,b_m])\cap E_m=\varnothing$.
As $\eta$ is injective and $a_n<c_n<b_n$, it follows that
\[
 z_n\in f_\varepsilon([0,1])\quad\Longleftrightarrow\quad
 \varepsilon_n=0.
\]
Hence distinct binary sequences produce distinct curve images.

Since each $f_\varepsilon$ is already simple, the reparametrization
in Lemma~\ref{lem:holder-arc} preserves its image and makes it
$\beta$-H\"older for every $0<\beta<1/\gamma_3$. These bounds give
upper box dimension at most $\gamma_3$. The lower bound
\eqref{eq:connected-dimension-lower} for connected sets gives Hausdorff
dimension at least $\gamma_3$. By \eqref{eq:box-comparison}, the
Hausdorff and box dimensions therefore both equal $\gamma_3$.
\end{proof}

\section{Limits of internal distances}\label{sec:limits}

In this section we prove tightness of the normalized internal graph
distances, the pseudometric properties in \eqref{eq:main-local}, and
the regularity assertion in Theorem~\ref{thm:main-metric}(i).
The remaining assertions of that theorem are proved later:
the reconstruction formula in (ii) is Theorem~\ref{thm:exact-gluing};
the measurability and convergence assertions in (iii) are proved in
Theorem~\ref{thm:measurable} and
Corollary~\ref{cor:strong-convergence}, respectively; and the
separation assertion in (iv) is Theorem~\ref{thm:deterministic-root}.

\subsection{Definition and properties of the limits}\label{subsec:metric-statement}

We first define the internal distances and state the properties of their
limits. A distance is called \emph{internal} to an interval $I$ when its
graph paths use only cells contained in $I$. We consider
the countable family
\[
 \mathscr J=
 \bigl\{[k2^{-m},\ell2^{-m}]:
 m\in\{0,1,\ldots\},\ k,\ell\in\Z,\ 0\le k<\ell\bigr\}.
\]
We call an element of $\mathscr J$ a dyadic interval; its endpoints
may be any two nonnegative dyadic rational numbers.
The notation $C(I^2)$, with $I=[a,b]$, means $C(I\times I,\R)$.
For $0<\alpha<1$, let $C^\alpha(I^2)$ be the subspace of functions
with finite $\alpha$-H\"older seminorm, equipped with the norm
\[
 \|f\|_{C^\alpha(I^2)}=\|f\|_\infty+[f]_{\alpha;I^2}.
\]
We also write, for $T>0$,
\[
 C_0([0,T],\R^3)=
 \{w\in C([0,T],\R^3):w(0)=\mathbf0\},
\]
with the uniform norm. Let $n_j=2^{k_j}$, where $(k_j)$ is a strictly
increasing sequence of nonnegative integers, and let $\kappa_j>0$ be
deterministic normalization constants.

For $I=[a,b]\in\mathscr J$ and all sufficiently large $j$, both $an_j$ and
$bn_j$ are integers. Let $G_j^I$ be the intersection graph with vertices
$an_j,\ldots,bn_j-1$ and cells $[k/n_j,(k+1)/n_j]$, with the edge rule
in \eqref{eq:graph-definition}. Define
\begin{equation}\label{eq:local-kernel}
 \iota_j^I(t)=\min\{\lfloor n_jt\rfloor,bn_j-1\},\qquad
 d_j^I(s,t)=\kappa_j^{-1}
        d_{G_j^I}(\iota_j^I(s),\iota_j^I(t)),\quad s,t\in I.
\end{equation}
Let $\widetilde d_j^I$ agree with $d_j^I$ at every grid point
$(k/n_j,\ell/n_j)\in I^2$, and make it linear separately in $s$
and $t$ on each grid square (bilinear interpolation). In either
argument, the value of $d_j^I$ at $b$ equals that at $b-1/n_j$,
since both times are assigned to the last cell of $G_j^I$.
Moving either vertex to a neighbour changes graph distance by at most
one, so
\begin{equation}\label{eq:interpolation}
 \|\widetilde d_j^I-d_j^I\|_\infty\le2/\kappa_j.
\end{equation}
The function $d_j^I$ is a pseudometric on time labels, whereas its
interpolation need not be one. The uniform error in
\eqref{eq:interpolation} will transfer the pseudometric properties to
the limit. For the finitely many indices at which the endpoints are not
grid points, set both functions equal to zero.

The following theorem describes the resulting limits, their regularity
and their dependence on the Brownian path.
\begin{theorem}\label{thm:main-metric}
Let $\theta$ be the exponent in Theorem~\ref{thm:main-growth}, and
choose the deterministic sequences $(n_j)$ and $(\kappa_j)$ from
Lemma~\ref{lem:records}. Then $\kappa_j=n_j^{\theta+o(1)}$ is comparable
to both $\E L_{n_j}$ and $\E V_{n_j}$ by constants independent of $j$,
and the laws of
$(W,(\widetilde d_j^I)_{I\in\mathscr J})$ are tight. Here we use uniform
convergence on compact time intervals for $W$ and uniform convergence
on $I^2$ for each distance function. The distance coordinates are
also tight in $C^\alpha(I^2)$ for every $0<\alpha<\theta$.
Let $(W,(\D^I)_{I\in\mathscr J})$ be any joint subsequential limit
in distribution. Almost surely, its distance coordinates are continuous
pseudometrics satisfying
\begin{equation}\label{eq:main-local}
 \begin{aligned}
 W_s=W_t&\Longrightarrow\D^I(s,t)=0
       &&(I\in\mathscr J,\ s,t\in I),\\
 \D^J(s,t)&\le\D^I(s,t)
       &&(I,J\in\mathscr J,\ I\subset J,\ s,t\in I).
 \end{aligned}
\end{equation}
The following assertions hold for this limiting law.
\begin{enumerate}
\item For every $I\in\mathscr J$, there are a nonnegative random variable
$H_I$ and a deterministic $c_I>0$ such that
$\E\exp\{c_IH_I^{1/(1-\theta)}\}<\infty$ and, almost surely,
\[
 \D^I(s,t)\le H_I|s-t|^\theta
       \left(\log\frac{e|I|}{|s-t|}\right)^{1-\theta}
       \qquad(s,t\in I,\ s\ne t).
\]
\item On an event of probability one, for every finite partition
$I=I_1\cup\cdots\cup I_m$, where $I,I_1,\ldots,I_m\in\mathscr J$
and the interiors of the $I_i$ are disjoint, and all $s,t\in I$,
\[
 \D^I(s,t)=\inf\sum_{r=1}^q\D^{I_{i_r}}(a_r,b_r).
\]
The infimum is over integers $q\ge1$, indices
$i_r\in\{1,\ldots,m\}$, and times $a_r,b_r\in I_{i_r}$ with
$a_1=s$, $b_q=t$, and $W_{b_r}=W_{a_{r+1}}$ for $r<q$.
No chronological ordering is imposed on the pairs $(a_r,b_r)$:
$a_r$ may exceed $b_r$, and the same partition interval may occur
more than once among $I_{i_1},\ldots,I_{i_q}$.
\item For every $I=[a,b]\in\mathscr J$, there is a Borel map
$\Psi_I:C_0([0,b-a],\R^3)\to C(I^2)$ such that
$\D^I=\Psi_I((W_{a+u}-W_a)_{0\le u\le b-a})$ almost surely.
If the joint convergence is along indices $j_k$, then, on the original
probability space of $W$,
\[
 \E\left\|\widetilde d_{j_k}^I-
       \Psi_I\bigl((W_{a+u}-W_a)_{0\le u\le b-a}\bigr)
       \right\|_{C^\alpha(I^2)}^p\longrightarrow0
       \qquad(p>0,\ 0<\alpha<\theta).
\]
In particular, convergence holds in each such H\"older norm in probability.
\item For each fixed deterministic $s\in[0,1]$, on an event of
probability one,
\begin{equation}\label{eq:main-root-separation}
 \inf_{\substack{t\in[0,1]\\|t-s|\ge\varepsilon}}
       \D^{[0,1]}(s,t)>0\qquad\text{for every }\varepsilon>0,
\end{equation}
where the infimum of the empty set is $+\infty$.
\end{enumerate}
\end{theorem}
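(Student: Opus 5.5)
The plan is to prove tightness and (i) by upgrading the moment estimates of Proposition~\ref{prop:scalar-seed} from one variable to two, to obtain \eqref{eq:main-local} by passing to the limit in discrete identities, and then to treat (ii)--(iv) with separate arguments: gluing, a uniqueness-of-limit argument for measurability, and a localized version of Proposition~\ref{prop:plateau}.

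\emph{Tightness and regularity.} Fix $I=[a,b]\in\mathscr J$ and $\alpha<\rho<\theta$. For grid points, the internal distance satisfies
\[
 |d_j^I(s,t)-d_j^I(s',t')|\le d_j^I(s,s')+d_j^I(t,t').
\]
Each term is at most $\kappa_j^{-1}$ times the diameter of the induced subgraph on the consecutive cells between the two times. By Brownian scaling, this diameter is dominated by a variable with the law of $Y_{l}$, where $l$ is the number of such cells. Proposition~\ref{prop:upper-tail} together with \eqref{eq:record-local} then gives
\[
 \E|d_j^I(s,t)-d_j^I(s',t')|^p\le C_{p,\rho}\bigl(|s-s'|+|t-t'|\bigr)^{p\rho},
\]
uniformly in $j$. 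Below the grid scale, the bilinear interpolation is handled exactly as in Proposition~\ref{prop:scalar-seed}, using $\kappa_j\ge c n_j^\rho$. Kolmogorov--Chentsov, or dyadic chaining on $I^2$, then yields uniform-in-$j$ moments of $[\widetilde d_j^I]_{\alpha;I^2}$, which gives tightness in $C^{\alpha'}(I^2)$ for $\alpha'<\alpha$. Since $\mathscr J$ is countable and $W$ is tight, the joint family is tight.

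\emph{The pseudometric properties.} These pass to any limit through \eqref{eq:interpolation}, because $d_j^I$ is an exact pseudometric on time labels. Compatibility follows because $W_s=W_t$ makes the two cells equal or adjacent, so $d_j^I(s,t)\le\kappa_j^{-1}$; this is the argument at the end of Proposition~\ref{prop:scalar-seed}. I would apply Skorokhod representation, noting that the implication ``$W_s=W_t\Rightarrow$ small distance'' must survive uniform convergence of $W$. For that, I would use closedness of the set of near-coincidences together with the uniform bound, as was done there. Monotonicity in $I\subset J$ holds exactly at the discrete level, because paths internal to $I$ are internal to $J$.

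\emph{The sharp modulus in (i).} For the modulus $|s-t|^\theta(\log)^{1-\theta}$ with exponential moments, I would not use Kolmogorov. Instead I would work directly with the exponential tail \eqref{eq:upper-tail} of interval maxima at every dyadic level, combined with the upper bound $\mu_l\le C l^{\theta}$ up to the slowly varying loss. Here lies the main obstacle: \eqref{eq:record-variable} only gives $\rho_j<\theta$, so a clean $|s-t|^\theta$ requires a sharper record inequality. Concretely, one needs $\mu_l/\kappa_j\le C(l/n_j)^{\theta}$ up to factors absorbed into the logarithm. I would try first to exploit the record property at exponent $\theta$ directly along the chosen subsequence, possibly refining the choice of $n_j$. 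A union bound over $2^k$ intervals at level $k$ costs a factor $\log 2^k$ in the exponential tail, which is what produces $(\log(e|I|/|s-t|))^{1-\theta}$ and the stretched-exponential integrability of $H_I$.

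\emph{The remaining assertions.} For (ii), I would prove the gluing identity at the discrete level, up to $O(m/\kappa_j)$ errors, by cutting a shortest path at partition boundaries, where intersection points give the chain constraint. Passing to the limit uses compactness of chains with a bounded number of pairs; the number of pairs is bounded because each pair contributes a cost bounded below on the separation event. For (iii), I would show that every subsequential limit of $(W,\D^I)$ has $\D^I$ equal to a measurable function of $W$, via the gluing formula and a coupling argument showing that two limits sharing $W$ coincide. Convergence in probability on the original space then follows, and uniform integrability from the moment bounds gives $L^p$ convergence in $C^\alpha$. For (iv), I would localize Proposition~\ref{prop:plateau} at a deterministic $s$ using two-sided Brownian motion around $s$ and Blumenthal's law.
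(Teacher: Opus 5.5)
Your tightness argument matches Lemma~\ref{lem:kernel-moments}. Part (i), however, has a genuine gap, and the obstacle you identify is not the real one.

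\textbf{Part (i).} With only the exponential tail \eqref{eq:upper-tail}, a union bound over the $2^k$ intervals at level $k$ controls their maximum only at order $k$. That yields $|s-t|^\theta\log(e|I|/|s-t|)$ with exponentially integrable $H_I$. The power $1-\theta$ on the logarithm and $\E\exp\{c_IH_I^{1/(1-\theta)}\}<\infty$ need the stretched-exponential tail \eqref{eq:diameter-tail}, $\Pp(\diam\D^I>u|I|^\theta)\le C\exp\{-cu^{1/(1-\theta)}\}$. Only with that tail does dividing by $(k+1)^{1-\theta}$ beat the factor $2^k$. The paper proves this tail in four steps:
\begin{enumerate}
\item cut $I$ into $m$ equal pieces;
\item bound $\kappa_jZ_j^I$ by the sum of the $m$ independent piece maxima via \eqref{eq:subadd};
\item apply their uniform exponential moments together with $\mu_l\le2\kappa_j(|I|/m)^{\rho_j}$ from \eqref{eq:record-variable};
\item let $j\to\infty$ with $m$ fixed, and only then optimize $m\asymp u^{1/(1-\theta)}$.
\end{enumerate}
In step (iv), $\rho_j\to\theta$ removes exactly the discrepancy you worry about. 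No sharper record inequality is needed. The one you propose is also not available. At $l=1$, even with a logarithmic allowance, it forces $\mu_{n_j}\ge c\,n_j^\theta(\log n_j)^{\theta-1}$. This fails if $\mu_N\asymp N^\theta/\log N$, a behaviour excluded neither by $\mu_N=N^{\theta+o(1)}$ nor by \eqref{eq:product}, and no choice of subsequence changes that.

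\textbf{Parts (ii) and (iii).} Both rest on steps that would fail.

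In (ii), the chains obtained by cutting a shortest path of $G_j^I$ at partition boundaries need not have boundedly many pairs. The path may cross between intervals of order $\kappa_j$ times, at cost $\kappa_j^{-1}$ each. A pair cost $\D^{I_{i_r}}(a_r,b_r)$ also has no positive lower bound: a segment may occupy a single cell, so $a_r,b_r$ are nearly equal, and no separation property changes that. So compactness of chains is unavailable.

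The paper proves $\Delta_{\mathcal P}\le\D$ by duality instead (Theorem~\ref{thm:local-global}). Take a continuous compatible $f$ that is $\D^{I_i}$-Lipschitz on each $I_i$. It is approximated on $G_j^{I_i}$ by the inf-extension of Lemma~\ref{lem:inf-extension}. All cross-interval edges are then neutralized at once by composing with the flattening map $g$ of Lemma~\ref{lem:flatten}. This works because $\bigcup_if(\mathcal T_i)$ is Lebesgue-null: contact times have dimension at most $1/2$ by \eqref{eq:double-times}, and $f$ is $\alpha$-H\"older with $1/2<\alpha<\theta$. One then takes $f=\Delta_{\mathcal P}(s,\cdot)$.

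In (iii), the gluing formula does not make two limits with the same $W$ coincide through a coupling argument. A pathwise comparison only bounds the change by $\sum_i\diam Q_i$, whose natural size $m\cdot m^{-\theta}$ diverges. The missing mechanism combines three ingredients:
\begin{itemize}
\item conditional independence of the pieces given $W$ (Lemma~\ref{lem:local-independence});
\item the one-piece bound of Lemma~\ref{lem:one-piece};
\item the Efron--Stein inequality.
\end{itemize}
Together they give $\E[\Var(\D(s,t)\mid W)]\le Cm^{1-2\theta}\to0$, again using $\theta>1/2$.

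\textbf{Smaller points.} Under the Skorokhod coupling, closedness of coincidence or near-coincidence sets only shows that limits of coincidences of $W^{(j)}$ are coincidences of $W$. That is the wrong direction. The paper therefore carries $\mathcal I_I(W)$ as an extra coordinate, so that Hausdorff convergence supplies exact coincidences of $W^{(j)}$.

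For (iv), Proposition~\ref{prop:plateau} concerns $\delta_\alpha^W$. Its positivity does not transfer to $\D$, because $\D\le H\delta_\alpha^W$ by \eqref{eq:metric-chain-comparison}. The zero--one argument must be rerun for $\D$ itself. That needs (iii) for germ measurability, a cell-pairing comparison in place of exact scaling, and, at interior $s$, the bump functions of Proposition~\ref{prop:localized-function}, which rest on (ii).
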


Proposition~\ref{prop:coarse-information} refines the measurability
assertion in Theorem~\ref{thm:main-metric}(iii): $\D^{[0,1]}$ is
measurable with respect to the completed sigma-field generated by the
compact self-intersection relation
\[
 \mathcal I_{[0,1]}(W)=\{(s,t)\in[0,1]^2:W_s=W_t\}.
\]

The event of probability one in Theorem~\ref{thm:main-metric}(iv)
may depend on $s$. It remains open whether every subsequential limit
satisfies, almost surely,
\begin{equation}\label{eq:spatial-separation-question}
 \D^{[0,1]}(s,t)=0\quad\Longrightarrow\quad W_s=W_t
 \qquad\text{for all }s,t\in[0,1].
\end{equation}
By \eqref{eq:main-local}, $\D^{[0,1]}$ induces a pseudometric on the
Brownian range $K=W([0,1])$ through
$d_K(W_s,W_t)=\D^{[0,1]}(s,t)$. If
\eqref{eq:spatial-separation-question} holds, $d_K$ is a metric on $K$.
The question of full separation is revisited in
Question~\ref{qu:critical-separation}.

In the remainder of this section, Lemma~\ref{lem:kernel-moments} and
Proposition~\ref{prop:local-field} establish tightness and the
properties in \eqref{eq:main-local}; Lemma~\ref{lem:critical-modulus}
proves Theorem~\ref{thm:main-metric}(i).
Lemma~\ref{lem:local-independence} records independence of internal
limits on disjoint intervals, and Proposition~\ref{prop:quotient-geometry}
describes the metric quotient. Parts (ii)--(iv) of
Theorem~\ref{thm:main-metric} are proved in Sections~\ref{sec:gluing}
and~\ref{sec:separation}, as specified above.

\subsection{Tightness and joint limits}\label{subsec:joint-limits}

Uniform convergence of paths need not preserve self-intersections:
even if $w_j\to w$ uniformly and $w(s)=w(t)$, there need not be times
$s_j\to s$ and $t_j\to t$ with $w_j(s_j)=w_j(t_j)$. For this reason,
we include the self-intersection relations as additional coordinates
in the joint convergence. Their Hausdorff convergence will provide
the exact intersections in the approximating paths needed to show that
$\D^I(s,t)=0$ whenever $W_s=W_t$; see the proof of
Proposition~\ref{prop:local-field}. Throughout this subsection we use
the sequences from Lemma~\ref{lem:records}.

For $I=[a,b]\in\mathscr J$ and $j$ such that $an_j,bn_j$ are integers,
write $G_j^I[u,v]$ for the subgraph of $G_j^I$ induced by the
consecutive vertices $u,\ldots,v-1$, where
$an_j\le u<v\le bn_j$. Set
\[
 Z_j^I=\frac1{\kappa_j}\left(1+
   \max_{an_j\le u<v\le bn_j}\diam G_j^I[u,v]\right).
\]
Thus every diameter is computed using only paths within the
corresponding consecutive restriction. Note that Brownian scaling gives
$Z_j^I\law\kappa_j^{-1}Y_{(b-a)n_j}$.
For the finitely many indices at which the endpoints are not grid
points, set $Z_j^I=0$. Along with the distances, we retain the compact set
\begin{equation}\label{eq:intersection-relation}
 \mathcal I_I(W)=\{(s,t)\in I^2:W_s=W_t\}.
\end{equation}
For nonempty compact sets $A,B\subset I^2$, define their Hausdorff
distance by
\[
 d_{\mathrm H}(A,B)=\max\left\{
   \sup_{x\in A}\inf_{y\in B}|x-y|,\;
   \sup_{y\in B}\inf_{x\in A}|x-y|\right\}.
\]
We use this metric on the nonempty compact subsets of $I^2$.
The map $W\mapsto\mathcal I_I(W)$ is Borel measurable for this
topology. Indeed, $\mathcal I_I(W)$ meets a fixed compact rectangle
$R\subset I^2$ exactly when
$\min_{(s,t)\in R}|W_s-W_t|=0$; taking countable unions over closed
rectangles inside open sets gives the measurable hitting events.

For a bounded pseudometric $q$ on $I$, write
$\diam q=\sup_{s,t\in I}q(s,t)$.
For $0<\alpha<1$, each $\widetilde d_j^I$ is a Borel random element
of $C^\alpha(I^2)$: for fixed $I$ and $j$, its defining graph takes
values in a finite set of labelled graphs, and the edge events are
measurable.

\Needspace{7\baselineskip}
\begin{lemma}\label{lem:kernel-moments}
Fix $I=[a,b]\in\mathscr J$, $0<\alpha<\rho<\theta$, and $p>0$.
The laws of $\widetilde d_j^I$ are tight in $C^\alpha(I^2)$, and
$\sup_j\E\bigl[[\widetilde d_j^I]_{\alpha;I^2}^{\,p}\bigr]<\infty$.
If $|I|\le1$, then
\begin{equation}\label{eq:local-envelope-moments}
 \E[(Z_j^I)^p]\le C_{p,\rho}|I|^{p\rho},
\end{equation}
where $C_{p,\rho}$ depends only on $p$ and $\rho$, not on $I$
or $j$. For every $M>1$ there is a
constant $C_{M,p}<\infty$ such that
\[
 \sup_{\substack{I\in\mathscr J\\1<|I|\le M}}
 \sup_{j\ge1}\E[(Z_j^I)^p]\le C_{M,p}.
\]
\end{lemma}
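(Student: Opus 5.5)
We first prove the moment bounds on $Z_j^I$ and then deduce the H\"older bounds and tightness by the dyadic chaining already used in Proposition~\ref{prop:scalar-seed}.

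\textbf{Moments of $Z_j^I$.} Write $l=(b-a)n_j$. By Brownian scaling, $Z_j^I\law\kappa_j^{-1}Y_l$. Proposition~\ref{prop:upper-tail} then gives
\[
 \E[(Z_j^I)^p]\le C_p(\mu_l/\kappa_j)^p .
\]
If $|I|\le1$, then $l\le n_j$, and \eqref{eq:record-local} gives $\mu_l\le C_\rho\kappa_j|I|^\rho$. This proves \eqref{eq:local-envelope-moments}, with a constant depending only on $p$ and $\rho$.

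If $1<|I|\le M$, then $l\le\lceil M\rceil n_j$. Subadditivity \eqref{eq:subadd} over $\lceil M\rceil$ blocks of length $n_j$ gives $\mu_l\le\lceil M\rceil\mu_{n_j}=\lceil M\rceil\kappa_j$. Alternatively, Proposition~\ref{prop:product} gives $\mu_{rs}\le C_M\mu_s$ for bounded $r$. Either way the uniform bound $C_{M,p}$ follows.

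\textbf{Increment estimates.} For grid points $s=k/n_j$ and $t=\ell/n_j$ in $I$, the distance $d_j^I(s,t)$ is at most $\kappa_j^{-1}$ times the diameter of the restriction $G_j^I[k,\ell+1]$. That diameter is in turn bounded by the interval maximum of the subinterval of length $(\ell-k+1)/n_j$. Arguing as in Proposition~\ref{prop:scalar-seed}, we obtain
\[
 \|d_j^I(s,t)\|_{L^p}\le C_{p,\rho}(|t-s|+n_j^{-1})^\rho
 \qquad(p\ge1).
\]
By the triangle inequality for the pseudometric $d_j^I$,
\[
 |d_j^I(s,t)-d_j^I(s',t')|\le d_j^I(s,s')+d_j^I(t,t').
\]
So the two-variable increments reduce to one-variable ones. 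For points off the grid we use bilinear interpolation: convex combinations of grid values within a block of adjacent cells, as before. For scales below $1/n_j$, the interpolant has Lipschitz constant at most $2n_j/\kappa_j\le Cn_j^{1-\rho}$, using $\kappa_j\ge c_\rho n_j^\rho$ from \eqref{eq:record-local} with $l=1$ (for $|I|>1$ the same lower bound holds). Combining these gives
\[
 \E\bigl|\widetilde d_j^I(x)-\widetilde d_j^I(y)\bigr|^p\le C|x-y|^{p\rho}
 \qquad(x,y\in I^2),
\]
uniformly in $j$.

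\textbf{Chaining and tightness.} A Kolmogorov–Garsia type chaining on $I^2$ is the dyadic argument of Proposition~\ref{prop:scalar-seed}, now with $4^k$ increments per level. Choosing $p$ with $\rho-2/p>\alpha$, it gives
\[
 \sup_j\E\bigl[[\widetilde d_j^I]_{\alpha;I^2}^{\,p}\bigr]<\infty
\]
for large $p$, and hence for all $p$ by Jensen's inequality.

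For tightness, note that $\widetilde d_j^I$ vanishes on the diagonal, so $\|\widetilde d_j^I\|_\infty$ is controlled by the H\"older seminorm. By Arzel\`a–Ascoli, sets that are bounded in $C^{\alpha'}(I^2)$ are compact in $C^\alpha(I^2)$ for $\alpha<\alpha'$. Taking $\alpha<\alpha'<\rho$ and applying Markov's inequality then gives tightness in $C^\alpha(I^2)$.

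\textbf{Main obstacle.} The delicate point is uniformity in $j$ at scales finer than the mesh. This is handled by the lower bound $\kappa_j\gtrsim n_j^\rho$ together with the convex-combination argument. The rest of the argument is routine.
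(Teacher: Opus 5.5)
Your proposal is correct and follows essentially the same route as the paper. The steps match: moment bounds for $Z_j^I$ via Brownian scaling, Proposition~\ref{prop:upper-tail}, \eqref{eq:record-local} and block subadditivity for $1<|I|\le M$; then increment moments from the pseudometric triangle inequality, with the deterministic Lipschitz bound $n_j/\kappa_j\lesssim n_j^{1-\rho}$ below the mesh; then dyadic chaining on $I^2$ and Arzel\`a--Ascoli with a slightly larger H\"older exponent.

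Two small points. For $|I|>1$, restrictions with more than $n_j$ cells fall outside the range of \eqref{eq:record-local}, so the increment bound there needs the same block subadditivity you already used for $Z_j^I$; the paper states this explicitly. Also, bilinear interpolation makes $\widetilde d_j^I$ positive between diagonal grid points, so it does not vanish on the whole diagonal, but $\widetilde d_j^I(a,a)=0$ is all you need to control the sup norm.
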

\begin{proof}
Since $d_j^I$, $\widetilde d_j^I$, and $Z_j^I$ are zero when
the endpoints of $I$ are not grid points, we may assume
$an_j,bn_j\in\Z$. To bound $d_j^I(s,t)$, write
\[
 r=\min\{\iota_j^I(s),\iota_j^I(t)\},\qquad
 v=1+\max\{\iota_j^I(s),\iota_j^I(t)\}.
\]
Both assigned vertices belong to the consecutive restriction
$G_j^I[r,v]$, which contains $v-r\le n_j|s-t|+2$ vertices.
Their distance in $G_j^I$ is at most their distance in this
restriction, and hence at most its diameter. Applying
Proposition~\ref{prop:upper-tail} and \eqref{eq:record-local} gives,
for $|I|\le M$,
\begin{equation}\label{eq:local-distance-moment}
 \E[(d_j^I(s,t))^p]\le
 C_{M,p,\rho}(|s-t|+n_j^{-1})^{p\rho}.
\end{equation}
If the restriction contains more than $n_j$ cells, divide it into at
most $\lceil M\rceil+1$ consecutive parts and use
\eqref{eq:subadd} before applying these estimates.
For $|I|\le1$, the law of $Z_j^I$, the moment bound following
from \eqref{eq:upper-tail}, and \eqref{eq:record-local} with
$l=|I|n_j$ give
\[
 \E[(Z_j^I)^p]\le C_p\left(\frac{\mu_{|I|n_j}}{\kappa_j}\right)^p
    \le C_{p,\rho}|I|^{p\rho},
\]
proving \eqref{eq:local-envelope-moments} with no dependence on $I$.
For $1<|I|\le M$, divide the $|I|n_j$ cells into at most
$\lceil M\rceil+1$ consecutive blocks of at most $n_j$ cells.
Subadditivity \eqref{eq:subadd}, monotonicity of $\mu_l$, and
$\mu_{n_j}=\kappa_j$ then give
$\E[(Z_j^I)^p]\le C_{M,p}$ uniformly in $I$ and $j$.
For any $p>0$ and nonnegative $x_1,\ldots,x_m$, we use
\[
 \left(\sum_{r=1}^m x_r\right)^p
 \le m^{\max\{p-1,0\}}\sum_{r=1}^m x_r^p.
\]

The graph triangle inequality compares distances at two pairs of
vertices:
\[
 |d(x,y)-d(x',y')|\le d(x,x')+d(y,y').
\]
For $z,z'\in I^2$ with $|z-z'|\ge n_j^{-1}$, this inequality,
\eqref{eq:local-distance-moment}, and the interpolation error
\eqref{eq:interpolation} give
\[
 \E|\widetilde d_j^I(z)-\widetilde d_j^I(z')|^p
       \le C_{I,p,\rho}|z-z'|^{p\rho}.
\]
For shorter increments, write $z=(s,t)$ and $z'=(s',t')$.
With the other coordinate fixed, moving between adjacent grid
points changes $\widetilde d_j^I$ by at most $1/\kappa_j$.
Bilinear interpolation therefore gives
\[
 |\widetilde d_j^I(s,t)-\widetilde d_j^I(s',t')|
 \le\frac{n_j}{\kappa_j}(|s-s'|+|t-t'|).
\]
By \eqref{eq:record-local} with $l=1$,
$\kappa_j\ge c_\rho n_j^\rho$. When $|z-z'|<n_j^{-1}$,
this inequality also yields the preceding moment bound.

Choose $\alpha<\beta<\rho$ and $q>p$ with
$\rho-2/q>\beta$. At dyadic level $k$ in the rectangle $I^2$,
there are at most $C2^{2k}$ adjacent grid pairs. The increment
moment bound therefore gives an $L^q$ bound of
$C2^{-k(\rho-2/q)}$ for their maximum.
Telescoping along successive dyadic approximations, as in the proof of
Proposition~\ref{prop:scalar-seed}, and summing $2^{k\beta}$ times
these increments gives a uniform $p$-th moment bound for the
$\beta$-H\"older seminorm. Since $\widetilde d_j^I(a,a)=0$, the $C^\beta(I^2)$ norms also have
uniformly bounded $p$-th moments, as do the $\alpha$-H\"older seminorms.
Bounded balls in $C^\beta(I^2)$ have compact closure in $C^\alpha(I^2)$
by the Arzel\`a--Ascoli theorem
\cite[Theorem 7.25]{Rudin} and the interpolation inequality
\eqref{eq:holder-interpolation}. This proves tightness.
\end{proof}

The tightness of $\widetilde d_j^I$ and the moment bounds for
$Z_j^I$ from Lemma~\ref{lem:kernel-moments} allow joint subsequential
convergence over the countable family $\mathscr J$, together with
$W$ and its self-intersection relations.
\begin{proposition}\label{prop:local-field}
Every subsequence of $(n_j)$ has a further deterministic subsequence
along which
\begin{equation}\label{eq:joint-local-limit}
 \left(W,\bigl(\mathcal I_I(W),\widetilde d_j^I,Z_j^I\bigr)_{I\in\mathscr J}\right)
 \xrightarrow{\mathrm d}
 \left(W,\bigl(\mathcal I_I(W),\D^I,Z^I\bigr)_{I\in\mathscr J}\right).
\end{equation}
The convergence in distribution in \eqref{eq:joint-local-limit} uses
uniform convergence on compact time intervals for the Brownian path,
the uniform topology on $C(I^2)$ for each distance function, the
Hausdorff metric for the compact sets $\mathcal I_I(W)$, and the
usual topology on $\R$ for $Z_j^I$. Almost surely, simultaneously
over all $I,J\in\mathscr J$, the following hold:
\begin{align}
 W_s=W_t&\Longrightarrow\D^I(s,t)=0,\qquad s,t\in I,
                      \label{eq:local-identifications}\\
 \D^J(s,t)&\le\D^I(s,t),\qquad I\subset J,\quad s,t\in I.
                      \label{eq:domain-monotonicity}
\end{align}
Every $\D^I$ is a continuous pseudometric and, for $0<\alpha<\theta$,
\begin{equation}\label{eq:local-holder}
 \D^I(s,t)\le H_{I,\alpha}|s-t|^\alpha,
       \qquad \E[H_{I,\alpha}^p]<\infty\quad(p>0).
\end{equation}
For $|I|\le1$ and $p>0$,
\begin{equation}\label{eq:diameter-moments}
 \E[(\diam\D^I)^p]\le\E[(Z^I)^p]\le C_p|I|^{p\theta}.
\end{equation}
In fact, these moment bounds follow from the stronger uniform tail
estimate: there are constants $c,C>0$, independent of $I$, such that
\begin{equation}\label{eq:diameter-tail}
 \Pp(\diam\D^I>u|I|^\theta)
 \le\Pp(Z^I>u|I|^\theta)
 \le C\exp\{-cu^{1/(1-\theta)}\}
       \qquad(|I|\le1,\ u\ge0).
\end{equation}
Also,
\begin{equation}\label{eq:limit-positive-mean}
 \E\D^{[0,1]}(0,1)\ge c_*>0.
\end{equation}
\end{proposition}
\begin{proof}
Since $W$ and its self-intersection relations
$(\mathcal I_I(W))_{I\in\mathscr J}$ do not depend on $j$, their joint
law is tight. Lemma~\ref{lem:kernel-moments} gives tightness of
$(\widetilde d_j^I)_j$ and $(Z_j^I)_j$ for each $I\in\mathscr J$.
Choosing compact coordinate sets with summable error probabilities gives
tightness in the countable product. Prokhorov's theorem
\cite[Theorem 5.1]{Billingsley} gives a weakly convergent subsequence.
The product space is Polish, so the Skorokhod representation theorem
\cite[Theorem 6.7]{Billingsley} gives copies of these random variables
on a space where the convergence is almost sure. On this space write
$W^{(j)}$ for the Brownian path at index $j$, and $W$ for its limit.
The finite quantities $d_j^I,\widetilde d_j^I,Z_j^I$ are now computed
from $W^{(j)}$. The joint law of
$(W^{(j)},(\mathcal I_I(W^{(j)}))_{I\in\mathscr J})$ does not depend
on $j$, so the same law is preserved in the weak limit.
In particular, the compact sets in the limit are
$\mathcal I_I(W)$ for all $I\in\mathscr J$ almost surely.

Fix $I\in\mathscr J$ and $s,t\in I$ with $W_s=W_t$.
Hausdorff convergence of $\mathcal I_I(W^{(j)})$ to $\mathcal I_I(W)$
gives times $s_j\to s$, $t_j\to t$ with $W^{(j)}_{s_j}=W^{(j)}_{t_j}$.
Their cells in $G_j^I$ coincide or are joined by an edge, so
$d_j^I(s_j,t_j)\le1/\kappa_j$. Uniform convergence of the
interpolations and \eqref{eq:interpolation} give
\eqref{eq:local-identifications}. Symmetry, nonnegativity, the zero
diagonal and the triangle inequality pass from $d_j^I$ to the limit.
For $I\subset J$, the graph $G_j^I$ is the subgraph of $G_j^J$
induced by the cells contained in $I$. For $t\in I$, the assigned
vertices in the two graphs agree, except possibly at the right
endpoint of $I$, where they are adjacent. Consequently,
\[
 d_j^J(s,t)\le d_j^I(s,t)+2/\kappa_j\qquad(s,t\in I).
\]
Passing to the limit gives \eqref{eq:domain-monotonicity}.

Lemma~\ref{lem:kernel-moments}, Fatou's lemma and lower semicontinuity
of the H\"older seminorm give \eqref{eq:local-holder}, first for a
sequence of exponents increasing to $\theta$ and then for every smaller
exponent. We can choose
$H_{I,\alpha}=\sup_{s,t\in I,\,s\ne t}\D^I(s,t)/|s-t|^\alpha$
measurably. Since $d_j^I(s,t)\le Z_j^I$ for all $s,t\in I$,
the joint convergence and \eqref{eq:interpolation} give
$\diam\D^I\le Z^I$.

Equation~\eqref{eq:upper-tail} gives constants $c_0>0$ and $B_*>1$
such that $\E\exp(c_0Y_N/\mu_N)\le B_*$ for every $N\ge1$.
Fix $I$ with $L=|I|\le1$ and divide it into $m$ equal intervals,
where $m$ is a power of two. For sufficiently large $j$,
$l=Ln_j/m$ is an integer. By \eqref{eq:subadd}, $\kappa_jZ_j^I$
is at most the sum of the interval maxima on these $m$ parts.
These maxima have the law of $Y_l$ and are independent, since each
depends only on the Brownian increments in its own part.
By \eqref{eq:record-variable},
$\mu_l\le2\kappa_j(L/m)^{\rho_j}$, so the exponential moment
bound applies with parameter $c_0m^{\rho_j}/(2\kappa_jL^{\rho_j})$.
Independence and exponential Markov's inequality give
\[
 \Pp(Z_j^I>uL^\theta)
 \le\exp\{m\log B_*-(c_0/2)uL^{\theta-\rho_j}m^{\rho_j}\}.
\]
Keeping $m$ fixed, we pass to the limit $j\to\infty$. The portmanteau
inequality for the open set $(uL^\theta,\infty)$, together with
$\rho_j\to\theta$, yields
\[
 \Pp(Z^I>uL^\theta)
 \le\exp\{m\log B_*-(c_0/2)um^\theta\}.
\]
Since $\theta<1$, for sufficiently large $u$ we can choose a power
of two $m$ satisfying
\[
 m\le\left(\frac{c_0u}{4\log B_*}\right)^{1/(1-\theta)}<2m.
\]
The exponent is then at most $-m\log B_*\le-cu^{1/(1-\theta)}$.
This proves \eqref{eq:diameter-tail}, after adjusting the constants
for bounded $u$. Integrating the tail probability, we obtain, for $p>0$,
\[
 \E[(Z^I)^p]
 =p|I|^{p\theta}\int_0^\infty u^{p-1}
   \Pp(Z^I>u|I|^\theta)\,\mathrm du
 \le C_p|I|^{p\theta}.
\]
Together with $\diam\D^I\le Z^I$, this proves
\eqref{eq:diameter-moments}. Finally,
$\widetilde d_j^{[0,1]}(0,1)=L_{n_j}/\kappa_j$, whose expectation
is at least $c_*$ by \eqref{eq:record-positive}. The uniform
moment bounds from \eqref{eq:upper-tail} imply uniform
integrability, so convergence in distribution of the endpoint
distances gives \eqref{eq:limit-positive-mean}.
\end{proof}

Applying the diameter tail \eqref{eq:diameter-tail} to dyadic
subdivisions controls $\D^I(s,t)$ simultaneously over all time pairs
in a fixed interval. The following lemma gives the resulting modulus
of continuity.
\begin{lemma}\label{lem:critical-modulus}
For each $I\in\mathscr J$ there is a nonnegative random variable $H_I$
and constants $c_I,C_I>0$ such that
\begin{equation}\label{eq:critical-modulus-tail}
 \Pp(H_I>u)\le C_I\exp\{-c_Iu^{1/(1-\theta)}\},\qquad u\ge0.
\end{equation}
On an event of probability one, simultaneously for every $I\in\mathscr J$,
\begin{equation}\label{eq:critical-modulus}
 \D^I(s,t)\le H_I|s-t|^\theta
       \left(\log\frac{e|I|}{|s-t|}\right)^{1-\theta}
       \qquad(s,t\in I,\ s\ne t).
\end{equation}
When $|I|\le1$, the constants in \eqref{eq:critical-modulus-tail}
can be chosen independently of $I$.
\end{lemma}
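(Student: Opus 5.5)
We plan a chaining argument that combines the uniform diameter tail \eqref{eq:diameter-tail} with the triangle inequality and the domain monotonicity \eqref{eq:domain-monotonicity}, organized over dyadic subintervals of $I$. First we treat $|I|\le1$. For each $k\ge0$, split $I=[a,b]$ into $2^k$ dyadic subintervals $J$ of length $|I|2^{-k}$; these belong to $\mathscr J$. We also use the $2^k-1$ subintervals formed by two adjacent such pieces. For each such $J$, \eqref{eq:diameter-tail} gives
\[
\Pp\bigl(\diam\D^J>u_k(|I|2^{-k})^\theta\bigr)\le C\exp\{-cu_k^{1/(1-\theta)}\}.
\]
Now choose $u_k=u\,(1+k)^{1-\theta}$. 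Then $u_k^{1/(1-\theta)}=u^{1/(1-\theta)}(1+k)$, and a union bound over all $k$ and the $O(2^k)$ pieces gives total probability
\[
\sum_k C2^{k+1}\exp\{-cu^{1/(1-\theta)}(1+k)\}\le C'\exp\{-c'u^{1/(1-\theta)}\}
\]
once $u$ exceeds a fixed constant, since then $cu^{1/(1-\theta)}>2\log2$. Define $H'_I$ as the least $u$ for which all these bounds hold. Then $H'_I$ satisfies the tail bound \eqref{eq:critical-modulus-tail}, with constants independent of $I$.

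Next comes the deterministic step on the good event. Given $s\ne t$ in $I$, let $k$ be the largest integer with $|s-t|\le|I|2^{-k}$. Then $s$ and $t$ lie in a single dyadic piece of length $|I|2^{-k}$ or in two adjacent ones; this is why we included the unions of adjacent pieces. Write $J$ for that piece or union. Monotonicity \eqref{eq:domain-monotonicity} gives
\[
\D^I(s,t)\le\D^J(s,t)\le\diam\D^J\le H'_I(1+k)^{1-\theta}(|I|2^{-k})^\theta.
\]
If $J$ is a union of two pieces, we double the length bound; alternatively, the union is itself a dyadic interval of $\mathscr J$ and we apply \eqref{eq:diameter-tail} to it directly. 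Since $|I|2^{-k}<2|s-t|$ and $1+k\le C\log(e|I|/|s-t|)$, this yields \eqref{eq:critical-modulus} with $H_I=CH'_I$. No chaining sum is needed, because monotonicity lets a single interval absorb both endpoints. This is the main simplification, and it is the step to check carefully. In particular, we must confirm that the adjacent unions lie in $\mathscr J$; they do, since arbitrary dyadic endpoints are allowed.

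For $|I|>1$, we rescale the argument. Brownian scaling is not directly available for the limit $\D$, so instead we split $I$ into at most $\lceil|I|\rceil$ pieces of length at most $1$ and apply the previous bound at scales below $1$. At scales between $1$ and $|I|$, we use the triangle inequality across boundedly many unit pieces, together with the diameter bounds $\diam\D^J$ for unit $J$. The resulting constants depend on $|I|$, as the statement allows.

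Finally, simultaneity over the countable family $\mathscr J$ is automatic: each $H_I$ is defined on a probability-one event, and a countable intersection of such events still has probability one. Measurability of $H_I$ is also straightforward, since we may instead define it as the supremum of the ratio in \eqref{eq:critical-modulus}, which is dominated by the quantity constructed above.
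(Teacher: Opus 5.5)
Your proposal is correct and follows essentially the same route as the paper. Both apply a union bound to \eqref{eq:diameter-tail} over all dyadic subintervals of $I$ with thresholds $u(1+k)^{1-\theta}$, then place $s,t$ in at most two adjacent pieces at the scale comparable to $|s-t|$ and use domain monotonicity.

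The differences are minor. The paper handles the two-piece case by the triangle inequality through the common endpoint, rather than adding the adjacent unions to the union bound. For $|I|>1$ it bounds coarse dyadic diameters by sums of level-$m_0$ diameters, which is what your sketch for that case amounts to.
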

\begin{proof}
Fix $I=[a,b]$, write $L=|I|$, and let
\[
 J_{m,k}=[a+kL2^{-m},a+(k+1)L2^{-m}],
       \qquad m\ge0,\quad 0\le k<2^m.
\]
These intervals belong to $\mathscr J$.
If $L\le1$, \eqref{eq:diameter-tail} applies to every $J_{m,k}$.
For $L>1$, choose $m_0$ with $L2^{-m_0}\le1$. Each coarser interval
$J_{m,k}$, $m<m_0$, is the union of $q=2^{m_0-m}$ level-$m_0$
intervals of length $\ell=L2^{-m_0}$. Domain monotonicity and the
triangle inequality bound its diameter by the sum of their internal diameters. If this sum exceeds
$u(q\ell)^\theta$, at least one diameter exceeds
$uq^{\theta-1}\ell^\theta$. Thus \eqref{eq:diameter-tail} and a union
bound give
\[
 \Pp(\diam\D^{J_{m,k}}>u|J_{m,k}|^\theta)
 \le Cq\exp\{-cu^{1/(1-\theta)}/q\}.
\]
Since $q\le2^{m_0}$, this and \eqref{eq:diameter-tail} give constants
$a_I,A_I>0$ such that
\[
 \Pp(\diam\D^{J_{m,k}}>u(L2^{-m})^\theta)
 \le A_I\exp\{-a_Iu^{1/(1-\theta)}\}
       \qquad(m\ge0,\ 0\le k<2^m,\ u\ge0).
\]
For $L\le1$, these constants are uniform in $I$. To control the
internal diameters at all dyadic scales, set
\[
 M_I=\sup_{m\ge0}\frac{1}{(m+1)^{1-\theta}}
       \max_{0\le k<2^m}
       \frac{\diam\D^{J_{m,k}}}{(L2^{-m})^\theta}.
\]
A union bound over these dyadic intervals gives, for sufficiently large
$u$,
\[
 \Pp(M_I>u)
 \le A_I\sum_{m\ge0}2^m
       \exp\{-a_Iu^{1/(1-\theta)}(m+1)\}
 \le2A_I\exp\{-a_Iu^{1/(1-\theta)}\}.
\]
In particular, $M_I<\infty$ almost surely.

Given $s,t\in I$ with $h=|s-t|>0$, choose $m$ so that
$h\le L2^{-m}<2h$. The interval between $s$ and $t$ lies in at most
two adjacent level-$m$ intervals. The triangle inequality and
domain monotonicity give
\[
 \D^I(s,t)\le2M_I(m+1)^{1-\theta}(L2^{-m})^\theta
       \le C_\theta M_I h^\theta[\log(eL/h)]^{1-\theta}.
\]
Thus $H_I=C_\theta M_I$ satisfies \eqref{eq:critical-modulus}; increasing
$C_I$ extends its tail bound to all $u\ge0$. Countability of
$\mathscr J$ gives a common event of probability one.
\end{proof}

From Lemma~\ref{lem:local-independence} through
Section~\ref{subsec:metric-measurability}, whenever finite graph
approximations are used, we work with the almost surely convergent
copies from Proposition~\ref{prop:local-field}. We reindex the chosen
subsequence by $j$. The graphs and distances at index $j$ are
constructed from $W^{(j)}$, while $W$ is the limiting Brownian path;
the dependence on $W^{(j)}$ is suppressed in the graph notation.
If a limit law of $(W,\widetilde d_j^{[0,1]})$ has already been
specified along a subsequence, we may pass to a further subsequence
to obtain joint convergence of all coordinates in
\eqref{eq:joint-local-limit}. The specified limit law is unchanged.

\Needspace{7\baselineskip}
The following lemma records independence of the Brownian increment
paths together with their internal distance limits on intervals with
disjoint interiors.
\begin{lemma}\label{lem:local-independence}
Let $I_i=[a_i,b_i]\in\mathscr J$, $1\le i\le m$, have disjoint
interiors. The random elements
\[
 \left((W_{a_i+u}-W_{a_i})_{0\le u\le b_i-a_i},
       (\D^J)_{J\in\mathscr J,\ J\subset I_i}\right),
       \qquad 1\le i\le m,
\]
are independent. For a dyadic $b>0$, the process
$(W_{b+u}-W_b)_{u\ge0}$ is independent of $W|_{[0,b]}$ together
with $(\D^J)_{J\in\mathscr J,\ J\subset[0,b]}$.
When two intervals have the same length, the corresponding random
elements have the same law after their time indices are translated
to the same interval.
\end{lemma}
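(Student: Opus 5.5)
The plan is to prove the statement first at the level of the finite approximations, and then pass to the limit along the almost surely convergent copies of Proposition~\ref{prop:local-field}.

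Fix $j$ large enough that every $a_in_j$ and $b_in_j$ is an integer. For $J\subset I_i$ with $J\in\mathscr J$ and $J$ on the grid, the graph $G_j^J$ is defined through edges of the form $W^{(j)}(\text{cell})\cap W^{(j)}(\text{cell}')\ne\varnothing$. Whether two sets intersect is unchanged under a common translation, so these edge events depend only on the increment path $(W^{(j)}_{a_i+u}-W^{(j)}_{a_i})_{0\le u\le b_i-a_i}$. Hence $\widetilde d_j^J$ is a Borel function of that increment path, and the same function works for every $J\subset I_i$. For each $j$, the vector
\[
 \Bigl((W^{(j)}_{a_i+u}-W^{(j)}_{a_i})_{u\le b_i-a_i},(\widetilde d_j^J)_{J\subset I_i}\Bigr)_{1\le i\le m}
\]
therefore consists of independent components, because Brownian increments over intervals with disjoint interiors are independent. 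For finitely many $J$ that are not yet on the grid, the coordinates are set to zero, which is deterministic and so causes no problem.

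To pass to the limit, I would use the fact that independence is preserved under joint convergence in distribution. Take bounded continuous test functions $F_1,\ldots,F_m$, each depending on finitely many coordinates. Then
\[
 \E\prod_i F_i(\cdot_j)=\prod_i\E F_i(\cdot_j),
\]
and both sides converge by the joint convergence in \eqref{eq:joint-local-limit}. The increment paths converge because $W^{(j)}\to W$ uniformly on compacts. Finite-dimensional cylinder functions are convergence-determining on the countable product space, so the limiting components are independent.

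The same argument gives the second assertion. The future increments $(W^{(j)}_{b+u}-W^{(j)}_b)_{u\ge0}$ are independent of $W^{(j)}|_{[0,b]}$. For $J\subset[0,b]$, the distances $\widetilde d_j^J$ are functions of $W^{(j)}|_{[0,b]}$. Taking test functions on compact time windows and passing to the limit as before yields the claim.

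For equality in law, let two intervals $[a,a+\ell]$ and $[a',a'+\ell]$ have the same length. Translation by $a'-a$ maps dyadic subintervals of one to dyadic subintervals of the other once $j$ is large. The finite-$j$ vectors then have the same law after the time translation, by stationarity of increments, as in \eqref{eq:unit-scaling}. Equality of laws passes to the weak limit.

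The main subtlety is that $\mathscr J$ contains intervals whose endpoints are not grid points at small $j$, and that limits are taken along a subsequence. This is harmless, because only finitely many indices are affected for each fixed finite collection of coordinates. I also need the test functions to depend continuously on the coordinates in the chosen topologies, so that weak convergence of the full vector applies. This holds because the increment map $w\mapsto(w(a+\cdot)-w(a))$ is continuous in the topology of uniform convergence on compacts.
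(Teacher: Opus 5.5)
Your proposal is correct and follows essentially the same route as the paper. Both arguments use finite-index independence, which holds because each internal distance depends only on the Brownian increments over its own interval. Both then pass to the limit along the joint convergence: the paper cites Billingsley's product-measure criterion, and your product test-function computation proves the same thing by hand. Equality in law is likewise inherited from stationarity of increments.

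One small wording point concerns your final step. What you need there is not that cylinder functions are convergence-determining (the joint convergence is already given). You need that factorization over bounded continuous cylinder functions extends, by a $\pi$--$\lambda$ argument, to the full product $\sigma$-fields. This is the monotone class step the paper invokes for the countable collections.
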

\begin{proof}
At each finite index, the internal distances on an interval depend
only on its Brownian increments. Independence, including that of the
future increments, therefore passes to the limit by
\cite[Theorem 2.8(ii)]{Billingsley}, first for finite collections and
then for the countable collections by a monotone class argument.
Equality in law under time shifts by dyadic rationals passes to the
limit in the same way.
\end{proof}

\subsection{The quotient metric space}\label{subsec:quotient-space}

We examine the metric space obtained by identifying times at zero
limiting distance. Set $\D=\D^{[0,1]}$, let $\pi$ denote the quotient
map for the relation $\D(s,t)=0$, and write
\begin{equation}\label{eq:quotient}
 F=[0,1]/\{\D=0\},\qquad
 d_F(\pi(s),\pi(t))=\D(s,t),\qquad \nu=\pi_*(\Leb|_{[0,1]}).
\end{equation}
The map $\pi$ is continuous, and $(F,d_F)$ is a compact metric space.
By \eqref{eq:local-identifications}, $W_s=W_t$ implies
$\pi(s)=\pi(t)$. Thus $\pi$ factors through $W$, defining a map
\begin{equation}\label{eq:spatial-map}
 \Phi:K\longrightarrow F,\qquad \Phi(W_t)=\pi(t).
\end{equation}
The map $\Phi$ is continuous because $W:[0,1]\to K$ is a
quotient map.

\Needspace{7\baselineskip}
\begin{proposition}\label{prop:quotient-geometry}
Almost surely, the space $(F,d_F)$ is geodesic: every two points
$x,y\in F$ are joined by an isometric map from $[0,d_F(x,y)]$ into $F$.
For every $z\in F$, the inverse image $\Phi^{-1}(\{z\})\subset K$
is compact and connected.
\end{proposition}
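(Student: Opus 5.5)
The plan is to work on the probability space of the almost surely convergent copies from Proposition~\ref{prop:local-field}. There $W^{(j)}\to W$ uniformly and $\widetilde d_j^{[0,1]}\to\D$ uniformly on $[0,1]^2$. By \eqref{eq:interpolation}, also $\|d_j^{[0,1]}-\D\|_\infty\to0$. Both assertions will be proved pathwise on this single event, so simultaneity over all $x,y,z$ comes for free. The common tool is a shortest path in $G_{n_j}$, which has exact midpoints and sublevel structure at the discrete level; these pass to the limit by compactness.

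\emph{Geodesic property.} The first step is an exact midpoint for every $s,t\in[0,1]$. Fix a shortest path in $G_{n_j}$ from $\iota_{n_j}(s)$ to $\iota_{n_j}(t)$. Take the vertex at graph distance $\lfloor d_{G_{n_j}}(\iota_{n_j}(s),\iota_{n_j}(t))/2\rfloor$ from the start, and let $r_j$ be the left endpoint of its cell. Then
\[
 |d_j(s,r_j)-\tfrac12d_j(s,t)|+|d_j(r_j,t)-\tfrac12d_j(s,t)|\le C/\kappa_j .
\]
Passing to a subsequence with $r_j\to r$ and using uniform convergence together with continuity of $\D$ gives $\D(s,r)=\D(r,t)=\tfrac12\D(s,t)$. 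Hence $\pi(r)$ is an exact midpoint of $\pi(s)$ and $\pi(t)$ in $F$. A compact (hence complete) metric space with exact midpoints is geodesic. To see this, define the curve on dyadic fractions of $[0,d_F(x,y)]$ by iterated midpoints. The triangle inequality forces it to be isometric on dyadics, and it extends by completeness to an isometric map $[0,d_F(x,y)]\to F$.

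\emph{Fibres.} Fix $z=\pi(s)$ and set $T_\varepsilon=\{r:\D(s,r)\le\varepsilon\}$ for $\varepsilon\ge0$. Each $T_\varepsilon$ is compact. Since $\Phi(W_r)=\pi(r)$, we have $\Phi^{-1}(\{z\})=W(T_0)$, which is compact as the continuous image of a compact set. Since $W(T_0)=\bigcap_{\varepsilon>0}W(T_\varepsilon)$ by compactness and nestedness, and a decreasing intersection of compact connected sets is connected, it suffices to show that each $W(T_\varepsilon)$ is connected. For this, fix $t\in T_\varepsilon$ and take a shortest path in $G_{n_j}$ from the cell of $s$ to the cell of $t$. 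Every vertex on this path is at graph distance at most $d_{G_{n_j}}(\iota(s),\iota(t))$ from $\iota(s)$. Let $S_j$ be the union of the corresponding cells. Then $W^{(j)}(S_j)$ is compact and connected, because each cell range is connected and consecutive ranges intersect. It contains $W^{(j)}_s$ and $W^{(j)}_t$, and $S_j\subset\{r:d_j(s,r)\le d_j(s,t)+2/\kappa_j\}$. Pass to a Hausdorff-convergent subsequence $W^{(j)}(S_j)\to Q$ and $S_j\to S$. Then $Q$ is compact and connected and contains $W_s,W_t$. Moreover $Q\subset W(S)$ by uniform convergence of $W^{(j)}$, and $S\subset T_{\D(s,t)}\subset T_\varepsilon$ by uniform convergence of $d_j$. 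Thus $W(T_\varepsilon)$ is a union of connected sets, each containing $W_s$, and so it is connected.

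The main obstacle is the limit passage in the fibre argument. Sublevel sets of $d_j$ do not converge to sublevel sets of $\D$, only to subsets of slightly larger ones. The device above handles this by working with $T_\varepsilon$ for $\varepsilon\ge\D(s,t)$ and closing with the nested-intersection step. The change of paths from $W^{(j)}$ to $W$ must also be tracked: this is exactly where joint almost sure convergence, rather than mere convergence in law, is needed.
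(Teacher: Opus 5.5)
Your proposal is correct and follows essentially the paper's proof. Exact midpoints come from the middle vertex of a shortest path in $G_{n_j}$, and completeness then gives the geodesic property. Hausdorff limits of the connected unions of cell ranges along shortest paths (built from $W^{(j)}$) give connected subsets of the fibre.

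The detour through $T_\varepsilon$ and the nested-intersection step is unnecessary. Your own limit argument already gives $S\subset T_{\D(s,t)}$ exactly, so for $t\in T_0$ the connected set $Q\ni W_s,W_t$ lies directly in $\Phi^{-1}(\{z\})$, which is how the paper concludes.
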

\begin{proof}
Use the almost surely convergent representation constructed in
the proof of Proposition~\ref{prop:local-field}, and fix
$s,t\in[0,1]$.
For each $j$, choose the middle vertex of a shortest graph path between
their assigned cells, and let $u_j$ be the left endpoint of its time cell.
Its graph distances to the two endpoint cells differ by at most one.
Along a subsequence, $u_j\to u\in[0,1]$. By \eqref{eq:interpolation}
and uniform convergence of the interpolated distances,
\[
 \D(s,u)=\D(u,t)=\tfrac12\D(s,t).
\]
Thus every pair in $F$ has a midpoint. Completeness now gives a
geodesic by \cite[Theorem 2.4.16(1)]{BuragoBuragoIvanov}.

For the connectedness assertion, let $x=W_s$ and $y=W_t$ satisfy
$\Phi(x)=\Phi(y)$.
In the graph built from $W^{(j)}$, choose a shortest graph path between
the cells assigned to $s,t$. Let $C_j$ be the union of
$W^{(j)}([k/n_j,(k+1)/n_j])$ over its visited vertices $k$. The sets $C_j$
are nonempty, compact and connected because consecutive ranges
intersect. They contain $W_s^{(j)}$ and $W_t^{(j)}$, while uniform path
convergence places them in one compact ball. Blaschke's theorem
\cite[Theorem 7.3.8]{BuragoBuragoIvanov} gives a Hausdorff subsequential
limit $C\subset K$ containing $x,y$. The Hausdorff limit $C$ is connected.

For $z\in C$, choose $z_j\in C_j$ tending to $z$, a visited vertex
$k_j$, and $u_j\in[k_j/n_j,(k_j+1)/n_j]$ with
$W^{(j)}_{u_j}=z_j$. Along a further subsequence, $u_j\to u$ and
$W_u=z$. Since $|u_j-k_j/n_j|\le n_j^{-1}$, also $k_j/n_j\to u$.
The vertex $k_j$ lies on the chosen shortest graph path, so
$d_j^{[0,1]}(s,k_j/n_j)\le d_j^{[0,1]}(s,t)\to0$.
Uniform convergence therefore gives $\D(s,u)=0$.
Thus $C\subset\Phi^{-1}(\{\Phi(x)\})$. Any two points of this
inverse image belong to a connected subset of it, so it is connected;
compactness follows from continuity of $\Phi$. These conclusions hold
for all pairs on the chosen realization.
\end{proof}

\section{Gluing and measurability}\label{sec:gluing}

We first reconstruct the global limiting distance from internal
distances on the pieces of a time partition. The difficulty is to
control graph edges linking different pieces. Section~\ref{subsec:contact-functions}
provides the necessary approximations, which lead in
Section~\ref{subsec:reconstruction} to the gluing formula of
Theorem~\ref{thm:exact-gluing}. This proves Theorem~\ref{thm:main-metric}(ii).

The gluing formula also permits a conditional independence argument.
Using Lemma~\ref{lem:local-independence}, we show in
Section~\ref{subsec:metric-measurability} that each distance value has
zero conditional variance given the Brownian path
(Theorem~\ref{thm:measurable}). In particular, each internal limit is
a measurable function of the Brownian increments on its interval.
Section~\ref{subsec:wiener-convergence} then establishes convergence on
the original Wiener space in every $C^\alpha$ norm for $0<\alpha<\theta$,
including convergence of all positive moments of the error
(Corollary~\ref{cor:strong-convergence}). Together these results prove
Theorem~\ref{thm:main-metric}(iii). Finally,
Section~\ref{subsec:intersection-information} identifies the Brownian
self-intersection relation as sufficient to determine each global
limit (Proposition~\ref{prop:coarse-information}). Throughout we fix a
joint limit from Proposition~\ref{prop:local-field}.

\subsection{Contact times and Lipschitz functions}\label{subsec:contact-functions}

A graph edge may join cells in different partition intervals when
their Brownian ranges intersect. To control such edges, we use the
set of times involved in a self-intersection:
\[
 E_W=\{s\in[0,1]:\text{there exists }t\in[0,1]\setminus\{s\}
       \text{ with }W_s=W_t\}.
\]
For every integer $k\ge1$, the pairs $(s,t)\in[0,1]^2$ with
$|s-t|\ge1/k$ and $W_s=W_t$ form a compact set, as does its projection
onto the first coordinate. The countable union of these projections
is $E_W$, which is therefore Borel measurable. Its dimension satisfies
\begin{equation}\label{eq:double-times}
 \dimH E_W\le\frac12\qquad\text{almost surely}.
\end{equation}
Indeed, Rosen's theorem~\cite[Theorem 6]{Rosen} gives Hausdorff
dimension $1/2$ for the set of pairs $(s,t)\in[0,\infty)^2$ with
$s\ne t$ and $W_s=W_t$. Since the coordinate projections are Lipschitz, this gives
\eqref{eq:double-times} and the same upper bound on every finite time
interval simultaneously.

Fix a finite dyadic partition
\begin{equation}\label{eq:partition}
 \mathcal P:\quad 0=t_0<t_1<\cdots<t_m=1,
 \qquad I_i=[t_{i-1},t_i],
\end{equation}
and define the contact times in $I_i$ by
\begin{equation}\label{eq:contact-times}
 \mathcal T_i=\{u\in I_i:W_u\in\bigcup_{k\ne i}W(I_k)\}.
\end{equation}
Each $\mathcal T_i$ is compact and, except for partition endpoints,
is contained in $E_W$. Thus $\dimH\mathcal T_i\le1/2$ by
\eqref{eq:double-times}.
It follows that, if $f:I_i\to\R$ is $\alpha$-H\"older with
$\alpha>1/2$, then the compact set $f(\mathcal T_i)$ has dimension at
most $1/(2\alpha)<1$ and hence has Lebesgue measure zero.

\Needspace{7\baselineskip}
We will modify functions near the contact values by composing them
with a nondecreasing contraction that is locally constant there and
close to the identity.

\begin{lemma}\label{lem:flatten}
If $S\subset\R$ is compact and has Lebesgue measure zero, then for every $\varepsilon>0$
there are a bounded open set $O\supset S$ and a nondecreasing $1$-Lipschitz
function $g:\R\to\R$ such that
\begin{equation}\label{eq:flatten}
 \Leb(O)<\varepsilon,\qquad
 \sup_{r\in\R}|g(r)-r|<\varepsilon,
\end{equation}
and $g$ is constant on every connected component of $O$.
\end{lemma}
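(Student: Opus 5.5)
The construction goes through the derivative of $g$: we make $g$ flat on a small open set $O\supset S$ and otherwise let it increase with slope one. Since $S$ is compact and $\Leb(S)=0$, outer regularity of Lebesgue measure gives an open set $U\supset S$ with $\Leb(U)<\varepsilon$. Compactness of $S$ lets us cover it by finitely many components of $U$, or by finitely many open intervals inside $U$. Merging overlapping intervals, we obtain $O$ as a finite union of pairwise disjoint bounded open intervals with $S\subset O$ and $\Leb(O)<\varepsilon$. In particular $O$ is bounded and has finitely many connected components.

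Next we fix a base point $r_0<\inf O$ and set
\[
 g(r)=r_0+\int_{r_0}^r\one_{\R\setminus O}(x)\,\mathrm dx,
\]
with the usual sign convention for $r<r_0$. Then $g$ is absolutely continuous with derivative $\one_{\R\setminus O}\in\{0,1\}$ almost everywhere, so $g$ is nondecreasing and $1$-Lipschitz. On each component interval of $O$ the integrand vanishes, so $g$ is constant there.

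Finally,
\[
 g(r)-r=-\int_{r_0}^r\one_O(x)\,\mathrm dx
\]
for $r\ge r_0$, and $g(r)=r$ for $r\le r_0$. Hence $|g(r)-r|\le\Leb(O)<\varepsilon$ for all $r$, which is \eqref{eq:flatten}. The strict inequality holds because $\Leb(O)<\varepsilon$ strictly.

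No step is a real obstacle. The only points needing care are that $O$ must be open and cover $S$ (so that $g$ is flat on a neighbourhood of $S$ and not just on $S$ itself) and that $O$ is bounded. Compactness of $S$ gives boundedness directly.
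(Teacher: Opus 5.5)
Your proposal is correct and follows the paper's construction: a bounded open $O\supset S$ with $\Leb(O)<\varepsilon$, and $g$ equal to the integral of $\one_{\R\setminus O}$, which is the paper's $g(r)=r-\int_0^r\one_O(v)\,dv$ up to the choice of base point. Reducing $O$ to finitely many intervals and taking $r_0<\inf O$ are harmless but unnecessary. Any bounded open $O$ works, and with base point $0$ one still has $|g(r)-r|\le\Leb(O)<\varepsilon$.
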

\begin{proof}
Choose a bounded open set $O\supset S$ with $\Leb(O)<\varepsilon$, and define
$g(r)=r-\int_0^r\one_O(v)\,dv$. Its derivative is $1-\one_O$
almost everywhere, which gives the assertions.
\end{proof}

\Needspace{11\baselineskip}
We also need to approximate a function that is Lipschitz for a limiting
internal distance by a function on the vertices of the corresponding
finite graph. The infimum construction below preserves the bound along
edges.
\begin{lemma}\label{lem:inf-extension}
Suppose that $\|d_j^{I_i}-\D^{I_i}\|_\infty\to0$ as $j\to\infty$,
and let $f:I_i\to\R$ satisfy
$|f(u)-f(v)|\le\D^{I_i}(u,v)$ for all $u,v\in I_i$.
Since $I_i=[t_{i-1},t_i]$ has dyadic endpoints and $n_j$ is a power
of two, $n_jt_{i-1}$ and $n_jt_i$ are integers for all sufficiently
large $j$. For each such $j$ and $k\in V(G_j^{I_i})$, define
\begin{equation}\label{eq:inf-extension}
 f_{j,i}(k)=\inf_{u\in I_i}\{f(u)+d_j^{I_i}(u,k/n_j)\}.
\end{equation}
Then, for every edge $\{k,\ell\}$ of $G_j^{I_i}$,
\[
 |f_{j,i}(k)-f_{j,i}(\ell)|\le\kappa_j^{-1}.
\]
Moreover, $t\mapsto f_{j,i}(\iota_j^{I_i}(t))$ converges uniformly
to $f$ on $I_i$.
\end{lemma}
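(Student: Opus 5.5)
The edge bound follows from the triangle inequality for graph distance, and the uniform convergence follows by sandwiching $f_{j,i}(\iota_j^{I_i}(t))$ between $f(t)$ and quantities that tend to $f(t)$.

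\textbf{Edge bound.} Fix an edge $\{k,\ell\}$ of $G_j^{I_i}$. For every $u\in I_i$,
\[
 d_j^{I_i}(u,k/n_j)\le d_j^{I_i}(u,\ell/n_j)+\kappa_j^{-1}.
\]
This holds because $d_j^{I_i}$ is $\kappa_j^{-1}$ times graph distance between assigned vertices. For $k,\ell$ vertices of $G_j^{I_i}$, the points $k/n_j$ and $\ell/n_j$ lie in $[t_{i-1},t_i)$, so $\iota_j^{I_i}(k/n_j)=k$ and $\iota_j^{I_i}(\ell/n_j)=\ell$. Taking infima over $u$ gives $f_{j,i}(k)\le f_{j,i}(\ell)+\kappa_j^{-1}$, and symmetry gives the claim. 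The infimum is finite because $f$ is bounded on $I_i$: it is $\D^{I_i}$-Lipschitz and $\D^{I_i}$ is bounded.

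\textbf{Uniform convergence.} Set $k=\iota_j^{I_i}(t)$. By definition of $\iota_j^{I_i}$, the time $k/n_j$ is assigned to the same vertex as $t$, so $d_j^{I_i}(u,k/n_j)=d_j^{I_i}(u,t)$ for all $u\in I_i$. Hence
\[
 f_{j,i}(\iota_j^{I_i}(t))=\inf_{u\in I_i}\{f(u)+d_j^{I_i}(u,t)\}.
\]
\emph{Upper bound.} Taking $u=t$ gives $f_{j,i}(\iota_j^{I_i}(t))\le f(t)+d_j^{I_i}(t,t)=f(t)$.

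\emph{Lower bound.} Put $\epsilon_j=\|d_j^{I_i}-\D^{I_i}\|_\infty$. For every $u$,
\[
 f(u)+d_j^{I_i}(u,t)\ge f(u)+\D^{I_i}(u,t)-\epsilon_j\ge f(t)-\epsilon_j,
\]
using the Lipschitz hypothesis $|f(u)-f(t)|\le\D^{I_i}(u,t)$. Therefore $|f_{j,i}(\iota_j^{I_i}(t))-f(t)|\le\epsilon_j\to0$ uniformly in $t\in I_i$.

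The only point needing care is the endpoint convention. At $t=t_i$, $\iota_j^{I_i}$ assigns the last cell, so both the identity $d_j^{I_i}(u,k/n_j)=d_j^{I_i}(u,t)$ and the definition of $f_{j,i}$ on vertices need checking there. They are consistent because $d_j^{I_i}$ is defined through $\iota_j^{I_i}$, and $\iota_j^{I_i}(k/n_j)=k$ for every vertex $k$.
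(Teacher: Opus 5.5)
Your proof is correct and follows the paper's route: the edge bound uses the same triangle inequality for $d_j^{I_i}$, and the convergence uses the same sandwich between $f-\epsilon_j$ and $f$. The only difference is that you use the identity $d_j^{I_i}(u,\iota_j^{I_i}(t)/n_j)=d_j^{I_i}(u,t)$ to compare with $f(t)$ directly. This gives the bound $\epsilon_j$ outright, whereas the paper compares with $f$ at grid points and needs an extra term $\sup_{|u-v|\le n_j^{-1}}|f(u)-f(v)|$, controlled by uniform continuity of $f$.
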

\begin{proof}
For adjacent vertices $k,\ell$ of $G_j^{I_i}$,
$d_j^{I_i}(k/n_j,\ell/n_j)=\kappa_j^{-1}$. The triangle inequality gives
\[
 d_j^{I_i}(u,k/n_j)\le d_j^{I_i}(u,\ell/n_j)+\kappa_j^{-1}
 \qquad(u\in I_i).
\]
Taking the infimum over $u$ in \eqref{eq:inf-extension} yields
$f_{j,i}(k)\le f_{j,i}(\ell)+\kappa_j^{-1}$.
Interchanging $k$ and $\ell$ proves the asserted edge inequality.

For uniform convergence, put
$e_j=\|d_j^{I_i}-\D^{I_i}\|_\infty\to0$.
The Lipschitz property of $f$ gives
$f(u)+d_j^{I_i}(u,k/n_j)\ge f(k/n_j)-e_j$ for every $u\in I_i$.
Taking $u=k/n_j$ gives the upper bound, so
\[
 f(k/n_j)-e_j\le f_{j,i}(k)\le f(k/n_j).
\]
Since $|t-\iota_j^{I_i}(t)/n_j|\le n_j^{-1}$,
\[
 \sup_{t\in I_i}|f_{j,i}(\iota_j^{I_i}(t))-f(t)|
 \le e_j+\sup_{\substack{u,v\in I_i\\|u-v|\le n_j^{-1}}}|f(u)-f(v)|
 \longrightarrow 0
\]
by uniform continuity of $f$. This also covers the right endpoint of
$I_i$.
\end{proof}

\subsection{Reconstruction from internal distances}\label{subsec:reconstruction}

The two approximation lemmas now allow us to pass from Lipschitz
bounds for the internal distances to a bound for the global distance.
In Theorem~\ref{thm:local-global}, edges within a partition interval
are controlled by Lemma~\ref{lem:inf-extension}, while
Lemma~\ref{lem:flatten} deals with edges between intervals. We then
apply the resulting criterion to the chain distance in the proof of
Theorem~\ref{thm:exact-gluing}.
\begin{theorem}\label{thm:local-global}
With probability one, for every partition \eqref{eq:partition}, if
a continuous function $f:[0,1]\to\R$ is compatible with $W$ and satisfies
\begin{equation}\label{eq:local-lipschitz}
 |f(s)-f(t)|\le\D^{I_i}(s,t),\qquad s,t\in I_i,\quad 1\le i\le m,
\end{equation}
then
\begin{equation}\label{eq:global-lipschitz}
 |f(s)-f(t)|\le\D^{[0,1]}(s,t),\qquad s,t\in[0,1].
\end{equation}
\end{theorem}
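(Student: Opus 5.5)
We work on the almost surely convergent representation of Proposition~\ref{prop:local-field}, so that $\|d_j^{I}-\D^{I}\|_\infty\to0$ for every $I\in\mathscr J$ (via \eqref{eq:interpolation}). We also intersect with the event \eqref{eq:double-times} and with \eqref{eq:local-holder}. Since the dyadic partitions form a countable family, this gives one full-probability event on which we fix $\mathcal P$ and $f$. The strategy is to build, for each large $j$, a function $F_j$ on the vertices of $G_{n_j}$ that changes by at most about $\kappa_j^{-1}$ along every edge and approximates $f$ uniformly. Summing along a shortest graph path then gives $|F_j(\iota(s))-F_j(\iota(t))|\le(1+o(1))d_j^{[0,1]}(s,t)+o(1)$, and letting $j\to\infty$ yields \eqref{eq:global-lipschitz}.

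\textbf{Step 1: flattening.} We first flatten $f$ near contact values so that cross-interval edges cost nothing in the limit. Each $\D^{I_i}$ is $\alpha$-H\"older for every $\alpha<\theta$, and $\theta>1/2$. Hence $f|_{I_i}$ is $\alpha$-H\"older with some $\alpha>1/2$, and $S=\bigcup_if(\mathcal T_i)$ is compact and Lebesgue-null. Apply Lemma~\ref{lem:flatten} with parameter $\varepsilon$ to obtain $g$ and $O$, and set $f^\varepsilon=g\circ f$. Because $g$ is nondecreasing and $1$-Lipschitz, $f^\varepsilon$ still satisfies \eqref{eq:local-lipschitz}, is compatible, and is within $\varepsilon$ of $f$. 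Moreover, $f^\varepsilon$ is constant on a neighbourhood of each contact value.

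\textbf{Step 2: vertex function.} On each $I_i$ define $f^\varepsilon_{j,i}$ by \eqref{eq:inf-extension}. By Lemma~\ref{lem:inf-extension}, these functions change by at most $\kappa_j^{-1}$ along edges inside $G_j^{I_i}$ and converge uniformly to $f^\varepsilon$. Glue them into $F_j$ on the vertices of $G_{n_j}$; for large $j$ every cell lies in exactly one $I_i$.

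\textbf{Step 3: cross edges (main obstacle).} The remaining task is to control an edge $\{k,\ell\}$ with $k\in I_i$, $\ell\in I_{i'}$, $i\ne i'$. Such an edge comes with times $u\in$ cell $k$ and $v\in$ cell $\ell$ satisfying $W^{(j)}_u=W^{(j)}_v$. We need $|F_j(k)-F_j(\ell)|\to0$ uniformly over all such edges, and the argument is by contradiction. Suppose there is a sequence of such edges with a gap of at least $\eta>0$. Pass to a subsequence with $u\to u^*\in I_i$ and $v\to v^*\in I_{i'}$. The Hausdorff convergence of $\mathcal I_{[0,1]}(W^{(j)})$ to $\mathcal I_{[0,1]}(W)$ gives $W_{u^*}=W_{v^*}$, so $u^*\in\mathcal T_i$ and, by compatibility, $f^\varepsilon(u^*)=f^\varepsilon(v^*)$. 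Uniform convergence from Step~2 then forces $F_j(k)\to f^\varepsilon(u^*)$ and $F_j(\ell)\to f^\varepsilon(v^*)$, a contradiction. So these edges cost $o(1)$ each, uniformly. This alone is not enough, since a path may use up to $\kappa_j$ of them, and this is exactly where flattening enters. By uniform convergence and the previous paragraph, for large $j$ both $F_j(k)$ and $F_j(\ell)$ lie within $o(1)$ of $f(u^*)\in S$. Replacing $F_j$ by $g\circ F_j$, or flattening the unflattened extension afterwards, makes both values land in the same component of $O$, so the cross-edge difference is exactly $0$. The $1$-Lipschitz property of $g$ preserves the $\kappa_j^{-1}$ bound on internal edges. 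The delicate point is the uniformity: we need a single $\delta$ such that values within $\delta$ of $S$ lie in the same component of $O$. This follows by compactness, since $S\subset O$ and the distance from $S$ to $O^c$ is positive. Accordingly, we will apply $g$ to the vertex functions built from the unflattened $f$, rather than flattening before Step~2.

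\textbf{Step 4: conclusion.} Along a shortest path from $\iota(s)$ to $\iota(t)$, the total change is at most $\kappa_j^{-1}L\le d_j^{[0,1]}(s,t)$. Passing $j\to\infty$ gives $|f^\varepsilon(s)-f^\varepsilon(t)|\le\D^{[0,1]}(s,t)$, and letting $\varepsilon\to0$ proves \eqref{eq:global-lipschitz}.
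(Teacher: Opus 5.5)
Your proposal is correct and follows the paper's proof. You take the infimum extensions $f_{j,i}$ of the unflattened $f$ on each $G_j^{I_i}$, and show by compactness and contradiction that, for large $j$, the two endpoint values of every edge joining different partition intervals lie in a common component of $O$. You then sum the increments of $g\circ f_{j,\mathcal P}$ along a shortest path and let $j\to\infty$ and $\varepsilon\downarrow0$, exactly as the paper does.

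The differences are cosmetic. The preliminary flattening in Step~1 is unnecessary, as you note yourself. The paper phrases the cross-edge step directly as a contradiction with the component claim. This avoids your uniform $\delta$, which as stated needs correcting: it must control pairs of values that are both within $\delta$ of a common point of $S$, not values that are each merely within $\delta$ of $S$.
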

\begin{proof}
Consider the almost surely convergent Skorokhod copies
constructed in the proof of Proposition~\ref{prop:local-field}.
The uniform convergence of $\widetilde d_j^I$ and
\eqref{eq:interpolation} imply $d_j^I\to\D^I$ uniformly for every
$I\in\mathscr J$. We restrict to the event where this convergence,
\eqref{eq:local-holder}, and \eqref{eq:double-times} hold.
Fix a partition $\mathcal P$ as in \eqref{eq:partition} and a
continuous compatible function $f$ satisfying
\eqref{eq:local-lipschitz}. For $m=1$ the assertion is immediate,
so assume $m\ge2$ and take $1/2<\alpha<\theta$.
By \eqref{eq:local-holder} and \eqref{eq:local-lipschitz}, $f$ is
$\alpha$-H\"older on every $I_i$. Thus
\[
 S=\bigcup_{i=1}^m f(\mathcal T_i)
\]
is compact and has Lebesgue measure zero. Fix $\varepsilon>0$ and
choose $O,g$ from Lemma~\ref{lem:flatten} for this set $S$.

To compare $f$ with the graph distances, use the approximation
$f_{j,i}$ from Lemma~\ref{lem:inf-extension} on the vertices of each
$G_j^{I_i}$. As the partition endpoints are dyadic, every time cell of
$G_{n_j}$ lies in exactly one $I_i$ for all sufficiently large $j$.
Define $f_{j,\mathcal P}(k)=f_{j,i}(k)$ when the cell of vertex $k$ lies in
$I_i$. If $\{k,\ell\}$ is an edge whose two cells lie in the same $I_i$,
Lemma~\ref{lem:inf-extension} gives
$|f_{j,\mathcal P}(k)-f_{j,\mathcal P}(\ell)|\le\kappa_j^{-1}$.

It remains to control edges joining different intervals. We claim that,
for all sufficiently large $j$, the values of $f_{j,\mathcal P}$ at the two endpoints
of every such edge lie in the same component of $O$. Otherwise, choose
infinitely many indices $j$ and an edge $\{k_j,\ell_j\}$ at each index
for which the claim fails. Since there are finitely many partition
intervals,
we may pass to a subsequence on which the two incident cells lie in
fixed intervals $I_i$ and $I_k$, with $i\ne k$.
By the edge rule there are times
$u_j\in[k_j/n_j,(k_j+1)/n_j]$ and
$v_j\in[\ell_j/n_j,(\ell_j+1)/n_j]$ such that
$W^{(j)}_{u_j}=W^{(j)}_{v_j}$.
A further subsequence has $u_j\to u\in I_i$ and $v_j\to v\in I_k$.
Uniform convergence of the Brownian paths gives $W_u=W_v$, so
$u\in\mathcal T_i$, $v\in\mathcal T_k$ and $f(u)=f(v)$.
The approximation in Lemma~\ref{lem:inf-extension}, together with
$|k_j/n_j-u_j|,|\ell_j/n_j-v_j|\le n_j^{-1}$, gives
\[
 f_{j,\mathcal P}(k_j)\longrightarrow f(u)=f(v)
       \longleftarrow f_{j,\mathcal P}(\ell_j).
\]
The common limit belongs to $S\subset O$, so an open interval about
it lies in one component of $O$. Both endpoint values eventually lie in
that component, giving a contradiction. This also covers adjacent cells separated by a partition endpoint,
since that endpoint belongs to both corresponding contact sets.

The composition $g\circ f_{j,\mathcal P}$ now has equal values at the endpoints
of every edge between different intervals. On every other edge it
changes by at most $\kappa_j^{-1}$, because $g$ is $1$-Lipschitz.
Summing along a shortest graph path gives
\[
 |g(f_{j,\mathcal P}(x))-g(f_{j,\mathcal P}(y))|\le\kappa_j^{-1}d_{G_{n_j}}(x,y)
\]
for all vertices $x,y$. Passing to the limit at cells representing
$s,t$ gives
$|g(f(s))-g(f(t))|\le\D^{[0,1]}(s,t)$.
Since $\sup_r|g(r)-r|<\varepsilon$, letting
$\varepsilon\downarrow0$ proves \eqref{eq:global-lipschitz}.
The argument holds for every $f$ on the chosen realization, and the
family of finite dyadic partitions is countable.
\end{proof}

To reconstruct the global distance $\D^{[0,1]}$ from the internal
distances $\D^{I_i}$, define $\Delta_{\mathcal P}(s,t)$ for the
partition $\mathcal P$ as the infimum of
\begin{equation}\label{eq:gluing-chain}
 \sum_{r=1}^q\D^{I_{i_r}}(a_r,b_r)
\end{equation}
over integers $q\ge1$, indices $i_r\in\{1,\ldots,m\}$ and pairs
$a_r,b_r\in I_{i_r}$ satisfying $a_1=s$, $b_q=t$ and
$W_{b_r}=W_{a_{r+1}}$ for $r<q$. We allow $a_r=b_r$.
The chains may revisit intervals and use the times in any order.
For a finite dyadic partition $\mathcal P$ of any $J\in\mathscr J$,
we use the same definition with $s,t\in J$ and with the internal
distances of the partition intervals.

Concatenation and reversal show that $\Delta_{\mathcal P}$ is a
pseudometric. To obtain a finite chain from $s$ to $t$, divide the interval between
them at the partition endpoints and use the internal distance on each
resulting portion. Moreover, $\Delta_{\mathcal P}$ is dominated by $\D^{I_i}$ on $I_i^2$
and vanishes whenever the two Brownian positions agree. Splitting $[u,v]$ at the partition points and using
$\sum_{r=1}^{q}\ell_r^\alpha\le q^{1-\alpha}
(\sum_{r=1}^{q}\ell_r)^\alpha$ for $q\le m$ give, for
$0<\alpha<\theta$ and $0\le u<v\le1$,
\begin{equation}\label{eq:gluing-continuity}
 \Delta_{\mathcal P}(u,v)
 \le\max_iH_{I_i,\alpha}\,m^{1-\alpha}|u-v|^\alpha,
\end{equation}
so $\Delta_{\mathcal P}$ is continuous.

\Needspace{7\baselineskip}
The criterion of Theorem~\ref{thm:local-global} identifies the chain
distance with the global distance, giving Theorem~\ref{thm:main-metric}(ii).
\begin{theorem}\label{thm:exact-gluing}
Almost surely, for every finite dyadic partition $\mathcal P$ as in
\eqref{eq:partition} and all $s,t\in[0,1]$,
\begin{equation}\label{eq:exact-gluing}
 \D^{[0,1]}(s,t)=\Delta_{\mathcal P}(s,t),\qquad s,t\in[0,1].
\end{equation}
For every $J\in\mathscr J$ and every finite dyadic partition
$\mathcal P$ of $J$, the analogous identity
$\D^J(s,t)=\Delta_{\mathcal P}(s,t)$ holds for all $s,t\in J$.
\end{theorem}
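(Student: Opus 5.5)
The two inequalities are proved separately. The bound $\D^{[0,1]}\le\Delta_{\mathcal P}$ comes from the properties in \eqref{eq:main-local}. The reverse bound uses the duality criterion of Theorem~\ref{thm:local-global}.

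\emph{Upper bound.} Fix a chain $a_1=s,b_1,\dots,a_q,b_q=t$ as in \eqref{eq:gluing-chain}. For each $r$, domain monotonicity \eqref{eq:domain-monotonicity} with $I_{i_r}\subset[0,1]$ gives
\[
\D^{[0,1]}(a_r,b_r)\le\D^{I_{i_r}}(a_r,b_r).
\]
The identification property \eqref{eq:local-identifications} gives $\D^{[0,1]}(b_r,a_{r+1})=0$, because $W_{b_r}=W_{a_{r+1}}$. By the triangle inequality for the pseudometric $\D^{[0,1]}$,
\[
\D^{[0,1]}(s,t)\le\sum_r\D^{I_{i_r}}(a_r,b_r).
\]
Taking the infimum over chains yields $\D^{[0,1]}\le\Delta_{\mathcal P}$. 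This holds on the almost sure event of Proposition~\ref{prop:local-field}, simultaneously for all partitions and all $s,t$.

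\emph{Lower bound.} Fix $s\in[0,1]$ and put $f(v)=\Delta_{\mathcal P}(s,v)$. We check the hypotheses of Theorem~\ref{thm:local-global}.
\begin{enumerate}
\item $f$ is continuous by \eqref{eq:gluing-continuity}.
\item $f$ is compatible with $W$: if $W_u=W_v$ then $\Delta_{\mathcal P}(u,v)=0$, using the one-step chains $(u,u),(v,v)$. The triangle inequality then gives $f(u)=f(v)$.
\item $f$ satisfies \eqref{eq:local-lipschitz}: for $u,v\in I_i$, the reverse triangle inequality and domination of $\Delta_{\mathcal P}$ by $\D^{I_i}$ on $I_i^2$ give $|f(u)-f(v)|\le\Delta_{\mathcal P}(u,v)\le\D^{I_i}(u,v)$.
\end{enumerate}
Theorem~\ref{thm:local-global} then gives $|f(s)-f(t)|\le\D^{[0,1]}(s,t)$, that is, $\Delta_{\mathcal P}(s,t)\le\D^{[0,1]}(s,t)$. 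Theorem~\ref{thm:local-global} holds on one event for all partitions and all compatible continuous $f$, so this bound is simultaneous in $s,t,\mathcal P$. No countability argument over $s$ is needed.

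\emph{General $J\in\mathscr J$.} The same two steps apply once Theorem~\ref{thm:local-global} is available with $[0,1]$ replaced by $J$. Its proof uses only the graphs $G_j^J$ and $G_j^{I_i}$, the contact sets relative to the pieces of $J$, the bound $\dimH E_W\le1/2$ on finite intervals, and the flattening Lemma~\ref{lem:flatten}. All of these are available on any dyadic interval. The dimension bound \eqref{eq:double-times} holds on every finite time interval simultaneously, and for $|J|>1$ Rosen's bound still applies.

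I expect the main obstacle to be this extension of Theorem~\ref{thm:local-global} to general $J$. It should be routine: rerun its proof verbatim with $J$ in place of $[0,1]$, using the uniform convergence $d_j^I\to\D^I$ for every $I\in\mathscr J$ and countability of $\mathscr J$ together with its finite dyadic partitions. Alternatively, the case $|J|\le1$ follows from Brownian scaling. The remaining points are bookkeeping: $\Delta_{\mathcal P}(s,s)=0$, finiteness of $\Delta_{\mathcal P}$, and measurability issues. The last do not arise, since all the statements are deterministic consequences on a fixed good event.
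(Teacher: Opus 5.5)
Your proposal is correct and matches the paper's proof essentially step for step: the upper bound uses domain monotonicity, the identifications and the triangle inequality; the lower bound applies Theorem~\ref{thm:local-global} to $f=\Delta_{\mathcal P}(s,\cdot)$, with continuity, compatibility and the local Lipschitz bound checked exactly as you do; and general $J\in\mathscr J$ is handled by rerunning the same argument on $J$ and its subintervals. The one caveat concerns your aside that the case $|J|\le1$ ``follows from Brownian scaling'', which does not work as stated. A dilation turns the $|J|n_j$ cells of length $1/n_j$ in $J$ into $|J|n_j$ cells on $[0,1]$, while the normalization stays $\kappa_j$, so $\D^J$ is not a rescaled copy of a limit along the same sequence; only dyadic translations preserve the law exactly (Lemma~\ref{lem:local-independence}), and dilations give only comparisons such as Lemma~\ref{lem:root-zero-scaling}, so rerunning the proof on $J$, your primary route, is the one to use.
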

\begin{proof}
For any admissible chain from $s$ to $t$, its cost is the sum
$\sum_{r=1}^q\D^{I_{i_r}}(a_r,b_r)$ in
\eqref{eq:gluing-chain}. Domain monotonicity
\eqref{eq:domain-monotonicity}, the identifications
\eqref{eq:local-identifications}, and the triangle inequality imply
\[
 \D^{[0,1]}(s,t)
 \le\sum_{r=1}^q\D^{I_{i_r}}(a_r,b_r).
\]
Taking the infimum gives $\D^{[0,1]}\le\Delta_{\mathcal P}$.
For the reverse inequality, fix $s$ and apply
Theorem~\ref{thm:local-global} to
$f(u)=\Delta_{\mathcal P}(s,u)$. This function is compatible with $W$
because $\Delta_{\mathcal P}$ vanishes at equal Brownian positions.
The pseudometric triangle inequality and domination by $\D^{I_i}$
give \eqref{eq:local-lipschitz}, while \eqref{eq:gluing-continuity}
gives continuity. Since $f(s)=0$, we obtain
$\Delta_{\mathcal P}(s,t)\le\D^{[0,1]}(s,t)$.
The arguments hold for every $s,t$ on the same probability-one event.
For an arbitrary $J\in\mathscr J$, the same proof uses the internal
distances on $J$ and its subintervals. Countability of $\mathscr J$
and its finite dyadic partitions gives the simultaneous assertion.
\end{proof}

We will use the chain formula in Theorem~\ref{thm:measurable} after
conditioning on the Brownian path. For this purpose, its value must
depend measurably on the path and on the internal distances. More
precisely, fix $s,t$ and replace $W$ and the internal distances in
\eqref{eq:gluing-chain} by $w\in C([0,1],\R^3)$ and nonnegative
functions $d_i\in C(I_i^2)$. For a fixed chain length and fixed interval
indices, the time parameters lie in a compact product of intervals.
Both the endpoint requirements and the matching conditions
$w(b_r)=w(a_{r+1})$ are closed, whereas the sum of the $d_i$-costs is
continuous. It follows that the minimum cost, defined as $+\infty$
when no such chain exists, has closed sublevel sets as a function of
$(w,(d_i))$. Taking the infimum over the countably many possible chain
lengths and interval indices proves that the chain value is Borel
measurable.

\subsection{Measurability of the limiting distances}\label{subsec:metric-measurability}

The gluing formula reduces measurability to a variance estimate: for
fixed $s,t\in[0,1]$, we will prove
$\Var(\D^{[0,1]}(s,t)\mid W)=0$. Here and below, conditioning on $W$
means conditioning on $W|_{[0,1]}$. We use the Efron--Stein variance inequality for functions of
independent random variables; see
Boucheron, Lugosi and Massart~\cite[Section 3.1]{BoucheronLugosiMassart}.
After conditioning on the Brownian path, the internal distances are
independent, so this inequality can be applied to their conditional
product law. Gwynne and Miller~\cite[Section 5, equation (5.7)]{GwynneMiller}
use a similar conditional resampling argument in their study of random
metrics. To apply the inequality here, we first bound the change in
the gluing formula when one internal pseudometric is replaced.

\begin{lemma}\label{lem:one-piece}
Fix a continuous path $w:[a,b]\to\R^3$ with $a<b$ and a finite
partition $[a,b]=\bigcup_{k=1}^m I_k$ into nondegenerate closed
intervals with disjoint interiors. Let $d_k$ be a bounded pseudometric on $I_k$. Replace
$d_i$ by another bounded pseudometric $d_i'$, leaving $w$ and the
other pseudometrics unchanged. Define $\Delta$ and $\Delta'$ by
the chain formula \eqref{eq:gluing-chain}, with these two collections
in place of $(\D^{I_k})$ and $w$ in place of $W$. Then
\begin{equation}\label{eq:one-piece}
 -\diam d_i\le\Delta'(s,t)-\Delta(s,t)\le\diam d_i'
\end{equation}
for all $s,t\in[a,b]$.
\end{lemma}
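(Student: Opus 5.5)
By symmetry it suffices to prove the upper bound $\Delta'(s,t)\le\Delta(s,t)+\diam d_i'$. The lower bound then follows by exchanging the roles of $d_i$ and $d_i'$: the upper bound applied to the swapped pair gives $\Delta(s,t)\le\Delta'(s,t)+\diam d_i$. We may assume $\Delta(s,t)<\infty$. Then fix $\varepsilon>0$ and an admissible chain $(a_r,b_r,i_r)_{r=1}^q$ for $\Delta$ whose cost is at most $\Delta(s,t)+\varepsilon$. The plan is to modify this chain into one admissible for $\Delta'$ whose cost exceeds the old cost by at most $\diam d_i'$. Letting $\varepsilon\downarrow0$ then finishes the argument.

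The key step is to merge all uses of the interval $I_i$ into one. If no index $r$ has $i_r=i$, the chain has the same cost under both collections and there is nothing to prove. Otherwise, let $r_1$ be the first and $r_2$ the last index with $i_r=i$. Replace the pairs $r_1,\dots,r_2$ by the single pair $(a_{r_1},b_{r_2})$, assigned to $I_i$. Both times lie in $I_i$, so this pair is allowed.

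We then check that the new chain is admissible. The matching conditions $w(b_{r_1-1})=w(a_{r_1})$ and $w(b_{r_2})=w(a_{r_2+1})$ are inherited unchanged from the old chain. The endpoint conditions $a_1=s$ and $b_q=t$ are also preserved, including the cases $r_1=1$ or $r_2=q$, because the new pair keeps $a_{r_1}$ and $b_{r_2}$.

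Under the new collection, the cost of the new chain is the cost of the pairs before $r_1$ and after $r_2$, plus $d_i'(a_{r_1},b_{r_2})$. The pairs before $r_1$ and after $r_2$ use only intervals $k\ne i$, so their $d_k$ costs are unchanged. The dropped middle terms were all nonnegative, and the added term satisfies $d_i'(a_{r_1},b_{r_2})\le\diam d_i'$. Hence $\Delta'(s,t)\le\Delta(s,t)+\varepsilon+\diam d_i'$, which is the required upper bound.

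I do not expect a real obstacle here. The only points needing care are to collapse the whole stretch from the first to the last visit of $I_i$, rather than each visit separately, so that only one $d_i'$ term is paid, and to handle the boundary cases $r_1=1$ and $r_2=q$, which the construction above already covers.
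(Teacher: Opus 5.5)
Your proposal is correct and matches the paper's proof essentially step for step. Like the paper, you take a near-optimal chain, collapse the whole block from the first to the last pair assigned to $I_i$ into the single pair $(a_{r_1},b_{r_2})$ costed by $d_i'$, observe that the end constraints and all other costs are unchanged, and get the lower bound by exchanging the two collections. Your reduction to $\Delta(s,t)<\infty$ is harmless, since a chain through consecutive partition intervals always gives a finite value.
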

\begin{proof}
Take a finite chain whose cost, measured with the original distances,
is at most $\Delta(s,t)+\varepsilon$.
If the chain does not use interval $i$, its cost is unchanged.
Otherwise, let $r_1$ and $r_2$ be the first and last indices of pairs
assigned to interval $i$. Replace the block of pairs from $r_1$ to
$r_2$, including those assigned to other intervals, by the single
pair $(a_{r_1},b_{r_2})\in I_i^2$, with cost measured by $d_i'$.
The constraints at the ends of the replaced block are unchanged, so
the new chain remains admissible. The removed cost is nonnegative,
whereas the replacement costs at most $\diam d_i'$.
Letting $\varepsilon\downarrow0$ proves the upper bound in
\eqref{eq:one-piece}; interchanging the collections proves the lower
bound.
\end{proof}

For the conditional argument, write
$B_i(u)=W_{t_{i-1}+u}-W_{t_{i-1}}$ for
$0\le u\le t_i-t_{i-1}$ and $Q_i=\D^{I_i}$, using the partition
\eqref{eq:partition}. By Lemma~\ref{lem:local-independence}, the
pairs $(B_i,Q_i)$ are independent.
Take the regular conditional law of $Q_i$ given $B_i$
\cite[Theorems 4.1.17--4.1.18]{Durrett}. Independence of the pairs
implies that the conditional law of $(Q_1,\ldots,Q_m)$ given all the
$B_i$ is the product of those laws. Since $W_0=\mathbf0$, the increment
paths $B_1,\ldots,B_m$ determine $W|_{[0,1]}$, and conversely.
Consequently, the $Q_i$ are conditionally independent given
$W|_{[0,1]}$, as required for the variance estimate.

\Needspace{7\baselineskip}
\begin{theorem}\label{thm:measurable}
For each joint subsequential law in Proposition~\ref{prop:local-field},
there is a Borel map
$\Psi:C_0([0,1],\R^3)\to C([0,1]^2)$ such that
\begin{equation}\label{eq:measurable}
 \D^{[0,1]}=\Psi(W)\qquad\text{almost surely}.
\end{equation}
For each $I=[a,b]\in\mathscr J$, there is also a Borel map
$\Psi_I:C_0([0,b-a],\R^3)\to C(I^2)$ such that
\[
 \D^I=\Psi_I\bigl((W_{a+u}-W_a)_{0\le u\le b-a}\bigr)
 \qquad\text{almost surely}.
\]
\end{theorem}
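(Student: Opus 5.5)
We will prove that $\Var(\D^{[0,1]}(s,t)\mid W)=0$ for each fixed pair $s,t\in[0,1]$. Granting this, $\D^{[0,1]}(s,t)=\E[\D^{[0,1]}(s,t)\mid W]$ almost surely, and the right-hand side is a Borel function of $W$. We apply this on a countable dense set of pairs, say the dyadic pairs. Since $\D^{[0,1]}$ is continuous, we then define $\Psi(w)$ as the continuous extension of these dyadic values when that extension exists, and as $0$ otherwise. This gives a Borel map into $C([0,1]^2)$ with $\D^{[0,1]}=\Psi(W)$ almost surely. The internal statement for $I=[a,b]$ follows by the same argument applied to the increment path on $I$. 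We use the analogous gluing identity for $J=I$ from Theorem~\ref{thm:exact-gluing}, the independence in Lemma~\ref{lem:local-independence}, and translation invariance.

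For the variance bound, fix $k$ and take the uniform dyadic partition $\mathcal P_k$ into $m=2^k$ intervals $I_i$. By Theorem~\ref{thm:exact-gluing}, $\D^{[0,1]}(s,t)=\Delta_{\mathcal P_k}(s,t)$ almost surely, and $\Delta_{\mathcal P_k}(s,t)$ is a Borel function of $W$ and $(Q_1,\ldots,Q_m)$ by the measurability remark following that theorem. As noted above, conditionally on $W|_{[0,1]}$ the $Q_i$ are independent. We apply the Efron--Stein inequality to the conditional product law, replacing $Q_i$ by a conditionally independent copy $Q_i'$. Lemma~\ref{lem:one-piece} bounds the resulting change by $\max\{\diam Q_i,\diam Q_i'\}$. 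This gives
\[
 \Var(\D^{[0,1]}(s,t)\mid W)\le\sum_{i=1}^m\E\bigl[(\diam Q_i)^2\mid W\bigr]
\]
almost surely. Taking expectations and using \eqref{eq:diameter-moments} with $|I_i|=2^{-k}$, we get
\[
 \E\Var(\D^{[0,1]}(s,t)\mid W)\le C\,2^k\,2^{-2k\theta}.
\]
Since $\theta>1/2$ by Theorem~\ref{thm:main-growth}, the right-hand side tends to $0$ as $k\to\infty$. The left-hand side does not depend on $k$, so it vanishes.

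The main obstacle is justifying Efron--Stein conditionally. We first need integrability, which follows from the moment bounds. We also need to check that the resampled collection is admissible in Lemma~\ref{lem:one-piece}: $Q_i'$ must be a bounded pseudometric on $I_i$, and the chain formula must still be evaluated with the same path $w=W$. Both hold because the regular conditional law of $Q_i$ given $B_i$ is supported on pseudometrics almost surely. A second subtlety is that the variance is computed from $\Delta_{\mathcal P_k}$ evaluated on resampled inputs, not from $\D^{[0,1]}$ itself. This causes no difficulty, because the conditional variance of $\Delta_{\mathcal P_k}(W,Q)$ equals that of $\D^{[0,1]}(s,t)$ by the almost sure identity, and Efron--Stein only involves the product law of the inputs. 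For general $I\in\mathscr J$ of length greater than one, we instead use the uniform bound in Lemma~\ref{lem:kernel-moments} at small scales, after rescaling to intervals of length at most one.
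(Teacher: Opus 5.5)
Your proposal is correct and follows the paper's proof essentially step by step: you apply Efron--Stein conditionally on $W$ to the $2^k$ conditionally independent internal pseudometrics via the gluing identity, bound each resampling change with Lemma~\ref{lem:one-piece} and \eqref{eq:diameter-moments}, and use $\theta>1/2$ to get $\E[\Var(Z\mid W)]\le Cm^{1-2\theta}\to0$. The only differences are cosmetic. You build $\Psi$ by hand from the dyadic values instead of citing the Doob--Dynkin lemma. For $|I|>1$ it suffices, as in the paper, to take $m$ large enough that $|I|/m\le1$ and apply \eqref{eq:diameter-moments} directly to the pieces.
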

\begin{proof}
Fix $s,t$ and partition $[0,1]$ into $m=2^k$ equal intervals. By
Theorem~\ref{thm:exact-gluing}, $Z=\D^{[0,1]}(s,t)$ is a Borel function
of $W$ and the conditionally independent pseudometrics $Q_1,\ldots,Q_m$.
We may therefore resample the internal distances one at a time:
conditionally on $W$, let $Q_i'$ be an independent copy of $Q_i$, also
independent of the other pseudometrics, and let $Z^{(i)}$ be the chain
value obtained by replacing $Q_i$ with $Q_i'$. The Efron--Stein
inequality, applied under the regular conditional law given $W$ and
then integrated, gives
\begin{equation}\label{eq:conditional-variance}
 \E[\Var(Z\mid W)]\le\frac12\sum_{i=1}^m\E[(Z-Z^{(i)})^2].
\end{equation}
Both $Z$ and $Z^{(i)}$ are square integrable: chains passing through
consecutive partition intervals bound them by sums of internal
diameters, whose second moments satisfy
\eqref{eq:diameter-moments}. To bound the right-hand side,
Lemma~\ref{lem:one-piece} gives
\[
 |Z-Z^{(i)}|\le\max\{\diam Q_i,\diam Q_i'\},\qquad
 \E[(Z-Z^{(i)})^2]\le2\E[(\diam Q_i)^2].
\]
Substituting these estimates and \eqref{eq:diameter-moments} into
\eqref{eq:conditional-variance} gives
\begin{equation}\label{eq:variance-vanishes}
 0\le\E[\Var(Z\mid W)]\le C m^{1-2\theta}\longrightarrow0,
\end{equation}
since $\theta>1/2$. As $Z=\D^{[0,1]}(s,t)$ and $W$ do not depend on
$m$, neither does $\E[\Var(Z\mid W)]$, the quantity on the left-hand
side of \eqref{eq:variance-vanishes}. It must therefore be zero, and
$Z=\E[Z\mid W]$ almost surely. Taking $s,t$ in a countable dense
subset of $[0,1]$ and using continuity of $\D^{[0,1]}$, we see that
$\D^{[0,1]}$ is measurable with respect to the completed sigma-field
generated by $W$. The Doob--Dynkin lemma
\cite[Lemma 1.13]{Kallenberg}, applied to these path and function
spaces, yields a Borel map $\Psi$ satisfying \eqref{eq:measurable}.

For the local assertion, fix $J=[a,b]\in\mathscr J$, and condition
instead on the increment path
$B_J=(W_{a+u}-W_a)_{0\le u\le b-a}$. Partition $J$ into $m=2^k$
equal pieces and repeat the argument for $Z_J=\D^J(s,t)$.
For $m$ large enough that $|J|/m\le1$, the diameter moments in
\eqref{eq:diameter-moments} give
\[
 0\le\E[\Var(Z_J\mid B_J)]
 \le C|J|^{2\theta}m^{1-2\theta}\longrightarrow0.
\]
Applying the same argument to a countable dense subset of $J^2$
produces the map $\Psi_J$ stated in the theorem. Because $\mathscr J$
is countable, all these representations hold on one event of
probability one.
\end{proof}

\subsection{Convergence on the Wiener space}\label{subsec:wiener-convergence}

The measurability just proved allows us to pass from a joint weak
limit to convergence on the original Wiener space. We use the fact
that joint weak convergence implies convergence in probability when
the limit is a measurable function of a coordinate that remains
unchanged along the sequence.
\begin{lemma}\label{lem:weak-strong}
Let $S$ be a Polish space and $E$ a separable Banach space. Suppose that
$Z$ and $(Y_k)$ are defined on one probability space, take values in $S$
and $E$, respectively, and
$(Z,Y_k)\xrightarrow{\mathrm d}(Z,f(Z))$ for a Borel map $f:S\to E$. Then
$\|Y_k-f(Z)\|_E\to0$ in probability on that space.
\end{lemma}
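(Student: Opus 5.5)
The plan is to approximate the Borel map $f$ by a continuous bounded map and then use the fact that, for a continuous function of $(Z,Y_k)$, convergence in distribution transfers expectations. The limit in distribution need not be a continuous function of $Z$, so a direct continuous-mapping argument is not available. Instead we exploit that the first coordinate $Z$ is the same random variable at every index $k$.

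Fix $\varepsilon>0$. Let $\mu$ be the law of $Z$ on $S$. By Lusin's theorem for Borel maps from a Polish space into a separable metric space, there is a compact $F\subset S$ with $\mu(S\setminus F)<\varepsilon$ on which $f$ is continuous. Extend $f|_F$ to a continuous map $g:S\to E$; Dugundji's extension theorem applies because $E$ is a Banach space, hence locally convex. We then have $\Pp(g(Z)\ne f(Z))<\varepsilon$.

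Next consider the bounded continuous function
\[
 \phi(z,y)=\min\{1,\|y-g(z)\|_E\}
\]
on $S\times E$. Convergence in distribution gives
\[
 \E\phi(Z,Y_k)\to\E\phi(Z,f(Z))=\E\min\{1,\|f(Z)-g(Z)\|_E\}\le\varepsilon .
\]
Hence $\limsup_k\E\min\{1,\|Y_k-g(Z)\|_E\}\le\varepsilon$. Since $\Pp(g(Z)\ne f(Z))<\varepsilon$, this yields
\[
 \limsup_k\E\min\{1,\|Y_k-f(Z)\|_E\}\le2\varepsilon .
\]
Letting $\varepsilon\downarrow0$ gives convergence in probability.

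The only step needing care is the approximation of $f$ by a continuous map in $L^0(\mu)$. Any route works here: Lusin plus Dugundji as above, or approximating $f$ by simple Borel maps with finitely many values and then approximating indicators of Borel sets by continuous functions, using regularity of $\mu$ on the Polish space $S$. Separability of $E$ ensures that $f(Z)$ and $\|Y_k-f(Z)\|_E$ are measurable random variables.
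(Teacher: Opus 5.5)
Your proposal is correct and follows essentially the same route as the paper: approximate $f$ in probability under the law of $Z$ by a continuous $g$, test the joint weak convergence against the bounded continuous function $\min\{1,\|y-g(z)\|_E\}$, and conclude with the triangle inequality, which gives the factor $2\varepsilon$. The only difference is how $g$ is produced. You use Lusin's theorem together with Dugundji's extension theorem, whereas the paper approximates $f$ by finitely valued Borel maps and then approximates the indicators of their level sets by continuous functions, using regularity of the law of $Z$. You also mention this second route as an alternative.
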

\begin{proof}
By separability of $E$, approximate $f$ in probability under the law
of $Z$ by a finitely valued Borel map. The indicators of the inverse
images of its values can then be approximated in $L^1$ by bounded
continuous real functions, using regularity of the law of $Z$
\cite[Theorem 17.10]{Kechris}. Thus there is a bounded continuous
$g:S\to E$ with
$\E[1\wedge\|f(Z)-g(Z)\|_E]$ arbitrarily small. Joint weak convergence
gives
\[
 \E[1\wedge\|Y_k-g(Z)\|_E]\longrightarrow
 \E[1\wedge\|f(Z)-g(Z)\|_E].
\]
The triangle inequality therefore gives
\[
 \limsup_k\E[1\wedge\|Y_k-f(Z)\|_E]
 \le2\E[1\wedge\|f(Z)-g(Z)\|_E].
\]
Letting the approximation error tend to zero proves convergence in
probability.
\end{proof}

\Needspace{7\baselineskip}
For the normalized distances, put $X_j(W)=\widetilde d_j^{[0,1]}$.
Lemma~\ref{lem:weak-strong} gives uniform convergence in probability;
the moment estimates of Lemma~\ref{lem:kernel-moments} then allow us
to strengthen it to H\"older convergence.
\begin{corollary}\label{cor:strong-convergence}
If $(W,X_{j_k})\xrightarrow{\mathrm d}(W,\Psi(W))$, then on the original
Wiener space, for every $0<\alpha<\theta$,
\begin{equation}\label{eq:strong-convergence}
 \|X_{j_k}-\Psi(W)\|_{C^\alpha([0,1]^2)}\longrightarrow0
       \quad\text{in probability},\qquad
 \E\|X_{j_k}-\Psi(W)\|_{C^\alpha([0,1]^2)}^p
       \longrightarrow0\quad(p>0).
\end{equation}
For $1\le p<\infty$ and $0<\alpha<\theta$, the family $(X_j)_{j\ge1}$
is relatively compact in $L^p(\Pp;C^\alpha([0,1]^2))$.
For any finite collection $I_1,\ldots,I_r\in\mathscr J$, both the
convergence assertions and the $L^p$ relative compactness statement
hold jointly for the vectors
$(\widetilde d_j^{I_1},\ldots,\widetilde d_j^{I_r})$, with the product
norm formed from the $C^\alpha(I_\ell^2)$ norms.
\end{corollary}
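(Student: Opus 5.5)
The plan is to combine Lemma~\ref{lem:weak-strong} with the uniform H\"older moment bounds of Lemma~\ref{lem:kernel-moments}, using interpolation between uniform and H\"older norms.

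\emph{Step 1: uniform convergence.} Take $S=C([0,\infty),\R^3)$ (or $C([0,1],\R^3)$) as the Polish space carrying $W$, and $E=C([0,1]^2)$ with the uniform norm as the separable Banach space. By hypothesis $(W,X_{j_k})\xrightarrow{\mathrm d}(W,\Psi(W))$, and $\Psi$ is Borel by Theorem~\ref{thm:measurable}. Lemma~\ref{lem:weak-strong} then gives $\|X_{j_k}-\Psi(W)\|_\infty\to0$ in probability on the original Wiener space.

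\emph{Step 2: upgrade to $C^\alpha$ and to $L^p$.} Fix $0<\alpha<\beta<\theta$. For any $f$ on $[0,1]^2$ we have the elementary interpolation inequality
\[
 [f]_{\alpha}\le 2\|f\|_\infty^{1-\alpha/\beta}[f]_\beta^{\alpha/\beta},
\]
which is presumably the inequality \eqref{eq:holder-interpolation} cited earlier. By Lemma~\ref{lem:kernel-moments}, $\sup_j\E[[X_j]_\beta^p]<\infty$ for every $p$. By Fatou's lemma and the lower semicontinuity of the H\"older seminorm under uniform convergence (along an a.s.\ convergent subsequence), the limit $\Psi(W)$ also has $[\Psi(W)]_\beta\in L^p$ for all $p$. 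Hence $[X_{j_k}-\Psi(W)]_\beta$ is bounded in probability. Combined with Step 1, the interpolation inequality gives $[X_{j_k}-\Psi(W)]_\alpha\to0$ in probability, and therefore convergence in $C^\alpha$ in probability.

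For the moments, the quantities $\|X_{j_k}-\Psi(W)\|_{C^\alpha}^p$ are bounded in $L^2$, uniformly in $k$. This follows from the uniform $\beta$-moments, together with $X_j(0,0)=0$ and the resulting bound on sup norms. Uniform integrability then turns convergence in probability into convergence of all $p$-th moments.

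\emph{Step 3: relative compactness and joint statements.} Given any sequence from $(X_j)$, Proposition~\ref{prop:local-field} extracts a further subsequence with joint weak convergence to some $(W,\Psi'(W))$; here $\Psi'$ is Borel by Theorem~\ref{thm:measurable} applied to that limit law. Steps 1--2 then give $L^p(\Pp;C^\alpha)$ convergence along it, so $(X_j)$ is relatively compact in $L^p(\Pp;C^\alpha([0,1]^2))$.

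For finitely many $I_1,\ldots,I_r$, we repeat the argument with $E=\prod_\ell C(I_\ell^2)$. The relevant Borel map is $(\Psi_{I_\ell}(\text{increments of }W))_\ell$, which is a Borel function of $W$ by Theorem~\ref{thm:measurable}(local part). Lemma~\ref{lem:kernel-moments} again supplies the uniform H\"older moments on each $I_\ell$.

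The main care point is Step 2. The interpolation must be applied to the difference, which requires a $\beta$-moment bound on the limit $\Psi(W)$ and not only on the approximants; this is obtained via the Fatou/lower-semicontinuity argument (or directly from \eqref{eq:local-holder}). One must also make sure the chosen $\beta$ stays strictly below $\theta$ so that Lemma~\ref{lem:kernel-moments} applies.
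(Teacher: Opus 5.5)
Your proposal is correct and follows essentially the paper's proof: Lemma~\ref{lem:weak-strong} with Theorem~\ref{thm:measurable} gives uniform convergence, the interpolation inequality \eqref{eq:holder-interpolation} with the uniform $\beta$-H\"older moments of Lemma~\ref{lem:kernel-moments} and Fatou for the limit upgrades it to $C^\alpha$ and to $L^p$, and Proposition~\ref{prop:local-field} applied to arbitrary subsequences gives relative compactness (your Step~2 obtains $C^\alpha$ convergence in probability first and then uses uniform integrability, whereas the paper applies H\"older's inequality to the interpolation bound; this difference is inessential). The one point the paper treats explicitly and you pass over is that $C^\alpha([0,1]^2)$ is not separable, so the statement $X_{j_k}\to\Psi'(W)$ in $L^p(\Pp;C^\alpha)$ requires $\Psi'(W)$ to be strongly measurable; this holds because each $X_j$ is finitely valued and your moment convergence yields an a.s.\ convergent further subsequence, which places $\Psi'(W)$ a.s.\ in the separable closed span of the values of the $X_j$ (alternatively, your moment convergence already makes the subsequence Cauchy in the complete space $L^p(\Pp;C^\alpha)$, which suffices for relative compactness).
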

\begin{proof}
Theorem~\ref{thm:measurable} and Lemma~\ref{lem:weak-strong} give
uniform convergence in probability. For every $q>\max\{p,1\}$,
Lemma~\ref{lem:kernel-moments} bounds
$\sup_j\E\|X_j\|_\infty^q$, and the same bound for
$\|\Psi(W)\|_\infty^q$ follows by weak convergence and lower
semicontinuity. The $p$-th powers of the uniform errors are therefore
uniformly integrable. Their expectations tend to zero by
\cite[Theorem 4.6.3]{Durrett}.

To pass to H\"older norms, we use the interpolation inequality valid
for $0<\alpha<\beta<1$ and $f\in C^\beta(I^2)$:
\begin{equation}\label{eq:holder-interpolation}
 [f]_{\alpha;I^2}
 \le (2\|f\|_\infty)^{1-\alpha/\beta}
       [f]_{\beta;I^2}^{\alpha/\beta}.
\end{equation}
This follows by interpolating the bounds $2\|f\|_\infty$ and
$[f]_{\beta;I^2}|z-z'|^\beta$ for each increment.
Choose $\alpha<\beta<\theta$ and put $f=X_{j_k}-\Psi(W)$.
Lemma~\ref{lem:kernel-moments} and Fatou's lemma bound every moment
of $[f]_{\beta;[0,1]^2}$ uniformly in $k$.
H\"older's inequality applied to \eqref{eq:holder-interpolation} gives
\[
 \E\bigl[[f]_{\alpha;[0,1]^2}^{p}\bigr]
 \le 2^{p(1-\alpha/\beta)}
       \bigl(\E\|f\|_\infty^p\bigr)^{1-\alpha/\beta}
       \bigl(\E\bigl[[f]_{\beta;[0,1]^2}^p\bigr]\bigr)^{\alpha/\beta}
 \longrightarrow0.
\]
Together with convergence of the uniform error, this proves
\eqref{eq:strong-convergence}.

To obtain $L^p$ compactness, we verify strong measurability in the
H\"older space. Each $X_j$ takes only finitely many values as a random function.
Hence the closed linear span $E_\alpha$ of all these values is a
separable subspace of $C^\alpha([0,1]^2)$. Although the full H\"older
space need not be separable, the $C^\alpha$ norm of
$X_{j_k}-\Psi(W)$ is measurable: its seminorm is the supremum over
distinct pairs in a fixed countable dense subset of $[0,1]^2$.
Convergence of its moments therefore permits a further subsequence
on which this norm tends to zero almost surely. It follows that
$\Psi(W)$ belongs to $E_\alpha$ almost surely and is strongly
measurable as an $E_\alpha$-valued random variable.
Proposition~\ref{prop:local-field} applies to every subsequence, so the
convergence just proved gives relative compactness in $L^p$.
Applying the argument to a finite collection of intervals gives the
last assertion.
\end{proof}

\subsection{Determination by the intersection relation}\label{subsec:intersection-information}

Every finite graph distance is determined by the Brownian
self-intersection relation $\mathcal I_{[0,1]}(W)$. The convergence in
Corollary~\ref{cor:strong-convergence} then shows that each
subsequential limit on the original Wiener space is measurable with
respect to that relation. We will also show that the relation itself
can be recovered from the labelled graphs $G_{2^m}$, $m\ge0$.

Let $\cH_m=\sigma(G_{2^m}(W))$ for $m\ge0$, including the vertex labels.
Group vertices $2i,2i+1$ into vertex $i$, retaining an edge between
two groups when an edge joins their vertices. This recovers $G_{2^m}$
from $G_{2^{m+1}}$, so $(\cH_m)$ is increasing. As the sigma-fields $\cH_m$ increase, conditioning on $\cH_m$
will also give approximations to $X_j$ with errors uniform in $j$.

\begin{proposition}\label{prop:coarse-information}
The completion of $\sigma(\bigcup_{m\ge0}\cH_m)$ is the completed
sigma-field generated by $\mathcal I_{[0,1]}(W)$. Every subsequential
limit of $X_j=\widetilde d_j^{[0,1]}$, realized on the original Wiener
space as in Corollary~\ref{cor:strong-convergence}, is measurable with
respect to this sigma-field. For $1\le p<\infty$ and
$0<\alpha<\theta$,
\begin{equation}\label{eq:coarse-information}
 \lim_{m\to\infty}\sup_j
 \E\|X_j-\E[X_j\mid\cH_m]\|_{C^\alpha([0,1]^2)}^p=0.
\end{equation}
The conditional expectations may be taken in $C^\alpha([0,1]^2)$;
they agree with their $C([0,1]^2)$ versions.
Each internal limit $\D^I$, $I\in\mathscr J$, realized on the original
Wiener space in the same way, is measurable with respect to the
completion of $\sigma(\mathcal I_I(W))$.
\end{proposition}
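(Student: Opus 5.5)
The proof has four parts: the sigma-field identity, measurability of the limits, the uniform conditional approximation, and the internal version.

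\emph{Sigma-field identity.} Write $\mathcal S$ for the completed sigma-field of $\mathcal I_{[0,1]}(W)$ and $\mathcal H_\infty$ for the completion of $\sigma(\bigcup_m\cH_m)$.

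For $\mathcal H_\infty\subset\mathcal S$: the edge $i\sim j$ of $G_{2^m}$ is present exactly when $\mathcal I_{[0,1]}(W)$ meets the closed rectangle $I_i^{(2^m)}\times I_j^{(2^m)}$. This hitting event is Borel in the Hausdorff topology, so each $G_{2^m}$ is a measurable function of $\mathcal I_{[0,1]}(W)$.

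For $\mathcal S\subset\mathcal H_\infty$: let $R_m$ be the union of the squares $I_i\times I_j$ over pairs that are equal or adjacent in $G_{2^m}$. Then $R_m$ is decreasing and contains $\mathcal I_{[0,1]}(W)$. Suppose $(s,t)\in\bigcap_mR_m$. For each $m$ there are $s_m,t_m$ within distance $2^{-m}$ of $s,t$ with $W_{s_m}=W_{t_m}$, and continuity then gives $W_s=W_t$. Hence $\bigcap_mR_m=\mathcal I_{[0,1]}(W)$. Decreasing compact sets converge in the Hausdorff metric to their intersection, so $\mathcal I_{[0,1]}(W)$ is a Hausdorff limit of $\cH_m$-measurable sets and is therefore $\mathcal H_\infty$-measurable.

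\emph{Measurability of limits.} For fixed $j$, the function $X_j$ is a deterministic function of $G_{n_j}$, and $n_j$ is a power of two. So $X_j$ is $\cH_{k_j}$-measurable and in particular $\mathcal S$-measurable. By Corollary~\ref{cor:strong-convergence}, along the subsequence there is a further subsequence converging almost surely in $C([0,1]^2)$. The limit $\Psi(W)$ is then $\mathcal S$-measurable: evaluate at countably many points and take almost sure limits of measurable functions.

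\emph{Uniform approximation \eqref{eq:coarse-information}.} This is the main obstacle, since the estimate must be uniform in $j$. I would argue by contradiction. Suppose there are $\eta>0$, $m_l\to\infty$ and indices $j_l$ with $\E\|X_{j_l}-\E[X_{j_l}\mid\cH_{m_l}]\|^p_{C^\alpha}\ge\eta$.

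If $j_l$ stays bounded along a subsequence, a fixed $X_j$ is $\cH_{m}$-measurable as soon as $2^m\ge n_j$, so the error vanishes for large $l$, a contradiction.

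Otherwise $j_l\to\infty$. By the $L^p(\Pp;C^\alpha)$ relative compactness in Corollary~\ref{cor:strong-convergence}, a subsequence satisfies $X_{j_l}\to X:=\Psi(W)$ in $L^p$. Use the triangle inequality together with the fact that conditional expectation is an $L^p$ contraction for strongly measurable random variables valued in the separable space $E_\alpha$:
\[
\|X_{j_l}-\E[X_{j_l}\mid\cH_{m_l}]\|_{L^p}\le 2\|X_{j_l}-X\|_{L^p}+\|X-\E[X\mid\cH_{m_l}]\|_{L^p}.
\]
The first term tends to zero by the choice of subsequence. The second tends to zero by the Banach-valued martingale convergence theorem in $L^p$, because $X$ is $\mathcal H_\infty$-measurable. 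This contradicts $\eta>0$.

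To see that $C^\alpha$ and $C([0,1]^2)$ conditional expectations agree, note that point evaluations are continuous linear functionals on both spaces. The Bochner conditional expectations therefore coincide pointwise almost surely. For $p<1$ I would apply the $L^1$ argument and use uniform integrability from the moment bounds.

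\emph{Internal limits.} The statement for $\D^I$ follows the same route. Use the graphs $G^I_{2^m}$ on $I$, whose edges are determined by $\mathcal I_I(W)$. Replace the measurability input by Theorem~\ref{thm:measurable} and the joint form of Corollary~\ref{cor:strong-convergence}.
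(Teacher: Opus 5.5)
Your proposal is correct and follows essentially the same route as the paper. The shared steps are the $R_m$ construction for the sigma-field identity, measurability of the limits via an almost surely convergent further subsequence, and the uniform bound \eqref{eq:coarse-information} from $L^p$ relative compactness combined with the $L^p$ contraction and L\'evy's upward theorem for Bochner conditional expectations in $E_\alpha$.

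The only difference is cosmetic. You run the compactness step as a sequential contradiction argument, whereas the paper takes a finite $\varepsilon$-net in the closure of $\{X_j\}$ and applies the upward theorem to each net element. Your separate bounded-$j_l$ case and the remark about $p<1$ are unnecessary: relative compactness already covers the former, and the statement assumes $p\ge1$.
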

\begin{proof}
An edge of $G_{2^m}$ is determined by whether $\mathcal I_{[0,1]}(W)$ meets
the product of the two closed cells. Conversely, let $R_m$ be the union
of all rectangles $I_i^{(2^m)}\times I_k^{(2^m)}$ for which $i=k$ or
$\{i,k\}$ is an edge of $G_{2^m}$. Then
\[
 \mathcal I_{[0,1]}(W)=\bigcap_{m\ge0}R_m.
\]
Indeed, if $(s,t)$ belongs to every $R_m$, there are times
$s_m,t_m$ within $2^{-m}$ of $s,t$ with $W_{s_m}=W_{t_m}$.
Continuity then gives $W_s=W_t$. The sets $R_m$ are nonempty compact
sets with $R_{m+1}\subset R_m$, so they converge in the Hausdorff
metric to $\mathcal I_{[0,1]}(W)$. Hence the intersection relation is
measurable with respect to $\sigma(\bigcup_m\cH_m)$.
Conversely, each edge of $G_{2^m}$ is determined by the relation,
which proves equality of the completed sigma-fields.
Each $X_j$ is $\sigma(\mathcal I_{[0,1]}(W))$-measurable.
Any probability limit in \eqref{eq:strong-convergence} is measurable
with respect to the completion of this sigma-field, by passing to an
almost surely convergent further subsequence
\cite[Theorem 2.3.2]{Durrett}.
For an internal interval $I$, the finite functions
$\widetilde d_j^I$ depend only on $\mathcal I_I(W)$. Their convergence
in probability in $C(I^2)$, again by
Corollary~\ref{cor:strong-convergence}, proves the corresponding
claim for $\D^I$.

For the uniform approximation, we use the common separable subspace
$E_\alpha\subset C^\alpha([0,1]^2)$ constructed in the proof of
Corollary~\ref{cor:strong-convergence}. If $Y$ is an integrable
$E_\alpha$-valued random variable, its Bochner conditional expectation
$\E[Y\mid\cH_m]$ is characterized by
\[
 \int_A\E[Y\mid\cH_m]\,d\Pp=\int_A Y\,d\Pp
 \qquad(A\in\cH_m).
\]
Existence and the $L^p$ contraction property are given in
\cite[Chapter V, Section 1, Theorem 4]{DiestelUhl}.
Since the embedding $E_\alpha\hookrightarrow C([0,1]^2)$ is bounded
and linear, the conditional expectation computed in $E_\alpha$
agrees with the one computed in $C([0,1]^2)$.

For $Y\in L^p(\Pp;E_\alpha)$ measurable with respect to the
completion of $\sigma(\bigcup_m\cH_m)$, put
$P_mY=\E[Y\mid\cH_m]$. L\'evy's upward theorem
\cite[Theorem 4.6.8]{Durrett}, first for simple functions and then by
$L^p$ contraction, gives $P_mY\to Y$ in $L^p$. To make this
convergence uniform over $X_j$, we use relative compactness of
$\{X_j\}$ in $L^p$ from Corollary~\ref{cor:strong-convergence}.
Every element of this closure is measurable with respect to the
completion of $\sigma(\bigcup_m\cH_m)$, since each $X_j$ is.
Choose a finite $\varepsilon$-net $Y_1,\ldots,Y_M$ in that closure. If
$\|X_j-Y_i\|_{L^p}<\varepsilon$, then
\[
 \|X_j-P_mX_j\|_{L^p}\le2\varepsilon+\|Y_i-P_mY_i\|_{L^p}.
\]
Taking the maximum over the finite net, letting $m\to\infty$, and
then letting $\varepsilon\downarrow0$ proves
\eqref{eq:coarse-information}.
\end{proof}

\section{Separation from deterministic times}\label{sec:separation}

The dimension identity and nonuniqueness established in
Theorem~\ref{thm:main-growth} also hold for curves starting at $W_s$
when $s\in[0,1]$ is fixed in advance. Our result is uniform over the
other endpoint.
\begin{theorem}\label{thm:main-pinned}
Fix a deterministic $s\in[0,1]$. On an event of probability one,
\begin{equation}\label{eq:main-pinned}
 p_K(W_s,y)=\gamma_3\qquad\text{for every }y\in K\setminus\{W_s\},
\end{equation}
and, for each such $y$, there are continuum many simple curves
$\eta\in C([0,1],K)$ with pairwise distinct images, each
$\beta$-H\"older for every $0<\beta<1/\gamma_3$ and satisfying
\[
 \eta(0)=W_s,\qquad \eta(1)=y,\qquad
 \dimH\eta([0,1])=\dimB\eta([0,1])=\gamma_3.
\]
For every such $y$, each connected set $C\subset K$
containing $W_s,y$ has $\dimH C\ge\gamma_3$.
\end{theorem}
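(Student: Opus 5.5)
The plan is to reduce everything to the behaviour of the limiting pseudometric $\D=\D^{[0,1]}$ near the fixed time $s$. Concretely, I would first establish Theorem~\ref{thm:main-metric}(iv), namely \eqref{eq:main-root-separation} at the fixed $s$, and then reuse the machinery of Section~\ref{sec:curves} with $W_0$ replaced by $W_s$.

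\textbf{Step 1: separation at $s$.} Since the Brownian increments after $s$ and before $s$ form two independent Brownian motions $B^\pm$ started at the origin, I would set
\[
 E_\varepsilon=\Bigl\{\inf_{|t-s|\ge\varepsilon}\D(s,t)>0\Bigr\}.
\]
The aim is to show $\Pp(\bigcap_\varepsilon E_\varepsilon)=1$ by a zero--one law, following the pattern of Proposition~\ref{prop:plateau}.
\begin{enumerate}
\item By Theorem~\ref{thm:exact-gluing} and Lemma~\ref{lem:local-independence}, the internal distances on dyadic intervals shrinking to $s$ are functions of the increments there (Theorem~\ref{thm:measurable}).
\item Transience gives $W([s-\delta,s+\delta])\cap W([0,1]\setminus(s-T,s+T))=\varnothing$ for small $\delta$, which plays the role of \eqref{eq:initial-disjoint-scalar}.
\item Consequently the event $\bigcap_\varepsilon E_\varepsilon$ coincides, up to null sets, with a germ event of the pair $(B^+,B^-)$. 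This germ $\sigma$-field is trivial by Blumenthal applied to the six-dimensional process $(B^+,B^-)$.
\item Positive probability comes from a two-sided version of Proposition~\ref{prop:scalar-seed}. I would take the graph distance from the cell of $s$, normalized by $\kappa_j$, and use \eqref{eq:limit-positive-mean} together with the comparison in Lemma~\ref{lem:grid}. This gives $\D(s,t)\ge c$ for $|t-s|\ge1/4$ with positive probability.
\item Scaling about $s$, via the self-similarity of Theorem~\ref{thm:measurable}, transfers this to every $\varepsilon$ with the same probability.
\end{enumerate}
I expect this step to be the main obstacle. The difficulty is that the forward and backward pieces do intersect arbitrarily close to $s$, so one cannot simply glue two one-sided plateau functions as in Proposition~\ref{prop:plateau}. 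The separation must instead come from the genuinely two-sided limiting distance, and the germ-measurability argument has to control these near-$s$ intersections through the gluing formula.

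\textbf{Step 2: lower bound.} Given Step 1 on a full event, fix $0<\alpha<\theta$. For $\varepsilon\in\{1/m\}$, define
\[
 u_\varepsilon(t)=\min\Bigl\{1,\ \D(s,t)\big/\inf_{|r-s|\ge\varepsilon}\D(s,r)\Bigr\}.
\]
Each $u_\varepsilon$ is compatible by \eqref{eq:local-identifications} and $\alpha$-H\"older by \eqref{eq:local-holder}; it vanishes at $s$ and equals one on $\{|t-s|\ge\varepsilon\}$. A fixed time is almost surely not a double time, since points are polar. Hence every $y\in K\setminus\{W_s\}$ equals $W_t$ with $t\ne s$, and Lemma~\ref{lem:dimension-test} gives $\dimH C\ge2\alpha$ for every connected $C\ni W_s,y$. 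Letting $\alpha\uparrow\theta$ along a countable sequence yields the last assertion and $p_K(W_s,y)\ge\gamma_3$, simultaneously for all $y$.

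\textbf{Step 3: upper bound and minimizers.} Proposition~\ref{prop:curve-upper} already supplies, uniformly in $x,y\in K$, a simple curve of upper box dimension at most $\gamma_3$ that is $\beta$-H\"older for all $\beta<1/\gamma_3$. With Step~2 and \eqref{eq:box-comparison} this gives \eqref{eq:main-pinned} and minimizers.

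\textbf{Step 4: nonuniqueness.} I would rerun Proposition~\ref{prop:nonunique-curves} with $W_0$ replaced by $W_s$ and $K_q$ replaced by $W([s-q,s+q])$ for dyadic $q$. Lemma~\ref{lem:double-point-avoidance} needs a two-sided analogue. I would apply Gao--Li--Liu--Qian to the forward and reversed paths on $[s,s+q]$ and $[s-q,s]$, together with the polarity of $W_s$, to obtain connections avoiding $W_s$ and a chosen double point, with $p$-variation $O(q^{1/4})$ from Lemma~\ref{lem:local-variation}. The binary-sequence construction and Lemma~\ref{lem:holder-arc} then proceed verbatim.
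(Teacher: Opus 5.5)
Your architecture matches the paper's. Once $s$ is separated by $\D$, your Step~2 is Proposition~\ref{prop:point-to-point}, and Step~3 is as in the paper. The genuine gap is Step~1, item~(iv), which carries the real content of the theorem. Proposition~\ref{prop:scalar-seed}, \eqref{eq:limit-positive-mean} and Lemma~\ref{lem:grid} only bound the distance between a single pair of times from below. The H\"older bound \eqref{eq:local-holder} only spreads such a bound to a neighbourhood of that pair. None of them controls $\inf_{|t-s|\ge1/4}\D(s,t)$ uniformly over a whole region of $t$.

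In the one-sided case (Proposition~\ref{prop:endpoint-separation}) the paper upgrades a bound near $t=1$ to all later times by a cut. On the event that $W([1,\infty))$ avoids $W([0,1-\varepsilon])$, every graph path from time $0$ to a later time must pass through $[1-\varepsilon,1]$ (Lemma~\ref{lem:root-localization}). At an interior time there is no such cut, because past and future meet arbitrarily close to $W_s$. You name this obstruction but do not overcome it.

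The missing idea is Lemma~\ref{lem:two-sided-positive}. For two-sided Brownian motion, Proposition~\ref{prop:localized-function} rests on endpoint separation and on a gap in $u(\mathcal T_I)$, forced by $\dimH u(\mathcal T_I)\le1/(2\alpha)<1$, which uses $\theta>1/2$. It yields, on an event in $\sigma(B|_J)$ of positive probability, a function $v$ with three properties: it is $1$-Lipschitz for $\D^J$, it vanishes off $I=[-1/2,1/2]$, and it exceeds $c_0$ at a fixed dyadic $q\in I$. On the further event that $B((-\infty,-1]\cup[1,\infty))$ avoids $B(I)$ (Lemma~\ref{lem:escape}), the gluing formula separates $q$ by $c_0$ from every time outside $(-1/2,1/2)$ in every $\D^{[-M,M]}$. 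Invariance under the dyadic shift by $q$ then gives $\Pp(b_2(1)>0)>0$. Only after this do scaling, localization and Blumenthal's law for $(B_u,B_{-u})_{u\ge0}$ give probability one.

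Item~(v) is also problematic. Theorem~\ref{thm:measurable} gives measurability, not self-similarity. For non-dyadic $s$, no scaling about $s$ is compatible with the dyadic meshes, and scaling also moves the window $[0,1]$ on which your events $E_\varepsilon$ are defined. The paper works instead with $B^{(s)}_u=W_{s+u}-W_s$, extended to negative times by an independent motion, and with cells shifted by $s$. Then $0$ is a mesh point, and \eqref{eq:two-localization}--\eqref{eq:two-scaling} apply to infinite-horizon quantities. Separation is then transferred to the original mesh by the one-directional comparison \eqref{eq:shift-comparison}, $\widehat\D^{[-M,M]}(0,t-s)\le2\D(s,t)$.

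Finally, your Step~4 does more than is needed. Every $y\ne W_s$ lies in $W([s,1])$ or $W([0,s])$, so it suffices to apply Proposition~\ref{prop:nonunique-curves} to the rescaled forward or backward path \eqref{eq:pinned-rescaled-paths}. The resulting curves lie in $K$, and their dimension follows from the lower bound of Step~2, so no two-sided version of Lemma~\ref{lem:double-point-avoidance} is required.
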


For each deterministic $s$, the event in Theorem~\ref{thm:main-pinned}
may depend on $s$, but its conclusion holds for all $y$ simultaneously.
We also prove the same conclusions for every $s$ in a random Borel
set $T_*\subset[0,1]$ of full Lebesgue measure, on a single event of
probability one. The set $T_*$ is first constructed in
Theorem~\ref{thm:deterministic-root}. In the proof of
Theorem~\ref{thm:main-pinned} in
Section~\ref{subsec:pinned-dimensions}, we replace it by a Borel
subset of full Lebesgue measure on which the curve statements
also hold simultaneously for every $s\in T_*$.

Proposition~\ref{prop:curve-upper} already supplies simple curves with
the required upper box dimension bound between every pair of distinct
points in $K$. For the matching lower bound, we use the criterion in
Lemma~\ref{lem:dimension-test}, with the limiting distance providing a
compatible H\"older function; this step is carried out in
Proposition~\ref{prop:point-to-point}. The necessary separation is
proved for time zero in Proposition~\ref{prop:endpoint-separation}.
Lemma~\ref{lem:two-sided-positive} treats the origin of a Brownian
motion indexed by the whole real line, and
Theorem~\ref{thm:deterministic-root} transfers the result to each
fixed starting time. Separation of all distinct Brownian positions
by the limiting distance on a single event remains open; see
Question~\ref{qu:critical-separation}.

\subsection{Separation from the initial time}\label{subsec:initial-separation}

We first study the distance from time zero, working with a
fixed joint limiting law from Proposition~\ref{prop:local-field}.
For positive dyadic times $0<t<T$, define
\begin{equation}\label{eq:root-distances}
 b(t,T)=\inf_{u\in[t,T]}\D^{[0,T]}(0,u),\qquad
 b(t)=\inf_{M\in\N,\,M>t}b(t,M).
\end{equation}
For each realization, the function
$u\mapsto\D^{[0,T]}(0,u)$ is continuous on the compact interval
$[t,T]$, so its infimum is a minimum. It is also a random variable:
taking this minimum is a continuous operation on
$C([0,T]^2)$ in the uniform norm. Domain monotonicity
\eqref{eq:domain-monotonicity} shows that $b(t,T)$ is nonincreasing
in $T$, and hence the infimum defining $b(t)$ can be taken along
any increasing sequence of integers tending to infinity. As $t$
increases, the target times are restricted, so $b(t)$ is
nondecreasing in $t$.

For comparison with the finite graphs, whenever $t,T$ are mesh
points we define
\begin{equation}\label{eq:finite-root-distances}
 b_j(t,T)=\inf_{u\in[t,T]}d_j^{[0,T]}(0,u).
\end{equation}
In the almost surely convergent representation used in the
proof of Proposition~\ref{prop:local-field},
\eqref{eq:interpolation} and uniform convergence of the
interpolated distances give, for each fixed $t,T$,
\[
 |b_j(t,T)-b(t,T)|
 \le \bigl\|d_j^{[0,T]}-\D^{[0,T]}\bigr\|_\infty
 \le \frac{2}{\kappa_j}
      +\bigl\|\widetilde d_j^{[0,T]}-\D^{[0,T]}\bigr\|_\infty
 \longrightarrow0.
\]

Although $b(t)$ is defined using intervals of arbitrarily
large length, the next lemma shows that restricting the path to
$[0,T]$ does not change its value when $W([0,t])$ and $W([T,\infty))$ are disjoint.
\begin{lemma}\label{lem:root-localization}
If $0<t<T$ are dyadic and
\begin{equation}\label{eq:root-avoidance}
 W([0,t])\cap W([T,\infty))=\varnothing,
\end{equation}
then $b(t)=b(t,T)$. With probability one, for every positive
dyadic $T$, this equality holds for all sufficiently small positive
dyadic $t$.
\end{lemma}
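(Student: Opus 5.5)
The plan is to show $b(t,M)=b(t,T)$ for every integer $M\ge T$ (with $M>t$). Since $b(t,\cdot)$ is nonincreasing, the definition of $b(t)$ then gives $b(t)=b(t,T)$. For $M\ge T$, domain monotonicity \eqref{eq:domain-monotonicity} with $[0,T]\subset[0,M]$ gives $\D^{[0,M]}(0,u)\le\D^{[0,T]}(0,u)$ for $u\in[t,T]$. Since $[t,T]\subset[t,M]$, this yields $b(t,M)\le b(t,T)$. The substance is the reverse inequality $b(t,M)\ge b(t,T)$.

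For the reverse inequality, I will use the gluing formula, Theorem~\ref{thm:exact-gluing}, applied to the dyadic partition $[0,M]=[0,T]\cup[T,M]$. On its probability-one event,
\[
 \D^{[0,M]}(0,u)=\Delta_{\mathcal P}(0,u).
\]
The right side is an infimum over chains $(a_r,b_r)$ with $a_1=0$, each pair lying in $[0,T]$ or $[T,M]$, and $W_{b_r}=W_{a_{r+1}}$. Fix $u\in[t,M]$ and a chain of cost at most $\D^{[0,M]}(0,u)+\varepsilon$.

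There are two cases. If every pair lies in $[0,T]$, the chain is admissible for $\D^{[0,T]}$ via the triangle inequality and \eqref{eq:local-identifications}, so the cost is at least $\D^{[0,T]}(0,u)\ge b(t,T)$, because then $u\in[0,T]\cap[t,M]=[t,T]$. Otherwise, let $r$ be the first index whose pair lies in $[T,M]$; then $r\ge2$ since $a_1=0$. The point $b_{r-1}\in[0,T]$ satisfies $W_{b_{r-1}}=W_{a_r}\in W([T,M])$, so by \eqref{eq:root-avoidance} we get $b_{r-1}\notin[0,t]$, that is, $b_{r-1}\in(t,T]$. The prefix of the chain up to index $r-1$ uses only $[0,T]$ and has cost at least $\D^{[0,T]}(0,b_{r-1})\ge b(t,T)$. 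Since costs are nonnegative, in both cases $\D^{[0,M]}(0,u)+\varepsilon\ge b(t,T)$. Letting $\varepsilon\downarrow0$ and minimizing over $u\in[t,M]$ gives $b(t,M)\ge b(t,T)$.

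The identity must hold on one event for all dyadic pairs $(t,T)$ and integers $M$. This is fine because Theorem~\ref{thm:exact-gluing} holds simultaneously for all $J\in\mathscr J$ and all finite dyadic partitions, a countable family.

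For the second assertion, I will reuse the argument behind \eqref{eq:initial-disjoint-scalar}. Transience and polarity of the origin give $\inf_{u\ge T}|W_u|>0$ almost surely for each dyadic $T$. Continuity at $0$ then makes \eqref{eq:root-avoidance} hold for all sufficiently small dyadic $t$. Intersecting over the countably many dyadic $T$ gives one event, on which the first part applies.

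The main obstacle is the chain-splitting step. It is exactly there that the avoidance hypothesis enters: the first transition into $[T,M]$ must come from a time in $(t,T]$. A secondary point needing care is that the limiting copies satisfy \eqref{eq:root-avoidance} for the limit path $W$, which is the path appearing in the gluing formula.
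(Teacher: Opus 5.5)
Your proof is correct, but it runs the argument at a different level from the paper.

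The paper works in the almost surely convergent Skorokhod representation. There \eqref{eq:root-avoidance} gives a positive distance between the compact sets $W([0,t])$ and $W([T,M])$. Uniform convergence of $W^{(j)}$ then rules out edges of $G_j^{[0,M]}$ between cells in $[0,t]$ and cells in $[T,M]$ for large $j$. Stopping a graph path at its first cell with left endpoint at least $t$ gives $b_j(t,M)=b_j(t,T)$ exactly for large $j$, and the paper passes to the limit.

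You run the same ``stop at the first exit'' idea directly on the limiting objects. You use the chain representation of Theorem~\ref{thm:exact-gluing} for the partition $[0,T]\cup[T,M]$ of $[0,M]$ and cut each chain at its first pair assigned to $[T,M]$. This is legitimate: Theorem~\ref{thm:exact-gluing} is proved earlier, does not depend on this lemma, and holds simultaneously for the countably many intervals and dyadic partitions you need.

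What your route buys: it needs only exact disjointness for the limit path. It involves no approximation, no gap argument and no Skorokhod coupling. It yields the identity on every realization of the gluing event that satisfies \eqref{eq:root-avoidance}.

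The price is dependence on the much heavier gluing theorem. That theorem rests on Rosen's bound \eqref{eq:double-times}, the flattening construction, and $\theta>1/2$. The paper's argument, by contrast, uses only uniform convergence of the finite graphs and is self-contained.

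One small point: a pair $(T,T)$ lies in both pieces, so ``lies in'' in your case analysis should be read as ``is assigned to''. The argument goes through unchanged under that reading.
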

\begin{proof}
Use the almost surely convergent representation constructed in
the proof of Proposition~\ref{prop:local-field}. For an integer
$M>T$, condition \eqref{eq:root-avoidance} implies
\[
 \operatorname{dist}\bigl(W([0,t]),W([T,M])\bigr)>0,
\]
since these are disjoint compact subsets of $\R^3$.
Both $t$ and $T$ are mesh points for all sufficiently large $j$.
The uniform convergence of $W^{(j)}$ on $[0,M]$ therefore ensures
that no edge of $G_j^{[0,M]}$ joins a cell in $[0,t]$ to a cell
in $[T,M]$.

A path in $G_j^{[0,M]}$ from the initial cell to a cell with left
endpoint at least $t$ may be stopped when it first visits such a
cell. Every preceding cell has left endpoint less than $t$ and is
therefore contained in $[0,t]$. By the exclusion of edges just
proved, the cell reached at the stopping point must have left
endpoint less than $T$. The stopped path consequently uses only
cells in $[0,T]$, which gives
$b_j(t,T)\le b_j(t,M)$. The reverse inequality follows because
$G_j^{[0,T]}$ is a subgraph of $G_j^{[0,M]}$. Thus
\[
 b_j(t,M)=b_j(t,T)
 \quad\text{for all sufficiently large }j,
\]
and passage to the limit gives $b(t,M)=b(t,T)$.
This holds simultaneously for every integer $M>T$, since there are
only countably many such integers. If $t<M\le T$, domain
monotonicity gives $b(t,M)\ge b(t,T)$, so taking the infimum over
all integers $M>t$ proves $b(t)=b(t,T)$.

For the second assertion, transience and polarity of the origin
give $m_T:=\inf_{u\ge T}|W_u|>0$ almost surely for every fixed
$T>0$. By continuity at time zero,
$\sup_{0\le u\le t}|W_u|<m_T$ for all sufficiently small $t$,
and hence \eqref{eq:root-avoidance} holds. Countability of the
dyadic times $T$ gives the asserted simultaneous conclusion.
\end{proof}

Grouping time cells in pairs changes graph distances by at most a
constant factor. Brownian scaling therefore gives the same
probability of positive limiting separation at scales $t$ and $2t$.
\begin{lemma}\label{lem:root-zero-scaling}
For every positive dyadic $t$,
\begin{equation}\label{eq:root-zero-scaling}
 \Pp(b(2t)>0)=\Pp(b(t)>0).
\end{equation}
\end{lemma}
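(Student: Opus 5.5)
The plan is to compare the finite approximations to $b(2t)$ and $b(t)$ through Brownian scaling by the factor two, and then pass to the limit. Define the rescaled path $W'_u=2^{-1/2}W_{2u}$. It is again a standard Brownian motion started at $\mathbf0$.

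\emph{Finite comparison.} Fix an integer $M>t$ and take $j$ large, so that $t$, $2t$, $M$ and $2M$ are mesh points. The graph $G_j^{[0,2M]}$ built from $W$ with cells of length $1/n_j$ is, after the time change $u\mapsto u/2$, exactly the intersection graph of $W'$ on $[0,M]$ with cells of length $1/(2n_j)$. Group these fine cells in consecutive pairs, as in the proof of \eqref{eq:refinement}. This gives the graph $G_j^{[0,M]}(W')$ with cells of length $1/n_j$. There are two comparisons between the fine and coarse graphs.
\begin{itemize}
\item Projecting a fine graph path onto the coarse cells and deleting consecutive repetitions shows that coarse distances are at most fine distances.
\item Replacing each coarse cell of a coarse path by at most its two halves, and choosing intersection points as in Lemma~\ref{lem:grid}, shows that fine distances are at most $2(\text{coarse distance})+1$.
\end{itemize}
Assigned vertices at a given time differ by at most one step. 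Taking the infimum over target times $u\in[2t,2M]$, which corresponds to $u/2\in[t,M]$, we obtain, with the notation \eqref{eq:finite-root-distances} applied to each path,
\[
 b_j^{W'}(t,M)-\tfrac{2}{\kappa_j}\le b_j^{W}(2t,2M)\le 2\,b_j^{W'}(t,M)+\tfrac{4}{\kappa_j}.
\]

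\emph{Passage to the limit.} The pair $\bigl(W',(d_j^{[0,M]}(W'))_M\bigr)$ has the same law as the corresponding object built from $W$. Hence, along the fixed subsequence from Proposition~\ref{prop:local-field}, the family $\bigl(b_j^{W}(2t,2M),\,b_j^{W'}(t,M)\bigr)_{M}$ is tight. Its marginal laws converge to those of $(b(2t,2M))_M$ and $(b(t,M))_M$, respectively; the latter is obtained by applying the same limiting law to $W'$. We take a further subsequence along which the countable joint vector converges in distribution, and realize it almost surely by Skorokhod representation. Since $\kappa_j\to\infty$, the limit variables $X_M\law b(2t,2M)$ and $Y_M\law b(t,M)$, taken jointly over $M$, satisfy
\[
 Y_M\le X_M\le 2Y_M\qquad\text{for all } M \text{ almost surely}.
\]
Here the limiting law of each marginal sequence in $M$ matters, not merely each coordinate. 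This is why joint convergence over all integers $M$ is used.

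\emph{Conclusion.} By domain monotonicity, $b(2t)=\inf_M b(2t,2M)$, because the infimum in \eqref{eq:root-distances} may be taken along the even integers. Similarly $b(t)=\inf_M b(t,M)$. Therefore $\inf_M X_M\law b(2t)$ and $\inf_M Y_M\law b(t)$, and
\[
 \inf_M Y_M\le\inf_M X_M\le2\inf_M Y_M.
\]
The events $\{\inf_M X_M>0\}$ and $\{\inf_M Y_M>0\}$ therefore coincide, which gives \eqref{eq:root-zero-scaling}.

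\emph{Main obstacle.} The delicate step is the limit. The object for $W'$ naturally lives at mesh $1/(2n_j)$, which does not belong to the subsequence. The pair-grouping inequality is what reduces it to mesh $1/n_j$. One must also make sure that the countable infimum over $M$ is handled through joint laws and not coordinatewise.
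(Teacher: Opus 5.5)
Your proposal is correct and follows essentially the same route as the paper. Both arguments rescale by $\widehat W_u=2^{-1/2}W_{2u}$ and compare the paired-cell graphs to get $\widehat b_j(t,T)\le b_j(2t,2T)\le 2\widehat b_j(t,T)+O(\kappa_j^{-1})$. Both then pass to a joint limit and take the infimum over even terminal times. The one small difference is that you take the joint limit of the countable vectors of finite minima directly, whereas the paper takes a joint limit of the full families of internal distances for $W$ and $\widehat W$ and defines $\widehat b$ from the limiting $\widehat{\mathcal D}$.
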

\begin{proof}
For each $I\in\mathscr J$, let $G_j^I(W)$ denote the intersection
graph of the Brownian ranges of the cells in $I$, as in
\eqref{eq:local-kernel}. We compare these graphs with
$G_j^I(\widehat W)$ for
$\widehat W_u=2^{-1/2}W_{2u}$, using the same mesh $1/n_j$ and
normalization $\kappa_j$. Proposition~\ref{prop:local-field}
gives tightness of each family of Brownian paths and internal
distances, including their intersection relations. The two families
are therefore jointly tight, since compact sets for their marginal
laws give compact sets for the product. After retaining the originally
specified limit, we may pass to a further subsequence on which
both families converge jointly. Brownian scaling implies that
their marginal laws agree at each finite index and hence in the
limit as well. Define $\widehat b_j(t,T)$, $\widehat b(t,T)$ and
$\widehat b(t)$ for the second family as in
\eqref{eq:finite-root-distances} and \eqref{eq:root-distances}.

The cell $[k/n_j,(k+1)/n_j]$ for $\widehat W$ corresponds,
up to a spatial scaling, to $[2k/n_j,(2k+2)/n_j]$ for $W$.
Thus a vertex of $G_j^{[0,T]}(\widehat W)$ represents a pair
of consecutive vertices of $G_j^{[0,2T]}(W)$. Projecting the
vertices of a path in the latter graph by
$k\mapsto\lfloor k/2\rfloor$ and deleting consecutive repetitions
gives a path in the former graph with no more edges. Conversely,
for each edge of a path in the coarser graph, choose an intersecting
pair of finer cells in the two corresponding blocks. The entry and
exit cells in any block can be joined by at most one additional edge.
A coarse path of $l$ edges therefore lifts to a fine path with at
most $2l+1$ edges. Since $t<T$ are dyadic, the target sets align
on the two meshes for all sufficiently large $j$. Consequently,
\begin{equation}\label{eq:root-two-mesh-comparison}
 \widehat b_j(t,T)\le b_j(2t,2T)
       \le2\widehat b_j(t,T)+\kappa_j^{-1}.
\end{equation}
For fixed $t,T$, the joint convergence and \eqref{eq:interpolation}
allow passage to the limit in \eqref{eq:root-two-mesh-comparison}.
We then take the infimum over integers $T>t$. Because the minima
are nonincreasing in the terminal time, taking the infimum over
even integer endpoints in the definition of $b(2t)$ gives the same
value as taking it over all integers. It follows that
\[
 \widehat b(t)\le b(2t)\le2\widehat b(t).
\]
Hence $b(2t)>0$ if and only if $\widehat b(t)>0$.
The two minima $\widehat b(t)$ and $b(t)$ have the same law, which
proves \eqref{eq:root-zero-scaling}.
\end{proof}

We next prove separation from time zero. A positive probability
at one scale will give the same probability at every scale $2^{-k}$
by Lemma~\ref{lem:root-zero-scaling}. Lemma~\ref{lem:root-localization}
will then identify the intersection of these events with an event in
the Brownian germ $\sigma$-field $\mathcal F_{0+}$, defined in
Section~\ref{subsec:scalar-zero-one}. Blumenthal's zero--one law
will turn the positive probability into probability one.
\begin{proposition}\label{prop:endpoint-separation}
The inequality $b(t)>0$ holds with probability one simultaneously for
all positive dyadic $t$. In particular,
\begin{equation}\label{eq:endpoint-separation}
 \inf_{\varepsilon\le u\le1}\D^{[0,1]}(0,u)>0,
 \qquad
 \inf_{0\le u\le1-\varepsilon}\D^{[0,1]}(u,1)>0
       \quad(0<\varepsilon\le1).
\end{equation}
Almost surely, for every $I=[a,b]\in\mathscr J$,
$\D^I(a,u)>0$ for all $u\in(a,b]$ and
$\D^I(u,b)>0$ for all $u\in[a,b)$.
\end{proposition}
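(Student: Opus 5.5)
We plan to follow the route sketched before the statement: positive probability at one scale, scale invariance, localization to the germ $\sigma$-field, and Blumenthal's zero--one law. The first step is to show $\Pp(b(t_0)>0)>0$ for some positive dyadic $t_0$. We will use \eqref{eq:limit-positive-mean} together with the diameter moments \eqref{eq:diameter-moments}. Since $\E\D^{[0,1]}(0,1)\ge c_*$ and the second moment is bounded, Paley--Zygmund gives $\Pp(\D^{[0,1]}(0,1)\ge c_*/2)\ge p_1>0$. On this event we argue as in Proposition~\ref{prop:plateau}. By \eqref{eq:local-holder}, with high probability $\D^{[0,1]}(u,1)\le H|1-u|^\alpha<c_*/4$ for $u\in[1-\varepsilon,1]$ and a deterministic $\varepsilon$, so $\inf_{u\in[1-\varepsilon,1]}\D^{[0,1]}(0,u)\ge c_*/4$ with positive probability. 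To pass from $[1-\varepsilon,1]$ to the full target set $[1-\varepsilon,M]$ appearing in $b(1-\varepsilon)$, we intersect with the event of Lemma~\ref{lem:escape} that $W([1,\infty))$ avoids $W([0,1-\varepsilon])$. This event has positive conditional probability given $\mathcal F_1$. Since $\D^{[0,1]}$ is a measurable function of $W|_{[0,1]}$ by Theorem~\ref{thm:measurable}, the intersection has positive probability. On it we apply Lemma~\ref{lem:root-localization} with $t=1-\varepsilon$ (rounded down to a dyadic) and $T=1$, which gives $b(t)=b(t,1)\ge c_*/4>0$.

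The second step uses Lemma~\ref{lem:root-zero-scaling}, which gives $\Pp(b(2^{-k})>0)=p>0$ for all $k$. Because $b(t)$ is nondecreasing in $t$, the events $\{b(2^{-k})>0\}$ decrease in $k$, so their intersection $A^*$ has probability $p$. Next we place $A^*$ in $\mathcal F_{0+}$. By Lemma~\ref{lem:root-localization}, for each dyadic $T$, almost surely $b(2^{-k})=b(2^{-k},T)$ for all large $k$. Hence $A^*$ agrees up to null sets with $\liminf_k\{b(2^{-k},T)>0\}$, and $b(\cdot,T)$ is a function of $\D^{[0,T]}$, which is $\sigma(W|_{[0,T]})$-measurable up to completion by Theorem~\ref{thm:measurable}, after a dyadic rescaling of $[0,T]$ to an element of $\mathscr J$. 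Letting $T\downarrow0$ along dyadics places $A^*$ in the completed germ field. Blumenthal's law then gives $p=1$, and monotonicity extends the conclusion to all positive dyadic $t$.

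The consequences follow directly. Domain monotonicity gives $\D^{[0,1]}(0,u)\ge b(\varepsilon',1)\ge b(\varepsilon')$ for $u\ge\varepsilon'$ with $\varepsilon'\le\varepsilon$ dyadic, which is the first half of \eqref{eq:endpoint-separation}. The second half follows by time reversal. The path $W_{1-u}-W_1$ is Brownian, and the graph distances are invariant under reversal, since reversal maps cells to cells and preserves the edge rule. So the joint limit for the reversed path is the reversal of $\D$. To make this rigorous we pass to a further subsequence on which both families converge jointly, as in the proof of Lemma~\ref{lem:root-zero-scaling}. The statement for general $I=[a,b]\in\mathscr J$ follows from the translation and scaling invariance in Lemma~\ref{lem:local-independence}, together with countability of $\mathscr J$.

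The main obstacle I expect is the measurability and localization in the first two steps. We must justify that conditioning on $\mathcal F_1$ is compatible with the limit objects, and that $\D^{[0,T]}$ is $\mathcal F_T$-measurable. Theorem~\ref{thm:measurable} should handle both points. Without it, I would instead use Lemma~\ref{lem:local-independence} for the independence of future increments from $(W|_{[0,b]},\D^J)_{J\subset[0,b]}$.
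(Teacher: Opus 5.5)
Your proposal is correct and follows the paper's proof essentially step for step: positive probability of $b(t_0)>0$ from the positive endpoint mean, the H\"older bound, the avoidance event of Lemma~\ref{lem:escape} and Lemma~\ref{lem:root-localization}; then Lemma~\ref{lem:root-zero-scaling}, localization to $\mathcal F_{0+}$ via Theorem~\ref{thm:measurable} and Blumenthal's law; and finally time reversal and translation invariance for the remaining claims. One small fix: do not round $1-\varepsilon$ down to a dyadic, because that enlarges the target interval $[t,1]$ beyond the region where your H\"older bound controls $\D^{[0,1]}(0,u)$; instead choose the deterministic $\varepsilon$ itself dyadic, as the paper does.
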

\begin{proof}
Fix $0<\alpha<\theta$ and take the H\"older constant to be
\[
 H_{[0,1],\alpha}:=
 \sup_{\substack{s,t\in[0,1]\\s\ne t}}
 \frac{\D^{[0,1]}(s,t)}{|s-t|^\alpha},
\]
as permitted in the proof of Proposition~\ref{prop:local-field}.
The positive endpoint mean \eqref{eq:limit-positive-mean} and
the moment bound \eqref{eq:local-holder} give $c,M>0$ such that
\begin{equation}\label{eq:root-positive-probability}
 E_0:=\{\D^{[0,1]}(0,1)>4c,\ H_{[0,1],\alpha}\le M\},
 \qquad \Pp(E_0)>0.
\end{equation}
Choose a dyadic $\varepsilon\in(0,1)$ with $M\varepsilon^\alpha<c$
and put $t_0=1-\varepsilon$. By the H\"older estimate, $E_0$
implies $b(t_0,1)>3c$. Theorem~\ref{thm:measurable} shows that
$\D^{[0,1]}$ is measurable with respect to the completed
$\sigma$-field generated by $W|_{[0,1]}$. Since the chosen
$H_{[0,1],\alpha}$ is a Borel function of $\D^{[0,1]}$, the event
$E_0$ in \eqref{eq:root-positive-probability} is measurable with
respect to this $\sigma$-field as well.

By polarity of a point applied to the reversed increments,
$W_1\notin W([0,t_0])$ almost surely. Lemma~\ref{lem:escape}
then implies that, conditionally on $W|_{[0,1]}$, the probability of
$W([1,\infty))\cap W([0,t_0])=\varnothing$ is strictly positive.
Because $\Pp(E_0)>0$ and $E_0$ is measurable from the path up to
time one, its intersection with this avoidance event has positive
probability. On the intersection, Lemma~\ref{lem:root-localization}
gives $b(t_0)=b(t_0,1)>3c$. Since $t_0<1$ and $b$ is
nondecreasing, $\Pp(b(1)>0)>0$.

To upgrade this to probability one, let
$A:=\bigcap_{k\ge1}\{b(2^{-k})>0\}$. These events decrease with $k$,
and Lemma~\ref{lem:root-zero-scaling} gives each of them the same
probability $p=\Pp(b(1)>0)>0$. Hence $\Pp(A)=p$.
For each fixed positive dyadic $T$, Lemma~\ref{lem:root-localization}
gives, up to a null set,
\begin{equation}\label{eq:root-germ-localization}
 A=\liminf_{k\to\infty}\{b(2^{-k},T)>0\}.
\end{equation}
Here the random variables $b(2^{-k},T)$ on the right-hand side of
\eqref{eq:root-germ-localization} are defined for all sufficiently
large $k$ and are functions of $\D^{[0,T]}$. By
Theorem~\ref{thm:measurable}, they are measurable with respect to
the completed $\sigma$-field generated by $W|_{[0,T]}$.
Therefore $A\in\mathcal F_T$ up to completion for every positive
dyadic $T$, and consequently $A\in\mathcal F_{0+}$ up to completion.
Blumenthal's zero--one law
\cite[Theorem 2.7]{MortersPeres} gives $p=1$. Since $b(t)$ is
nondecreasing in $t$, positivity holds for every positive dyadic
$t$, and the first assertion of \eqref{eq:endpoint-separation} follows. The second assertion of \eqref{eq:endpoint-separation} follows by
time reversal on $[0,1]$ and subtraction of $W_1$. Indeed, the finite
graphs are invariant under reversal; only the convention for cells
at division points can change the distances indexed by time, by at most
$O(\kappa_j^{-1})$. The reversed distances may be included in a joint
limit without changing the original marginal law.

Finally, the internal distance on any $I=[a,b]\in\mathscr J$ has,
after translating time by $a$, the same law as that on $[0,b-a]$,
by Lemma~\ref{lem:local-independence}. Positivity of $b(t)$ therefore
separates $a$ from every other time in $I$; the argument for the reversed path
separates $b$. Since $\mathscr J$ and the positive dyadic gaps are
countable, these conclusions hold on one event of probability one.
\end{proof}

\Needspace{10\baselineskip}
\subsection{Functions supported in a time interval}\label{subsec:supported-functions}

Proposition~\ref{prop:endpoint-separation} gives positive internal
distance between the endpoints of each dyadic interval. We use this
to construct a function $v$ that vanishes outside a smaller interval
$I\subsetneq J$ but satisfies $v(q)>0$ at some $q\in I$. We also
require $|v(s)-v(t)|\le\D^J(s,t)$, which then implies
$\D^J(q,t)\ge v(q)>0$ for every $t\in J\setminus I$.
\begin{proposition}\label{prop:localized-function}
With probability one, for every pair of intervals
$I=[a,b],J\in\mathscr J$ with $I\subsetneq J$, there is a nonnegative,
nonconstant function $v:J\to\R$ such that
\begin{equation}\label{eq:localized-function}
 v=0\text{ on }J\setminus(a,b),\qquad
 |v(s)-v(t)|\le\D^J(s,t)\quad(s,t\in J).
\end{equation}
Moreover, $v(t)=0$ whenever $t\in J$ and $W_t=W_u$ for some
$u\in J\setminus(a,b)$.
\end{proposition}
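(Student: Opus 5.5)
The plan is to take for $v$ the truncated distance to a "forbidden" set. Put
\[
 F=\{t\in J:\ W_t=W_u\ \text{for some }u\in J\setminus(a,b)\},
\]
which is compact (a projection of a compact piece of $\mathcal I_J(W)$) and contains $J\setminus(a,b)$. Set
\[
 v(s)=\inf_{u\in F}\D^J(s,u),\qquad s\in J.
\]
This $v$ is nonnegative and $1$-Lipschitz for $\D^J$ by the triangle inequality, so the second condition in \eqref{eq:localized-function} holds. It vanishes on $F$, which gives both the vanishing on $J\setminus(a,b)$ and the final assertion. Everything therefore reduces to finding one $q\in(a,b)$ with $\D^J(q,F)>0$, so that $v$ is nonconstant.

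To bound $\D^J(q,F)$ from below, I would use Theorem~\ref{thm:exact-gluing} with a finite dyadic partition of $J$ that contains $I$ as one piece. Any chain from $q\in I$ to $F$ starts with pieces inside $I$. Its first pair either ends at a point of $F\cap I$, or at a time $u\in I$ with $W_u\in W(J\setminus I)$ before it jumps out. In both cases $u$ lies in the contact set
\[
 \mathcal T=\{u\in I:W_u\in W(J\setminus(a,b))\},
\]
which contains $a,b$. All later pieces have nonnegative cost. Repeating the argument for chains that return to $I$, and using domain monotonicity, I expect
\[
 \D^J(q,F)\ \ge\ \inf_{u\in\mathcal T}\D^I(q,u).
\]
So it suffices to find $q$ with $\D^I(q,\mathcal T)>0$. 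The set $\mathcal T$ is compact and, apart from $a,b$, lies in $E_W$, so $\dimH\mathcal T\le1/2$ by \eqref{eq:double-times}. Hence there is a dyadic $I_1=[a_1,b_1]\subset(a,b)$ disjoint from $\mathcal T$, with $W(I_1)\cap W(J\setminus(a,b))=\varnothing$.

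The main obstacle is the remaining step. $\D^I$ can be small between points of $I_1$ and $\mathcal T$ through self-intersections between $I_1$ and $I\setminus I_1$, so disjointness in time alone does not give a positive lower bound. My first attempt would repeat the reduction inside $I$: apply Theorem~\ref{thm:exact-gluing} to $I$ partitioned as $[a,a_1],I_1,[b_1,b]$. A chain from $q\in I_1$ to $\mathcal T$ must leave $I_1$ through the contact set of $I_1$ relative to $I\setminus I_1$. I would then choose $I_1$ and $q$ so that $q$ is $\D^{I_1}$-separated from that contact set. For this I would use Proposition~\ref{prop:endpoint-separation}, which gives positive internal distance from the endpoints of every dyadic interval, together with compactness of the contact set. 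If this direct choice fails because contacts accumulate near every candidate $q$, the fallback mirrors Proposition~\ref{prop:plateau}. First show that, with positive probability for a fixed pair $I\subsetneq J$, a $\D^I$-separation of the required form holds. This uses the positive mean \eqref{eq:limit-positive-mean}, the H\"older control \eqref{eq:local-holder}, and the avoidance estimate of Lemma~\ref{lem:escape} to make $W(I)$ avoid the relevant parts of $W(J\setminus I)$. Then upgrade to probability one using the independence in Lemma~\ref{lem:local-independence} over infinitely many disjoint dyadic subintervals of $(a,b)$, which gives a Borel--Cantelli-type conclusion. Countability of $\mathscr J$ then yields the simultaneous statement.
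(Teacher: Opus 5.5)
\textbf{There is a genuine gap at the decisive step.} Your reduction is sound. With $F$ as you define it, $v=\D^J(\cdot,F)$ is admissible, and it is the largest admissible function. So the proposition is equivalent to finding $q$ with $\D^J(q,F)>0$. Your chain argument, using the partition of $J$ at $a,b$, also correctly gives $\D^J(q,F)\ge\D^I(q,\mathcal T)$ for $q\in(a,b)$: by \eqref{eq:local-identifications}, the first maximal block of pairs lying in $I$ merges into a single pair from $q$ to a time in $\mathcal T$.

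What you leave open is producing some $q$ with $\D^I(q,\mathcal T)>0$, and your proposed routes do not close it.
\begin{itemize}
\item Finding $I_1$ disjoint from $\mathcal T$ in time does not help, as you observe yourself: time-disjointness says nothing about $\D^I$-distance.
\item The ``first attempt'' is circular. Passing to $I_1$ reproduces the same problem for $I_1$ and its own contact set, which in general contains interior points of $I_1$. Proposition~\ref{prop:endpoint-separation} controls only distances from $a_1$ and $b_1$. Compactness would help only if you knew $\D^I(q,t)>0$ for every $t\ne q$. That pointwise separation of an interior time is Theorem~\ref{thm:deterministic-root}, which the paper proves later using this very proposition, via Lemma~\ref{lem:two-sided-positive}.
\item The fallback is not workable as stated. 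No concrete event is specified. The avoidance events for different subintervals all involve the common random set $W(J\setminus(a,b))$, so they are not independent, and the Borel--Cantelli step fails.
\end{itemize}

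\textbf{The missing idea} is to apply the dimension bound on the value side rather than in time.
\begin{itemize}
\item Put $u(t)=\D^I(a,t)$ on $I$. By \eqref{eq:local-holder}, $u$ is $\alpha$-H\"older for some $1/2<\alpha<\theta$; this is where $\theta>1/2$ is used.
\item Hence $\dimH u(\mathcal T)\le1/(2\alpha)<1$, so the compact set $u(\mathcal T)$ is Lebesgue-null.
\item Proposition~\ref{prop:endpoint-separation} gives $L=u(b)>0$, and continuity gives $u(I)\supset[0,L]$.
\item Choose $q\in I$ with $u(q)\notin u(\mathcal T)$. Then $q\in(a,b)$, since $a,b\in\mathcal T$. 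Set $\delta=\operatorname{dist}(u(q),u(\mathcal T))>0$.
\item For every $t\in\mathcal T$, the triangle inequality gives $\D^I(q,t)\ge|u(q)-u(t)|\ge\delta$. So $\D^I(q,\mathcal T)\ge\delta$, and your reduction yields $v(q)\ge\delta>0=v(a)$.
\end{itemize}

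This is the paper's argument in different packaging. The paper takes $v=g\circ u$ on $I$, with $g$ a tent function supported in a gap $(r_-,r_+)$ of $u(\mathcal T)$, extends it by zero, and checks the $\D^J$-Lipschitz bound through the gluing formula. With your maximal choice of $v$, the Lipschitz bound is automatic, and gluing enters only in the lower bound $\D^J(q,F)\ge\D^I(q,\mathcal T)$.
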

\begin{proof}
Fix $J=[c,d]\in\mathscr J$ and $I=[a,b]\in\mathscr J$ with
$c\le a<b\le d$ and $I\ne J$. The times in $I$ at which the
Brownian path visits a position attained outside $(a,b)$ form the
compact set
\[
 \mathcal T_I=\{t\in I:W_t\in W([c,a]\cup[b,d])\}.
\]
In particular, $a,b\in\mathcal T_I$. By \eqref{eq:double-times},
$\dimH\mathcal T_I\le1/2$ almost surely. On $I$ define
$u(t)=\D^I(a,t)$ and put $L=u(b)$. Proposition~\ref{prop:endpoint-separation}
gives $L>0$. Moreover, \eqref{eq:local-holder} makes $u$
$\alpha$-H\"older for every $1/2<\alpha<\theta$, so
\[
 \dimH u(\mathcal T_I)\le\frac{1}{2\alpha}<1.
\]
Because $u(\mathcal T_I)$ is compact and contains $0$ and $L$,
we can choose $0<r_-<r_+<L$ such that
$[r_-,r_+]\cap u(\mathcal T_I)=\varnothing$. Set
\[
 g(r)=\max\{0,\min(r-r_-,r_+-r)\},\qquad
 v(t)=\begin{cases}g(u(t)),&t\in I,\\0,&t\in J\setminus I.\end{cases}
\]
The function $g$ is $1$-Lipschitz and vanishes outside $(r_-,r_+)$.
By the choice of $r_-$ and $r_+$, $v$ vanishes on $\mathcal T_I$,
including $a$ and $b$. Since $u$ and $g$ are continuous, defining
$v$ to be zero on $J\setminus I$ therefore gives a continuous
function on $J$.

If $s,t\in I$ and $W_s=W_t$, then
\eqref{eq:local-identifications} gives $u(s)=u(t)$, so $v(s)=v(t)$.
If the same Brownian position is visited both in $I$ and in
$J\setminus I$, its representative in $I$ belongs to
$\mathcal T_I$, where $v$ is zero. Thus $v$ is compatible with
$W$ on $J$. By the triangle inequality for $\D^I$ and the
Lipschitz bound for $g$,
\[
 |v(s)-v(t)|\le\D^I(s,t)\qquad(s,t\in I).
\]
On the complementary intervals $[c,a]$ and $[b,d]$, whenever
nondegenerate, $v$ is identically zero. We can therefore sum the
changes of $v$ along any chain in \eqref{eq:gluing-chain} for the
partition of $J$ at $a,b$: changes at successive pairs with equal
Brownian positions are zero. The gluing identity in
Theorem~\ref{thm:exact-gluing} now gives the distance bound in
\eqref{eq:localized-function}. By construction, $v=0$ on
$J\setminus(a,b)$. Since $v$ is compatible with $W$, it also
vanishes at any $t\in J$ for which $W_t=W_r$ with
$r\in J\setminus(a,b)$.

Finally, $u(a)=0$, $u(b)=L$ and continuity of $u$ imply
$[0,L]\subset u(I)$. Because $g$ is positive on $(r_-,r_+)$,
$v$ is nonconstant. There are countably many pairs $(I,J)$,
so these properties hold simultaneously on one event of probability
one.
\end{proof}

For any $0\le s<t\le1$, choose a nondegenerate dyadic interval
$I=[a,b]\subset(s,t)$ and apply
Proposition~\ref{prop:localized-function} with $J=[0,1]$.
The resulting function $v$ is nonconstant on $I$ and satisfies
$|v(u)-v(w)|\le\D(u,w)$ for $u,w\in[0,1]$. Hence the quotient
image of $[s,t]$ has positive diameter. In particular, on one
event of probability one,
\begin{equation}\label{eq:no-interval-collapse}
 \diam_{d_F}\pi([s,t])>0\qquad(0\le s<t\le1).
\end{equation}

Moreover, if $O\subset K$ is nonempty and relatively open,
its inverse image under $W|_{[0,1]}$ contains a nondegenerate
dyadic interval $I$. Applying
Proposition~\ref{prop:localized-function} with $J=[0,1]$, we obtain
a nonnegative nonconstant function $v$ that is $1$-Lipschitz for
$\D$ and vanishes outside $I$. This induces a function on
$(F,d_F)$ with the same Lipschitz bound, vanishing on
$\Phi(K\setminus O)$.

\subsection{Separation for two-sided Brownian motion}\label{subsec:two-sided-separation}

For a fixed $s\in(0,1)$, a path in the intersection graph
joining the cell assigned to $s$ to another cell may use cells
lying either before or after $s$. We first study this situation
for Brownian motion indexed by the whole real line. The proof
uses the functions $v$ from Proposition~\ref{prop:localized-function},
which vanish outside a chosen interval and satisfy
\eqref{eq:localized-function}, to obtain separation from time zero.

Let $B=(B_u)_{u\in\R}$ be two-sided Brownian motion, with
$B_0=\mathbf0$ and with $(B_u)_{u\ge0}$ and $(B_{-u})_{u\ge0}$
independent standard Brownian motions. We use the cells
$[k/n_j,(k+1)/n_j]$, $k\in\Z$, with the normalizations $\kappa_j$
and include all finite intervals whose endpoints are dyadic,
including negative endpoints. For every such $[a,b]$,
$(B_{a+u}-B_a)_{0\le u\le b-a}$ has the law of standard Brownian
motion. Thus the tightness estimates in the proof of
Proposition~\ref{prop:local-field} apply to this enlarged countable
family, and we write $\D^I$ for the joint subsequential limits.
The repeated visit estimate \eqref{eq:double-times} and the
arguments of Theorems~\ref{thm:exact-gluing} and
\ref{thm:measurable} and Proposition~\ref{prop:endpoint-separation}
then apply to these intervals as well. In particular, gluing,
measurability and endpoint separation hold for their limiting
distances.

If $q$ is dyadic, then $n_jq$ is an integer for all sufficiently
large $j$. The finite intersection graphs and internal distances
are therefore unchanged in law by translating the time coordinates
by $q$ and subtracting $B_q$ from the path. The resulting invariance
passes to the joint limits. Any previously specified marginal limit
is preserved when we enlarge the family of intervals and take a
further subsequence.

For positive dyadic $t<T$ define
\begin{equation}\label{eq:two-sided-root}
 b_2(t,T)=\inf_{u\in[-T,-t]\cup[t,T]}\D^{[-T,T]}(0,u),
 \qquad b_2(t)=\inf_{M\in\N,\,M>t}b_2(t,M).
\end{equation}
For the finite graphs, write $b_{2,j}(t,T)$ for the infimum in
\eqref{eq:two-sided-root} with $\D^{[-T,T]}$ replaced by
$d_j^{[-T,T]}$. As in \eqref{eq:finite-root-distances},
$b_{2,j}(t,T)\to b_2(t,T)$ for fixed $t,T$.
To apply the reasoning of Section~\ref{subsec:initial-separation},
we need two facts. First, if the path near zero does not meet the
path at times with $|u|\ge T$, the infimum defining $b_2(t)$ can
be computed using $\D^{[-T,T]}$. Second, the event $\{b_2(t)>0\}$
has the same probability at the scales $t$ and $2t$. Precisely,
\begin{align}
 B([-t,t])\cap B(\R\setminus(-T,T))=\varnothing
   &\ \Longrightarrow\ b_2(t)=b_2(t,T),\label{eq:two-localization}\\
 \Pp(b_2(2t)>0)&=\Pp(b_2(t)>0).\label{eq:two-scaling}
\end{align}
To prove \eqref{eq:two-localization}, suppose its disjointness
condition holds and fix an integer $M>T$. The sets $B([-t,t])$
and $B([-M,-T]\cup[T,M])$ are disjoint and compact. As in the
proof of Lemma~\ref{lem:root-localization}, convergence of the
Brownian paths on $[-M,M]$ ensures that, for large $j$, no edge
joins a time cell in $[-t,t]$ to a time cell in
$[-M,-T]\cup[T,M]$. Consider a graph path from the cell assigned
to zero to one assigned to a time $u$ with $|u|\ge t$. Stop at its
first cell whose left endpoint is at most $-t$ or at least $t$.
All preceding cells lie in $[-t,t]$, so the stopping cell must
belong to $[-T,T]$. This proves
$b_{2,j}(t,T)\le b_{2,j}(t,M)$; the reverse inequality follows
because the cells of the graph restricted to $[-T,T]$ also occur
in the larger graph. Consequently,
$b_{2,j}(t,M)=b_{2,j}(t,T)$ for all sufficiently large $j$.
Passing to the limit gives $b_2(t,M)=b_2(t,T)$ for integers
$M>T$. For $t<M\le T$, domain monotonicity gives
$b_2(t,M)\ge b_2(t,T)$. Taking the infimum over all integers
$M>t$ proves \eqref{eq:two-localization}.

For fixed $T$, transience and polarity applied separately to
$(B_u)_{u\ge0}$ and $(B_{-u})_{u\ge0}$ yield
$\inf_{|u|\ge T}|B_u|>0$ almost surely. As
$\sup_{|u|\le t}|B_u|\to0$ when $t\downarrow0$, the disjointness
condition in \eqref{eq:two-localization} holds for all sufficiently
small $t$.

To prove \eqref{eq:two-scaling}, pair adjacent time cells
with indices in $\Z$, as in the proof of
Lemma~\ref{lem:root-zero-scaling}, and consider
$\widehat B_u=2^{-1/2}B_{2u}$. Let $\widehat b_2$ denote
the quantities in \eqref{eq:two-sided-root} for the jointly constructed
limiting distances of $\widehat B$. The finite comparison gives
$\widehat b_2(t)\le b_2(2t)\le2\widehat b_2(t)$, and
$\widehat b_2$ has the same law as $b_2$.

These two comparisons allow the germ zero--one argument once
separation has been shown to have positive probability.
\begin{lemma}\label{lem:two-sided-positive}
The inequality $b_2(t)>0$ holds with probability one simultaneously for
all positive dyadic $t$.
\end{lemma}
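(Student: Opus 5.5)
Following Proposition~\ref{prop:endpoint-separation}, we first show $\Pp(b_2(t)>0)>0$ for one positive dyadic $t$. We then pass to probability one using \eqref{eq:two-scaling}, \eqref{eq:two-localization} and Blumenthal's zero--one law.

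\emph{Positive probability.} The two halves $(B_u)_{u\ge0}$ and $(B_{-u})_{u\ge0}$ are independent. Let $\D^+=\D^{[0,1]}$ and $\D^-=\D^{[-1,0]}$. By Proposition~\ref{prop:endpoint-separation}, applied to $\D^+$ and, after time reversal, to $\D^-$ (whose right endpoint $0$ is separated), almost surely
\[
\inf_{u\in[\varepsilon,1]}\D^+(0,u)>0,\qquad
\inf_{u\in[-1,-\varepsilon]}\D^-(u,0)>0 .
\]
The main issue is that $\D^{[-1,1]}$ permits chains switching between the two halves at intersection points of $B([-1,0])$ and $B([0,1])$. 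To control this, pick dyadic $0<a<b<1$. Apply Proposition~\ref{prop:localized-function} with $I=[a,b]$ and $J=[-1,1]$ (this extends to the enlarged family, as noted before the lemma). This gives $v\ge0$, nonconstant, vanishing on $[-1,a]\cup[b,1]$, compatible with $B$, and $1$-Lipschitz for $\D^{[-1,1]}$. Choose $q\in(a,b)$ with $v(q)>0$. Then $\D^{[-1,1]}(q,u)\ge v(q)$ for all $u\notin(a,b)$, in particular for $u$ near $0$. This localises separation at the point $q$, not at $0$.

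To transfer it to $0$, we use translation invariance at dyadic times, which passes to the limits as explained before \eqref{eq:two-sided-root}. Fix a deterministic dyadic $q$. We need a good positive-probability event around time $q$, and we run the argument with $q$ in place of $0$. Concretely, take $I=[-2^{-k},2^{-k}]$, $J=[-1,1]$. Localisation gives $v$ vanishing off $I$, but $v(0)$ may vanish. Instead we argue as follows. With positive probability, \eqref{eq:two-localization} holds with $T=1$ and small $t$. On this event, a function $v$ from Proposition~\ref{prop:localized-function} with $I=[t',T']\subset(0,1)$ shows that the one-sided separation from $\D^+$ survives on $[t,1]$. Chains leaving $[0,t]$ into the negative half must hit $B([-T,T])$. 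Using the exact gluing formula (Theorem~\ref{thm:exact-gluing}) for the partition of $[-1,1]$ at $0$, any chain from $0$ to $u\in[t,1]$ decomposes into pieces in $[-1,0]$ and $[0,1]$, joined at intersection points.

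Define the compatible candidate $f(s)=\min\{\D^+(0,s),\,\D^-(s,0)\}$, where $\D^+(0,s)$ is set to $+\infty$ for $s<0$, $\D^-(s,0)$ to $+\infty$ for $s>0$, and compatibility is enforced by flattening (Lemma~\ref{lem:flatten}) near contact values. Contact sets have dimension at most $1/2$ by \eqref{eq:double-times}, so $f$ takes contact values in a null set. Following the proof of Theorem~\ref{thm:local-global}, we compose with $g$ to obtain a function that is $1$-Lipschitz for $\D^{[-1,1]}$ and bounded below on $\{|u|\ge t\}$. Positivity of the lower bound requires that, with positive probability, the contact values of $f$ near $0$ stay small while $f$ is large off $(-t,t)$. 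We get this by a scaling argument: the event that $B((-\delta,\delta))$ meets no contact set of the far pieces has positive probability by the escape estimate of Lemma~\ref{lem:escape} applied to each half conditionally on the other. This step, building a compatible $\D^{[-1,1]}$-Lipschitz function positive on $\{|u|\ge t\}$ and zero at $0$, is the main obstacle. I would first try to obtain it by gluing one-sided functions through Theorem~\ref{thm:local-global}.

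\emph{Zero--one upgrade.} Set $A=\bigcap_k\{b_2(2^{-k})>0\}$. By \eqref{eq:two-scaling}, $\Pp(A)=\Pp(b_2(1)>0)=p>0$. By \eqref{eq:two-localization}, up to null sets $A=\liminf_k\{b_2(2^{-k},T)>0\}$ for every dyadic $T$. This is measurable with respect to the completed two-sided germ field $\bigcap_T\sigma(B_u:|u|\le T)$, using Theorem~\ref{thm:measurable} for $\D^{[-T,T]}$. Blumenthal's law for the pair of independent motions, which is a $3\cdot2$-dimensional Brownian motion, gives $p=1$. Monotonicity of $b_2$ in $t$ then gives the claim for all dyadic $t$.
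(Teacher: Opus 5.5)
Your zero--one upgrade is the same as the paper's. The positive-probability step, however, is the substance of the lemma, and you never establish it. The route you end up pursuing would not work. The candidate $f(s)=\min\{\D^+(0,s),\D^-(s,0)\}$ is not compatible with $B$. At a contact $B_s=B_{s'}$ with $s'<0<s$, the values $\D^+(0,s)$ and $\D^-(s',0)$ are unrelated, and such contacts occur with positive probability even at macroscopic times (e.g.\ $s\in[1/2,1]$, $s'\in[-1,-1/2]$). Lemma~\ref{lem:flatten} cannot repair this. In the proof of Theorem~\ref{thm:local-global} it is applied to a function that is \emph{already} compatible. There it only ensures that the two endpoint values of a discrete edge, which converge to a common contact value, fall in one component of $O$. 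The escape claim for $B((-\delta,\delta))$ is unsupported and, even granted, would not produce the required function. You yourself leave this step open.

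The missing idea is already in your first paragraph: do not separate $0$ directly. Separate a \emph{deterministic} dyadic time $q$, and transfer the result to $0$ by dyadic translation invariance. Two additions make this work.

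First, $q$ must be fixed in advance, since translation invariance holds only at deterministic dyadic times. A random $q$ with $v(q)>0$ is not enough. The paper takes $I=[-1/2,1/2]$, $J=[-1,1]$, $u=\D^I(-1/2,\cdot)$, and lets $\mathcal T_I$ be the times in $I$ whose positions are visited during $J\setminus(-1/2,1/2)$. It then defines an event $E$ with deterministic rational levels $r_\pm$, threshold $c_0$ and dyadic $q$: namely $0<r_-<r_+<u(1/2)$, $[r_-,r_+]\cap u(\mathcal T_I)=\varnothing$, and the cutoff $v$ of $u$ satisfies $v(q)>c_0$. Endpoint separation and the dimension gap show that these countably many events cover a full-probability set, so one fixed choice has $\Pp(E)>0$. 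Moreover $E\in\sigma(B|_J)$ by Theorem~\ref{thm:measurable}.

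Second, Proposition~\ref{prop:localized-function} only controls $\D^J$, whereas $b_2$ involves $\D^{[-M,M]}$ for every $M$. To handle this, intersect $E$ with the avoidance event $\{B(\R\setminus(-1,1))\cap B(I)=\varnothing\}$. It has positive conditional probability given $B|_J$, by polarity and Lemma~\ref{lem:escape} applied to the two independent continuations, so the intersection has positive probability.

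On this intersection, $v$ extended by zero is compatible on $[-M,M]$. By Theorem~\ref{thm:exact-gluing} it is $1$-Lipschitz for $\D^{[-M,M]}$, so $\D^{[-M,M]}(q,u)\ge c_0$ whenever $|u-q|\ge1$, for every $M$. Domain monotonicity then gives $b_2(1)\ge c_0$ for the translated process $u\mapsto B_{q+u}-B_q$. That quantity has the law of $b_2(1)$, so $\Pp(b_2(1)>0)>0$, and your zero--one argument finishes the proof.
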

\begin{proof}
Fix the dyadic intervals $I=[-1/2,1/2]$ and $J=[-1,1]$, and set
\[
 u(t)=\D^I(-1/2,t),\qquad
 \mathcal T_I=\{t\in I:B_t\in B([-1,-1/2]\cup[1/2,1])\}.
\]
Fix $r_-,r_+,c_0\in\mathbb Q$ with $0<r_-<r_+$ and
$c_0>0$, and a dyadic $q\in(-1/2,1/2)$. Define
$v(t)=\max\{0,\min\{u(t)-r_-,r_+-u(t)\}\}$ on $I$, with
$v=0$ on $J\setminus I$. We consider the event $E$ on which
$0<r_-<r_+<u(1/2)$,
$[r_-,r_+]\cap u(\mathcal T_I)=\varnothing$, and $v(q)>c_0$.
Theorem~\ref{thm:measurable} and the measurability of the compact
contact set, as in \eqref{eq:intersection-relation}, show that
$E\in\sigma(B|_J)$.

By Proposition~\ref{prop:endpoint-separation}, $u(1/2)>0$ almost
surely. The dimension argument in the proof of
Proposition~\ref{prop:localized-function} gives a gap in
$u(\mathcal T_I)$, and continuity of $u$ then gives a dyadic
$q\in I$ where the associated cutoff is strictly positive.
Consequently, the countable collection of events $E$ obtained by
varying the rational parameters and dyadic $q$ covers an
event of probability one. At least one deterministic choice therefore
has $\Pp(E)>0$. We keep this choice fixed. On $E$, the proof of
Proposition~\ref{prop:localized-function} gives
$|v(s)-v(t)|\le\D^J(s,t)$ for all $s,t\in J$.

The points $B_{-1}$ and $B_1$ are almost surely outside $B(I)$,
by polarity applied to the appropriate forward and reversed Brownian
increments. Conditional on $B|_J$, the continuations beyond $1$
and before $-1$ are independent Brownian motions. Lemma~\ref{lem:escape}
therefore gives a strictly positive conditional probability that both
avoid $B(I)$ forever. Define the avoidance event
\begin{equation}\label{eq:two-sided-avoidance-event}
 E_{\mathrm{avoid}}=
 \bigl\{B(({-\infty},-1]\cup[1,\infty))\cap B(I)=\varnothing\bigr\}.
\end{equation}
Since $E$ is measurable with respect to $B|_J$ and
$\Pp(E)>0$, we have $\Pp(E\cap E_{\mathrm{avoid}})>0$.

On $E\cap E_{\mathrm{avoid}}$, extend $v$ by zero to $[-M,M]$ for
every integer $M>1$. This extension is compatible with $B$:
$v$ can be nonzero only at times in $I$, and no point of
$B(I)$ is visited outside $J$. It remains $1$-Lipschitz for $\D^J$
on $J$ and is constant on $[-M,-1]$ and $[1,M]$. By the gluing
formula of Theorem~\ref{thm:exact-gluing}, the change of $v$
along each chain is bounded by the sum of its distance costs:
$v$ has equal values at successive times representing the same
Brownian position, and its change within each partition interval
is bounded by the corresponding internal distance. Taking the
infimum over chains gives
\[
 |v(a)-v(b)|\le\D^{[-M,M]}(a,b)\qquad(a,b\in[-M,M]).
\]
Since $v(q)>c_0$ and $v=0$ outside $(-1/2,1/2)$, this gives
\[
 \inf_{u\in[-M,M]\setminus(-1/2,1/2)}\D^{[-M,M]}(q,u)\ge c_0
 \qquad(M\in\N,\ M>1).
\]
Since $|q|<1/2$, every time $u$ with $|u-q|\ge1$ lies outside
$(-1/2,1/2)$. In particular, for every integer $N>1$, we may choose
an integer $M>N+|q|$. The inclusion
$[q-N,q+N]\subset[-M,M]$ and domain monotonicity give
\[
 \inf_{\substack{u\in[q-N,q+N]\\|u-q|\ge1}}
 \D^{[q-N,q+N]}(q,u)
 \ge\inf_{\substack{u\in[-M,M]\\|u-q|\ge1}}
 \D^{[-M,M]}(q,u)\ge c_0.
\]
Let $b_{2,q}(1)$ denote the quantity defined in
\eqref{eq:two-sided-root} for the translated process
$u\mapsto B_{q+u}-B_q$, with its translated internal distances.
The inequalities just proved hold for every integer $N>1$ and give
$b_{2,q}(1)\ge c_0$ on $E\cap E_{\mathrm{avoid}}$.
Because the translated process is two-sided Brownian motion and
the dyadic translation preserves the joint law of its internal
distances, $b_{2,q}(1)$ and $b_2(1)$ have the same distribution.
Since $\Pp(E\cap E_{\mathrm{avoid}})>0$, we conclude that
$\Pp(b_2(1)>0)>0$.

By \eqref{eq:two-scaling}, the decreasing events
$\{b_2(2^{-k})>0\}$ all have the same strictly positive probability. Their
intersection is, modulo a null set, measurable with respect to
$B|_{[-T,T]}$ for every positive dyadic $T$, by the localization
\eqref{eq:two-localization} and Theorem~\ref{thm:measurable}.
It therefore belongs to the germ sigma field of the
six-dimensional Brownian motion $(B_u,B_{-u})_{u\ge0}$.
Blumenthal's zero--one law~\cite[Theorem 2.7]{MortersPeres}
forces its probability to be one. Monotonicity in $t$ then gives
separation at all positive dyadic scales on the same event.
\end{proof}

\subsection{Separation from a prescribed time}\label{subsec:prescribed-separation}

For a fixed deterministic $s\in[0,1]$, we compare the graph
built from the intervals $[k/n_j,(k+1)/n_j]$ with the graph
built from $[s+k/n_j,s+(k+1)/n_j]$. In the second partition,
$s$ is a division point. The comparison allows us to apply
Lemma~\ref{lem:two-sided-positive} to distances from $s$ in
the original graph.
\begin{theorem}\label{thm:deterministic-root}
Fix a joint limiting law from Proposition~\ref{prop:local-field}.
For each fixed deterministic $s\in[0,1]$, with probability one,
\begin{equation}\label{eq:deterministic-root}
 \inf_{\substack{t\in[0,1]\\|t-s|\ge\varepsilon}}
       \D(s,t)>0\qquad\text{for every }\varepsilon>0.
\end{equation}
Furthermore, there is almost surely a Borel set $T_*\subset[0,1]$ of full
Lebesgue measure such that
\begin{equation}\label{eq:typical-root}
 s\in T_*,\quad t\in[0,1]\setminus\{s\}
       \quad\Longrightarrow\quad\D(s,t)>0.
\end{equation}
The measure $\nu$ in \eqref{eq:quotient} is nonatomic with full support, and
$\pi^{-1}(\{z\})$ is a singleton for $\nu$-almost every $z\in F$.
\end{theorem}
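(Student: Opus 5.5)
The plan is to transfer Lemma~\ref{lem:two-sided-positive} to a fixed time $s$ by comparing the original mesh with the mesh shifted by $s$, and then to obtain the remaining assertions by Fubini.

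\textbf{Shifted-mesh comparison.} Fix $s\in(0,1)$; the cases $s=0,1$ are already covered by \eqref{eq:endpoint-separation}. Extend $W$ to a two-sided process around $s$: let $B_u=W_{s+u}-W_s$ for $u\ge -s$, and continue to $u<-s$ by an independent Brownian motion. Then $B$ is a two-sided Brownian motion. Build the graphs for $B$ on the cells $[k/n_j,(k+1)/n_j]$, which correspond to the cells $[s+k/n_j,s+(k+1)/n_j]$ for $W$. Since $n_j$ is a power of two, each original cell of $W$ meets at most two shifted cells and vice versa. The lifting argument of Lemma~\ref{lem:root-zero-scaling} therefore gives, for every $I\in\mathscr J$ with $I\subset[0,1]$ and every $\delta>0$,
\[
 d_j^{[0,1]}(s,t)\ \ge\ \tfrac12\, d_j^{B,[-s',1-s'']}(0,t-s)-C\kappa_j^{-1}.
\]
Here $[-s',1-s'']$ is a dyadic interval slightly inside $[-s,1-s]$, and paths are stopped on leaving it. Instead of fine-tuning this, I would work with the enlarged joint tightness, as in Lemma~\ref{lem:root-zero-scaling}, for the pair of families built from $W$ on the original mesh and from $B$ on its mesh. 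I would then pass to a further subsequence, keeping the specified marginal, so that both converge jointly.

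\textbf{Separation at $s$.} The main difficulty is that $s$ sits at the origin of $B$, while $[0,1]$ corresponds to $[-s,1-s]$, which is not dyadic. I would handle this with localization. By Lemma~\ref{lem:two-sided-positive}, $b_2(t)>0$ for all dyadic $t$, almost surely. By \eqref{eq:two-localization}, for small dyadic $t$ and dyadic $T<\min(s,1-s)$, $b_2(t)=b_2(t,T)$ is computed inside $[-T,T]\subset(-s,1-s)$. The stopping argument from the proof of \eqref{eq:two-localization} then shows that any $W$-graph path from the cell of $s$ to a cell at distance at least $t$ must first reach a cell in $[s-T,s+T]\setminus(s-t,s+t)$ while using only cells within $[s-T,s+T]$. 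Through the comparison, such a path costs at least $\tfrac12 b_{2,j}(t,T)-C\kappa_j^{-1}$. Taking limits gives $\D(s,u)\ge\tfrac12 b_2(t,T)>0$ for all $|u-s|\ge t$, which is \eqref{eq:deterministic-root} for each $\varepsilon\ge t$. Since $t$ ranges over a sequence tending to $0$, this covers every $\varepsilon>0$. One technical point must be checked: the limit of the shifted-mesh distances must coincide in law with the $b_2$ of Lemma~\ref{lem:two-sided-positive}. This holds because at each finite index the shifted graph has exactly the law of the two-sided construction.

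\textbf{The set $T_*$ and the measure $\nu$.} Let $T_*=\{s:\D(s,t)>0\ \forall t\ne s\}$, which equals $\{s:\inf_{|t-s|\ge1/k}\D(s,t)>0\ \forall k\}$. By continuity of $\D$ this set is Borel. Fubini applied to \eqref{eq:deterministic-root} shows that $\Leb(T_*)=1$ almost surely. Consequently $\pi^{-1}(\pi(s))=\{s\}$ for $s\in T_*$, so $\pi^{-1}(\{z\})$ is a singleton for $\nu$-a.e.\ $z$.

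For nonatomicity, suppose $\nu(\{z\})>0$. Then $\pi^{-1}(\{z\})$ has positive measure, so it meets $T_*$ in a point $s$. But then $\pi^{-1}(\{z\})=\{s\}$ has measure zero, a contradiction.

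For full support, let $U\subset F$ be open and nonempty. Then $\pi^{-1}(U)$ is a nonempty open subset of $[0,1]$, so it has positive Lebesgue measure, and hence $\nu(U)>0$.
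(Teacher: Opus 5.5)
Your overall strategy matches the paper's: shift the mesh so that $s$ becomes a division point, compare the two intersection graphs through the common refinement, apply Lemma~\ref{lem:two-sided-positive} to $B^{(s)}$, and finish with Fubini. Your treatment of $T_*$, nonatomicity and full support is also essentially the paper's.

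The difference is in the separation step, where you create a difficulty that the paper sidesteps. That $[-s,1-s]$ is not dyadic does no harm. The paper compares with the shifted internal distance on $[-M,M]$ for an integer $M>2$. Its window $[s-M,s+M]$ contains all of $[0,1]$, so every projected shifted cell lies in it, and $\widehat d_j^{[-M,M]}(0,t-s)\le 2d_j^{[0,1]}(s,t)+C/\kappa_j$ for all $t\in[0,1]$. In the limit this gives $\widehat\D^{[-M,M]}(0,t-s)\le2\D(s,t)$. Since $b_{2,s}(\delta)$ is by definition an infimum over all such windows, $\inf_{|t-s|\ge\varepsilon}\D(s,t)\ge\frac12 b_{2,s}(\delta)>0$ for dyadic $\delta\le\varepsilon$. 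Enlarging the domain only lowers distances, which is the favourable direction here. So no localization, no stopping argument, no random choice of small $t$, and no separate treatment of $s\in\{0,1\}$ is needed.

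Your localized version does go through: you shrink to $[s-T,s+T]$ with $T<\min\{s,1-s\}$ and stop paths on the disjointness event behind \eqref{eq:two-localization}. But it needs extra care with original cells straddling the non-aligned times $s\pm t$ and $s\pm T$, so you must build some slack into the disjointness. It also buys nothing here. On the two-sided side you would only need the trivial monotonicity $b_2(t,T)\ge b_2(t)$, not the equality.

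Two smaller points. First, your first displayed inequality, with a window $[-s',1-s'']$ strictly inside $[-s,1-s]$, is not justified and can fail. A shortest path in $G_{n_j}$ from $s$ to $t$ may shortcut through cells outside that window, and stopping it on exit does not give a path to $t$. Since you abandon it, the final argument is unaffected.

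Second, for Fubini it is not enough that each section $T_*$ is Borel; you need $\{(\omega,s):s\in T_*\}$ to be product measurable. The paper gets this by noting that $\{(\D,s,t):|s-t|\ge1/k,\ \D(s,t)=0\}$ is closed in $C([0,1]^2)\times[0,1]^2$. Hence $\{(\D,s):s\in A(\D)\}$ is Borel. Your continuity remark, applied jointly in $(\D,s)$ rather than in $s$ alone, would give the same.
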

\begin{proof}
Fix a deterministic $s\in[0,1]$ and extend $W$ to negative
times using an independent Brownian motion. Then
$B^{(s)}_u=W_{s+u}-W_s$, $u\in\R$, is a two-sided Brownian
motion: the processes $(B^{(s)}_u)_{u\ge0}$ and
$(B^{(s)}_{-u})_{u\ge0}$ are independent standard Brownian
motions. The shifted cells $[s+k/n_j,s+(k+1)/n_j]$ form the
mesh $[k/n_j,(k+1)/n_j]$ for $B^{(s)}$. We use them alongside
the original cells, with the same normalization $\kappa_j$.
The tightness estimates in Lemma~\ref{lem:kernel-moments},
applied to these additional internal distances, allow us to pass
to a further joint subsequence while preserving the previously
fixed law of $(W,(\D^I)_{I\in\mathscr J})$.

For each positive integer $M$, denote the normalized internal distance
on the shifted cells in $[s-M,s+M]$ by
$\widehat d_j^{[-M,M]}$ and its limit by
$\widehat\D^{[-M,M]}$; their arguments are measured relative
to $s$. For $\delta>0$ dyadic, define
\begin{equation}\label{eq:shifted-two-sided-minimum}
 b_{2,s}(\delta)=\inf_{\substack{N\in\N\\N>\delta}}
  \inf_{\substack{u\in[-N,N]\\|u|\ge\delta}}
   \widehat\D^{[-N,N]}(0,u).
\end{equation}
This is the quantity $b_2(\delta)$ from
\eqref{eq:two-sided-root} for the shifted Brownian motion
$B^{(s)}$.

\Needspace{17\baselineskip}
We compare paths in the two finite intersection graphs.
Each original time cell has length $1/n_j$, and its interior
contains at most one division point of the shifted mesh.
Consequently, the common refinement partitions each original
cell into at most two smaller intervals. Fix $t\in[0,1]$ and
choose a shortest path of $l$ edges between the original cells
assigned to $s$ and $t$. For each edge of this path, choose a
Brownian position belonging to the ranges of its two incident
cells. Within each visited original cell, connect the chosen
positions by passing through at most two cells of the common
refinement, using $s$ and $t$ at the two ends. The resulting path
in the refined intersection graph has at most $2(l+1)$ vertices,
and hence at most $2l+1$ edges.

Each refined cell is contained in a shifted cell. Replacing every
refined vertex by its containing shifted cell, and deleting
consecutive repetitions, preserves the graph path and does not
increase its length. If a chosen time is a boundary point of the
shifted mesh, the assigned cell may differ from the projected
cell by one adjacent cell. Connecting the prescribed cells at
$s$ and $t$ therefore adds at most two edges. For every integer $M>2$, all shifted cells used in this
construction lie in $[s-M,s+M]$. Consequently,
\begin{equation}\label{eq:shifted-finite-comparison}
 \widehat d_j^{[-M,M]}(0,t-s)
 \le 2d_j^{[0,1]}(s,t)+\frac{C}{\kappa_j},
 \qquad t\in[0,1],
\end{equation}
where $C$ is a deterministic constant independent of $j$ and $t$.
Passing to the joint limit gives
\begin{equation}\label{eq:shift-comparison}
 \widehat\D^{[-M,M]}(0,t-s)\le2\D(s,t),\qquad t\in[0,1].
\end{equation}
For any $\varepsilon>0$, a positive dyadic $\delta\le\varepsilon$
satisfies, by Lemma~\ref{lem:two-sided-positive},
\[
 \inf_{\substack{t\in[0,1]\\|t-s|\ge\varepsilon}}\D(s,t)
 \ge\frac12b_{2,s}(\delta)>0,
\]
with the usual convention for an empty infimum. This proves
\eqref{eq:deterministic-root} simultaneously for all
$\varepsilon>0$, since only countably many dyadic $\delta$ are used.

For a continuous pseudometric $\D$ on $[0,1]$, the starting times
not separated from every other time form the set
\[
  A(\D)=\{s\in[0,1]:\text{there exists }t\in[0,1]\setminus\{s\}\text{ with }\D(s,t)=0\}.
\]
For every $k\ge1$, the set of pairs $(s,t)$ satisfying
$|s-t|\ge1/k$ and $\D(s,t)=0$ is compact. Projection onto the
first coordinate shows that $A(\D)$ is an $F_\sigma$ set.
This construction is also Borel when $\D$ varies. Indeed,
for each $k$, the set of triples $(\D,s,t)$ satisfying
$|s-t|\ge1/k$ and $\D(s,t)=0$ is closed. Its projection onto
$(\D,s)$ is closed because $t$ belongs to the compact set
$[0,1]$. Taking the countable union shows that
$\{(\D,s):s\in A(\D)\}$ is Borel.
By \eqref{eq:deterministic-root}, each deterministic $s$
belongs to $A(\D)$ with probability zero. Fubini's theorem then
gives $\Leb A(\D)=0$ almost surely, and
$T_*=[0,1]\setminus A(\D)$ satisfies \eqref{eq:typical-root}.

The set $\pi(A(\D))$ consists exactly of the points in $F$
with more than one time representative. It is $F_\sigma$, because
$\pi$ is continuous, and its inverse image under $\pi$ equals
$A(\D)$ by the equivalence relation defined by $\D=0$. Hence
$\nu(\pi(A(\D)))=\Leb A(\D)=0$. Every point outside this set
has a singleton inverse image and therefore zero $\nu$-mass;
thus $\nu$ is nonatomic. Finally, the inverse image under $\pi$
of any nonempty open set in $F$ is a nonempty relatively open subset
of $[0,1]$, with positive Lebesgue measure. This proves full support.
\end{proof}

Together with the previous sections, Theorem~\ref{thm:deterministic-root}
now completes Theorem~\ref{thm:main-metric}. Its normalization and
tightness statements follow from Lemma~\ref{lem:records} and
Proposition~\ref{prop:local-field}, while part (i) follows from
Lemma~\ref{lem:critical-modulus}. The gluing identity in part (ii)
is Theorem~\ref{thm:exact-gluing}, and part (iii) is provided by
Theorem~\ref{thm:measurable} and Corollary~\ref{cor:strong-convergence}.
Part (iv) is precisely the separation proved above.

\subsection{Curve dimensions and chain thresholds}\label{subsec:pinned-dimensions}

We now use the limiting distance to estimate dimensions of
connected sets in $K$. When $\Phi(x)\ne\Phi(y)$, the function
$t\mapsto\D(s,t)$, with $W_s=x$, distinguishes $x$ and $y$ on
the Brownian range. Lemma~\ref{lem:dimension-test} will then give
$\dimH C\ge\gamma_3$ for every connected set $C\subset K$
containing these two points. Together with the curves supplied by
Proposition~\ref{prop:curve-upper}, this yields the exact minimum
curve dimension for such endpoints.
\begin{proposition}\label{prop:point-to-point}
With probability one, every pair $x,y\in K$ with
$\Phi(x)\ne\Phi(y)$ satisfies $p_K(x,y)=\gamma_3$, and there is a
simple curve from $x$ to $y$ in $K$, $\beta$-H\"older for every
$0<\beta<1/\gamma_3$, whose image has Hausdorff and box dimensions
$\gamma_3$. For every such pair $x,y$, each connected subset of $K$
containing $x,y$ has Hausdorff dimension at least $\gamma_3$.
\end{proposition}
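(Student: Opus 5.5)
The plan is to combine the upper bound from Proposition~\ref{prop:curve-upper} with a lower bound obtained from Lemma~\ref{lem:dimension-test}, using the limiting distance itself as the compatible H\"older function. Everything is done on a single event of probability one. On it, \eqref{eq:kaufman} holds, the conclusions of Proposition~\ref{prop:curve-upper} hold for all pairs $x,y\in K$, and $\D=\D^{[0,1]}$ satisfies \eqref{eq:local-identifications} and the H\"older bound \eqref{eq:local-holder} for a countable sequence of exponents $\alpha_k\uparrow\theta$. The H\"older constants $H_{[0,1],\alpha_k}$ are finite on this event. None of these requirements depends on $x,y$, so the conclusions will hold simultaneously for all pairs.

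For the lower bound, fix $x,y\in K$ with $\Phi(x)\ne\Phi(y)$. Choose $s$ with $W_s=x$ and define $u(t)=\D(s,t)$ for $t\in[0,1]$.
\begin{enumerate}
\item $u$ is compatible with $W$: if $W_t=W_{t'}$, then $\D(t,t')=0$ by \eqref{eq:local-identifications}, and the triangle inequality gives $u(t)=u(t')$.
\item $u$ is $\alpha_k$-H\"older with constant $H_{[0,1],\alpha_k}$ for each $k$, again by the triangle inequality and \eqref{eq:local-holder}.
\item The induced function satisfies $\bar u(x)=0$ and $\bar u(y)=\D(s,t)=d_F(\Phi(x),\Phi(y))>0$, where $W_t=y$. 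Hence $\bar u$ is nonconstant on every connected $C\subset K$ containing $x$ and $y$.
\end{enumerate}
Lemma~\ref{lem:dimension-test} then gives $\dimH C\ge2\alpha_k$ for every $k$. Letting $k\to\infty$ yields $\dimH C\ge2\theta=\gamma_3$. In particular $p_K(x,y)\ge\gamma_3$. This is exactly the last assertion of the proposition.

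For the upper bound and the curve, note that $x\ne y$, since $\Phi(x)\ne\Phi(y)$. Proposition~\ref{prop:curve-upper} therefore provides a simple curve $\eta$ from $x$ to $y$ in $K$ that is $\beta$-H\"older for every $\beta<1/\gamma_3$ and satisfies $\dimUB\eta([0,1])\le\gamma_3$. Its image is connected and contains $x$ and $y$, so the lower bound above gives $\dimH\eta([0,1])\ge\gamma_3$. Combining this with \eqref{eq:box-comparison}, the Hausdorff, lower box and upper box dimensions all equal $\gamma_3$. This shows $p_K(x,y)=\gamma_3$, with the infimum attained.

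The only delicate point is simultaneity over all pairs and all choices of $s$. It is handled by fixing the event before choosing $x,y$ and by using only countably many exponents $\alpha_k$. Every pseudometric property used holds for all $s,t$ at once on that event, so I do not expect a real obstacle.
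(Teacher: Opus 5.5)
Your proposal is correct and follows the paper's proof essentially step for step. The paper also takes $u(t)=\D(s,t)$ with $W_s=x$, gets compatibility from \eqref{eq:local-identifications} and H\"older regularity from \eqref{eq:local-holder}, applies Lemma~\ref{lem:dimension-test} and lets $\alpha\uparrow\theta$, and then combines the simple curve of Proposition~\ref{prop:curve-upper} with \eqref{eq:box-comparison}; your explicit choice of a single event with countably many exponents $\alpha_k$ just spells out the simultaneity over all pairs that the paper leaves implicit.
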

\begin{proof}
Let $W_s=x$ for some $s\in[0,1]$ and set $u(t)=\D(s,t)$.
By \eqref{eq:local-identifications}, the function
$\bar u(W_t)=u(t)$ is well defined on $K$. Since
$\Phi(x)\ne\Phi(y)$, it takes different values at $x$ and $y$.
Moreover, \eqref{eq:local-holder} makes $u$ $\alpha$-H\"older for
every $0<\alpha<\theta$. Consequently,
Lemma~\ref{lem:dimension-test} gives $\dimH C\ge2\alpha$ for every
connected $C\subset K$ containing $x,y$. Letting
$\alpha\uparrow\theta$ yields $\dimH C\ge\gamma_3$.
Proposition~\ref{prop:curve-upper} supplies a simple connecting
curve with upper box dimension at most $\gamma_3$, so
\eqref{eq:box-comparison} forces equality of its Hausdorff and
box dimensions with $\gamma_3$.
\end{proof}

\begin{proof}[Proof of Theorem~\ref{thm:main-pinned}]
Fix one subsequence along which the joint convergence
\eqref{eq:joint-local-limit} of Proposition~\ref{prop:local-field}
holds, and write $\D=\D^{[0,1]}$ for its limiting distance.
By Theorem~\ref{thm:measurable}, this distance has the
representation $\D=\Psi(W)$ almost surely for a Borel map
$\Psi$ on the original Wiener space. For each fixed deterministic
$s$, separation in
Theorem~\ref{thm:deterministic-root} implies
$\Phi^{-1}(\{\pi(s)\})=\{W_s\}$ almost surely.
Proposition~\ref{prop:point-to-point} therefore gives the dimension
identity, attainment and the lower bound for connected sets
simultaneously for every $y\in K\setminus\{W_s\}$. By \eqref{eq:typical-root},
the same conclusions hold simultaneously for every $s\in T_*$.
Since $\Psi$ is Borel and the exceptional set $A(\D)$ in the proof
of Theorem~\ref{thm:deterministic-root} is Borel in $(\D,s)$,
the indicator of $\{s\in T_*\}$ is a Borel function of $(W,s)$.

For nonuniqueness, we use the Brownian motions obtained by
rescaling the forward and backward increments at time $s$:
\begin{equation}\label{eq:pinned-rescaled-paths}
 \begin{aligned}
 t&\longmapsto\frac{W_{s+(1-s)t}-W_s}{\sqrt{1-s}}
   &&(s<1),\\
 t&\longmapsto\frac{W_{s-st}-W_s}{\sqrt{s}}
   &&(s>0),\qquad 0\le t\le1.
 \end{aligned}
\end{equation}
For each fixed $s$, each of these paths that is defined is a
standard Brownian motion. Since every $y\in K\setminus\{W_s\}$ lies in
$W([s,1])$ or $W([0,s])$, Proposition~\ref{prop:nonunique-curves}
applied to the corresponding rescaled path gives continuum many
simple curves from $W_s$ to $y$ with distinct images. Scaling back
preserves finite $p$-variation for all $p>\gamma_3$. By
Lemma~\ref{lem:holder-arc}, each curve therefore admits a single
parametrization that is $\beta$-H\"older for every
$0<\beta<1/\gamma_3$, without changing its image.
Together with the lower bound for connected sets from
Proposition~\ref{prop:point-to-point}, these H\"older bounds imply
that its Hausdorff and box dimensions equal $\gamma_3$, by
\eqref{eq:box-comparison}.

To prove the assertion for almost every starting time, choose a
Borel set $E_0\subset C_0([0,1],\R^3)$ such that
\begin{equation}\label{eq:nonuniqueness-borel-event}
 \Pp\bigl(W|_{[0,1]}\in E_0\bigr)=1
\end{equation}
and such that every path in $E_0$ has the conclusion of
Proposition~\ref{prop:nonunique-curves} for all of its target
points. Such a Borel set exists because the proposition holds
under the completed Wiener law, which is the distribution of
$W|_{[0,1]}$ on $C_0([0,1],\R^3)$. For $0<s<1$,
both rescaling maps in \eqref{eq:pinned-rescaled-paths} are jointly
continuous in $(W,s)$. Their inverse images of $E_0$ are therefore Borel, and
Fubini's theorem gives, almost surely, a Borel set of starting
times of full Lebesgue measure for which both rescaled paths belong
to $E_0$. Replacing $T_*$ from \eqref{eq:typical-root} by its intersection
with this Borel set of full Lebesgue measure preserves all
the conclusions of Theorem~\ref{thm:deterministic-root}
and completes the simultaneous curve assertions.
\end{proof}

Finally, separation of the limiting distance also identifies
the exponent threshold for the chain pseudometrics
$\delta_\alpha^W$ at each predetermined starting time.
\begin{corollary}\label{cor:fixed-threshold}
For each fixed deterministic $s\in[0,1]$, almost surely and simultaneously for
$0<\alpha<\theta$ and $\varepsilon>0$,
\begin{equation}\label{eq:fixed-threshold}
 \inf_{\substack{t\in[0,1]\\|t-s|\ge\varepsilon}}
       \delta_\alpha^W(s,t)>0.
\end{equation}
Consequently, for every $t\in[0,1]\setminus\{s\}$,
\[
 \sup\{\alpha\in(0,1):\delta_\alpha^W(s,t)>0\}=\theta.
\]
On one event of probability one, these conclusions hold simultaneously
for Lebesgue almost every $s\in[0,1]$.
\end{corollary}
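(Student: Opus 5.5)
The plan is to compare $\delta_\alpha^W$ with the limiting distance $\D=\D^{[0,1]}$ through the maximality in Lemma~\ref{lem:chain-dual}. Fix a joint limiting law from Proposition~\ref{prop:local-field}, realized on the original Wiener space via Theorem~\ref{thm:measurable}, so that $\D=\Psi(W)$ almost surely. Fix $0<\alpha<\theta$. By \eqref{eq:local-holder}, $\D(u,v)\le H_{[0,1],\alpha}|u-v|^\alpha$ with $H_{[0,1],\alpha}<\infty$ almost surely. By \eqref{eq:local-identifications}, $\D(u,v)=0$ whenever $W_u=W_v$. Hence $\D/H_{[0,1],\alpha}$ is a pseudometric bounded by $|u-v|^\alpha$ and vanishing at equal Brownian positions. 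Lemma~\ref{lem:chain-dual} says $\delta_\alpha^W$ is the largest such pseudometric, so
\[
 \delta_\alpha^W(s,t)\ge H_{[0,1],\alpha}^{-1}\D(s,t)\qquad(s,t\in[0,1]).
\]

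Theorem~\ref{thm:deterministic-root} gives, for the fixed $s$, an almost sure event on which $\inf_{|t-s|\ge\varepsilon}\D(s,t)>0$ for every $\varepsilon>0$. Combining this with the previous bound gives \eqref{eq:fixed-threshold} for a single $\alpha$.

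For simultaneity in $\alpha$, I would take a countable sequence $\alpha_k\uparrow\theta$ and intersect the corresponding almost sure events. Since $|u-v|\le1$, we have $|u-v|^{\alpha}\ge|u-v|^{\alpha_k}$ for $\alpha\le\alpha_k$. Then $\delta_{\alpha_k}^W$ is an admissible pseudometric for exponent $\alpha$, so Lemma~\ref{lem:chain-dual} gives $\delta_\alpha^W\ge\delta_{\alpha_k}^W$. This extends \eqref{eq:fixed-threshold} to all $\alpha<\theta$ at once. Simultaneity in $\varepsilon$ is automatic, because the infimum is monotone in $\varepsilon$.

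For the threshold identity, take $t\ne s$ and apply \eqref{eq:fixed-threshold} with $\varepsilon=|t-s|$. This gives $\delta_\alpha^W(s,t)>0$ for every $\alpha<\theta$, so the supremum is at least $\theta$. Corollary~\ref{cor:chain-threshold} gives, on an almost sure event, $\delta_\alpha^W\equiv0$ for all $\alpha\in(\theta,1)$, so the supremum is at most $\theta$. The two bounds give equality.

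For the almost-every-$s$ statement, I would use the Borel set $T_*$ of full Lebesgue measure from Theorem~\ref{thm:deterministic-root}, on which $\D(s,t)>0$ for all $t\ne s$. For each $s\in T_*$, the map $t\mapsto\D(s,t)$ is continuous and positive on the compact set $\{t\in[0,1]:|t-s|\ge\varepsilon\}$. It therefore has a positive minimum, and the comparison above again gives \eqref{eq:fixed-threshold} and the threshold identity. All the events involved, namely \eqref{eq:local-holder} for the countably many $\alpha_k$, Corollary~\ref{cor:chain-threshold}, and the construction of $T_*$, are single almost sure events that do not depend on $s$.

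I expect the main care to be needed in using one fixed realization $\D=\Psi(W)$ on the Wiener space. This is what lets the separation statements, which are proved for a limit law, be applied to the pathwise quantity $\delta_\alpha^W$ itself; Theorem~\ref{thm:measurable} is exactly what makes this legitimate.
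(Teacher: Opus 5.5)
Your proposal is correct and matches the paper's proof. The paper obtains the same comparison $\D(s,t)\le H_{[0,1],\alpha}\,\delta_\alpha^W(s,t)$ by telescoping $u\mapsto\D(s,u)$ along an admissible chain, which is the argument behind the maximality clause of Lemma~\ref{lem:chain-dual} that you invoke (writing it in this multiplied form rather than dividing by $H_{[0,1],\alpha}$ avoids the trivial case $H_{[0,1],\alpha}=0$). It then combines this comparison with Theorem~\ref{thm:deterministic-root}, the set $T_*$, Corollary~\ref{cor:chain-threshold}, and monotonicity along a countable sequence $\alpha_k\uparrow\theta$, exactly as you do.
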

\begin{proof}
Fix $0<\alpha<\theta$ and $s,t\in[0,1]$.
For any chain $(a_l,b_l)_{l=1}^m$ admissible in
\eqref{eq:holder-chain}, we have $a_1=s$, $b_m=t$ and
$W_{b_l}=W_{a_{l+1}}$ for $l<m$. By
\eqref{eq:local-identifications},
$\D(s,b_l)=\D(s,a_{l+1})$. Thus the changes of the function
$u\mapsto\D(s,u)$ along successive pairs add up to its total
change from $s$ to $t$. The triangle inequality and
\eqref{eq:local-holder} give
\[
 \begin{aligned}
  \D(s,t)
  &\le\sum_{l=1}^m
       |\D(s,a_l)-\D(s,b_l)|\\
  &\le H_{[0,1],\alpha}
       \sum_{l=1}^m|a_l-b_l|^\alpha.
 \end{aligned}
\]
Taking the infimum over these chains proves
\begin{equation}\label{eq:metric-chain-comparison}
 \D(s,t)\le H_{[0,1],\alpha}\delta_\alpha^W(s,t).
\end{equation}
For each fixed $s$, \eqref{eq:deterministic-root} and
\eqref{eq:metric-chain-comparison} yield \eqref{eq:fixed-threshold}.
For $s\in T_*$, \eqref{eq:typical-root} supplies pointwise
positivity of $\D(s,\cdot)$, and continuity makes its minimum
positive over every nonempty set $\{t:|t-s|\ge\varepsilon\}$.
The upper bound on the threshold follows from
Corollary~\ref{cor:chain-threshold}, which states that
$\delta_\alpha^W\equiv0$ for $\alpha>\theta$.
To obtain the assertions for all subcritical exponents and all gaps
on the same event, we first use countable sequences of exponents
increasing to $\theta$ and of positive dyadic gaps, and then apply
monotonicity. This also applies on the set of full Lebesgue measure of times
from \eqref{eq:typical-root}.
\end{proof}

\section{Ball exit partitions}\label{sec:comparisons}\label{sec:exits}

In addition to the equal time partitions considered above, we study
partitions determined by successive exits of Brownian motion from balls.
For $r>0$, define
\begin{equation}\label{eq:exit-times}
 \tau_0^{(r)}=0,\qquad
 \tau_{k+1}^{(r)}=\inf\{t>\tau_k^{(r)}:
               |W_t-W_{\tau_k^{(r)}}|=r\}.
\end{equation}
Let $N_r=\min\{k\ge1:\tau_k^{(r)}\ge1\}$. These stopping times give
an interval partition of $[0,1]$ with cells
\[
 [\tau_k^{(r)},\tau_{k+1}^{(r)}]\quad(0\le k<N_r-1),
 \qquad [\tau_{N_r-1}^{(r)},1].
\]
The intersection graph is defined as in \eqref{eq:graph-definition},
using the Brownian ranges of these cells. Its endpoint distance and diameter are denoted by
$L_r^{\rm ex}$ and $V_r^{\rm ex}$, respectively.
Almost surely, both are positive for all sufficiently small $r$.

\Needspace{7\baselineskip}
\begin{theorem}\label{thm:main-exit}
The endpoint distance $L_r^{\rm ex}$ and diameter $V_r^{\rm ex}$ satisfy
\begin{gather}
 \lim_{r\downarrow0}\frac{\log L_r^{\rm ex}}{\log(1/r)}
 =\lim_{r\downarrow0}\frac{\log V_r^{\rm ex}}{\log(1/r)}
 =\gamma_3,\label{eq:main-exit}\\
 \liminf_{r\downarrow0}r^{\gamma_3}\log(1/r)L_r^{\rm ex}>0
       \label{eq:main-exit-lower}
\end{gather}
almost surely. For every $p>0$, there is a deterministic $c_p>0$
such that, for all sufficiently small $r>0$,
\begin{equation}\label{eq:main-exit-moments}
 \frac{c_p r^{-p\gamma_3}}{[\log(1/r)]^p}
 \le\E[(L_r^{\rm ex})^p]\le\E[(V_r^{\rm ex})^p]
 =r^{-p\gamma_3+o(1)}.
\end{equation}
\end{theorem}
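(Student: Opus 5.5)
The plan is to compare the exit partition with equal time partitions at mesh comparable to $r^2$, using the two-sided control on exit times. Standard Brownian estimates give that $\tau_{k+1}^{(r)}-\tau_k^{(r)}$ are i.i.d.\ with the law of $r^2\tau_1^{(1)}$, which has exponential tails and a positive mean. Consequently $N_r\asymp r^{-2}$ with exponential concentration. Moreover, on an event of overwhelming probability, every exit cell has duration at most $Ar^2\log(1/r)$. Also, every time interval of length $a r^2/\log(1/r)$ meets at most one exit time; this last statement needs care, and I would instead use that each exit cell has spatial diameter at least $r$ while Brownian H\"older continuity bounds displacements.

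\emph{Upper bounds.} Take $n=\lceil r^{-2}/(A\log(1/r))\rceil$, so each equal cell of length $1/n$ is at least as long as every exit cell on the good event. Each equal cell is then covered by at most two exit cells. Lifting a shortest path in $G_n$ exactly as in the proof of \eqref{eq:refinement} gives $1+V_r^{\rm ex}\le 2(1+V_n)$. More precisely, for any two exit cells, pick times in them and join the corresponding $G_n$-cells, then refine through the common partition. Using $V_n\le n^{\theta+o(1)}$ almost surely from \eqref{eq:main-pathwise}, the stochastic domination of $1+V_n$ by $Y_n$, and \eqref{eq:upper-tail} for the moments, we obtain the upper halves of \eqref{eq:main-exit} and \eqref{eq:main-exit-moments}. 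The bad event has probability $O(r^{K})$ for every $K$, so it is negligible against the deterministic bound $V_r^{\rm ex}\le N_r$, whose moments are polynomial.

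\emph{Lower bounds.} I would avoid comparing with fine equal meshes, since exit cells can be short. Instead I would use the compatible H\"older functions directly, mirroring the last step of the proof of \eqref{eq:main-pathwise}. Fix $\alpha<\theta$ and let $u$ be compatible with $u(0)=0$, $u(1)=1$, and H\"older constant $H$, as given by Proposition~\ref{prop:plateau}. Along a shortest exit-graph path with $q+1$ cells, summing the oscillations of $u$ over the cells gives $1\le H(q+1)\max_k|{\rm cell}_k|^\alpha\le H(q+1)(Ar^2\log(1/r))^\alpha$. This yields $L_r^{\rm ex}\ge r^{-2\alpha+o(1)}$ and hence the lower half of \eqref{eq:main-exit}.

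For the sharper bound \eqref{eq:main-exit-lower} and the moment lower bound, I would replace the H\"older bound by the critical modulus of Lemma~\ref{lem:critical-modulus}. I would take $u(t)=\D(0,t)$ with a limit $\D$ for which $\D(0,1)>0$, which holds almost surely by Proposition~\ref{prop:endpoint-separation}. For a cell of length $h\le Ar^2\log(1/r)$, the oscillation is at most $H\,h^\theta\log(e/h)^{1-\theta}\le C H r^{2\theta}\log(1/r)^{\theta}\log(1/r)^{1-\theta}=CHr^{\gamma_3}\log(1/r)$. Summing gives $\D(0,1)\le CH(q+1)r^{\gamma_3}\log(1/r)$, which is exactly \eqref{eq:main-exit-lower}. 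For moments, $H$ has stretched-exponential tails and $\D(0,1)>c$ with positive probability. On the intersection of that event with $\{H\le M\}$ and the good cell-length event, which together have probability bounded below, we get $L_r^{\rm ex}\ge c\,r^{-\gamma_3}/\log(1/r)$, giving the left side of \eqref{eq:main-exit-moments}.

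The main obstacle is making the cell-length control uniform. The bound on $\max_k(\tau_{k+1}-\tau_k)$ requires a union bound over at most $N_r\lesssim r^{-2}$ cells with an exponential tail, which is routine. The subtler points are that the final cell is truncated at $1$, which is harmless, and that the almost sure statement needs Borel--Cantelli along $r=2^{-m}$ together with a monotone interpolation in $r$. For the interpolation, $L_r^{\rm ex}$ is not monotone in $r$, but $V$ upper bounds and $L$ lower bounds at $r\in[2^{-m-1},2^{-m}]$ follow from the same estimates applied uniformly over all $r$ in that range, since the cell-length bounds hold for all such $r$ simultaneously.
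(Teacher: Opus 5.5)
Your lower bounds are sound and essentially follow the paper. The compatible-function argument for the lower half of \eqref{eq:main-exit} is the H\"older version of \eqref{eq:general-partition-lower}. Your treatment of \eqref{eq:main-exit-lower} and of the moment lower bound, via $u(t)=\D(0,t)$ and the critical modulus, is exactly the paper's. The upper-bound step, however, runs the comparison in the wrong direction, so it does not prove what you claim.

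With $n=\lceil r^{-2}/(A\log(1/r))\rceil$, every equal cell is longer than every exit cell. So it is each \emph{exit} cell that meets at most two equal cells. An equal cell of length about $Ar^2\log(1/r)$ typically contains on the order of $A\log(1/r)$ exit cells. Using only an upper bound on exit durations, you cannot bound this number at all. In \eqref{eq:refinement}, a path in the finer graph is projected onto the coarser one. Here that yields $1+V_n\le2(1+V_r^{\rm ex})$, which is a \emph{lower} bound on $V_r^{\rm ex}$. Lifting a $G_n$-path into the exit graph instead costs a factor equal to the maximal number of exit cells meeting one equal cell. Controlling that factor requires a \emph{lower} bound on exit durations, which your upper-bound paragraph never uses.

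The missing ingredient is the one you mention in your first paragraph and then drop. A displacement of size $r$ needs time at least $r^2/(C\log(1/r))$, by L\'evy's modulus or by Gaussian increment tails with a union bound. Hence consecutive exit times up to $\tau_{N_r}$ are that far apart. This holds almost surely for all small $r$, and outside probability $O(r^a)$ at each fixed $r$; this is the lower half of \eqref{eq:exit-comparability}. You can then finish in either of two ways.
\begin{enumerate}
\item Take $n=\lceil Cr^{-2}\log(1/r)\rceil$. Each equal cell then meets at most two exit cells, so $1+V_r^{\rm ex}\le2(1+V_n)$ with $n=r^{-2+o(1)}$.
\item As the paper does, take $n=\lceil r^{-2}\rceil$ and apply Lemma~\ref{lem:partition-comparison} in both directions. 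This gives \eqref{eq:exit-distance-comparison} with a factor $C\log(1/r)$.
\end{enumerate}
Your H\"older-continuity variant also works: it gives durations at least $(r/[W]_\xi)^{1/\xi}$, at the cost of an $r^{-o(1)}$ loss absorbed into the exponent, with the moments of $[W]_\xi$ controlling the bad event.

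With this repair, the rest of your upper-bound argument goes through as in the paper. That is, the almost sure growth of $V_n$, the moment bound through $Y_n$ and \eqref{eq:upper-tail}, and the bad event handled by $N_r$ and Cauchy--Schwarz.
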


We compare the exit partition at radius $r$ with the equal time
partition whose intersection graph $G_n$ is defined in
\eqref{eq:graph-definition}, taking $n=\lceil r^{-2}\rceil$.
The required distance bounds come from comparing the graph distances
for two time partitions and controlling the durations between
successive exits.

\subsection{Comparison of time partitions}\label{subsec:partition-comparison}

The comparison of graph distances for two time partitions is
independent of the Brownian law. To formulate it, let
$w:[0,1]\to\R^3$ be continuous, and let $\mathcal P$ and
$\mathcal Q$ be finite partitions of $[0,1]$ into nondegenerate closed
intervals with disjoint interiors. For $\mathcal A\in\{\mathcal P,\mathcal Q\}$, let
$G_{\mathcal A}(w)$ be the graph whose vertices are the cells of
$\mathcal A$, with distinct cells $I,J$ adjacent when
$w(I)\cap w(J)\ne\varnothing$. At an internal partition point,
the time is assigned to the cell on its right; time $1$ is assigned
to the last cell. The notation $d_{\mathcal A}(s,t)$ refers to the
graph distance between the cells assigned to $s$ and $t$. List all distinct partition points of $\mathcal P$ and $\mathcal Q$
in increasing order as $0=t_0<t_1<\cdots<t_m=1$. Their
\emph{common refinement}, denoted by $\mathcal P\vee\mathcal Q$,
is the partition with cells $[t_{j-1},t_j]$, $1\le j\le m$.

\Needspace{7\baselineskip}
\begin{lemma}\label{lem:partition-comparison}
Suppose every cell of $\mathcal P$ contains at most $A$ cells of the common
refinement $\mathcal P\vee\mathcal Q$. Then
\begin{equation}\label{eq:partition-comparison}
 1+d_{\mathcal Q}(s,t)
 \le A\,[1+d_{\mathcal P}(s,t)]\qquad(s,t\in[0,1]).
\end{equation}

\end{lemma}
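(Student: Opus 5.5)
The plan is to lift a shortest path in $G_{\mathcal P}(w)$ to a path in $G_{\mathcal Q}(w)$ in two stages: first to the common refinement $\mathcal P\vee\mathcal Q$, then project down to $\mathcal Q$. This is the same pattern as in the proofs of \eqref{eq:refinement} and \eqref{eq:shifted-finite-comparison}.

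Fix $s,t$ and a shortest path $J_0,J_1,\ldots,J_l$ in $G_{\mathcal P}(w)$, where $l=d_{\mathcal P}(s,t)$. Here $J_0$ is the $\mathcal P$-cell assigned to $s$ and $J_l$ the one assigned to $t$.

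\emph{Choosing representative times.} For each edge $\{J_{k-1},J_k\}$, pick times $x_k\in J_{k-1}$ and $y_k\in J_k$ with $w(x_k)=w(y_k)$. Put $y_0=s$ and $x_{l+1}=t$. Inside each visited cell $J_k$, we then have an entry time $y_k$ and an exit time $x_{k+1}$, both in $J_k$. The refinement cells contained in $J_k$ are consecutive, at most $A$ in number, and consecutive ones share an endpoint, so they are adjacent in $G_{\mathcal P\vee\mathcal Q}(w)$. Hence the refinement cells meeting the closed time segment between $y_k$ and $x_{k+1}$ form a chain of at most $A$ vertices. Chaining these blocks gives a walk in $G_{\mathcal P\vee\mathcal Q}(w)$: across a $\mathcal P$-edge, the refinement cells containing $x_k$ and $y_k$ are equal or adjacent because $w(x_k)=w(y_k)$. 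The walk has at most $A(l+1)$ vertices in total.

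\emph{Projecting to $\mathcal Q$.} Every refinement cell lies in a unique $\mathcal Q$-cell. Map each refinement vertex to its containing $\mathcal Q$-cell and delete consecutive repetitions. Adjacency is preserved, since the containing $\mathcal Q$-cells have intersecting ranges and so are equal or adjacent. The vertex count does not increase, so we get a $\mathcal Q$-walk with at most $A(l+1)$ vertices, that is, at most $A(l+1)-1$ edges. This gives $1+d_{\mathcal Q}\le A(1+d_{\mathcal P})$, provided the walk starts and ends at the $\mathcal Q$-cells assigned to $s$ and $t$.

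\emph{Main obstacle: endpoint conventions.} When $s$ is a partition point, the assigned cell is the one on the right. In $J_0$, I would choose the representative block so that it begins with the refinement cell to the right of $s$. That cell lies in both $J_0$ and the $\mathcal Q$-cell assigned to $s$, and this works because $J_0$ itself is the right cell at $s$. The same holds for $t$, with $t=1$ belonging to the last cell. If $s$ is a $\mathcal Q$-point but not a $\mathcal P$-point, the right refinement cell at $s$ still lies inside $J_0$ and inside the assigned $\mathcal Q$-cell. So one should check that, within $J_0$, the segment from $s$ to $x_1$ is covered by refinement cells all inside $J_0$, starting from the assigned one, which costs no extra cells.

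A degenerate case occurs when $s$ and $t$ lie in the same $\mathcal P$-cell, so that $l=0$. Then at most $A$ consecutive refinement cells join them, consistent with the bound.

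I expect this bookkeeping at the endpoints to be the only delicate point. The counting itself is immediate from the at-most-$A$ hypothesis.
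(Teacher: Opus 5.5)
Your proposal is correct and follows the paper's proof essentially step for step: you lift a shortest path in $G_{\mathcal P}(w)$ through chosen intersection times to at most $A$ consecutive refinement cells per visited $\mathcal P$-cell, using the boundary convention at $s$ and $t$, and then project to $\mathcal Q$, deleting consecutive repetitions. The only detail the paper states explicitly that your wording leaves implicit is this: when an entry or exit time is an endpoint of $J_k$, you must use only refinement cells contained in $J_k$, because a refinement cell from the neighbouring $\mathcal P$-cell also meets that endpoint and would break the count of at most $A$.
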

\begin{proof}
Let $I_0,\ldots,I_\ell$ be a shortest path in $G_{\mathcal P}(w)$
from the cell assigned to $s$ to the cell assigned to $t$.
For each $1\le j\le\ell$, there are times $a_j\in I_{j-1}$ and
$b_j\in I_j$ with $w(a_j)=w(b_j)$. Set $b_0=s$ and
$a_{\ell+1}=t$.

Within each $I_j$, $0\le j\le\ell$, join $b_j$ to $a_{j+1}$
by the consecutive cells of the common refinement between these
two times. If a chosen time is an endpoint of $I_j$, use the
refinement cell adjacent to that endpoint from within $I_j$;
it contains the chosen time since the refinement cells are closed.
At $s$ and $t$, use the refinement cells assigned by the
boundary convention specified above. As each $I_j$ contains at most
$A$ refinement cells, these connections use at most $A(\ell+1)$
cells in total.

Consecutive refinement cells within an $I_j$ share a time endpoint.
At the transition from $I_{j-1}$ to $I_j$, the last and first
chosen cells have intersecting images under $w$, since they
contain $a_j$ and $b_j$, respectively, and $w(a_j)=w(b_j)$.
Thus these cells form a path in the refinement graph, with at most
$A(\ell+1)$ vertices. Each refinement cell lies in a unique cell
of $\mathcal Q$ when interiors are considered. Replacing each
refinement cell by this $\mathcal Q$ cell and deleting consecutive
repetitions gives a path in $G_{\mathcal Q}(w)$ from the cell
assigned to $s$ to the cell assigned to $t$. Hence
\[
 1+d_{\mathcal Q}(s,t)\le A(\ell+1)
   =A[1+d_{\mathcal P}(s,t)],
\]
as claimed.
\end{proof}

\subsection{Bounds on exit times}\label{subsec:exit-times}

For a fixed radius $r$, the strong Markov property and Brownian
scaling show that the full exit durations
$(\tau_{i+1}^{(r)}-\tau_i^{(r)})_{i\ge0}$ are independent
and each has the law of $r^2\tau$, where $\tau$ is the first
exit time from the unit ball for Brownian motion started at the
origin. The comparison of partitions requires bounds that hold
simultaneously for all sufficiently small $r$.

\Needspace{13\baselineskip}
\begin{lemma}\label{lem:exit-durations}
There are $c,C>0$ such that, for $u\ge1$,
\begin{equation}\label{eq:exit-duration-tails}
 \Pp(\tau<u^{-1})\le Ce^{-cu},\qquad
 \Pp(\tau>u)\le Ce^{-cu}.
\end{equation}
Almost surely, for every sufficiently small $r>0$,
\begin{equation}\label{eq:exit-comparability}
 \frac{r^2}{C\log(1/r)}
 \le \tau_{i+1}^{(r)}-\tau_i^{(r)}
 \le Cr^2\log(1/r),\qquad 0\le i<N_r.
\end{equation}
For the last index $i=N_r-1$, the inequalities apply to
$\tau_{N_r}^{(r)}-\tau_{N_r-1}^{(r)}$, even though the last cell
of the partition ends at time $1$. Moreover, for every $a>0$
there is $C_a>0$ such that, for each sufficiently small fixed $r$,
\eqref{eq:exit-comparability} holds with $C_a$ in place of $C$
outside an event of probability at most $C_ar^a$.
\end{lemma}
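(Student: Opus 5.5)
The plan has three steps: prove the two tail bounds in \eqref{eq:exit-duration-tails} for a single exit time $\tau$, transfer them to the increments $\tau_{i+1}^{(r)}-\tau_i^{(r)}$ by a union bound over the $O(r^{-2}\log(1/r))$ relevant indices, and then upgrade from a fixed radius to all small $r$ by Borel--Cantelli along a geometric sequence together with a sandwich argument.

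\emph{Tail bounds for $\tau$.} For the upper tail $\Pp(\tau>u)\le Ce^{-cu}$, we use the Markov property at integer times. From any point of $B(\mathbf0,1)$, the probability of exiting within one time unit is at least $\Pp_{\mathbf0}(|W_1|\ge2)=:\delta>0$. This is because exiting the unit ball is implied by a displacement of size at least $2$. Iterating gives $\Pp(\tau>k)\le(1-\delta)^k$. For the lower tail, the event $\{\tau<u^{-1}\}$ forces $\sup_{t\le1/u}|W_t|\ge1$. Applying the reflection principle coordinatewise together with the Gaussian tail \cite[Theorem 2.21 and Remark 2.22]{MortersPeres} gives
\[
 \Pp\Bigl(\sup_{t\le 1/u}|W_t|\ge1\Bigr)\le Ce^{-cu}.
\]

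\emph{Fixed $r$.} For a fixed radius $r$, the increments $\tau_{i+1}^{(r)}-\tau_i^{(r)}$ are i.i.d.\ with the law of $r^2\tau$. We first control $N_r$. By the Cram\'er--Chernoff bound for sums of i.i.d.\ copies of $\tau$ (which has $\E\tau=1/3>0$ and all exponential moments near $0$), we get $\Pp(N_r>2\cdot 3r^{-2})\le e^{-cr^{-2}}$.

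On the complementary event, only indices $i<6r^{-2}$ matter. Taking $u=C_a\log(1/r)$ in both tail bounds and applying a union bound over these $O(r^{-2})$ indices shows that \eqref{eq:exit-comparability} fails with probability at most
\[
 Cr^{-2}e^{-cC_a\log(1/r)}+e^{-cr^{-2}}\le C_ar^a,
\]
provided $C_a$ is chosen large. This is the last assertion of the lemma. The last increment $\tau_{N_r}^{(r)}-\tau_{N_r-1}^{(r)}$ is one of these i.i.d.\ increments, with $N_r-1<6r^{-2}$ on the good event, so it is covered by the same bound.

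\emph{Uniformity in $r$.} This is the main obstacle, since $r$ ranges over a continuum and the exit sequences for different radii are not nested. Apply the fixed-$r$ bound with $a=1$ along $r_k=2^{-k}$; the failure probabilities are summable, so Borel--Cantelli makes the bounds hold for all large $k$.

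For $r_{k+1}\le r\le r_k$ we do not compare exit sequences directly. Instead we use path regularity on a finer radius scale, which handles the continuum of radii.
\begin{itemize}
\item The lower bound on durations follows from L\'evy's modulus of continuity. A displacement of size $r$ requires time at least $r^2/(C\log(1/r))$, since over times $h$ the increments are at most $C\sqrt{h\log(1/h)}$ uniformly.
\item The upper bound requires that every time window of length $Cr^2\log(1/r)$ inside $[0,2]$ contain a displacement of size at least $2r$ from its starting point. Cover $[0,2]$ by a grid of windows of length $\tfrac12 C r_{k+1}^2\log(1/r_{k+1})$; there are polynomially many in $2^{k}$. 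In a fixed window, by the Markov property and the fixed-$r$ bound applied at radius $4r_k$ started from the window's initial point, the probability of no displacement $\ge 4r_k$ is at most $e^{-cC\log 2^k}$. A union bound then gives a summable sequence in $k$ once $C$ is large.
\item Any window of length $Cr^2\log(1/r)$ contains a full grid window. Consequently, after $\tau_i^{(r)}$ the path leaves $B(W_{\tau_i^{(r)}},r)$ within that time, which is the upper bound.
\end{itemize}

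Adjusting the constants then yields \eqref{eq:exit-comparability} for all sufficiently small $r$, and for all $i<N_r$ because $\tau_{N_r-1}^{(r)}<1$.
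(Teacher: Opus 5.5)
Your proof is correct and follows the route the paper indicates when it omits the proof: tail bounds for $\tau$, a union bound over $O(r^{-2})$ indices combined with an exponential (Chernoff) estimate for $N_r$ at fixed $r$, and L\'evy's modulus of continuity for the almost sure lower bound on durations. The one difference is in the almost sure upper bound, where the paper cites the Cs\"org\H{o}--R\'ev\'esz theorem on how small Brownian increments can be uniformly, while your grid-window union bound with Borel--Cantelli (plus the triangle-inequality step from radius $4r_k$ to $2r$) proves the needed weak form directly; your preliminary Borel--Cantelli step along $r_k=2^{-k}$ using the fixed-$r$ bound is never actually used.
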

These estimates are standard; see
\cite[Theorems 1.14, 2.16 and 2.21]{MortersPeres} and
\cite[Theorem 1.6.1]{CsorgoRevesz}. The probability bound for a
fixed radius also follows by a union bound using
\eqref{eq:exit-duration-tails} and the exponential estimate for
$N_r$ below. We omit the proof.

The number $N_r$ of exit intervals satisfies a corresponding moment
upper bound. Indeed, if $q_0=\E[e^{-\tau}]<1$, independence of the full durations
and the exponential Markov inequality give, for integers $m\ge1$,
\[
 \Pp(N_r>m)=\Pp(\tau_m^{(r)}<1)
 \le e^{r^{-2}}q_0^m.
\]
Splitting the tail sum at a sufficiently large multiple of $r^{-2}$
then yields
\begin{equation}\label{eq:exit-count-moments}
 \E N_r^q\le C_qr^{-2q}\qquad(0<r<1,\ q>0).
\end{equation}

\subsection{Distance growth for ball exit partitions}\label{subsec:exit-growth}

For the exit partition $\mathcal P_r$, write
$d_r^{\rm ex}(s,t)=d_{\mathcal P_r}(s,t)$.
The estimates from the preceding subsections compare this distance
with the distance in the equal time partition at
$n=\lceil r^{-2}\rceil$.

\Needspace{7\baselineskip}
\begin{proposition}\label{prop:exit-comparison}
Almost surely, for every sufficiently small $r>0$, with
$n=\lceil r^{-2}\rceil$,
\begin{equation}\label{eq:exit-distance-comparison}
 \frac{1+d_{G_n}(\iota_n(s),\iota_n(t))}{C\log(1/r)}
 \le 1+d_r^{\rm ex}(s,t)
 \le C\log(1/r)\,[1+d_{G_n}(\iota_n(s),\iota_n(t))]
\end{equation}
for every $s,t\in[0,1]$. For every $a>0$, this comparison holds at
each sufficiently small fixed $r$, with $C_a$ in place of $C$,
outside an event of probability at most $C_ar^a$.
\end{proposition}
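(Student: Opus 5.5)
The plan is to apply Lemma~\ref{lem:partition-comparison} twice, once in each direction, with the two partitions being the exit partition $\mathcal P_r$ and the equal time partition $\mathcal E_n$ into cells $[i/n,(i+1)/n]$. The only input needed is a bound on the number of common refinement cells contained in a single cell of each partition. That bound will come from the duration estimates \eqref{eq:exit-comparability} of Lemma~\ref{lem:exit-durations}. The boundary conventions in Lemma~\ref{lem:partition-comparison} (the right cell at interior partition points, the last cell at time $1$) agree with the assignment $\iota_n$ for $\mathcal E_n$. Hence $d_{\mathcal E_n}(s,t)=d_{G_n}(\iota_n(s),\iota_n(t))$.

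\textbf{Upper bound, with $\mathcal P=\mathcal E_n$ and $\mathcal Q=\mathcal P_r$.} Fix the almost sure event of \eqref{eq:exit-comparability} and take $r$ small. Every exit cell other than the last has length at least $r^2/(C\log(1/r))$. The last cell $[\tau_{N_r-1}^{(r)},1]$ may be short, but it is only one cell. An equal time cell has length $1/n\le r^2$. The number of exit partition points in its interior is therefore at most $1+C\log(1/r)$, and the number of refinement cells it contains is at most $A=2+C\log(1/r)\le C'\log(1/r)$. Lemma~\ref{lem:partition-comparison} then gives the right-hand inequality of \eqref{eq:exit-distance-comparison}.

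\textbf{Lower bound, with $\mathcal P=\mathcal P_r$ and $\mathcal Q=\mathcal E_n$.} Each exit cell has length at most $\tau_{i+1}^{(r)}-\tau_i^{(r)}\le Cr^2\log(1/r)$. This also covers the truncated last cell, by the remark in Lemma~\ref{lem:exit-durations}. Since $1/n\ge r^2/2$ for small $r$, such a cell meets at most $2C\log(1/r)+2$ equal cells. Each of these contributes at most one refinement cell. Hence $A\le C'\log(1/r)$, and the lemma yields $1+d_{G_n}\le C'\log(1/r)\,(1+d_r^{\rm ex})$, which is the left-hand inequality. Both directions are deterministic consequences of \eqref{eq:exit-comparability}. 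The comparison therefore holds simultaneously for all $s,t$ and all sufficiently small $r$ on that event.

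\textbf{Fixed-radius probability bound.} This follows verbatim from the last assertion of Lemma~\ref{lem:exit-durations}, replacing $C$ by $C_a$: off an event of probability at most $C_ar^a$, the same two counting arguments apply. The only delicate point is the bookkeeping at the last, truncated exit cell and at shared endpoints, where a partition point coinciding with an equal time division point could add one refinement cell. This costs only an additive constant in $A$, which is absorbed into $C\log(1/r)$. I expect no genuine obstacle beyond making sure \eqref{eq:exit-comparability} covers index $N_r-1$, which the lemma explicitly provides.
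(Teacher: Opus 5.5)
Your proposal is correct and follows the paper's proof essentially verbatim: you apply Lemma~\ref{lem:partition-comparison} in both directions, bound the number of common-refinement cells per equal-time cell by $C\log(1/r)+2$ using the lower bound in \eqref{eq:exit-comparability}, and bound the number per exit cell by the same quantity using the upper bound together with $n\le 2r^{-2}$. You also obtain the fixed-radius statement from the last assertion of Lemma~\ref{lem:exit-durations}, just as the paper does.
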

\begin{proof}
Let $\mathcal Q_n$ be the partition of $[0,1]$ into the $n$
intervals $[j/n,(j+1)/n]$, $0\le j<n$, where
$n=\lceil r^{-2}\rceil$. On the probability one event in
Lemma~\ref{lem:exit-durations}, let
$r>0$ be sufficiently small. Any two consecutive exit times
strictly before time $1$ are separated by at least
$r^2/[C\log(1/r)]$. An interval of length $1/n\le r^2$
therefore contains at most $C\log(1/r)+1$ exit times in its
interior. Thus each cell of $\mathcal Q_n$ contains at most
$C\log(1/r)+2$ cells of the common refinement
$\mathcal P_r\vee\mathcal Q_n$.

Conversely, the length of each exit cell is at most
$Cr^2\log(1/r)$ by \eqref{eq:exit-comparability}; this includes
the last cell, whose right endpoint is $1$. Since
$n=\lceil r^{-2}\rceil\le 2r^{-2}$ for $r\le1$, each such cell
contains at most $C\log(1/r)+2$ cells of the common refinement.
Lemma~\ref{lem:partition-comparison}, applied first to
$(\mathcal P_r,\mathcal Q_n)$ and then with their roles reversed,
now gives \eqref{eq:exit-distance-comparison}, after enlarging $C$.
The probability statement for each fixed $r$ follows in the
same way from the last assertion of Lemma~\ref{lem:exit-durations}.
\end{proof}

The modulus estimate \eqref{eq:critical-modulus} and the endpoint
separation in Theorem~\ref{thm:deterministic-root} give
quantitative lower bounds for $L_n$.

\Needspace{7\baselineskip}
\begin{corollary}\label{cor:quantitative-lower-bounds}
Almost surely,
\begin{equation}\label{eq:time-distance-lower}
 \inf_{n\ge2}\frac{[\log(en)]^{1-\theta}L_n}{n^\theta}>0.
\end{equation}
For every $p>0$, there is a deterministic $c_p>0$ such that
\begin{equation}\label{eq:time-moment-lower}
 \E[L_n^p]\ge
 \frac{c_p n^{p\theta}}{[\log(en)]^{p(1-\theta)}}
 \qquad(n\ge2).
\end{equation}
\end{corollary}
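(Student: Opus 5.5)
The plan is to combine a pathwise lower bound for the limiting distance between times $0$ and $1$ with the modulus of continuity \eqref{eq:critical-modulus}, and then transfer the bound to the finite graphs by summing increments of a compatible function along a shortest path. This mirrors the lower-bound argument in the proof of \eqref{eq:main-pathwise}. The difference is that we use the limiting distance $u(t)=\D(0,t)$, realized on the original Wiener space via Theorem~\ref{thm:measurable} and Corollary~\ref{cor:strong-convergence}. Its modulus has the sharp form $|s-t|^\theta(\log(e/|s-t|))^{1-\theta}$, in place of the lossy $|s-t|^\alpha$ with $\alpha<\theta$.

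Concretely, we fix a joint limit and put $\D=\Psi(W)=\D^{[0,1]}$. By \eqref{eq:local-identifications}, $u(t)=\D(0,t)$ is compatible with $W$, and $u(1)=\D(0,1)>0$ almost surely by \eqref{eq:endpoint-separation}, or by Theorem~\ref{thm:deterministic-root} with $s=0$. Lemma~\ref{lem:critical-modulus} gives
\[
 |u(s)-u(t)|\le\D(s,t)\le H|s-t|^\theta\bigl(\log(e/|s-t|)\bigr)^{1-\theta},
\]
where $H=H_{[0,1]}$ has a stretched exponential tail. We take a shortest path in $G_n$ from cell $0$ to cell $n-1$, with $L_n$ edges. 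At each transition we choose equal-position representatives. Within each visited cell, the entry and exit times differ by at most $1/n$, so $u$ changes by at most $Hn^{-\theta}(\log(en))^{1-\theta}$ there. Compatibility makes the changes across edges vanish. Summing over the $L_n+1$ cells gives
\[
 \D(0,1)\le H(L_n+1)n^{-\theta}(\log(en))^{1-\theta}.
\]
Since $L_n\ge1$ for $n\ge2$, this yields $[\log(en)]^{1-\theta}L_n/n^\theta\ge\D(0,1)/(2H)>0$ uniformly in $n$, which proves \eqref{eq:time-distance-lower}.

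For \eqref{eq:time-moment-lower}, we use the same inequality in the form
\[
 L_n\ge\frac{\D(0,1)}{2H}\,\frac{n^\theta}{(\log(en))^{1-\theta}},
\]
with the random factor independent of $n$. It then suffices to show that $\E[(\D(0,1)/H)^p]>0$. This holds because $\D(0,1)/H$ is positive almost surely and its law does not depend on $n$. So $c_p:=2^{-p}\E[(\D(0,1)/H)^p\wedge1]>0$ works, and truncating at $1$ avoids any integrability concern.

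The main obstacle is making sure that a single modulus constant $H$ with the logarithmic correction is available simultaneously for all pairs of times, on the original probability space. Lemma~\ref{lem:critical-modulus} is stated for a limiting law and its Skorokhod copy. We therefore realize the limit as $\Psi(W)$ using Theorem~\ref{thm:measurable}; the almost sure statements \eqref{eq:critical-modulus} and \eqref{eq:endpoint-separation} are properties of the law of $(W,\D)$, so they transfer to this realization. A minor point is the endpoint convention: the last cell contains time $1$ and the first contains $0$, so the representatives $0$ and $1$ lie in the visited end cells, which costs at most one extra cell in the count.
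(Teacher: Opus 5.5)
Your proposal is correct and follows essentially the same route as the paper's proof. You realize $\D=\D^{[0,1]}$ on the Wiener space and use the compatibility of $u(t)=\D(0,t)$, the logarithmic modulus from Theorem~\ref{thm:main-metric}(i), and the separation of $0$ from $1$. Summing increments along a shortest path then gives $L_n\ge (c_W/2)\,n^\theta[\log(en)]^{-(1-\theta)}$ with $c_W=\D(0,1)/H$, and $c_p=2^{-p}\E[c_W^p]$.

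You leave one step implicit. Bounding the in-cell change by $Hn^{-\theta}(\log(en))^{1-\theta}$ requires $h\mapsto h^\theta[\log(e/h)]^{1-\theta}$ to be increasing on $(0,1]$, which holds because $\theta>1/2$. Your truncation at $1$ is harmless but unnecessary, since the modulus at $|s-t|=1$ already gives $\D(0,1)\le H$.
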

\begin{proof}
By Theorem~\ref{thm:main-metric}(iii), we may realize
$\D=\D^{[0,1]}$ on the original Wiener space. Theorem~\ref{thm:main-metric}(i),
(iv) then give a finite random variable $H>0$ such that
\[
 \begin{gathered}
 \omega(0)=0,\qquad
 \omega(h)=h^\theta[\log(e/h)]^{1-\theta}\quad(0<h\le1),\\
 \D(s,t)\le H\omega(|s-t|),\qquad
 c_W:=\frac{\D(0,1)}H\in(0,1].
 \end{gathered}
\]
The function $\omega$ is increasing on $(0,1]$ because
$\theta>1/2$. Furthermore, $u(t)=\D(0,t)$ is compatible with $W$
by \eqref{eq:main-local}, and the triangle inequality gives
$|u(s)-u(t)|\le\D(s,t)$.

Consider any finite partition $\mathcal P$ as in
Section~\ref{subsec:partition-comparison}, with maximum cell length
$h\le1$. Along a graph path from its first to its last cell,
choose a common Brownian position at each transition and time
representatives in the two incident cells. Compatibility makes the
change in $u$ at each transition zero, whereas the change within
any one cell is at most $H\omega(h)$. Summing the changes along a
shortest path therefore gives
\begin{equation}\label{eq:general-partition-lower}
 \D(0,1)\le H[1+d_{\mathcal P}(0,1)]\omega(h),\qquad
 1+d_{\mathcal P}(0,1)\ge\frac{c_W}{\omega(h)}.
\end{equation}
For the equal time partition $h=1/n$, this implies
\[
 L_n\ge\frac{c_W}{2}
 \frac{n^\theta}{[\log(en)]^{1-\theta}}\qquad(n\ge2),
\]
since $L_n+1\le2L_n$. This proves
\eqref{eq:time-distance-lower}. Taking $p$-th moments gives
\eqref{eq:time-moment-lower}, with $c_p=2^{-p}\E[c_W^p]>0$.

\end{proof}

\begin{proof}[Proof of Theorem~\ref{thm:main-exit}]
For $n=\lceil r^{-2}\rceil$, applying
\eqref{eq:exit-distance-comparison} with $s=0,t=1$ compares the
endpoint distances; taking the supremum over $s,t\in[0,1]$
compares the diameters. Theorem~\ref{thm:main-growth} and
$\log n\sim2\log(1/r)$ therefore give \eqref{eq:main-exit}.

For the quantitative lower bounds, choose a deterministic $C>0$ and
put
\[
 E_r=\bigl\{\tau_{i+1}^{(r)}-\tau_i^{(r)}\le
 Cr^2\log(1/r)\text{ for all }0\le i<N_r\bigr\}.
\]
By Lemma~\ref{lem:exit-durations}, $C$ may be chosen so that
$\Pp(E_r^c)\le Cr$ for all sufficiently small $r$, and
almost surely $E_r$ holds for every sufficiently small $r$.
On $E_r$, every cell of the exit partition has length at most
$h_r=Cr^2\log(1/r)$. For all sufficiently small $r$,
$\omega(h_r)\le C'r^{\gamma_3}\log(1/r)$, where $C'$ is
deterministic. Thus \eqref{eq:general-partition-lower} yields
\begin{equation}\label{eq:exit-lower-on-good-event}
 1+L_r^{\rm ex}\ge
 \frac{c_W}{C'r^{\gamma_3}\log(1/r)}\qquad\text{on }E_r.
\end{equation}
This proves \eqref{eq:main-exit-lower}, since $c_W>0$ almost surely.

Fix $p>0$ and choose $\delta>0$ such that
$q:=\Pp(c_W\ge\delta)>0$. For all sufficiently small $r$,
\[
 \Pp(E_r\cap\{c_W\ge\delta\})\ge q/2.
\]
On this event, after decreasing the upper bound on $r$ if
necessary, \eqref{eq:exit-lower-on-good-event} gives
\[
 L_r^{\rm ex}\ge
 \frac{\delta}{2C'}\frac{r^{-\gamma_3}}{\log(1/r)}.
\]
Taking $p$-th moments proves the lower bound in
\eqref{eq:main-exit-moments}.

For the upper bound, choose $a>4p$ in
Proposition~\ref{prop:exit-comparison}. On the event where
\eqref{eq:exit-distance-comparison} holds,
$1+V_r^{\rm ex}\le C_a\log(1/r)(1+V_n)$; on its complement,
$1+V_r^{\rm ex}\le N_r$. By Cauchy--Schwarz and
\eqref{eq:exit-count-moments}, the exceptional contribution is at most
$C_{a,p}r^{a/2-2p}$. Consequently,
\[
 \E[(1+V_r^{\rm ex})^p]
 \le [C_a\log(1/r)]^p\E[(1+V_n)^p]
       +C_{a,p}r^{a/2-2p}
 =r^{-p\gamma_3+o(1)},
\]
where we used \eqref{eq:main-moments}. Combined with the lower
bound and $L_r^{\rm ex}\le V_r^{\rm ex}$, this proves
\eqref{eq:main-exit-moments}.
\end{proof}

\section{Further questions}\label{sec:questions}

In this section, we present several questions left open by the present work.

\subsection{Comparison with simple random walk}\label{subsec:discrete-comparison}

Let $S=(S_k)_{k\ge0}$ be simple random walk on $\Z^d$,
$d\in\{2,3\}$, started at the origin, and write
$\Pp_d^{\mathrm{RW}}$ and $\E_d^{\mathrm{RW}}$ for its law and expectation.
The graph $\mathcal R_n$ has vertex set $\{S_0,\ldots,S_n\}$ and edge set
$\{\{S_{k-1},S_k\}:1\le k\le n\}$, with unit edge lengths. Set
\[
 A_n^{\mathrm{RW}}=d_{\mathcal R_n}(0,S_n).
\]
For $R\ge1$, define
\[
 T_R=\inf\{k\ge0:|S_k|>R\},\qquad
 \calG_R^{\mathrm{RW}}=\mathcal R_{T_R},\qquad
 D_R^{\mathrm{RW}}=d_{\calG_R^{\mathrm{RW}}}(0,S_{T_R}).
\]
Thus $\calG_R^{\mathrm{RW}}$ includes the edge by which the random walk leaves
the ball. In dimension three, transience implies that
$A_n^{\mathrm{RW}}>0$ for all sufficiently large $n$, almost surely.
We ask whether the graph distances admit deterministic exponents and
whether their three-dimensional values agree with the Brownian exponents.

\begin{question}\label{qu:discrete-time-exponent}
In dimension three, is there a deterministic exponent
$\theta_{\mathrm{RW}}$ such that
\[
 \E_3^{\mathrm{RW}}[A_n^{\mathrm{RW}}]
     =n^{\theta_{\mathrm{RW}}+o(1)},\qquad
 \lim_{n\to\infty}\frac{\log A_n^{\mathrm{RW}}}{\log n}
     =\theta_{\mathrm{RW}}
 \quad\Pp_3^{\mathrm{RW}}\text{-almost surely}?
\]
For each $d\in\{2,3\}$, is there a deterministic exponent $\beta_d$
such that
\[
 \E_d^{\mathrm{RW}}[D_R^{\mathrm{RW}}]=R^{\beta_d+o(1)},\qquad
 \lim_{R\to\infty}\frac{\log D_R^{\mathrm{RW}}}{\log R}
     =\beta_d
 \quad\Pp_d^{\mathrm{RW}}\text{-almost surely}?
\]
If $\theta_{\mathrm{RW}}$ and $\beta_3$ exist, do we have
$\theta_{\mathrm{RW}}=\theta$ and
$\beta_3=2\theta_{\mathrm{RW}}=\gamma_3$?
\end{question}

Movshovitz and Havlin~\cite[Section 4, equations (4.2) and (4.6)]{MovshovitzHavlin}
use a Flory argument to predict that the mean square Euclidean
radius at chemical distance $\ell$ scales as $\ell^{4/d}$.
Although this concerns a different observable, it suggests the
spatial exponents $d/2$.
\begin{question}\label{qu:discrete-exponent-values}
If the exponents $\beta_2$ and $\beta_3$ exist, are
$\beta_2=1$ and $\beta_3=3/2$? For the Brownian exponent,
is $\gamma_3=3/2$?
\end{question}

Zhou, Deng and Hou~\cite{ZhouDengHou} study the mean maximal graph
distance from the starting point in the trace of a two-dimensional
random walk run for $L^2$ steps on a torus of side length $L$.
Their simulations support the scaling $L(\log L)^{1/4}$ for this
observable. The distance $D_R^{\mathrm{RW}}$ considered here differs from the quantity studied in~\cite{ZhouDengHou}, and it remains open whether
\[
 \E_2^{\mathrm{RW}}[D_R^{\mathrm{RW}}]=R(\log R)^{1/4+o(1)}.
\]

\subsection{Spatial separation of the limiting distance}\label{subsec:spatial-separation-question}

We ask whether the limiting distance $\D=\D^{[0,1]}$ from
Theorem~\ref{thm:main-metric} satisfies the following spatial separation
property simultaneously for all $s,t\in[0,1]$, almost surely:
\begin{equation}\label{eq:full-separation-question}
 \D(s,t)=0\quad\Longleftrightarrow\quad W_s=W_t
                   \qquad(s,t\in[0,1]).
\end{equation}
The implication $W_s=W_t\Rightarrow\D(s,t)=0$ follows from
\eqref{eq:main-local}. For each fixed deterministic $s$,
Theorem~\ref{thm:deterministic-root} gives $\D(s,t)>0$ for every
$t\ne s$, almost surely. The event of probability one may depend on $s$;
this does not establish the reverse implication in
\eqref{eq:full-separation-question} simultaneously for every pair
$s,t\in[0,1]$.
If it does, the map $\Phi:K\to F$ in \eqref{eq:spatial-map}
is a homeomorphism. If it fails, there is a $z\in F$ such that
$\Phi^{-1}(\{z\})$ has positive Euclidean diameter; this set is
compact and connected by Proposition~\ref{prop:quotient-geometry}.

\begin{question}\label{qu:critical-separation}
Does every subsequential limit of the normalized graph distances along
$(n_j)$ satisfy \eqref{eq:full-separation-question} almost surely?
\end{question}

An affirmative answer would also give $p_K(x,y)=\gamma_3$ for every
distinct $x,y\in K$ on one event of probability one, by
Proposition~\ref{prop:point-to-point}.

\vspace{2em}

\noindent\textbf{AI statement.}
Commercial large language models were used to assist with
polishing the English, preparing parts of the manuscript
in \LaTeX{}, and checking some proofs.
All mathematical arguments were verified by the author,
who takes full responsibility for the content of the paper.

\vspace{0.5em}

\noindent\textbf{Acknowledgments.}
The author was supported in part by JSPS KAKENHI Grant Numbers
JP26K00609 and JP23K22399.

\end{document}